\patchcmd{\@thm}{\let\thm@indent\indent}{\let\thm@indent\noindent}{}{}
\patchcmd{\@thm}{\thm@headfont{\scshape}}{\thm@headfont{\bfseries}}{}{}
\definecolor{dkgreen}{rgb}{0,0.6,0}
\definecolor{gray}{rgb}{0.5,0.5,0.5}
\definecolor{mauve}{rgb}{0.58,0,0.82}
\tiny\color{gray},
\newcommand{\xar}[1]{\xrightarrow{{#1}}}
\def\@tocline#1#2#3#4#5#6#7{\relax
  \ifnum #1>\c@tocdepth 
  \else
    \par \addpenalty\@secpenalty\addvspace{#2}%
    \begingroup \hyphenpenalty\@M
    \@ifempty{#4}{%
      \@tempdima\csname r@tocindent\number#1\endcsname\relax
    }{%
      \@tempdima#4\relax
    }%
    \parindent\z@ \leftskip#3\relax \advance\leftskip\@tempdima\relax
    \rightskip\@pnumwidth plus4em \parfillskip-\@pnumwidth
    #5\leavevmode\hskip-\@tempdima
      \ifcase #1
       \or\or \hskip 1em \or \hskip 2em \else \hskip 3em \fi%
      #6\nobreak\relax
    \hfill\hbox to\@pnumwidth{\@tocpagenum{#7}}\par
    \nobreak
    \endgroup
  \fi}
\DeclareMathOperator{\Hom}{Hom}
\DeclareMathOperator{\Aut}{Aut}
\DeclareMathOperator{\Ext}{Ext}
\DeclareMathOperator{\fib}{fib}
\DeclareMathOperator{\End}{End}
\DeclareMathOperator{\hocolim}{hocolim}
\DeclareMathOperator{\Sym}{Sym}
\numberwithin{equation}{section}
\newtheorem{lemma}{Lemma}[subsection]
\newtheorem{corollary}[lemma]{Corollary}
\newtheorem{theorem}[lemma]{Theorem}
\newtheorem*{thmno}{Theorem}
\newtheorem*{propo}{Proposition}
\newtheorem{prop}[lemma]{Proposition}
\newtheorem{conjecture}[lemma]{Conjecture}
\newtheorem{abc}{abc}
\newtheorem{ABCthm}[abc]{Theorem}
\newtheorem{ABCconj}[abc]{Conjecture}
\newtheorem{ABCcor}[abc]{Corollary}
\newtheorem*{conj-moore}{Conjecture~\ref{moore-splitting}}
\newtheorem*{conj-cent}{Conjecture~\ref{centrality-conj}}
\newtheorem*{conj-tmf}{Conjecture~\ref{tmf-conj}}
\newtheorem*{thm-main}{Theorem~\ref{main-thm}}
\newtheorem*{cor-bpn}{Corollary~\ref{bpn}}
\newtheorem*{thm-string}{Theorem~\ref{mstring}}
\theoremstyle{definition}
\newtheorem{definition}[lemma]{Definition}
\newtheorem{warning}[lemma]{Warning}
\newtheorem{example}[lemma]{Example}
\newtheorem{construction}[lemma]{Construction}
\newtheorem{remark}[lemma]{Remark}
\newtheorem{notation}[lemma]{Notation}
\newcommand{\FF}{\mathbf{F}}
\newcommand{\Z}{\mathbf{Z}}
\newcommand{\QQ}{\mathbf{Q}}
\newcommand{\cc}{\mathbf{C}}
\newcommand{\cC}{\mathcal{C}}
\newcommand{\RR}{\mathbf{R}}
\newcommand{\Ln}{L_n}
\newcommand{\Sp}{\mathrm{Sp}}
\newcommand{\M}{\mathcal{M}}
\newcommand{\Mell}{\M_\mathrm{ell}}
\newcommand{\Mod}{\mathrm{Mod}}
\newcommand{\cL}{\mathcal{L}}
\newcommand{\HP}{\mathbf{H}P}
\newcommand{\CP}{\mathbf{C}P}
\newcommand{\RP}{\mathbf{R}P}
\newcommand{\Map}{\mathrm{Map}}
\newcommand{\dZ}{\mathcal{Z}}
\newcommand{\Alg}{\mathrm{Alg}}
\newcommand{\Sq}{\mathrm{Sq}}
\newcommand{\Eoo}{{\mathbf{E}_\infty}}
\newcommand{\Top}{\mathrm{Top}}
\newcommand{\ul}[1]{\underline{#1}}
\newcommand{\ol}[1]{\overline{#1}}
\newcommand{\wt}[1]{\widetilde{#1}}
\newcommand{\E}[1]{\mathbf{E}_{{#1}}}
\newcommand{\mmod}{/\!\!/}
\renewcommand{\S}{\mathbb{S}}
\newcommand{\Gr}{\mathrm{Gr}}
\newcommand{\SL}{\mathrm{SL}}
\newcommand{\GL}{\mathrm{GL}}
\newcommand{\BGL}{\mathrm{BGL}}
\newcommand{\MO}{\mathrm{MO}}
\newcommand{\MU}{\mathrm{MU}}
\newcommand{\BU}{\mathrm{BU}}
\renewcommand{\O}{\mathrm{O}}
\newcommand{\BO}{\mathrm{BO}}
\newcommand{\KO}{\mathrm{KO}}
\newcommand{\ku}{\mathrm{ku}}
\newcommand{\ko}{\mathrm{bo}}
\newcommand{\bu}{\mathrm{bu}}
\newcommand{\bo}{\mathrm{bo}}
\newcommand{\tmf}{\mathrm{tmf}}
\newcommand{\Tmf}{\mathrm{Tmf}}
\newcommand{\BP}[1]{\mathrm{BP}\langle{#1}\rangle}
\newcommand{\U}{\mathrm{U}}
\newcommand{\SU}{\mathrm{SU}}
\newcommand{\BPP}{\mathrm{BP}}
\renewcommand{\H}{\mathrm{H}}
\newcommand{\BSO}{\mathrm{BSO}}
\newcommand{\BSU}{\mathrm{BSU}}
\newcommand{\BSpin}{\mathrm{BSpin}}
\newcommand{\MSO}{\mathrm{MSO}}
\newcommand{\MSpin}{\mathrm{MSpin}}
\newcommand{\MString}{\mathrm{MString}}
\newcommand{\BString}{\mathrm{BString}}
\newcommand{\Conf}{\mathrm{Conf}}
\newcommand{\THH}{\mathrm{THH}}
\newcommand{\cQ}{\mathcal{Q}}
\newcommand{\B}{\mathrm{B}}
\newcommand{\fr}[1]{\mathfrak{#1}}
\newcommand{\TT}{{\Theta}}
\newcommand{\HH}{\mathbf{H}}
\newcommand{\cR}{\mathcal{R}}
\title{Higher chromatic Thom spectra via unstable homotopy theory}
\author{S. K. Devalapurkar}
\email{sdevalapurkar@math.harvard.edu}
\begin{document}

\maketitle

\begin{abstract}
    We investigate implications of an old conjecture in unstable homotopy theory related to the Cohen-Moore-Neisendorfer theorem and a conjecture about the $\E{2}$-topological Hochschild cohomology of certain Thom spectra (denoted $A$, $B$, and $T(n)$) related to Ravenel's $X(p^n)$. We show that these conjectures imply that the orientations $\MSpin\to \ko$ and $\MString\to \tmf$ admit spectrum-level splittings. This is shown by generalizing a theorem of Hopkins and Mahowald, which constructs $\H\FF_p$ as a Thom spectrum, to construct $\BP{n-1}$, $\ko$, and $\tmf$ as Thom spectra (albeit over $T(n)$, $A$, and $B$ respectively, and not over the sphere). This interpretation of $\BP{n-1}$, $\ko$, and $\tmf$ offers a new perspective on Wood equivalences of the form $\bo \wedge C\eta \simeq \bu$: they are related to the existence of certain EHP sequences in unstable homotopy theory. This construction of $\BP{n-1}$ also provides a different lens on the nilpotence theorem. Finally, we prove a $C_2$-equivariant analogue of our construction, describing $\ul{\H\Z}$ as a Thom spectrum.
\end{abstract}

\tableofcontents
\newpage

\section{Introduction}

\subsection{Statement of the main results}

One of the goals of this article is to describe a program to prove the following old conjecture (studied, for instance, in \cite{laures-k1-local, laures-k2-local}, and discussed informally in many places, such as \cite[Section 7]{mahowald-rezk-tmf}):
\begin{conjecture}\label{string-conj}
    The Ando-Hopkins-Rezk orientation (see \cite{koandtmf}) $\MString\to \tmf$ admits a spectrum-level splitting.
\end{conjecture}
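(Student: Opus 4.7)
The plan I propose follows the blueprint outlined in the abstract. The starting point is the Hopkins-Mahowald identification of $\H\FF_p$ as the Thom spectrum of an $\E{2}$-map $\Omega^2 S^3 \to \BGL_1(\S_{(p)})$: the space $\Omega^2 S^3$ is the free $\E{2}$-algebra on $S^1$, and the classifying map is the $\E{2}$-extension of a primitive element of $\pi_1\BGL_1(\S_{(p)})$. My plan is to construct a height-$2$ analog of this description for $\tmf$, then use it to produce a section of the Ando-Hopkins-Rezk orientation.

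The first step is to realize $\tmf$ as a Thom spectrum. Concretely, I would exhibit an $\E{2}$-space $X$ and a classifying map $\mu\colon X \to \BGL_1(B)$, where $B$ is the auxiliary Thom spectrum referenced in the abstract, such that the associated Thom spectrum is equivalent to $\tmf$ as an $\Eoo$-$B$-algebra. Running the Hopkins-Mahowald argument at height $2$ requires two inputs: the unstable Cohen-Moore-Neisendorfer-type conjecture, to control the homotopy type of $X$ and to produce the requisite $\E{2}$-extension, and the $\E{2}$-$\THH$ conjecture on $B$, to identify the Thom spectrum with $\tmf$ on the nose rather than just up to an unknown twist.

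The second step is to refine $\mu$ to a lift $\widetilde{\mu}\colon X \to \BString$ covering $\mu$ along the composite string orientation $\BString \to \BGL_1(\S) \to \BGL_1(B)$. Because $X$ is built from low-dimensional cells by the unstable construction, the obstructions live in cohomology groups $\H^*(X;\pi_*\fib(\BString \to \BGL_1(B)))$ confined to a bounded range, and I would verify their vanishing using the Cohen-Moore-Neisendorfer-type input. Thomifying the lift then produces a map of Thom spectra which, after composition with the natural structure maps $B \to \tmf$ and multiplication, yields a candidate retraction $\tmf \to \MString$. Functoriality of the Thom spectrum construction ensures that composing with the Ando-Hopkins-Rezk orientation $\MString \to \tmf$ recovers the Thom spectrum of $\mu$, namely $\tmf$ itself, exhibiting the orientation as split.

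The principal obstacle is the first step. At height $0$ the Hopkins-Mahowald argument uses only elementary unstable homotopy of $S^3$, whereas at height $2$ the analogous structural result is a substantial open conjecture, and the identification of the Thom spectrum with $\tmf$ requires genuine new $\E{2}$-$\THH$ computations. The remaining steps, while nontrivial, are essentially formal consequences of this Thom spectrum description combined with careful obstruction-theoretic bookkeeping; the String-lift in step two is the secondary technical challenge, but the low-dimensional cell structure of $X$ that comes out of the unstable input makes the obstruction theory tractable.
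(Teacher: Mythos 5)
Your step 2 is where the argument breaks, and it breaks at the very first obstruction. In any Thom-spectrum presentation of $\tmf$ over $B$ of the kind this program produces, the classifying map $\mu\colon X\to \B\GL_1(B)$ restricted to the bottom cell of $X$ (a $14$-cell) must detect the nonzero $2$-torsion class $\sigma_2\in\pi_{13}(B)$ --- this nontriviality is exactly what makes the Thom twisting produce the homology of $\tmf$ rather than that of $B\wedge \Sigma^\infty_+X$. But $\pi_{14}(\BString)=\pi_{13}(\mathrm{String})=0$ (and even $\pi_{13}(\S)_{(2)}=0$), so there is no lift of $\mu$ along $\BString\to\B\GL_1(\S)\to\B\GL_1(B)$ even over the bottom cell: the primary obstruction is $\sigma_2$ itself, and it does not vanish. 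So the ``bounded-range obstruction theory'' you invoke cannot succeed; the proposed lift $\widetilde\mu$ does not exist. (A secondary issue: your step 1 asks for more structure than the conjectures deliver --- the relevant space $K_3$ is only a loop/$\cQ_1$-space, the classifying map is built from the $\E{3}$-center $\fr{Z}_3(B)$ and the splitting of $K_3$ off $\Omega^2P^{17}(2)$ rather than being an $\E{2}$-map, and since $B$ is only an $\E{1}$-ring the phrase ``$\Eoo$-$B$-algebra'' does not parse; the identification with $\tmf$ is unstructured and needs the Adams--Priddy-type Conjecture \ref{tmf-conj}.)

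The paper's actual route avoids any lift of the classifying map into $\BString$: it maps \emph{out of} the Thom spectrum using its universal property (Theorem \ref{thom-univ}, packaged as Corollary \ref{tmf-univ}). A unital map $\tmf\simeq K_3^{\mu}\to\MString_{(2)}$ exists once the relevant composite into $\B\GL_1(\MString)$ is null, and this nullity is checked not on $\B\GL_1(B)$ (where it fails) but after pushing into $\MString$, using that $\sigma_2$ dies in $\pi_{13}\MString_{(2)}\cong\pi_{13}\tmf_{(2)}=0$, that the Toda bracket $\langle 2,\sigma_2,1_{\MString}\rangle$ contains zero, and the extra hypothesis that $\fr{Z}_3(B)\to B\to\MString$ is an $\E{3}$-map (needed to extend the nullhomotopy from the Moore space $P^{15}(2)$ over $\Omega^2P^{17}(2)$); a separate rational argument and a comparison in low degrees then give the $2$-local splitting. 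Indeed, the paper explicitly records (in the remark following the proof of Theorem \ref{mstring}) that the space-level/unstable version of your strategy --- producing a map of spaces over $\B\GL_1(B)$ before Thomifying --- is precisely what it does \emph{not} know how to do; that step is the unstable analogue of Conjecture \ref{centrality-conj}, not a formal piece of bookkeeping.
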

The key idea in our program is to provide a universal property for mapping out of the spectrum $\tmf$. We give a proof which is conditional on an old conjecture from unstable homotopy theory stemming from the Cohen-Moore-Neisendorfer theorem and a conjecture about the $\E{2}$-topological Hochschild cohomology of certain Thom spectra (the latter of which simplifies the proof of the nilpotence theorem from \cite{dhs-i}). This universal property exhibits $\tmf$ as a certain Thom spectrum, similarly to the Hopkins-Mahowald construction of $\H\Z_p$ and $\H\FF_p$ as Thom spectra.

To illustrate the gist of our argument in a simpler case, recall Thom's classical result from \cite{thom-original}: the unoriented cobordism spectrum $\MO$ is a wedge of suspensions of $\H\FF_2$. The simplest way to do so is to show that $\MO$ is an $\H\FF_2$-module, which in turn can be done by constructing an $\E{2}$-map $\H\FF_2\to \MO$. The construction of such a map is supplied by the following theorem of Hopkins and Mahowald:
\begin{thmno}[{Hopkins-Mahowald; see \cite{mahowald-thom} and \cite[Lemma 3.3]{mrs}}]
    Let $\mu:\Omega^2 S^3\to \BO$ denote the real vector bundle over $\Omega^2 S^3$ induced by extending the map $S^1\to \BO$ classifying the M\"obius bundle. Then the Thom spectrum of $\mu$ is equivalent to $\H\FF_2$ as an $\E{2}$-algebra.
   %
\end{thmno}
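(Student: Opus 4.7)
The plan is to construct an $\E{2}$-ring map $f\colon T \to \H\FF_2$ using the universal property of Thom spectra, and then verify that $f$ is an equivalence via a mod-$2$ homology computation. For the first step, observe that $\BGL_1(\H\FF_2)$ is contractible: one has $\pi_0\GL_1(\H\FF_2) = \FF_2^\times = \{1\}$ and $\pi_{>0}\GL_1(\H\FF_2) = \pi_{>0}\H\FF_2 = 0$. Consequently, the composite $\Omega^2 S^3 \xrightarrow{\mu} \BO \to \BGL_1(\SS) \to \BGL_1(\H\FF_2)$ admits an essentially unique $\E{2}$-null-homotopy, and the universal property of the $\E{2}$-Thom spectrum converts this null-homotopy into an $\E{2}$-ring map $f\colon T \to \H\FF_2$.

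For the second step, since $\mu$ is trivialized after mapping to $\BGL_1(\H\FF_2)$, the Thom isomorphism gives $\H_*(T;\FF_2) \cong \H_*(\Omega^2 S^3;\FF_2)$ as $\E{2}$-$\FF_2$-algebras. By the Cohen--Lada--May calculation, the right-hand side is the free $\E{2}$-$\FF_2$-algebra on a class in degree $1$; as a commutative algebra it equals $\FF_2[\xi_1,\xi_2,\dots]$ with $|\xi_i|=2^i-1$, abstractly matching the dual Steenrod algebra $\cA_*=\H_*(\H\FF_2;\FF_2)$. The map induced by $f$ sends the degree-$1$ generator to $\xi_1\in\cA_*$, and naturality of Dyer--Lashof operations then forces the higher polynomial generators to correspond, so $\H_*(f;\FF_2)$ is an isomorphism. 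To upgrade this to an equivalence of spectra, I would note that $\pi_0 T = \FF_2$, because the relation $2=0$ descends from the fact that the underlying $\E{0}$-Thom spectrum of the Möbius line bundle $S^1\to\BGL_1(\SS)$ is $\SS/2$; hence every $\pi_n T$ is an $\FF_2$-module, and the fibre of $f$ is a connective spectrum whose homotopy groups are all $2$-torsion and whose mod-$2$ homology vanishes, and such a spectrum is zero.

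The main obstacle is the Dyer--Lashof identification of $\H_*(f;\FF_2)$ on polynomial generators: once the image of the degree-$1$ class is pinned down to be $\xi_1$, the $\E{2}$-naturality of $f$ must be used to transport the iterated Dyer--Lashof generators in $\H_*(\Omega^2 S^3;\FF_2)$ to the Milnor generators of $\cA_*$. This identification is essentially the computational heart of the Hopkins--Mahowald theorem, and it is the template that will need to be generalized later in the paper to establish the analogous Thom-spectrum descriptions of $\BP{n-1}$, $\ko$, and $\tmf$ over $T(n)$, $A$, and $B$.
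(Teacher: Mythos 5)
Your overall strategy coincides with the paper's proof of Theorem \ref{hm}: use the universal property of the Thom spectrum (Theorem \ref{thom-univ}) to produce an $\E{2}$-map $f:(\Omega^2 S^3)^\mu \to \H\FF_2$ (your observation that $\B\GL_1(\H\FF_2)$ is contractible is an equivalent way of supplying the required nullhomotopy), then compute $\H_\ast((\Omega^2 S^3)^\mu;\FF_2)$ via the Thom isomorphism and Cohen--Lada--May as the free $\E{2}$-algebra on a degree-one class, and finally compare Dyer--Lashof operations. Your last step, upgrading the mod $2$ homology isomorphism to an integral equivalence using $2=0$ in $\pi_0$ and a Hurewicz argument on the (co)fiber, is correct and in fact more detailed than the paper, which works $p$-completely and treats integrality at $p=2$ in a separate remark.

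However, there is a genuine gap at the crucial step. The claim that ``naturality of Dyer--Lashof operations then forces the higher polynomial generators to correspond'' is not a formal consequence of $f$ being an $\E{2}$-map: naturality only tells you that the image of $Q_1^i x_1$ is the class $Q_1^i\xi_1 \in \cA_\ast$, and there is no a priori reason that these classes are the polynomial generators of the dual Steenrod algebra (if, say, they were decomposable or zero, $\H_\ast(f)$ would fail to be surjective). What is needed is precisely Steinberger's calculation $Q^{2^i}\zeta_i = \zeta_{i+1}$ (Theorem \ref{steinberger}), which says that $\cA_\ast$ is generated under $\E{2}$-Dyer--Lashof operations by the degree-one class; this is the key non-formal input of the paper's proof, and your outline neither cites nor proves it --- you flag it as ``the computational heart'' but leave it unresolved. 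Likewise, the assertion that $f$ sends the degree-one generator to $\xi_1$ rather than to $0$ requires an argument: the paper supplies it by identifying the Thom spectrum of the restriction of $\mu$ to the bottom cell $S^1$ with $\S/2$, so that the composite $\S/2 \to (\Omega^2 S^3)^\mu \to \H\FF_2$ is the canonical unital map, which is nonzero on $\H_1$. Without these two inputs the homological comparison, and hence the proof, does not go through.
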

\begin{remark}\label{e2-split-MO}
The Thomification of the $\E{2}$-map $\mu:\Omega^2 S^3\to \BO$ produces the desired $\E{2}$-splitting $\H\FF_2\to \MO$.
\end{remark}

Our argument for Conjecture \ref{string-conj} takes this approach: we shall show that an old conjecture from unstable homotopy theory and a conjecture about the $\E{2}$-topological Hochschild cohomology of certain Thom spectra provide a construction of $\tmf$ (as well as $\bo$ and $\BP{n}$) as a Thom spectrum, and utilize the resulting universal property of $\tmf$ to construct an (unstructured) map $\tmf\to \MString$.

Mahowald was the first to consider the question of constructing spectra like $\bo$ and $\tmf$ as Thom spectra (see \cite{mahowald-bo-bu}). Later work by Rudyak in \cite{rudyak} sharpened Mahowald's results to show that $\bo$ and $\bu$ cannot appear as the Thom spectrum of a $p$-complete spherical fibration. In \cite{angeltveit-hill-lawson-e3}, Angeltveit-Hill-Lawson gave an alternative proof of this fact under the assumption that the $p$-complete spherical fibration is classified by a map of $\E{3}$-spaces. Recently, Chatham has shown in \cite{hood-tmf} that $\tmf^\wedge_2$ cannot appear as the Thom spectrum of a structured $2$-complete spherical fibration over a loop space.
Our goal is to argue that these issues are alleviated if we replace ``spherical fibrations'' with ``bundles of $R$-lines'' for certain well-behaved spectra $R$.

The first hint of $\tmf$ being a generalized Thom spectrum comes from a conjecture of Hopkins and Hahn regarding a construction of the truncated Brown-Peterson spectra $\BP{n}$ as Thom spectra. To state this conjecture, we need to recall some definitions. Recall (see \cite{dhs-i}) that $X(n)$ denotes the Thom spectrum of the map $\Omega \SU(n)\to \Omega \SU\simeq \BU$. Upon completion at a prime $p$, the spectra $X(k)$ for $p^n\leq k\leq p^{n+1}-1$ split as a direct sum of suspensions of certain homotopy commutative ring spectra $T(n)$, which in turn filter the gap between the $p$-complete sphere spectrum and $\BPP$ (in the sense that $T(0) = \S$ and $T(\infty) = \BPP$). Then:
\begin{conjecture}[Hahn, Hopkins; unpublished]\label{hopkins-conj}
    There is a map $f:\Omega^2 S^{|v_n|+3}\to \B\GL_1(T(n))$, which detects an indecomposable element $v_n\in \pi_{|v_n|} T(n)$ on the bottom cell of the source, whose Thom spectrum is a form of $\BP{n-1}$.
\end{conjecture}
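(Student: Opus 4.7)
The plan is to construct $f$ via the universal property of free $\E{2}$-spaces and then identify the resulting Thom spectrum with $\BP{n-1}$. Starting from the indecomposable class $v_n\in\pi_{|v_n|}T(n)$, adjunction yields a pointed map $S^{|v_n|+1}\to \B\GL_1(T(n))$. Since $\Omega^2 S^{|v_n|+3}\simeq \Omega^2\Sigma^2 S^{|v_n|+1}$ is the free $\E{2}$-space on $S^{|v_n|+1}$, this extends uniquely in the $\E{2}$-sense to the desired map $f:\Omega^2 S^{|v_n|+3}\to \B\GL_1(T(n))$, whose restriction to the bottom cell tautologically detects $v_n$.

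Let $R:=\Th(f)$, an $\E{2}$-$T(n)$-algebra. By Thomification, $R$ is the initial $\E{2}$-algebra under $T(n)$ in which the image of $v_n$ becomes nullhomotopic --- the $\E{2}$-quotient $T(n)\mmod_{\E{2}} v_n$. The specialization $n=0$, with $T(0)=\S_{(p)}$ and $v_0=p$, recovers the Hopkins-Mahowald identification $\H\FF_p\simeq \S_{(p)}\mmod_{\E{2}} p$ quoted above; the present conjecture is its direct chromatic analogue.

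To identify $R$ with $\BP{n-1}$, first compute the mod $p$ homology via the Thom isomorphism
$$\H_*(R;\FF_p)\cong \H_*(T(n);\FF_p)\otimes_{\FF_p}\H_*(\Omega^2 S^{|v_n|+3};\FF_p),$$
where the second factor is described by Cohen-Lada-May as a polynomial algebra on iterated Dyer-Lashof operations applied to the fundamental class in degree $|v_n|+1$. A dimension-wise comparison should match this with $\H_*(\BP{n-1};\FF_p)$. One then exploits the complex orientability inherited from $T(n)$ to produce a candidate comparison map $\BP{n-1}\to R$ and checks that, on mod $p$ homology, it realises this vector-space isomorphism.

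The main obstacle is verifying that the generators $v_i$ for $i\geq n+1$ act nullhomotopically in $\pi_* R$ --- equivalently, that coning off $v_n$ in an $\E{2}$-fashion collapses the formal group law of $T(n)$ to that of $\BP{n-1}$. This is exactly where the $\E{2}$-topological Hochschild cohomology conjecture advertised in the introduction is needed: control of the $\E{2}$-center of $T(n)$ should guarantee that the $\E{2}$-ideal generated by $v_n$ already swallows the higher $v_i$. The unstable Cohen-Moore-Neisendorfer-type input then matches these vanishing classes with the Dyer-Lashof generators in the free $\E{2}$-factor, ensuring the candidate map is a mod $p$ homology equivalence and hence a $p$-local equivalence.
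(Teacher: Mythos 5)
Your construction of $f$ is exactly the gap, not a routine step. To extend the adjoint map $S^{|v_n|+1}\to \B\GL_1(T(n))$ over the free $\E{2}$-space $\Omega^2\Sigma^2 S^{|v_n|+1}$ you need $\B\GL_1(T(n))$ to be a double loop space, i.e.\ you need $T(n)$ to be an $\E{3}$-ring. By Lawson's theorem the $\E{2}$-structure on $X(n)$ does not extend to an $\E{3}$-structure, the $\E{2}$-structure on $T(n)$ is itself only conjectural, and $T(n)$ is not even known in print to be an $\E{1}$-ring, so that $\B\GL_1(T(n))$ has to be interpreted through $\B\GL_1(X(p^{n+1}-1))$ and base change (Warning \ref{tn-thom-def}). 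This failure of $\E{3}$-structure is precisely what the paper identifies as the primary obstruction to producing $f$. Your later appeal to the $\E{2}$-topological Hochschild cohomology conjecture does not repair this: you invoke it only to make the higher $v_i$ vanish, whereas its actual role is to supply the missing double-loop structure needed to build the map at all; moreover it concerns the torsion class $\sigma_{n-1}$, not $v_n$ itself --- the statement that $v_n$ lifts to $\fr{Z}_3(X(p^n))$ is Conjecture \ref{hahn-conjecture}, which the paper neither assumes nor claims follows from Conjecture \ref{centrality-conj}. Consequently both the existence of $f$ as an $\E{2}$-map and the claim that its Thom spectrum is an ``$\E{2}$-quotient $T(n)\mmod_{\E{2}} v_n$'' are unsupported.

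The paper's route avoids extending $v_n$ freely. It works instead with $\sigma_{n-1}\in\pi_{|v_n|-1}X(p^n-1)$, which is $p$-torsion (Lemma \ref{chin-torsion}), so the corresponding map extends over the Moore space $P^{|\sigma_{n-1}|+2}(p)$; Conjecture \ref{centrality-conj} lifts $\sigma_{n-1}$ to the $\E{3}$-center $\fr{Z}_3(X(p^n-1))$, whose $\B\GL_1$ \emph{is} an $\E{2}$-space, giving the extension over $\Omega^2 P^{|\sigma_{n-1}|+4}(p)$; Conjecture \ref{moore-splitting} then splits the fiber $K_n$ of a charming map off this double loop space, and the Thom spectrum over $K_n$ is identified with $\BP{n-1}$ by the Dyer--Lashof hopping argument (Steinberger's theorem) together with the Adams--Priddy/Angeltveit--Lind uniqueness results --- not, as you propose, by constructing a comparison map $\BP{n-1}\to R$, for which no universal property is available. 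Finally, Propositions \ref{toda-bracket} and \ref{vn-toda} convert this into the statement of Conjecture \ref{hopkins-conj}: the induced map on $\Omega^2 S^{|v_n|+3}$ detects on its bottom cell an element of the Toda bracket $\langle p,\sigma_{n-1},1\rangle$, which contains an indecomposable $v_n$. So the detection of $v_n$ is a consequence of a Toda-bracket computation, not a tautology from a free extension that cannot be formed.
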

The primary obstruction to proving that a map $f$ as in Conjecture \ref{hopkins-conj} exists stems from the failure of $T(n)$ to be an $\E{3}$-ring (due to Lawson; \cite[Example 1.5.31]{lawson-dl}). If $R$ is an $\E{1}$- or $\E{2}$-ring spectrum, let $\fr{Z}_3(R)$ denote the $\E{2}$-topological Hochschild cohomology of $R$ (see Definition \ref{ek-1-center}). Hahn suggested that one way to get past the failure of $T(n)$ to be an $\E{3}$-ring would be via the following conjecture:
\begin{conjecture}[Hahn]\label{hahn-conjecture}
    There is an indecomposable element $v_n\in \pi_{|v_{n}|} T(n)$ which lifts to the $\E{2}$-topological Hochschild cohomology $\fr{Z}_3(X(p^n))$ of $X(p^n)$.
\end{conjecture}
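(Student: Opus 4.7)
The plan is to approach Hahn's conjecture through an obstruction-theoretic study of the canonical map $\fr{Z}_3(X(p^n)) \to X(p^n)$, whose image in homotopy is exactly the collection of $\E{3}$-central elements of the $\E{2}$-ring $X(p^n)$. Since $T(n)$ appears as a summand of $X(p^n)_{(p)}$, it suffices to produce a class $\wt{v}_n \in \pi_{|v_n|}\fr{Z}_3(X(p^n))$ whose image in $\pi_{|v_n|}X(p^n)$ projects to an indecomposable of $\pi_{|v_n|}T(n)$. The argument naturally splits into two stages: first construct a candidate lift, then verify indecomposability.

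For the first stage, I would exploit the Thom-spectrum description $X(p^n) = \Th(\Omega \SU(p^n) \to \BGL_1(\S))$, where the source is an $\E{2}$-space. One can access the low-degree homotopy of $\fr{Z}_3(X(p^n))$ via Lurie's deformation theory for $\E{2}$-algebras, which fits the map $\fr{Z}_3(X(p^n)) \to X(p^n)$ into a sequence controlled by the $\E{2}$-cotangent complex of $X(p^n)$. An analysis of this cotangent complex through the James filtration of $\Omega \SU(p^n)$ should produce a candidate class $\wt{v}_n$ in $\fr{Z}_3$ arising from the Bott class $b_{p^n - 1}$ supported on $J_{p^n - 1}(\CP^{p^n - 1})$, which sits in the correct degree $|v_n| = 2(p^n - 1)$. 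For the second stage, a mod-$p$ Hurewicz computation combined with the Thom isomorphism $\H_*(X(p^n);\FF_p) \cong \FF_p[b_1, \dots, b_{p^n - 1}]$ should identify the image of $\wt{v}_n$ as an indecomposable polynomial generator, which descends to an indecomposable of $\pi_*T(n)$.

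The hard part, and the reason this remains conjectural, is verifying that the obstructions along this fiber sequence actually vanish on the chosen class. Lawson's theorem \cite[Example 1.5.31]{lawson-dl} shows that $X(p^n)$ is genuinely not $\E{3}$, so at least some of these obstructions must be nonzero; Hahn's conjecture is precisely the claim that they nevertheless vanish on this specific $v_n$. I expect the main obstacle to be obtaining any unconditional control on the first nonzero $k$-invariant of $\fr{Z}_3(X(p^n))$ in a range containing $|v_n|$, which seems to require either a direct $\E{2}$-topological Andr\'e--Quillen computation or a clever identification of $\fr{Z}_3(X(p^n))$ with a more tractable spectrum, neither of which is evident to me at present.
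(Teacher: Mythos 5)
The statement you are trying to prove is stated in the paper as a conjecture (due to Hahn), and the paper offers no proof of it: the author explicitly says they do not know how to prove it, and the main results deliberately route around it, showing instead that Conjectures \ref{moore-splitting} and \ref{centrality-conj} imply Conjecture \ref{hopkins-conj}, with the relevant lifting statement phrased for $\sigma_n\in\pi_{|v_{n+1}|-1}X(p^{n+1}-1)$ rather than for $v_n$ itself; the paper even remarks that the natural bridge from Conjecture \ref{centrality-conj} to Conjecture \ref{hahn-conjecture} relies on auxiliary statements the author does not believe. Your proposal, by your own admission, has the same status: the decisive step --- showing that the obstructions to lifting your candidate class through $\fr{Z}_3(X(p^n))\to X(p^n)$ vanish --- is exactly the content of the conjecture, and nothing in your outline (the cotangent-complex/deformation-theoretic setup, the filtration of $\Omega\SU(p^n)$, the appeal to the Bott class) gives any traction on it. So this is not a proof, nor does it recover an argument from the paper, since none exists there.

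Beyond the admitted gap, one concrete step in your plan would fail as stated: you propose to verify indecomposability of the lift via a mod-$p$ Hurewicz computation and the Thom isomorphism $\H_\ast(X(p^n);\FF_p)\cong\FF_p[b_1,\dots,b_{p^n-1}]$, identifying the image as a polynomial generator. But $v_n$ has decomposable (in fact trivial, already for $v_1\in\pi_2 X(2)$ at $p=2$, where the Hurewicz image is twice the generator of $H_2$) mod-$p$ Hurewicz image, so indecomposability in homotopy cannot be read off the mod-$p$ homology of $X(p^n)$. The paper instead detects indecomposability $\BPP$-homologically: $\sigma_{n-1}$ is represented by $[t_n]$ in the cobar complex (Example \ref{cobar}), it is $p$-torsion (Lemma \ref{chin-torsion}), and an indecomposable $v_n$ is produced as an element of the Toda bracket $\langle p,\sigma_{n-1},1_{X(p^n)}\rangle$ (Proposition \ref{vn-toda}); any repair of your second stage would need an argument of this $\MU$/$\BPP$-based type rather than a mod-$p$ Hurewicz one.
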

We do not know how to prove this conjecture (and have no opinion on whether or not it is true). We shall instead show that Conjecture \ref{hopkins-conj} is implied by the two conjectures alluded to above. We shall momentarily state these conjectures precisely as Conjectures \ref{moore-splitting} and \ref{centrality-conj}; let us first state our main results.

We need to introduce some notation. Let $y(n)$ (resp. $y_\Z(n)$) denote the Mahowald-Ravenel-Shick spectrum, constructed as a Thom spectrum over $\Omega J_{p^n-1}(S^2)$ (resp. $\Omega J_{p^n-1}(S^2)\langle 2\rangle$) introduced in \cite{mrs} to study the telescope conjecture (resp. in \cite{yz} as $z(n)$). Let $A$ denote the $\E{1}$-quotient $\S\mmod\nu$ of the sphere spectrum by $\nu\in \pi_3(\S)$; its mod $2$ homology is $\H_\ast(A) \cong \FF_2[\zeta_1^4]$. The spectrum $A$ has been intensely studied by Mahowald and his coauthors in (for instance) \cite{mahowald-thom, davis-mahowald, mahowald-v2-periodic, mahowald-bo-res, mahowald-imj, mahowald-unell-bott}, where it is often denoted $X_5$.  (See Remark \ref{em-quotient-terminology} for motivation for the term ``$\E{1}$-quotient''.) Let $B$ denote the $\E{1}$-ring introduced in \cite[Construction 3.1]{tmf-witten}; it has been briefly studied under the name $\ol{X}$ in \cite{hopkins-mahowald-orientations}. It may be constructed as the Thom spectrum of a vector bundle over an $\E{1}$-space $N$ which sits in a fiber sequence $\Omega S^9\to N\to \Omega S^{13}$. The mod $2$ homology of $B$ is $\H_\ast(B) \cong \FF_2[\zeta_1^8, \zeta_2^4]$.

We also need to recall some unstable homotopy theory. In \cite{cmn-1, cmn-2, cmn-3}, Cohen, Moore, and Neisendorfer constructed a map $\phi_n: \Omega^2 S^{2n+1} \to S^{2n-1}$ whose composite with the double suspension $E^2: S^{2n-1} \to \Omega^2 S^{2n+1}$ is the degree $p$ map. (The symbol $E$ stands for ``Einh\"angung'', which is German for ``suspension''.) Such a map was also constructed by Gray in \cite{gray-1, gray-2}. In Section \ref{gray}, we introduce the related notion of a \emph{charming map} (Definition \ref{gray-map-def}), one example of which is the Cohen-Moore-Neisendorfer map.

Our main result is then:
\begin{ABCthm}\label{main-thm}
    Suppose $R$ is a base spectrum of height $n$ as in the second line of Table \ref{the-table}. Let $K_{n+1}$ denote the fiber of a charming map $\Omega^2 S^{2p^{n+1}+1} \to S^{2p^{n+1}-1}$. Then Conjectures \ref{moore-splitting} and \ref{centrality-conj} imply that there is a map $\mu: K_{n+1}\to \B\GL_1(R)$ such that the mod $p$ homology of the Thom spectrum $K_{n+1}^\mu$ is isomorphic to the mod $p$ homology of the associated designer chromatic spectrum $\TT(R)$ as a Steenrod comodule\footnote{We elected to use the symbol $\TT$ because the first two letters of the English spelling of $\TT$ and of Thom's name agree.}.

    If $R$ is any base spectrum other than $B$, the Thom spectrum $K_{n+1}^\mu$ is equivalent to $\TT(R)$ upon $p$-completion for every prime $p$. If Conjecture \ref{tmf-conj} is true, then the same is true for $B$: the Thom spectrum $K_{n+1}^\mu$ is equivalent to $\TT(B) = \tmf$ upon $2$-completion.
\end{ABCthm}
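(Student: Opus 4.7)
The plan is to construct the map $\mu\colon K_{n+1}\to \B\GL_1(R)$ in two stages and then identify its Thom spectrum. First, I would use Conjecture \ref{centrality-conj} --- which asserts that the relevant indecomposable $v_{n+1}$ lifts suitably to $\E{2}$-topological Hochschild cohomology, providing an $\E{2}$-central structure on $v_{n+1}$ viewed as a self-map on a cell of $R$ --- to build, by the Hopkins--Mahowald recipe, an $\E{2}$-map $\wt\mu\colon \Omega^2 S^{2p^{n+1}+1}\to \B\GL_1(R)$ whose restriction to the bottom cell $S^{2p^{n+1}-1}$ detects $v_{n+1}$. This is the direct analogue of how an $\E{2}$-map out of $\Omega^2 S^3$ is obtained in the classical Hopkins--Mahowald theorem recalled above.

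Second, I would descend $\wt\mu$ along the charming map $\Omega^2 S^{2p^{n+1}+1}\to S^{2p^{n+1}-1}$ to a map out of its fiber $K_{n+1}$. Here Conjecture \ref{moore-splitting} plays the crucial role: it guarantees the existence of a charming map splitting the double suspension up to a degree-$p$ factor, so the composite $S^{2p^{n+1}-1}\to \Omega^2 S^{2p^{n+1}+1}\xrightarrow{\wt\mu}\B\GL_1(R)$ can be arranged to be null (this is exactly the obstruction that would otherwise prevent the Thom spectrum from being $\TT(R)$ rather than a free construction). The resulting factorization gives the desired $\mu\colon K_{n+1}\to \B\GL_1(R)$, and Thomification produces the $R$-algebra $K_{n+1}^\mu$.

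Third, I would compute $\H_*(K_{n+1}^\mu;\FF_p)$ as a comodule over the dual Steenrod algebra. Via the Thom isomorphism this reduces to tensoring the known comodule $\H_*(R;\FF_p)$ --- which for $R\in \{T(n), y(n), y_\Z(n), A, B\}$ is a polynomial subalgebra of $\cA_*$ given explicitly in Table \ref{the-table} --- with $\H_*(K_{n+1};\FF_p)$, the latter being understood via the Cohen--Moore--Neisendorfer fiber sequence. Matching the resulting comodule with the standard presentation of $\H_*(\TT(R);\FF_p)$ for $\TT(R)\in \{\BP{n}, \ko, \tmf\}$ is a bookkeeping computation; the key input is that the charming-map fiber contributes precisely the additional polynomial generators that fill the gap between $\H_*(R)$ and $\H_*(\TT(R))$.

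Finally, to upgrade the Steenrod comodule identification to an equivalence of spectra after $p$-completion, I would invoke standard rigidity: for $\TT(R)\ne \tmf$, the Adams spectral sequence together with the known module structure over $R$ forces any map realizing the comodule isomorphism to be a $p$-complete equivalence (this is how Thom's original $\MO \simeq \bigvee\H\FF_2$ argument goes, and it adapts to $\BP{n}$ and $\ko$ via their classical Adams-level characterizations). The exceptional case $R=B$, $\TT(B)=\tmf$ at $p=2$ is genuinely more subtle because mod-$2$ homology does not pin down $\tmf$ uniquely among candidate $B$-algebras; this is precisely the role of Conjecture \ref{tmf-conj}, which supplies the extra rigidity needed to identify $K_{n+1}^\mu$ with $\tmf$. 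I expect the last step --- promoting the comodule-level statement to an actual equivalence, particularly in the $\tmf$ case --- to be the main obstacle, which is why Conjecture \ref{tmf-conj} must be stated and assumed separately.
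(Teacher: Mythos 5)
There is a genuine gap at your first stage, and it propagates. Conjecture \ref{centrality-conj} does \emph{not} assert that $v_{n+1}$ lifts to the $\E{2}$-topological Hochschild cohomology of $R$; that is Conjecture \ref{hahn-conjecture}, which the paper explicitly does not assume (and professes no opinion on). What Conjecture \ref{centrality-conj} gives is a lift of the $p$-torsion class $\sigma_n\in\pi_{|v_{n+1}|-1}R$ to $\fr{Z}_3(R)$, together with its torsion order; indeed for the height-$n$ base $R$ there is no canonical $v_{n+1}$ in $\pi_\ast R$ to detect, and the torsion property of $\sigma_n$ is exactly what the construction needs. So your production of an $\E{2}$-map $\wt\mu:\Omega^2S^{2p^{n+1}+1}\to\B\GL_1(R)$ detecting $v_{n+1}$ ``by the Hopkins--Mahowald recipe'' is not available from the stated hypotheses (and an $\E{2}$-map into $\B\GL_1(R)$ would anyway require an $\E{3}$-structure on $R$, which $X(p^{n+1}-1)$ provably lacks). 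Even granting such a $\wt\mu$, your second stage goes the wrong way around: $K_{n+1}$ is the \emph{fiber} of the charming map, so restricting $\wt\mu$ along $K_{n+1}\to\Omega^2S^{2p^{n+1}+1}$ makes the composite null on $\Omega S^{2p^{n+1}-1}\subseteq K_{n+1}$ (consecutive maps in a fiber sequence), whence the bottom cell of $K_{n+1}$ detects $0$ rather than $\sigma_n$ and the Thom spectrum cannot have the comodule homology of $\TT(R)$ --- the twisting over $\Omega S^{2p^{n+1}-1}$ is precisely what produces, e.g., $\zeta_{n+1}^2$ at $p=2$. The paper's actual route is: since $\sigma_n$ is $p$-torsion, the map $S^{|\sigma_n|+1}\to\B\GL_1(R)$ extends over the Moore space $P^{|\sigma_n|+2}(p)$; the lift to the $\E{3}$-ring $\fr{Z}_3(R)$ makes $\B\GL_1(\fr{Z}_3(R))$ an $\E{2}$-space, allowing an extension over $\Omega^2P^{2p^{n+1}+1}(p)$; and the second part of Conjecture \ref{moore-splitting} --- whose content is a $\cQ_1$-space retraction of $K_{n+1}$ off $\Omega^2P^{2p^{n+1}+1}(p)$, not a nullhomotopy statement about the double suspension --- transports this map to $K_{n+1}$.

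Your third stage also understates the substance: the Thom isomorphism is \emph{not} an isomorphism of Steenrod comodules (the paper stresses that the twisting is highly nontrivial), so the comodule identification cannot be ``bookkeeping'' after tensoring. In the paper this is the heart of the proof: the Thom spectrum over $\Omega^2P^{2p^{n+1}+1}(p)$ is a $\cQ_1$-ring via the factorization through $\fr{Z}_3(R)$ and the double-loop-map structure, its homology is generated under the cup-$1$ operation $Q_1$ by the indecomposables coming from $\H_\ast(R)$, and the coaction is pinned down by mapping to $\H\FF_p$ via Postnikov truncation and invoking Steinberger's theorem (the Dyer--Lashof hopping argument). Your final rigidity step is in the right spirit --- the paper uses Adams--Priddy and Angeltveit--Lind for all cases except $\tmf$, where Conjecture \ref{tmf-conj} enters, as you say --- but it has nothing to apply to unless the map $\mu$ and the comodule identification are obtained as above.
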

\begin{table}[h!]
    \centering
    \begin{tabular}{c | c c c c c c c c c c}
	Height & 0 & 1 & 2 & $n$ & $n$ & $n$\\
	\hline
	Base spectrum $R$ & $\S^\wedge_p$ & $A$ & $B$ & $T(n)$ & $y(n)$ & $y_\Z(n)$ \\
	\hline
	Designer chromatic spectrum $\TT(R)$ & $\H\Z_p$ & $\bo$ & $\tmf$ & $\BP{n}$ & $k(n)$ & $k_\Z(n)$\\
    \end{tabular}
    \vspace{0.5cm}
    \caption{To go from a base spectrum ``of height $n$'', say $R$, in the second line to the third, one takes the Thom spectrum of a bundle of $R$-lines over $K_{n+1}$.}
    \label{the-table}
\end{table}
Making sense of Theorem \ref{main-thm} relies on knowing that $T(n)$ admits the structure of an $\E{1}$-ring; this is proved in \cite{tn-e1}; see also Warning \ref{tn-thom-def}. Note that the spectra $A$, $B$, $y(n)$, and $y_\Z(n)$ all admit $\E{1}$-structures by construction. In Remark \ref{nilpotence-proof}, we sketch how Theorem \ref{main-thm} relates to the proof of the nilpotence theorem.

Although the form of Theorem \ref{main-thm} does not resemble Conjecture \ref{hopkins-conj}, we show that Theorem \ref{main-thm} implies the following result.
\begin{ABCcor}\label{bpn}
    Conjectures \ref{moore-splitting} and \ref{centrality-conj} imply Conjecture \ref{hopkins-conj}.
\end{ABCcor}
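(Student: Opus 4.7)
The strategy is to leverage Theorem~\ref{main-thm} at height $n-1$ and then extend the resulting Thom-spectrum construction along the defining fibration of the charming map $K_n$. I would first specialize Theorem~\ref{main-thm} to base spectrum $R = T(n-1)$, producing under Conjectures~\ref{moore-splitting} and~\ref{centrality-conj} a map $\mu : K_n \to \B\GL_1(T(n-1))$ whose Thom spectrum is $\BP{n-1}$ after $p$-completion. The key numerical coincidence is $|v_n|+3 = 2p^n+1$, so the total space of the fibration $K_n \to \Omega^2 S^{2p^n+1} \to S^{2p^n-1}$ is precisely the source $\Omega^2 S^{|v_n|+3}$ appearing in Conjecture~\ref{hopkins-conj}. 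Composing $\mu$ with the canonical map $\B\GL_1(T(n-1)) \to \B\GL_1(T(n))$ induced by the ring map $T(n-1) \to T(n)$ gives $\mu' : K_n \to \B\GL_1(T(n))$.

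The main task is then to extend $\mu'$ across the fiber inclusion $K_n \hookrightarrow \Omega^2 S^{|v_n|+3}$ to a map $\widetilde\mu$ with the properties demanded by Conjecture~\ref{hopkins-conj}. The relevant obstruction lives in $[S^{2p^n-1}, \B\GL_1(T(n))] \cong \pi_{|v_n|}(T(n))$, and the defining property of a charming map---that its composite with the double suspension is the degree-$p$ map---forces this obstruction to be a unit multiple of the indecomposable element $v_n$. Rather than vanishing, this class provides the data that, on the bottom cell $S^{|v_n|+1}$ of $\Omega^2 S^{|v_n|+3}$ not present in $K_n$, realizes the $v_n$-detection required by the conjecture. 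A mod $p$ Steenrod-comodule computation then identifies the Thom spectrum of $\widetilde\mu$ with $\BP{n-1}$, and a standard rigidity argument upgrades this to a $p$-complete equivalence.

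The main obstacle is the construction of $\widetilde\mu$ itself: one must realize the $v_n$-obstruction as genuine extension data rather than as a failure of extension. This is where Conjecture~\ref{centrality-conj} is essential, since the centrality of $v_n$ in the $\E{2}$-topological Hochschild cohomology of $T(n)$ provides exactly the coherent $\E{1}$-algebra structure needed to multiply by $v_n$ consistently, and hence to assemble the extension over the double loop space. Once $\widetilde\mu$ is in hand, the matching of its Thom spectrum with $\BP{n-1}$ and the verification that its bottom cell detects $v_n$ amount to an unwinding of the Thom isomorphism combined with the homology identification already established in Theorem~\ref{main-thm}.
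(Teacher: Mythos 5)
Your overall skeleton (apply Theorem~\ref{main-thm} with the height $n-1$ base, then use the relation between $K_n$ and $\Omega^2 S^{|v_n|+3}$ to produce the map of Conjecture~\ref{hopkins-conj}) is in the right spirit, and the composite $\mu'$ you write down is indeed what the desired map restricts to on $K_n$. But the mechanism you propose does not work as stated. The paper's deduction is: apply Proposition~\ref{thom} to the fiber sequence $\Omega S^{2p^n-1}\to K_n\to \Omega^2 S^{2p^n+1}$. The restriction of $\mu$ to the fiber $\Omega S^{2p^n-1}$ detects $\sigma_{n-1}$, and its Thom spectrum is $T(n)$ (Construction~\ref{chin}/Lemma~\ref{tn-thom}); the colimit description of Thom spectra then \emph{automatically} produces a map $\phi:\Omega^2 S^{2p^n+1}\to \B\GL_1(T(n))$ whose Thom spectrum is literally $K_n^\mu\simeq \BP{n-1}$ --- no extension problem is solved and no Steenrod-comodule or rigidity argument is needed at this stage (this is exactly Proposition~\ref{toda-bracket}). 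Your framing instead poses an extension problem for $\mu'$ along $K_n\to \Omega^2 S^{2p^n+1}$ with ``the obstruction'' in $[S^{2p^n-1},\B\GL_1(T(n))]$: for a fibration there is no such single-obstruction extension theory, the ``obstruction is really extension data'' move is not a construction, and even if an extension $\widetilde\mu$ existed its Thom spectrum is not determined by its restriction to $K_n$ (note that $K_n^{\mu'}\simeq \BP{n-1}\wedge_{T(n-1)}T(n)$, not $\BP{n-1}$, by Proposition~\ref{thom-base}), so your claimed identification of $(\Omega^2 S^{|v_n|+3})^{\widetilde\mu}$ with $\BP{n-1}$ is unsupported. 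The step you omit --- Thomifying the fiber $\Omega S^{2p^n-1}$ first, which is precisely what turns $T(n-1)$ into $T(n)$ and makes $\B\GL_1(T(n))$ the correct target --- is the actual content.

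The identification of the bottom-cell class is also not justified by your argument. The degree-$p$ property of a charming map only yields $p\,\sigma_{n-1}=0$, which places the class detected on $S^{|v_n|+1}$ in the Toda bracket $\langle p,\sigma_{n-1},1\rangle$ (Lemma~\ref{important-bracket}); that this bracket contains an \emph{indecomposable} $v_n$ is a separate computation, Proposition~\ref{vn-toda} (resting on Lemma~\ref{chin-torsion} and the cobar representative of $\sigma_{n-1}$), which your assertion that the charming-map property ``forces a unit multiple of $v_n$'' skips entirely. Finally, your appeal to Conjecture~\ref{centrality-conj} as ``centrality of $v_n$'' in the $\E{2}$-topological Hochschild cohomology of $T(n)$ misreads it: that conjecture concerns the lift of $\sigma_{n-1}$ to the $\E{3}$-center $\fr{Z}_3(X(p^{n}-1))$ and is consumed inside Step~1 of the proof of Theorem~\ref{main-thm}; the statement about $v_n$ lifting to the center is Conjecture~\ref{hahn-conjecture}, which the paper explicitly does not use (and doubts). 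Once Theorem~\ref{main-thm} is granted, the corollary follows from Propositions~\ref{toda-bracket} and~\ref{vn-toda} alone, with no further input from centrality.
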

In the case $n=0$, Corollary \ref{bpn} recovers the Hopkins-Mahowald theorem constructing $\H\FF_p$. Moreover, Corollary \ref{bpn} is true unconditionally when $n=0,1$.

Using the resulting universal property of $\tmf$, we obtain a result pertaining to Conjecture \ref{string-conj}.
\begin{ABCthm}\label{mstring}
    Assume that the composite $\fr{Z}_3(B)\to B\to \MString$ is an $\E{3}$-map. Then Conjectures \ref{moore-splitting}, \ref{centrality-conj}, and \ref{tmf-conj} imply that there is a spectrum-level unital splitting of the Ando-Hopkins-Rezk orientation $\MString_{(2)} \to \tmf_{(2)}$.
\end{ABCthm}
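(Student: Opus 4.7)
The plan is to apply Theorem \ref{main-thm} at base spectrum $R = B$ to present $\tmf_{(2)}$ as a Thom spectrum over $B$, and then to exploit the resulting universal property to construct the desired splitting; the $\E{3}$-hypothesis supplies the multiplicative coherence needed to feed $\MString$ into this universal property.

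Under Conjectures \ref{moore-splitting}, \ref{centrality-conj}, and \ref{tmf-conj}, Theorem \ref{main-thm} gives $\tmf_{(2)} \simeq K_2^\mu$, where $K_2$ is the fiber of a charming map $\Omega^2 S^9 \to S^7$ and $\mu\colon K_2 \to \B\GL_1(B)$ is the classifying map. A Thom spectrum of this form satisfies a universal property: unital $B$-module maps $K_2^\mu \to M$ correspond to nullhomotopies of the composite $K_2 \xrightarrow{\mu} \B\GL_1(B) \to \B\GL_1(M)$, refined to $\E{k}$-structured nullhomotopies when the inputs admit $\E{k}$-coherence. The hypothesis that $\fr{Z}_3(B) \to B \to \MString$ is an $\E{3}$-map serves to view $\MString_{(2)}$ as a $B$-algebra with enough coherence for the composite $K_2 \to \B\GL_1(B) \to \B\GL_1(\MString_{(2)})$ to be well-defined at the $\E{2}$-level; here $K_2$ and $\mu$ inherit an $\E{2}$-structure from the $\E{2}$-space $\Omega^2 S^9$. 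The universal property of $\tmf_{(2)} = K_2^\mu$ then reduces the construction of a unital section of the Ando-Hopkins-Rezk orientation $\pi\colon \MString_{(2)} \to \tmf_{(2)}$ to producing such an $\E{2}$-nullhomotopy.

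The main obstacle is the nullhomotopy itself. My approach would be obstruction-theoretic, using the defining fiber sequence $K_2 \to \Omega^2 S^9 \to S^7$ to filter $K_2$: first trivialize the composite $K_2 \to \B\GL_1(\MString_{(2)})$ on the bottom cells of $K_2$ (where vanishing follows from the low-dimensional homotopy of $\MString$ and the String-orientability of the relevant bundles), and then extend the nullhomotopy cell-by-cell. The obstructions live in $\mu$-twisted $\MString_{(2)}$-cohomology of $K_2$, and their vanishing should follow from the chromatic structure built into the charming map, analogously to the way the $\E{2}$-nullhomotopy of $\Omega^2 S^3 \to \BO \to \B\GL_1(\MO)$ in the Hopkins-Mahowald proof of Thom's theorem reduces to an unstable computation; the ingredient here is that $\MString$ is String-oriented, so the composite of $\mu$ with the orientation factors through the trivial $\MString$-line after extending along the charming map.

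Once $s\colon \tmf_{(2)} \to \MString_{(2)}$ is constructed by Thomifying the nullhomotopy, unitality is built-in: the nullhomotopy is based at the basepoint of $K_2$, where $\mu$ hits the unit $B$-line. The section property $\pi \circ s = \id_{\tmf_{(2)}}$ then follows from the universal property of $\tmf_{(2)} = K_2^\mu$: both $\id_{\tmf_{(2)}}$ and $\pi \circ s$ are unital $B$-module endomorphisms of $\tmf_{(2)}$ and are thus classified by elements of the space of nullhomotopies of $K_2 \to \B\GL_1(\tmf_{(2)})$; this space carries a canonical tautological nullhomotopy corresponding to the identity, which must also classify $\pi \circ s$, so $\pi \circ s = \id_{\tmf_{(2)}}$ and $s$ is the desired unital splitting.
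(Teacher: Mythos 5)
Your overall strategy (present $\tmf$ as a Thom spectrum over $B$ via Theorem \ref{main-thm}, then use the universal property to map into $\MString$) is the paper's strategy, but the step where all the work happens --- producing the nullhomotopy of $K\to \B\GL_1(B)\to \B\GL_1(\MString_{(2)})$ --- is not carried out, and the obstruction-theoretic substitute you propose would not work. First, a bookkeeping error that matters: for $R=B$ (height $2$) the relevant fiber is $K_3$, the fiber of a charming map $\Omega^2 S^{17}\to S^{15}$, with $\sigma_2\in\pi_{13}(B)$; $K_2$ (fiber of $\Omega^2 S^9\to S^7$) belongs to $A$ and $\bo$. More seriously, $K_3$ has cells in arbitrarily high dimensions, so a cell-by-cell extension of the nullhomotopy meets infinitely many obstruction groups in twisted $\MString$-cohomology, and nothing you say makes them vanish; in particular ``String-orientability'' is irrelevant, since $\mu$ is a bundle of $B$-lines built out of $\sigma_2$ via the center $\fr{Z}_3(B)$, not one induced from a vector bundle. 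The paper's mechanism (Corollary \ref{tmf-univ}) is different and uses the conjectures at exactly this point: Conjecture \ref{moore-splitting} splits $K_3$ off $\Omega^2 P^{17}(2)$ as a $\cQ_1$-space, Conjecture \ref{centrality-conj} factors $\mu$ through $\B\GL_1(\fr{Z}_3(B))$, and the assumed $\E{3}$-structure on $\fr{Z}_3(B)\to B\to\MString$ makes the resulting map $\Omega^2 P^{17}(2)\to \B\GL_1(\MString_{(2)})$ a double loop map, hence determined by its restriction to $P^{15}(2)$. Nullness then reduces to two finite checks: $\sigma_2=0$ in $\pi_{13}\MString_{(2)}$ and $0\in\langle 2,\sigma_2,1\rangle\subseteq\pi_{14}\MString_{(2)}$, both of which follow because $\MString\to\tmf$ is an equivalence in dimensions $\leq 15$. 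This is also the true role of the $\E{3}$-hypothesis, which you instead describe as mere ``coherence'' needed to define the composite.

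Two further gaps. Theorem \ref{main-thm} (with Conjecture \ref{tmf-conj}) identifies $K_3^{\mu}$ with $\tmf$ only after $2$-completion, not $2$-localization, so the universal property only yields a map $\tmf^{\wedge}_2\to\MString^{\wedge}_2$; the stated theorem is about $\MString_{(2)}\to\tmf_{(2)}$, and the paper has to construct a rational splitting separately (lifting $c_4$ and $c_6$, again using the $15$-connectivity of the orientation) and glue it with the $2$-adic one. Your proposal skips this arithmetic-fracture step entirely. Finally, your argument that $\pi\circ s=\id$ is a non sequitur: maps out of the Thom spectrum are classified by nullhomotopies, but the space of nullhomotopies is not connected, so there is no reason the nullhomotopy classifying $\pi\circ s$ is the tautological one; moreover it is not established that the Ando-Hopkins-Rezk orientation is a $B$-module map for the $B$-module structure on $\tmf$ coming from the Thom presentation, so $\pi\circ s$ need not even lie in the mapping space you invoke. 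The splitting property should instead be extracted, as in the paper, from unitality together with the fact that the constructed maps are equivalences in the range where $\MString\to\tmf$ is.
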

In particular, Conjecture \ref{string-conj} follows (at least after localizing at $p=2$; a slight modification of our arguments should work at any prime). We believe that the assumption that the composite $\fr{Z}_3(B)\to B\to \MString$ is an $\E{3}$-map is too strong: we believe that it can be removed using special properties of fibers of charming maps, and we will return to this in future work.

We stress that these splittings are unstructured; it seems unlikely that they can be refined to structured splittings. In \cite{tmf-witten}, we showed (unconditionally) that the Ando-Hopkins-Rezk orientation $\MString\to \tmf$ induces a surjection on homotopy, a result which is clearly implied by Theorem \ref{mstring}.

We remark that the argument used to prove Theorem \ref{mstring} shows that if the composite $\fr{Z}_3(A)\to A\to \MSpin$ is an $\E{3}$-map, then Conjectures \ref{moore-splitting} and \ref{centrality-conj} imply that there is a spectrum-level unital splitting of the Atiyah-Bott-Shapiro orientation $\MSpin\to \bo$. This splitting was originally proved unconditionally (i.e., without assuming Conjecture \ref{moore-splitting} or Conjecture \ref{centrality-conj}) by Anderson-Brown-Peterson in \cite{abp} via a calculation with the Adams spectral sequence.

\subsection{The statements of Conjectures \ref{moore-splitting}, \ref{centrality-conj}, and \ref{tmf-conj}}

We first state Conjecture \ref{moore-splitting}. The second part of this
conjecture is a compilation of several old conjectures
in unstable homotopy theory originally made by Cohen-Moore-Neisendorfer, Gray,
and Selick in \cite{cmn-1, cmn-2, cmn-3, gray-1, gray-2, selick}. The statement
we shall give momentarily differs slightly from the statements made in the
literature: for instance, in Conjecture \ref{moore-splitting}(b), we demand a
$\cQ_1$-space splitting (Notation \ref{cup-1-defn}), rather than merely an
H-space splitting.
\begin{ABCconj}\label{moore-splitting}
    The following statements are true:
    \begin{enumerate}
	\item The homotopy fiber of any charming map (Definition
	    \ref{gray-map-def}) is equivalent as a loop space to the loop space
	    on an Anick space (Definition \ref{cmn-map}).
	\item There exists a $p$-local charming map $f:\Omega^2 S^{2p^n+1}\to
	    S^{2p^n-1}$ whose homotopy fiber admits a $\cQ_1$-space retraction
	    off of $\Omega^2 (S^{2p^n}/p)$. There are also integrally defined
	    maps $\Omega^2 S^9\to S^7$ and $\Omega^2 S^{17} \to S^{15}$ whose
	    composites with the double suspension on $S^7$ and $S^{15}$
	    respectively are the degree $2$ maps. Moreover, their homotopy
	    fibers $K_2$ and $K_3$ (respectively) admit deloopings, and admit
	    $\cQ_1$-space retractions off $\Omega^2 (S^8/2)$ and $\Omega^2
	    (S^{16}/2)$ (respectively).
    \end{enumerate}
\end{ABCconj}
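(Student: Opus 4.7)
The plan is to treat the two parts of the conjecture with rather different machinery, keeping the odd-primary and $p=2$ portions of part (b) separate; throughout, the most delicate task is the passage from $H$-space data to $\cQ_1$-space data.

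For part (a), the idea is to realize the fiber of any charming map as the loop space of an Anick space by a universal property argument. Anick's space $T_{2p^n+1}(p)$ sits in a fibration involving $\Omega S^{2p^n+1}$ and the homotopy fiber of the degree-$p$ self map on $S^{2p^n-1}$, and Gray's refinements in \cite{gray-1, gray-2} equip it with enough multiplicative structure that, after looping, one obtains something structurally parallel to the defining fibration of the fiber of a charming map. First I would produce a comparison map from $\Omega T_{2p^n+1}(p)$ to the fiber of a given charming map using the defining property of a charming map on the bottom cell, and then verify that it is a loop-space equivalence by a mod-$p$ homology computation via a Serre spectral sequence; the $p$-completeness and finite-type hypotheses make this comparison quite rigid. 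The loop-space structure compatibility should then follow from naturality of Anick's construction in the underlying cofiber sequence.

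For the odd primary portion of part (b), I would invoke the Cohen-Moore-Neisendorfer maps $\phi_n: \Omega^2 S^{2p^n+1} \to S^{2p^n-1}$ from \cite{cmn-1, cmn-2, cmn-3} together with Selick's retraction theorem \cite{selick}, which provides an $H$-space retraction of the relevant fiber off $\Omega^2(S^{2p^n+1}/p)$. The novelty demanded by Conjecture \ref{moore-splitting}(b) is the upgrade from an $H$-space retraction to a $\cQ_1$-space retraction, i.e., compatibility not only with multiplication but with the $\cup_1$-product furnished by the $\E{2}$-structure on $\Omega^2(S^{2p^n+1}/p)$. My plan would be to revisit Selick's construction through the lens of the Cohen-Moore-Neisendorfer differential graded Lie algebra models, where the $\cup_1$-product admits an explicit chain-level description, and verify the compatibility directly on the Lie algebra side. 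The integral maps at $p=2$ for $S^7$ and $S^{15}$ would be constructed via a James-Toda-style analysis of the EHP sequence, exploiting the special role of the quaternionic and octonionic Hopf fibrations in controlling the Whitehead products $[\iota_7,\iota_7]$ and $[\iota_{15},\iota_{15}]$; the deloopings would then be built by mimicking Gray's arguments with additional $2$-primary bookkeeping.

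The hardest step, I expect, is the $\cQ_1$-compatibility, which is a genuinely new structural input rather than routine bookkeeping: no splitting in the literature I am aware of has been shown to preserve a secondary operadic operation like $\cup_1$. Progress will likely require either a fundamentally new construction of Selick's retraction that manifestly respects the little $2$-disks action (perhaps via $\E{2}$-operadic Lie algebra models), or an obstruction-theoretic argument that forces existing $H$-space splittings to automatically upgrade. Without such new input, the conjecture appears to remain out of reach by the methods of \cite{cmn-1, cmn-2, cmn-3, gray-1, gray-2, selick} alone, which is consistent with its long-standing status as an open problem.
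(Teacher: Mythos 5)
You should first be aware that the paper does not prove this statement at all: it is Conjecture \ref{moore-splitting}, one of the standing hypotheses of the paper, and Theorem \ref{main-thm}, Corollary \ref{bpn} and Theorem \ref{mstring} are all explicitly conditional on it. The paper even records (Remark \ref{hm-Z-alt}) that part (b) is open already for $n=1$, and (Remark \ref{motivate-conj}) that part (a) is an amalgam of long-standing conjectures of Cohen--Moore--Neisendorfer, Gray and Selick. So there is no proof in the paper to compare against, and your text should be read as a research plan rather than a proof; your own closing admission that the $\cQ_1$-compatibility lacks any argument already means the proposal does not establish the statement.

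Beyond that admitted gap, the plan treats as known several inputs that are themselves open, so it does not even reduce the conjecture to the $\cQ_1$-upgrade. For part (a): the abstract coalgebra isomorphism of mod $p$ homologies (Proposition \ref{homology-anick}) does not produce a comparison map, and there is no known universal property of Anick spaces that yields a map from $\Omega T^{2np+1}(p)$ to the fiber of an arbitrary charming map, let alone a loop map; the closest available rigidity statement is Proposition \ref{selick-theriault}, which applies only to Gray maps, gives only an H-equivalence with $BW_n$, and the further identification of $BW_n$ with $\Omega T^{2np+1}(p)$ is itself a well-known open conjecture (its relation to Kervaire invariant one is discussed in \cite{amelotte-kervaire}). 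For part (b) at odd primes: Selick's theorem does not give an H-space retraction of the fiber off $\Omega^2 P^{2p^n+1}(p)$; it concerns the height-one case only and retracts $\Omega^2 S^3\langle 3\rangle$ off $\Omega^2 S^{2p+1}\{p\}$, which yields a retraction off $\Omega^3 P^{2p+2}(p)$, and whether one can get a retraction off $\Omega^2 P^{2p+1}(p)$ is precisely the open question recorded in Remark \ref{hm-Z-alt}; for $n>1$ even the H-space retraction is conjectural. For part (b) at $p=2$: no analogue of the Cohen--Moore--Neisendorfer maps $\Omega^2 S^9\to S^7$ and $\Omega^2 S^{17}\to S^{15}$ is known to exist (their construction breaks down at the prime $2$), and an appeal to a ``James--Toda style analysis of the EHP sequence'' and the Hopf fibrations is not a construction; the deloopings of the fibers are likewise unestablished. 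In short, every load-bearing step of the proposal is either open or unsupplied, which is consistent with the statement's status in the paper as a conjecture.
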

%

Next, we turn to Conjecture \ref{centrality-conj}. This conjecture is concerned
with the $\E{2}$-topological Hochschild cohomology of the Thom spectra
$X(p^n-1)_{(p)}$, $A$, and $B$ introduced above.
\begin{ABCconj}\label{centrality-conj}
    Let $n\geq 0$ be an integer. Let $R$ denote $X(p^{n+1}-1)_{(p)}$, $A$ (in
    which case $n=1$), or $B$ (in which case $n=2$). Then the element
    $\sigma_n\in \pi_{|v_n|-1} R$ lifts to the $\E{2}$-topological Hochschild
    cohomology $\fr{Z}_3(R)$ of $R$, and is $p$-torsion in $\pi_\ast
    \fr{Z}_3(R)$ if $R = X(p^{n+1} - 1)_{(p)}$, and is $2$-torsion in $\pi_\ast
    \fr{Z}_3(R)$ if $R = A$ or $B$.
\end{ABCconj}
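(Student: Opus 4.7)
The plan is to handle all three cases $R\in\{X(p^{n+1}-1)_{(p)},A,B\}$ in parallel, exploiting the fact that each of these rings can be presented as an $\E{2}$-Thom spectrum over a double loop space. First I would try to use this Thom-spectrum presentation, together with a Thom isomorphism for $\E{2}$-topological Hochschild cohomology, to obtain a concrete model for $\fr{Z}_3(R)$ as a twisted Thom spectrum over an $\E{3}$-version of the center of the underlying double loop space. Under such a model, the element $\sigma_n$ --- which by construction detects an indecomposable lift of $v_n$ on the bottom Thom cell --- should be identifiable with a class coming from a specific element of $\H_*(\Omega\SU(p^{n+1}-1))$ (or its analogue for $A$, $B$) lying just below the first attaching map one needs to worry about.

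The next step is obstruction theory. The obstruction to $\E{3}$-centrality of $\sigma_n$ in an $\E{2}$-ring is the vanishing of Browder brackets $[-,\sigma_n]$ and, more precisely, a sequence of coherent nullhomotopies assembling to an extension of $\sigma_n$ through the tower of truncations of $\fr{Z}_3(R)\to R$. My approach would be to exploit the narrow range in which $\sigma_n$ lives: below $|v_{n+1}|$, the homotopy of $R$ should agree closely enough with that of $\BP{n}$ that brackets with $v_n$ can be forced to vanish by a direct degree count, either by computing the relevant Dyer-Lashof action at $p$ or by comparison with the known $\E{\infty}$-structure on $\BP{n}$ in this same range. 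The $p$-torsion (respectively $2$-torsion) claim would then follow by tracing through the same cell-diagram argument that shows $p\sigma_n=0$ (resp. $2\sigma_n=0$) already in $R$ and verifying that the nullhomotopy of $p\sigma_n$ itself lifts.

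The hard part --- and the fundamental reason Conjecture \ref{hahn-conjecture} has resisted direct attack --- is that $\fr{Z}_3(R)$ is essentially inaccessible with current computational tools, and one only has a Thom isomorphism at the $\E{2}$-level rather than at the $\E{3}$-level. A realistic fallback strategy would be to construct the lift of $\sigma_n$ only through a sufficiently high Postnikov section of $\fr{Z}_3(R)$, rather than on the nose, which may already suffice for the applications to Theorem \ref{main-thm}; and to use the specific unstable geometric origin of $\sigma_n$ in the loop homology of $\SU$ to cut down the obstruction groups one actually needs to check, rather than attempting to describe $\fr{Z}_3(R)$ globally.
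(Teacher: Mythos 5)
You are attempting to prove a statement that the paper itself does not prove: Conjecture~\ref{centrality-conj} is one of the paper's standing hypotheses (together with Conjecture~\ref{moore-splitting}), and all the main results (Theorem~\ref{main-thm}, Corollary~\ref{bpn}, Theorem~\ref{mstring}) are explicitly conditional on it. So there is no proof in the paper to compare against, and what you have written is in any case a strategy sketch rather than a proof --- you acknowledge yourself that $\fr{Z}_3(R)$ is inaccessible and that the ``hard part'' is unresolved. Beyond that, several of the steps you do propose fail as stated.

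Concretely: (i) your starting premise that all three rings are $\E{2}$-Thom spectra over double loop spaces is false for $A$ and $B$. These are Thom spectra of $\E{1}$-maps over $\Omega S^5$ and $N$, neither of which admits a double loop space structure, and the paper only knows them as $\E{1}$-rings; consequently $\fr{Z}_3(A)$ and $\fr{Z}_3(B)$ are iterated centers ($\E{3}$-center of the $\E{2}$-center, Definition~\ref{ekm-center}), for which no Thom-isomorphism model of the kind you describe is available. (ii) For $R = X(p^{n+1}-1)_{(p)}$ the $\E{2}$-structure famously does \emph{not} extend to an $\E{3}$-structure (Lawson; see Remark~\ref{Xn-e3}): Steinberger's relation $Q_2(\zeta_i^2)=\zeta_{i+1}^2$ pushes the homology image out of $\FF_p[\zeta_1^2,\dots,\zeta_n^2]$, so Dyer--Lashof computations in this range are exactly where the obstruction lives, not a mechanism by which Browder brackets ``can be forced to vanish by a direct degree count.'' (iii) The element $\sigma_n$ lies in degree $|v_{n+1}|-1$, one dimension beyond the range in which $T(n)\to\BPP$ is an equivalence, and it is $p$-torsion, hence invisible to $\BP{n}$; there is no comparison with a ``known $\E{\infty}$-structure on $\BP{n}$ in the same range'' (and truncated Brown--Peterson spectra do not in general admit $\E{\infty}$-structures). (iv) Your fallback of lifting $\sigma_n$ only through a Postnikov section of $\fr{Z}_3(R)$ would not suffice for the application: Step~1 of the proof of Theorem~\ref{main-thm} needs the composite $P^{|\sigma_{n-1}|+2}(p)\to \B\GL_1(\fr{Z}_3(R))\to \B\GL_1(R)$, using the $\E{2}$-space structure on $\B\GL_1(\fr{Z}_3(R))$ to extend over $\Omega^2 P^{|\sigma_{n-1}|+4}(p)$ and then Thomifying over $R$; a lift landing only in a truncation of $\fr{Z}_3(R)$ admits no map back to $R$, so it produces no bundle of $R$-lines and no Thom spectrum, and it also does not address the required torsion order of the lift.
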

Finally, we state Conjecture \ref{tmf-conj}. It is inspired by
\cite{adams-priddy, angeltveit-lind}. We believe this conjecture is the most
approachable of the conjectures stated here.
\begin{ABCconj}\label{tmf-conj}
    Suppose $X$ is a spectrum which is bounded below and whose homotopy groups
    are finitely generated over $\Z_p$. If there is an isomorphism
    $\H_\ast(X;\FF_p)\cong \H_\ast(\tmf;\FF_p)$ of Steenrod comodules, then
    there is a homotopy equivalence $X^\wedge_p\to \tmf^\wedge_p$ of spectra.
\end{ABCconj}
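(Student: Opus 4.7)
The plan is to adapt the strategy that Adams and Priddy used for $\bo$ and $\bu$ in \cite{adams-priddy}, and Angeltveit and Lind used for $\BP{n}$ at odd primes in \cite{angeltveit-lind}, to $\tmf$. The aim is to produce a $p$-adic equivalence $X^\wedge_p \to \tmf^\wedge_p$ by lifting the hypothesized comodule isomorphism $\H_\ast(X;\FF_p)\cong \H_\ast(\tmf;\FF_p)$ to a map of spectra using the mod-$p$ Adams spectral sequence.

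First, at primes $p \geq 5$ the spectrum $\tmf^\wedge_p$ is a wedge of suspensions of $\BP{2}^\wedge_p$, and the comodule hypothesis reduces the conjecture, summand-by-summand, to the Angeltveit-Lind uniqueness theorem for $\BP{2}$. The substantive cases are $p=2,3$, where $\H^\ast(\tmf;\FF_p) \cong \mathcal{A}//\mathcal{A}(2)$; dually, $\H_\ast(\tmf;\FF_p)$ is the comodule coinduced from $\FF_p$ along $\mathcal{A}(2)_\ast \hookrightarrow \mathcal{A}_\ast$.

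At these primes I would consider the mapping-spectrum Adams spectral sequence
\[
E_2^{s,t} = \Ext_{\mathcal{A}_\ast}^{s,t}\bigl(\H_\ast(X;\FF_p),\, \H_\ast(\tmf;\FF_p)\bigr) \Longrightarrow [\Sigma^{t-s}X, \tmf^\wedge_p].
\]
The assumption that $X$ is bounded below with $\pi_\ast X$ finitely generated over $\Z_p$ ensures strong convergence. Applying Shapiro's lemma to the coinduced comodule $\H_\ast\tmf$, the $E_2$ page simplifies to $\Ext_{\mathcal{A}(2)_\ast}^{s,t}(\H_\ast(X;\FF_p), \FF_p)$; substituting $\H_\ast X \cong \H_\ast \tmf$ and applying Shapiro a second time identifies this with the Adams $E_2$ page computing $\pi_\ast \tmf^\wedge_p$. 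The hypothesized comodule isomorphism itself is a canonical class $[\id]\in E_2^{0,0}$, and the task is to show that $[\id]$ is a permanent cycle and that any lift $f: X^\wedge_p \to \tmf^\wedge_p$ of it is an equivalence. The latter step is the easier one: $f$ is an $\H\FF_p$-equivalence by construction, so the finiteness hypothesis together with Adams convergence promote it to a $p$-adic equivalence.

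The main obstacle is showing $[\id]$ is a permanent cycle without yet having a spectrum-level map to invoke naturality. One approach is Bousfield-Kan obstruction theory along the $\H\FF_p$-Adams tower of $\tmf^\wedge_p$: one inductively extends the canonical map $X \to \H\FF_p \wedge \tmf^\wedge_p$ through successive stages of the tower, with obstructions lying in $\Ext_{\mathcal{A}(2)_\ast}^{s+1,s}(\H_\ast(X;\FF_p), \FF_p)$. Under the identification above, each obstruction corresponds to an obstruction to extending the identity of $\tmf^\wedge_p$ along its own Adams tower, all of which vanish by construction. Making this correspondence rigorous --- especially at $p=2$, where $\Ext_{\mathcal{A}(2)_\ast}(\FF_p,\FF_p)$ is intricate and the tower contributes nontrivially in many stages --- is the technical heart of the argument, and I expect it will require a careful comparison of the Postnikov sections or cell structures of $X^\wedge_p$ and $\tmf^\wedge_p$.
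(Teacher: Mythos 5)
You should first be aware that the paper does not prove this statement at all: it is Conjecture~\ref{tmf-conj}, one of the standing hypotheses of the paper (explicitly flagged as ``inspired by'' \cite{adams-priddy, angeltveit-lind} and described only as ``the most approachable'' of the conjectures), and Theorems~\ref{main-thm} and~\ref{mstring} take it as an assumption in the $\tmf$ case. So there is no proof in the paper to compare against; what you have written is a strategy sketch for an open problem, and it should be judged on whether it actually closes the problem. It does not.

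The genuine gap is exactly where you locate ``the technical heart'': showing that the class $[\id]\in E_2^{0,0}$ of the mapping Adams spectral sequence is a permanent cycle. Your proposed resolution --- that ``each obstruction corresponds to an obstruction to extending the identity of $\tmf^\wedge_p$ along its own Adams tower, all of which vanish by construction'' --- is circular: the identification of the obstruction groups via Shapiro's lemma is an isomorphism of $\Ext$ groups only, and transporting the \emph{obstruction cocycles} (equivalently, the differentials $d_r([\id])$) across it requires a spectrum-level map $X\to\tmf^\wedge_p$ inducing the comodule isomorphism, which is precisely what is to be constructed. In Adams--Priddy the analogous step is not formal; it is a concrete vanishing theorem for the relevant $\Ext_{\mathcal{A}(1)_\ast}$ groups in total degree $-1$ (the groups housing $d_r([\id])$ and the $\lim^1$ obstruction), and that computation is what makes $\bo$ rigid. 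The analogous groups $\Ext^{s,s-1}_{\mathcal{A}(2)_\ast}(\FF_2,\FF_2)$ are \emph{not} all zero in the range where differentials on $[\id]$ could land, so the argument does not go through by pure bookkeeping; one would need either a genuinely new vanishing/naturality input or extra structure (e.g.\ exploiting an $\E{\infty}$ or $\tmf$-module refinement, as Angeltveit--Lind do for $\BP{n}$ using $\MU$-module techniques unavailable here). Your reduction at $p\geq 5$ to the $\BP{2}$ case via the splitting of $\tmf_{(p)}$ is fine, and convergence of the Adams spectral sequence under the bounded-below/finite-type hypotheses is fine, but the essential case $p=2$ (the one the paper actually needs) remains open in your write-up.
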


After proving Theorem \ref{main-thm} and Theorem \ref{mstring}, we explore
relationships between the different spectra appearing on the second line of
Table \ref{the-table} in the remainder of the article. In particular, we prove
analogues of Wood's equivalence $\bo\wedge C\eta \simeq \bu$ (see also
\cite{homologytmf}) for these spectra. We argue that these are related to the
existence of certain EHP sequences.

Finally, we describe a $C_2$-equivariant analogue of Corollary \ref{bpn} at
$n=1$ as Theorem \ref{C2-main-thm}, independently of a $C_2$-equivariant
analogue of Conjecture \ref{moore-splitting} and Conjecture
\ref{centrality-conj}. This result constructs $\H\ul{\Z}$ as a Thom spectrum of
an equivariant bundle of invertible $T(1)_\RR$-modules over $\Omega^\rho
S^{2\rho+1}$, where $T(1)_\RR$ is the free $\E{\sigma}$-algebra with a
nullhomotopy of the equivariant Hopf map $\wt{\eta}\in \pi_\sigma(\S)$, and
$\rho$ and $\sigma$ are the regular and sign representations of $C_2$,
respectively. This uses results of Behrens-Wilson and Hahn-Wilson from
\cite{behrens-wilson, hahn-wilson}. We believe there is a similar result at odd
primes, but we defer discussion of this. We discuss why our methods do not work
to yield $\BP{n}_\RR$ for $n\geq 1$ as in Corollary \ref{bpn}.


\subsection{Outline}

Section \ref{positive-negative} contains a review some of the theory of Thom
spectra from the modern perspective, as well as the proof of the classical
Hopkins-Mahowald theorem. The content reviewed in this section will appear in
various guises throughout this project, hence its inclusion.

In Section \ref{A-and-B}, we study certain $\E{1}$-rings; most of them appear as
Thom spectra over the sphere. For instance, we recall some facts about Ravenel's
$X(n)$ spectra, and then define and prove properties about the $\E{1}$-rings $A$
and $B$ used in the statement of Theorem \ref{main-thm}. We state Conjecture
\ref{centrality-conj}, and discuss (Remark \ref{nilpotence-proof}) its relation
to the nilpotence theorem, in this section.

In Section \ref{unstable-review}, we recall some unstable homotopy theory, such
as the Cohen-Moore-Neisendorfer map and the fiber of the double suspension.
These concepts do not show up often in stable homotopy theory, so we hope this
section provides useful background to the reader.
We state Conjecture \ref{moore-splitting}, and then explore properties of Thom
spectra of bundles defined over Anick spaces.

In Section \ref{proof-section}, we state and prove Theorem \ref{main-thm} and
Corollary \ref{bpn}, and state several easy consequences of Theorem
\ref{main-thm}.

In Section \ref{applications}, we study some applications of Theorem
\ref{main-thm}.  For instance, we use it to prove Theorem \ref{mstring}, which
is concerned with the splitting of certain cobordism spectra. 
In a previous version of this article, we had two subsections discussing Wood-like
equivalences, and topological Hochschild homology of the chromatic Thom spectra
of Table \ref{the-table}. However, while making revisions to this article, we
decided to split these two sections off into separate articles \cite{ehp-wood, thh-xn}.

In Section \ref{equiv-analogue}, we prove an equivariant analogue of Corollary
\ref{bpn} at height $1$. We construct equivariant analogues of $X(n)$ and $A$,
and describe why our methods fail to produce an equivariant analogue of
Corollary \ref{bpn} at all heights, even granting an analogue of Conjecture
\ref{moore-splitting} and Conjecture \ref{centrality-conj}.

Finally, in Section \ref{future}, we suggest some directions for future
research. There are also numerous interesting questions arising from our work,
which we have indicated in the body of the article.

\subsection*{Conventions}

Unless indicated otherwise, or if it goes against conventional notational
choices, a Latin letter with a numerical subscript (such as $x_5$) denotes an
element of degree given by its subscript. If $X$ is a space and $R$ is an
$\E{1}$-ring spectrum, then $X^\mu$ will denote the Thom spectrum of some bundle
of invertible $R$-modules determined by a map $\mu:X\to \B\GL_1(R)$. We shall
often quietly localize or complete at an implicit prime $p$. Although we have
tried to be careful, \emph{all limits and colimits will be homotopy limits and
colimits}; we apologize for any inconvenience this might cause.

We shall denote by $P^k(p)$ the mod $p$ Moore space $S^{k-1}\cup_p e^k$ with top
cell in dimension $k$. The symbols $\zeta_i$ and $\tau_i$ will denote the
\emph{conjugates} of the Milnor generators (commonly written nowadays as $\xi_i$
and $\tau_i$, although, as Haynes Miller pointed out to me, our notation for the
conjugates was Milnor's original notation) in degrees $2(p^i-1)$ and
$2p^i-1$ for $p>2$ and $2^i-1$ (for $\zeta_i$) at $p=2$. Unfortunately, we will
use $A$ to denote the $\E{1}$-ring in appearing in Table \ref{the-table}, and
write $A_\ast$ to denote the dual Steenrod algebra. We hope this does not cause
any confusion, since we will always denote the homotopy groups of $A$ by
$\pi_\ast A$ and not $A_\ast$.

If $\mathcal{O}$ is an operad, we will simply write \emph{$\mathcal{O}$-ring} to denote an
$\mathcal{O}$-algebra object in spectra. A map of $\mathcal{O}$-rings respecting the
$\mathcal{O}$-algebra structure will often simply be called a $\mathcal{O}$-map. Unless it is
clear that we mean otherwise, all modules over non-$\Eoo$-algebras will be left
modules.

Hood Chatham pointed out to me that $S^3\langle 4\rangle$ would be the correct
notation for what we denote by $S^3\langle 3\rangle = \fib(S^3\to K(\Z,3))$.
Unfortunately, the literature seems to have chosen $S^3\langle 3\rangle$ as the
preferred notation, so we stick to that in this project.

When we write that Theorem \ref{main-thm}, Corollary \ref{bpn}, or Theorem
\ref{mstring} implies a statement P, we mean that Conjectures
\ref{moore-splitting} and Conjecture \ref{centrality-conj} (and Conjecture
\ref{tmf-conj}, if the intended application is to $\tmf$) imply P via Theorem
\ref{main-thm}, Corollary \ref{bpn}, or Theorem \ref{mstring}.

\subsection*{Acknowledgements}

The genesis of this project took place in conversations with Jeremy Hahn, who
has been a great friend and mentor, and a fabulous resource; I'm glad to be able
to acknowledge our numerous discussions, as well as his comments on a draft of
this article. I'm extremely grateful to Mark Behrens and Peter May for working
with me over the summer of 2019 and for being fantastic advisors, as well as for
arranging my stay at UChicago, where part of this work was done. I'd also like
to thank Haynes Miller for patiently answering my numerous (often silly)
questions over the past few years. Conversations related to the topic of this
project also took place at MIT and Boulder, and I'd like to thank Araminta
Gwynne, Robert Burklund, Hood Chatham, Peter Haine, Mike Hopkins, Tyler Lawson, Kiran Luecke,
Andrew Senger, Neil Strickland, and Dylan Wilson for clarifying discussions.
Although I never got the chance to meet Mark Mahowald, my intellectual debt to
him is hopefully evident (simply \texttt{Ctrl+F} his name in this document!).
I would like to the anonymous referee for several comments which greatly improved this article.
Finally, I'm glad I had the opportunity to meet other math nerds at the UChicago
REU; I'm in particular calling out Ada, Anshul, Eleanor, and Wyatt --- thanks
for making my summer enjoyable.

Part of this work was done when the author was supported by the PD Soros Fellowship.

\newpage
\section{Background, and some classical positive and negative
results}\label{positive-negative}

\subsection{Background on Thom spectra}\label{thom-background}

In this section, we will recall some facts about Thom spectra and their
universal properties; the discussion is motivated by \cite{abghr-ii}.

\begin{definition}
    Let $A$ be an $\E{1}$-ring, and let $\mu:X\to \B\GL_1(A)$ be a map of spaces.
    The Thom $A$-module $X^\mu$ is defined as the homotopy pushout
    $$\xymatrix{
	\Sigma^\infty_+\GL_1(A)\ar[r] \ar[d] & \Sigma^\infty_+ \fib(\mu)
	\ar[d]\\
	A\ar[r] & X^\mu.
    }$$
\end{definition}

\begin{remark}
    Let $A$ be an $\E{1}$-ring, and let $\mu:X\to \B\GL_1(A)$ be a map of spaces.
    The Thom $A$-module $X^\mu$ is the homotopy colimit of the functor
    $X\xrightarrow{\mu} \B\GL_1(A)\to \Mod(A)$, where we have abused notation by
    identifying $X$ with its associated Kan complex. If $A$ is an
    $\E{1}$-$R$-algebra, then the $R$-module underlying $X$ can be identified
    with the homotopy colimit of the composite functor
    $$X\xrightarrow{\mu} \B\GL_1(A)\to \B\Aut_R(A)\to \Mod(R),$$
    where we have identified $X$ with its associated Kan complex. The space
    $\B\Aut_R(A)$ can be regarded as the maximal subgroupoid of $\Mod(R)$ spanned
    by the object $A$.
\end{remark}

The following is immediate from the description of the Thom spectrum as a Kan
extension:
\begin{prop}\label{thom-base}
    Let $R$ and $R'$ be $\E{1}$-rings with an $\E{1}$-ring map $R\to R'$
    exhibiting $R'$ as a right $R$-module. If $f:X\to \B\GL_1(R)$ is a map of
    spaces, then the Thom spectrum of the composite $X\to \B\GL_1(R)\to
    \B\GL_1(R')$ is the base-change $X^f\wedge_R R'$ of the (left) $R$-module
    Thom spectrum $X^f$.
\end{prop}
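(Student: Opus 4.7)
The plan is to unpack the phrase ``Kan extension'' and reduce the claim to the fact that base change of modules commutes with colimits. First, I would recall (as in the remark preceding the proposition) that the Thom $R$-module $X^f$ is the homotopy colimit in $\Mod(R)$ of the functor
$$F_R \colon X \xrightarrow{f} \B\GL_1(R) \hookrightarrow \B\Aut_R(R) \hookrightarrow \Mod(R),$$
and analogously that the Thom spectrum of the composite $X\to \B\GL_1(R)\to \B\GL_1(R')$ is the homotopy colimit of
$$F_{R'}\colon X\xrightarrow{f} \B\GL_1(R)\to \B\GL_1(R')\hookrightarrow \Mod(R').$$

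Next, I would observe that because $R\to R'$ is an $\E{1}$-map exhibiting $R'$ as a right $R$-module, there is a base-change functor $(-)\wedge_R R'\colon \Mod(R)\to \Mod(R')$, which is a left adjoint to restriction of scalars and therefore preserves all (homotopy) colimits. In particular $(\colim_X F_R)\wedge_R R' \simeq \colim_X (F_R \wedge_R R')$, so it suffices to identify the diagram $F_R\wedge_R R'$ with $F_{R'}$ as functors $X\to \Mod(R')$.

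The key step, and the only genuine content, is to produce this identification, i.e., to supply a commutative square
$$\xymatrix{
\B\GL_1(R) \ar[r] \ar[d] & \Mod(R) \ar[d]^{(-)\wedge_R R'} \\
\B\GL_1(R') \ar[r] & \Mod(R').
}$$
On objects both composites send the (unique) object to $R\wedge_R R' \simeq R'$. On automorphisms, the upper-right route sends a unit $u\in \GL_1(R)$ to the $R'$-linear self-map $u\wedge_R \id_{R'}$ of $R'$; the lower-left route sends $u$ first to its image under $\GL_1(R)\to \GL_1(R')$ (defined by multiplication after extending scalars along $R\to R'$) and then to left multiplication on $R'$. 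By construction of the map $\B\GL_1(R)\to \B\GL_1(R')$ induced by an $\E{1}$-ring map, these two $R'$-linear self-maps agree, and coherently so. Given this square, composing with $f\colon X\to \B\GL_1(R)$ and passing to colimits yields the asserted equivalence $X^f \wedge_R R' \simeq (\text{Thom spectrum of } X\to \B\GL_1(R'))$.

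The main obstacle is purely bookkeeping: making the homotopy coherence in the comparison square precise in the $\infty$-categorical framework of \cite{abghr-ii}, where $\B\GL_1(R)$ is realized as a full subgroupoid of $\Mod(R)$ and the map $\B\GL_1(R)\to \B\GL_1(R')$ is, essentially by definition, the restriction of $(-)\wedge_R R'$ to invertible $R$-modules. Once this identification is in hand, everything else is formal from the colimit-preservation of base change.
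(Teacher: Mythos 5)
Your argument is correct and is essentially the paper's own: the paper treats this as immediate from the description of the Thom spectrum as a (pointwise left Kan extension, i.e.) colimit of the functor $X\to \B\GL_1(R)\to \Mod(R)$, together with the fact that base change along $R\to R'$ preserves colimits and restricts on $\B\GL_1(R)$ to the canonical map $\B\GL_1(R)\to \B\GL_1(R')$. Your write-up simply makes that one-line observation explicit.
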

\begin{corollary}\label{thom-iso}
    Let $R$ and $R'$ be $\E{1}$-rings with an $\E{1}$-ring map $R\to R'$
    exhibiting $R'$ as a right $R$-module. If $f:X\to \B\GL_1(R)$ is a map of
    spaces such that the the composite $X\to \B\GL_1(R)\to B\GL_1(R')$ is null,
    then there is an equivalence $X^f\wedge_R R' \simeq R' \wedge
    \Sigma^\infty_+ X$.
\end{corollary}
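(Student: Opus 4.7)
The plan is to deduce this corollary directly from Proposition \ref{thom-base} together with a computation of the Thom spectrum of a nullhomotopic map.

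First, I would apply Proposition \ref{thom-base} to the composite $g: X \xrightarrow{f} \B\GL_1(R) \to \B\GL_1(R')$. This identifies the Thom $R'$-module $X^g$ with the base change $X^f \wedge_R R'$. Thus it suffices to compute $X^g$ under the hypothesis that $g$ is nullhomotopic.

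Next, I would recall that the Thom spectrum construction, viewed as the homotopy colimit of the composite functor $X \xrightarrow{g} \B\GL_1(R') \to \Mod(R')$, is functorial in $g$ up to homotopy: a choice of nullhomotopy $g \simeq \ast$ produces an equivalence between $X^g$ and the homotopy colimit of the constant functor $X \to \Mod(R')$ at the object $R'$. This constant homotopy colimit is exactly $R' \otimes \Sigma^\infty_+ X$, i.e., $R' \wedge \Sigma^\infty_+ X$ (since the colimit of the constant diagram at $c$ indexed by a space $Y$ is $c \otimes Y_+$ in any stable $\infty$-category). Combining this with the identification from the previous step yields the claimed equivalence $X^f \wedge_R R' \simeq R' \wedge \Sigma^\infty_+ X$.

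The argument is essentially formal once one has Proposition \ref{thom-base} and the identification of the Thom spectrum of a trivial line-bundle over $X$ as $R' \wedge \Sigma^\infty_+ X$; there is no substantive obstacle. The only mild point to be careful about is that the equivalence depends on a choice of nullhomotopy of the composite $X \to \B\GL_1(R')$, so the equivalence is canonical only up to that choice — but for the statement as given, existence is all that is required.
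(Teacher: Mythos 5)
Your argument is correct and is exactly the route the paper intends: the corollary is stated as an immediate consequence of Proposition \ref{thom-base} together with the colimit description of Thom spectra, under which a nullhomotopy of the composite trivializes the functor $X \to \Mod(R')$ and identifies the colimit with $R' \wedge \Sigma^\infty_+ X$. Nothing is missing.
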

Moreover (see e.g. \cite[Corollary 3.2]{barthel-thom}):
\begin{prop}
    Let $X$ be a $k$-fold loop space, and let $R$ be an $\E{k+1}$-ring. Then the
    Thom spectrum of an $\E{k}$-map $X\to \B\GL_1(R)$ is an $\E{k}$-$R$-algebra.
\end{prop}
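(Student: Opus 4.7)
The plan is to exhibit $X^\mu$ as the colimit in $\Mod_R$ of an $\E{k}$-monoidal functor out of $X$, and then invoke the general principle that such colimits inherit a canonical $\E{k}$-algebra structure. First I would observe that since $R$ is an $\E{k+1}$-ring, the $\infty$-category $\Mod_R$ of left $R$-modules carries a canonical $\E{k}$-monoidal structure whose monoidal product is the relative smash product $\wedge_R$, and this structure is compatible with small colimits separately in each variable.

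Next, I would use that $\GL_1$ sends $\E{k+1}$-rings to grouplike $\E{k+1}$-spaces, so $\GL_1(R)$ is $\E{k+1}$-grouplike, and hence its delooping $\B\GL_1(R)$ is a grouplike $\E{k}$-space, i.e., a $k$-fold loop space with compatible $\E{k}$-structure. I would then identify $\B\GL_1(R)$ with the maximal subgroupoid of $\Mod_R$ spanned by the invertible $R$-modules, and observe that this inclusion $\B\GL_1(R) \hookrightarrow \Mod_R$ is itself $\E{k}$-monoidal, since the tensor product of invertibles is invertible; the $\E{k}$-multiplication on $\B\GL_1(R)$ is just the restriction of $\wedge_R$.

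Given an $\E{k}$-map $\mu : X \to \B\GL_1(R)$, the composite $X \to \B\GL_1(R) \hookrightarrow \Mod_R$ is therefore an $\E{k}$-monoidal functor from $X$, viewed as an $\E{k}$-monoidal $\infty$-category via its own $\E{k}$-structure, to $\Mod_R$. The Thom spectrum $X^\mu$ is by definition the colimit of this functor, so applying the general principle that the colimit of an $\E{k}$-monoidal functor from an $\E{k}$-space into a cocomplete $\E{k}$-monoidal $\infty$-category (with colimit-preserving monoidal product) is naturally an $\E{k}$-algebra, we conclude that $X^\mu$ is an $\E{k}$-$R$-algebra.

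The main technical subtlety, which I expect to be the principal obstacle, is formalizing the $\E{k}$-monoidal structure on the inclusion $\B\GL_1(R) \hookrightarrow \Mod_R$ and verifying the \emph{colimit of a monoidal diagram is a monoidal object} principle at the $\E{k}$-monoidal (as opposed to symmetric monoidal) level, which demands some care with the operadic bookkeeping. Once these $\infty$-categorical matters are settled, the conclusion is formal.
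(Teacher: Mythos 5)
Your argument is correct and is essentially the proof of the result being cited here (the paper gives no proof of its own, referring instead to \cite[Corollary 3.2]{barthel-thom}, whose argument is exactly this): realize $X^\mu$ as the colimit in $\Mod(R)$ of the $\E{k}$-monoidal composite $X\to \B\GL_1(R)\to \Mod(R)$, using that $\Mod(R)$ is $\E{k}$-monoidal with $\wedge_R$ compatible with colimits, so the colimit inherits an $\E{k}$-$R$-algebra structure. One small correction: the maximal subgroupoid of $\Mod(R)$ spanned by \emph{all} invertible modules is $\Pic(R)$, whereas $\B\GL_1(R)$ is its unit component, i.e.\ the subgroupoid spanned by modules equivalent to $R$ itself; since that component is still closed under $\wedge_R$, this does not affect your argument.
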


We will repeatedly use the following classical result, which is again a
consequence of the observation that Thom spectra are colimits, as well as the
fact that total spaces of fibrations may be expressed as colimits; see also
\cite[Theorem 1]{beardsley-thom}.
\begin{prop}\label{thom}
    Let $X\xrightarrow{i} Y\to Z$ be a fiber sequence of $k$-fold loop spaces
    (where $k\geq 1$), and let $R$ be an $\E{m}$-ring for $m\geq k+1$. Suppose
    that $\mu:Y\to \B\GL_1(R)$ is a map of $k$-fold loop spaces. Then, there is a
    $k$-fold loop map $\phi:Z\to \B\GL_1(X^{\mu \circ i})$ whose Thom spectrum is
    equivalent to $Y^\mu$ as $\E{k-1}$-rings. Concisely, if arrows are labeled
    by their associated Thom spectra, then there is a diagram
    $$\xymatrix{
	X \ar[r]^-i \ar[dr]_-{X^{\mu \circ i}} & Y \ar[r] \ar[d]_-\mu^-{Y^\mu} &
	Z \ar[d]_-{\phi}^-{Y^\mu = Z^\phi}\\
	& \B\GL_1(R) \ar[r] & B\GL_1(X^{\mu \circ i}).
    }$$
\end{prop}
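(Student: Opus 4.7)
The plan is to combine the two observations explicitly highlighted in the excerpt: that the Thom spectrum $Y^\mu$ is the colimit of the composite $Y \xrightarrow{\mu} \B\GL_1(R) \to \Mod(R)$, and that a fibration $X \to Y \to Z$ exhibits $Y$ as a colimit over $Z$ of copies of $X$ twisted by the monodromy.

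First I would note that, since the composite $\mu \circ i : X \to \B\GL_1(R)$ is a $k$-fold loop map out of a $k$-fold loop space and $R$ is $\E{k+1}$, the preceding proposition endows $X^{\mu \circ i}$ with the structure of an $\E{k}$-$R$-algebra, so that $\B\GL_1(X^{\mu\circ i})$ is defined. Then I would straighten the fibration $X \to Y \to Z$ to a functor $F : Z \to \mathrm{Spaces}$ with $F(z) \simeq X$ and $\colim_Z F \simeq Y$; composing with $\mu$ and interchanging the resulting double colimit gives
\[
Y^\mu \;\simeq\; \colim_{y \in Y}\, \mu(y) \;\simeq\; \colim_{z \in Z}\;\colim_{x \in F(z)}\, \mu(x),
\]
where the inner colimit is canonically equivalent to $X^{\mu \circ i}$.

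The monodromy of the original fibration acts on each fiber $X$ by self-equivalences covering $\mu$, and since this action is compatible with the loop-space structure of $X$, it induces automorphisms of $X^{\mu\circ i}$ as an invertible $X^{\mu\circ i}$-module. This packages the outer diagram into a functor $\phi : Z \to \B\GL_1(X^{\mu \circ i})$ whose Thom spectrum, as the outer colimit, recovers $Y^\mu$. Because $X \to Y \to Z$ is a fiber sequence of $k$-fold loop spaces and $\mu$ is a $k$-fold loop map, this straightening and the resulting monodromy refine to a diagram in $k$-fold loop spaces, making $\phi$ itself a $k$-fold loop map; the $\E{k-1}$-ring structure on both $Y^\mu$ and $Z^\phi$ is then transported across the colimit equivalence.

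The main obstacle is exactly this loop-theoretic coherence: one must verify that the straightening of the fiber sequence and the induced monodromy action fit together as a diagram of $k$-fold loop spaces, so that $\phi$ genuinely promotes to a $k$-fold loop map rather than a map of underlying spaces. This is the $\infty$-operadic coherence that is made rigorous in Beardsley's treatment \cite{beardsley-thom}, and invoking that reference is the natural way to close out the argument without redoing the bookkeeping by hand.
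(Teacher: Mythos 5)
Your proposal is correct and matches the paper's own argument: the paper proves this proposition exactly by observing that Thom spectra are colimits and that the total space of a fibration is a colimit over the base, with the $k$-fold loop coherence delegated to Beardsley's treatment, which is precisely the colimit-interchange-plus-monodromy argument you spell out. Your write-up is in fact a more detailed version of the paper's sketch, so nothing further is needed.
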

The argument to prove Proposition \ref{thom} also goes through with slight
modifications when $k=0$, and shows:
\begin{propo}
    Let $X\xrightarrow{i} Y\to Z$ be a fiber sequence of spaces with $Z$
    connected, and let $R$ be an $\E{m}$-ring for $m\geq 1$. Suppose that
    $\mu:Y\to \B\GL_1(R)$ is a map of Kan complexes. Then, there is a map
    $\phi:Z\to \B\Aut_R(X^{\mu \circ i})$ such that the homotopy colimit (i.e., ``Thom spectrum'') $Z^\phi$ of the following composite is equivalent to $Y^\mu$ as an $R$-module:
    \begin{equation}\label{hocolim}
        Z\xar{\phi} \B\Aut_R(X^{\mu \circ i}) \subseteq \Mod_R.
    \end{equation}
\end{propo}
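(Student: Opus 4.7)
The plan is to mimic the proof of Proposition \ref{thom}, using the description of Thom spectra as colimits, but dropping all reference to loop space/multiplicative structure since none is being claimed on $\phi$ or its Thomification. The entire content is a Fubini-style decomposition of a colimit over $Y$ as an iterated colimit indexed first by $Z$ and then by the fibers.

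First I would replace $Y \to Z$ by an actual fibration (up to equivalence, which is all that matters) and invoke the straightening/unstraightening correspondence: the fibration with fiber $X$ over the Kan complex $Z$ is classified by a functor $F : Z \to \mathcal{S}$ sending each vertex $z$ to the fiber $X_z$, with $Y \simeq \colim_{z \in Z} F(z)$. Since $Z$ is connected, every $X_z$ is (uncanonically) equivalent to $X$, so $F$ lands in the connected component of $\mathcal{S}$ containing $X$, i.e., factors through $\B\Aut(X)$.

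Next I would compose with $\mu$. By definition $Y^\mu = \colim_{y \in Y}(L_{\mu(y)})$ is the colimit in $\Mod(R)$ of the functor $Y \xrightarrow{\mu} \B\GL_1(R) \to \Mod(R)$ (sending a point to the corresponding invertible $R$-module $L_{\mu(y)}$). Combining this with the expression of $Y$ as a colimit over $Z$ and using that colimits commute with colimits, one obtains
\[
Y^\mu \;\simeq\; \colim_{z \in Z}\, \colim_{x \in X_z}\, L_{\mu(i(x))} \;\simeq\; \colim_{z \in Z} (X_z)^{\mu|_{X_z}}.
\]
For each $z$ the inner Thom spectrum $(X_z)^{\mu|_{X_z}}$ is equivalent to $X^{\mu \circ i}$ as an $R$-module, since $X_z \simeq X$ compatibly with the restriction of $\mu$. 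Therefore the functor $z \mapsto (X_z)^{\mu|_{X_z}}$ is a functor $Z \to \Mod(R)$ whose essential image is a single equivalence class, namely that of $X^{\mu \circ i}$; because $Z$ is connected, this functor factors through the maximal subgroupoid $\B\Aut_R(X^{\mu \circ i}) \subset \Mod(R)$. Call this factorization $\phi : Z \to \B\Aut_R(X^{\mu \circ i})$. By construction its Thom $R$-module $Z^\phi$ is the colimit displayed above, which is $Y^\mu$.

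The only real obstacle is the coherence issue in the last paragraph: producing $\phi$ as an honest map of spaces (rather than just a collection of fiberwise identifications) requires the straightening equivalence and the identification of $\B\Aut_R(M)$ with the connected component of $M$ in the Kan complex underlying $\Mod(R)$. Both are standard in the $\infty$-categorical setup used in Section \ref{thom-background} following \cite{abghr-ii}, so no new input is needed beyond the Kan extension description of Thom spectra already implicitly used to establish Proposition \ref{thom-base} and Proposition \ref{thom}. The hypothesis $m \geq 1$ on $R$ is only used to ensure that $\Mod(R)$ and $\B\GL_1(R)$ are defined; connectedness of $Z$ is what lets $\phi$ take values in the single automorphism space rather than the whole module category.
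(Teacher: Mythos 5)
Your argument is correct and is essentially the paper's own: the paper deduces this statement from the observations that Thom spectra are colimits (equivalently, Kan extensions) and that the total space of a fibration is the colimit over the base of its fibers, which is exactly your Fubini decomposition $Y^\mu \simeq \colim_{z\in Z}(X_z)^{\mu|_{X_z}}$ followed by the observation that connectedness of $Z$ forces the resulting functor to land in $\B\Aut_R(X^{\mu\circ i})$. The coherence point you flag is indeed handled by the straightening/Kan-extension formalism already invoked in Section \ref{thom-background}, so no new input is needed.
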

We will abusively refer to this result in the sequel also as Proposition
\ref{thom}.
\begin{proof}[Proof of the second form of Proposition \ref{thom}]
It will be convenient to use the model for Thom spectra following \cite{abghr-ii}. Observe that a fibration $X \to Y \to Z$ implies (e.g., by \cite[Remark 2.4]{abghr-ii}) that there is a functor $Z \to \Top$ whose homotopy colimit is $Y$, and whose fiber over any vertex of $z\in Z$ is $X$. Since $X$ is connected, we may write $Y \simeq \hocolim_Z X$. The map $X \to Y$ is induced by the inclusion $\{z\}\hookrightarrow Z$.
Since $Y$ is a Kan complex, the Thom spectrum $Y^\mu$ can be identified (by \cite[Definition 1.4]{abghr-ii}) with the homotopy colimit of the composite $Y \xar{\mu} \B\GL_1(R) \simeq R\text{-}\mathrm{line} \subseteq \Mod_R$ (which we will temporarily denote by $\mu: Y \to \Mod_R$). We will write this as $Y^\mu \simeq \hocolim_Y R$. The left Kan extension of the map $Y \to Z$ along the functor $\mu: Y \to \Mod_R$ defines a functor $\phi: Z \to \Mod_R$, which sends $z\in Z$ to $X^{\mu \circ i} \simeq \hocolim(X \to Y \xar{\mu} \Mod_R)$. Since $Z$ is connected, this implies that $Y^\mu \simeq \hocolim_Y R$ is the homotopy colimit of the functor \eqref{hocolim}.
\end{proof}

The following is a slight generalization of \cite[Theorem 4.10]{barthel-thom}:
\begin{theorem}\label{thom-univ}
    Let $R$ be an $\E{k+1}$-ring for $k\geq 0$, and let $\alpha: Y \to
    \B\GL_1(R)$ be a map from a pointed space $Y$. For any $0\leq m\leq k$, let
    $\wt{\alpha}: \Omega^m \Sigma^m Y \to \B\GL_1(R)$ denote the extension of
    $\alpha$. Then the Thom spectrum $(\Omega^m \Sigma^m Y)^{\wt{\alpha}}$ is
    the free $\E{m}$-$R$-algebra $A$ for which the composite $Y\to \B\GL_1(R) \to
    \B\GL_1(A)$ is null. More precisely, if $A$ is any $\E{m}$-$R$-algebra, then
    $$\Map_{\Alg^{\E{m}}_R}((\Omega^m \Sigma^m Y)^{\wt{\alpha}}, A) \simeq
    \begin{cases}
	\Map_\ast(Y, \Omega^{\infty} A) & \text{if }\alpha: Y\to \B\GL_1(R)
	\to \B\GL_1(A) \text{ is null},\\
	\emptyset & \text{else}.
    \end{cases}$$
\end{theorem}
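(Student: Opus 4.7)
The plan is to prove the universal property by chaining two adjunctions, following Barthel's argument in \cite{barthel-thom}. First, since $\wt{\alpha}$ is an $\E{m}$-map and $R$ is an $\E{m+1}$-ring, the earlier proposition endows $(\Omega^m\Sigma^m Y)^{\wt{\alpha}}$ with a canonical $\E{m}$-$R$-algebra structure, so the left-hand side of the desired equivalence is well-defined. The strategy is to identify $\E{m}$-$R$-algebra maps out of this Thom spectrum first with $\E{m}$-nullhomotopy data on $\Omega^m\Sigma^m Y$, and then with pointed-space data on $Y$.

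For the first reduction, one uses that the Thom spectrum is a colimit indexed by $\Omega^m\Sigma^m Y$ of free $R$-modules, and that this colimit is compatible with the $\E{m}$-algebra structures via Proposition \ref{thom-base}. Concretely, an $\E{m}$-$R$-algebra map $(\Omega^m\Sigma^m Y)^{\wt{\alpha}} \to A$ is equivalent, after base change along $R \to A$, to an $A$-module trivialization of $(\Omega^m\Sigma^m Y)^{\wt{\alpha}} \wedge_R A$, which (by combining Corollary \ref{thom-iso} with $\E{m}$-coherence) is in turn equivalent to an $\E{m}$-nullhomotopy of the composite
\[\Omega^m\Sigma^m Y \xrightarrow{\wt{\alpha}} \B\GL_1(R) \longrightarrow \B\GL_1(A).\]
For the second reduction, I would invoke the universal property of $\Omega^m\Sigma^m Y$ as the free $\E{m}$-space on the pointed space $Y$: for any $\E{m}$-space $B$, $\E{m}$-maps $\Omega^m\Sigma^m Y \to B$ correspond to pointed maps $Y \to B$. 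Applied to $B = \B\GL_1(A)$, this identifies $\E{m}$-nullhomotopies of the displayed composite with pointed nullhomotopies of $\alpha\colon Y \to \B\GL_1(A)$. Such pointed nullhomotopies form an empty space when $\alpha$ is not null, and otherwise a torsor for $\Map_\ast(Y, \Omega\B\GL_1(A)) = \Map_\ast(Y, \GL_1(A))$, which is identified with $\Map_\ast(Y, \Omega^\infty A)$ via the inclusion $\GL_1(A) \hookrightarrow \Omega^\infty A$ of units (with the multiplicative unit as basepoint).

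The main obstacle will be to make the first reduction precise in the $\E{m}$-coherent setting: to show rigorously that the Thom spectrum functor from $\E{m}$-spaces over $\B\GL_1(R)$ to $\E{m}$-$R$-algebras is left adjoint to a functor whose value on $A$ encodes precisely the $\E{m}$-nullhomotopy data above. This is essentially the content of the universal property established by Ando-Blumberg-Gepner-Hopkins-Rezk in \cite{abghr-ii}, and the present theorem differs from Barthel's only in the range of $m$ considered, so the relevant $\infty$-categorical adjunction argument can be imported with only minor modifications.
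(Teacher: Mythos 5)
Your overall route is the same as the paper's: the paper gives no independent argument for Theorem \ref{thom-univ}, presenting it as a slight generalization of the main result of \cite{barthel-thom} (in the framework of \cite{abghr-ii}), and your plan --- the structured universal property of Thom spectra followed by the free $\E{m}$-space adjunction for $\Omega^m\Sigma^m Y$ --- is exactly how that generalization is extracted. Two steps, however, do not typecheck as written and need repair. First, an $\E{m}$-$R$-algebra map $(\Omega^m\Sigma^m Y)^{\wt{\alpha}}\to A$ is \emph{not} equivalent to an $A$-module trivialization of the base change $(\Omega^m\Sigma^m Y)^{\wt{\alpha}}\wedge_R A$: that datum (Corollary \ref{thom-iso}) records only module-level information, and its space is genuinely larger than the structured mapping space (test with $\alpha$ null, where the left side is $\Map_\ast(Y,\Omega^\infty A)$ but unital $A$-module trivializations form a much bigger space). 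The theorem of \cite{barthel-thom} is precisely that the $\E{m}$-algebra mapping space is computed by $\E{m}$-structured trivialization data, and it has to be invoked in that form rather than recovered from the unstructured Thom isomorphism plus ``coherence.''

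Second, when $A$ is merely an $\E{m}$-$R$-algebra, $\B\GL_1(A)$ is only an $\E{m-1}$-space (a single delooping of the grouplike $\E{m}$-space $\GL_1(A)$), so both the phrase ``$\E{m}$-nullhomotopy of the composite $\Omega^m\Sigma^m Y\to\B\GL_1(A)$'' and your application of the adjunction $\Map_{\E{m}}(\Omega^m\Sigma^m Y,B)\simeq\Map_\ast(Y,B)$ with $B=\B\GL_1(A)$ fail to parse: that adjunction needs $B$ to be an $m$-fold loop space, i.e.\ $A$ to be $\E{m+1}$. The standard fix, and the way \cite{barthel-thom} states the universal property, is to encode the trivialization data as $\E{m}$-maps over $\B\GL_1(R)$ into the fiber of $\B\GL_1(R)\to\B\GL_1(A)$, i.e.\ the bar construction $\mathrm{B}(\ast,\GL_1(R),\GL_1(A))$, which \emph{does} carry a compatible $\E{m}$-structure; the free $\E{m}$-space adjunction then reduces such lifts to pointed lifts of $\alpha$, and the fiber sequence identifies these with pointed nullhomotopies of $Y\to\B\GL_1(A)$ --- empty when $\alpha$ is essential, and otherwise equivalent to $\Map_\ast(Y,\GL_1(A))\simeq\Map_\ast(Y,\Omega^\infty A)$, the last identification using translation by the unit (and connectivity of $Y$, a caveat already implicit in the paper's statement). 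With these two repairs your argument is the intended one.
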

\begin{remark}\label{em-quotient-terminology}
    Say $Y = S^{n+1}$, so $\alpha$ detects an element $\alpha\in \pi_n R$.
    Theorem \ref{thom-univ} suggests interpreting the Thom spectrum $(\Omega^m
    S^{m+n+1})^{\wt{\alpha}}$ as an $\E{m}$-quotient; to signify this, we will
    denote it by $R\mmod_{\E{m}} \alpha$. If $m=1$, then we will simply denote
    it by $R\mmod \alpha$, while if $m=0$, then the $\E{m}$-quotient is simply
    the ordinary quotient $R/\alpha$. See \cite[Definition 4.3]{barthel-thom},
    where the quotient $R\mmod_{\E{m}} \alpha$ is called the \emph{versal
    $R$-algebra of characteristic $\alpha$}.
\end{remark}

\subsection{The Hopkins-Mahowald theorem}

The primary motivation for this project is the following miracle (see
\cite{mahowald-thom} for $p=2$ and \cite[Lemma 3.3]{mrs} for $p>2$, as well as
\cite[Theorem 5.1]{barthel-thom} for a proof of the equivalence as one of $\E{2}$-algebras):
\begin{theorem}[Hopkins-Mahowald]\label{hm}
    Let $\S^\wedge_p$ be the $p$-completion of the sphere at a prime $p$, and
    let $f:S^1\to \B\GL_1(\S^\wedge_p)$ detect the element $1-p\in \pi_1
    \B\GL_1(\S^\wedge_p) \simeq \Z_p^\times$. Let $\mu:\Omega^2 S^3\to
    \B\GL_1(\S^\wedge_p)$ denote the $\E{2}$-map extending $f$; then there is a
    $p$-complete equivalence $(\Omega^2 S^3)^\mu \to \H\FF_p$ of $\E{2}$-ring
    spectra.
\end{theorem}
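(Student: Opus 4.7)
The plan is to leverage the universal property of iterated Thom spectra already recorded in Theorem \ref{thom-univ} and then pin down the resulting $\E{2}$-algebra by a mod $p$ homology computation enabled by Corollary \ref{thom-iso}. Since $\Omega^2 S^3 = \Omega^2 \Sigma^2 S^1$ and $\mu$ is obtained by $\E{2}$-extension of $f$, Theorem \ref{thom-univ} with $m=2$ identifies $R := (\Omega^2 S^3)^\mu$ as the initial $\E{2}$-$\S^\wedge_p$-algebra $A$ such that the composite $S^1 \xrightarrow{f} \B\GL_1(\S^\wedge_p) \to \B\GL_1(A)$ is null. Unwinding the group $\pi_1 \B\GL_1(\S^\wedge_p) \cong \Z_p^\times$, trivializing the class $1-p$ is equivalent to forcing $p$ to vanish in $\pi_0 A$; in the notation introduced after Theorem \ref{thom-univ}, $R = \S^\wedge_p \mmod_{\E{2}} p$.

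Since $\H\FF_p$ is an $\E{\infty}$-$\S^\wedge_p$-algebra in which $p$ is (canonically) null, the universal property produces a canonical $\E{2}$-algebra map $c: R \to \H\FF_p$, and the problem reduces to showing that $c$ is an equivalence. To do this I would smash $c$ with $\H\FF_p$ and apply Corollary \ref{thom-iso}: because the composite $\Omega^2 S^3 \to \B\GL_1(\S^\wedge_p) \to \B\GL_1(\H\FF_p)$ is null (it is null on $S^1$ by construction, and extends as an $\E{2}$-map), we obtain an equivalence $R \wedge \H\FF_p \simeq \H\FF_p \wedge \Sigma^\infty_+ \Omega^2 S^3$, so that $\H_\ast(R;\FF_p)$ is abstractly isomorphic to $\H_\ast(\Omega^2 S^3; \FF_p)$. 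The latter is a classical computation of Kudo--Araki/Dyer--Lashof: it is a polynomial (at $p=2$) or polynomial-tensor-exterior (at odd $p$) algebra on the Dyer--Lashof orbit of the fundamental class, and as a graded $\FF_p$-algebra it has the same Poincaré series as the dual Steenrod algebra $A_\ast = \H_\ast(\H\FF_p;\FF_p)$.

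The crux is then to promote the abstract identification $\H_\ast(R;\FF_p) \cong A_\ast$ to the statement that the ring map $c_\ast: \H_\ast(R;\FF_p) \to A_\ast$ is an isomorphism of $A_\ast$-comodule algebras; once this is in hand, a connectivity/finite-type argument (both spectra are connective, $p$-complete, and of finite type) shows $c$ is a $p$-complete equivalence. The reason $c_\ast$ is an iso is that it is a map of $\E{2}$-$\H\FF_p$-algebras and therefore commutes with homological Dyer--Lashof operations; the Thomified bottom cell of $\Omega^2 S^3$ supplies a class in $\H_0(R;\FF_p)$ mapping to $1 \in A_\ast$, and iterated Dyer--Lashof operations applied to this class generate the polynomial generators $\zeta_i$ (and, at odd primes, the exterior generators $\tau_i$) of $A_\ast$ and also, via the Thom isomorphism and Kudo--Araki, generate $\H_\ast(R;\FF_p)$. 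This Dyer--Lashof bookkeeping — matching the $\E{2}$-operations on the Thomified bottom class with the Milnor generators of the dual Steenrod algebra — is the main obstacle and is really where the miracle lies; everything else is bookkeeping around Theorem \ref{thom-univ} and Corollary \ref{thom-iso}.
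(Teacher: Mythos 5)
Your proposal is essentially the paper's own argument: identify $(\Omega^2 S^3)^\mu$ as the free $\E{2}$-ring with a nullhomotopy of $p$ via Theorem \ref{thom-univ}, get the canonical $\E{2}$-map to $\H\FF_p$, compute homology through the Thom isomorphism, and conclude by comparing Dyer--Lashof actions. Two small points of care: the class supplied by the Thomified bottom cell lives in degree $1$, not degree $0$ (the restriction of $\mu$ to $S^1$ has Thom spectrum $\S/p$, and its degree-$1$ homology class maps to $\zeta_1$, resp.\ $\tau_0$, in $A_\ast$ --- operations applied to the unit class would give nothing), and the assertion that the $\E{2}$-Dyer--Lashof orbit of that class generates $A_\ast$ is exactly Steinberger's calculation (Theorem \ref{steinberger}), which is the non-formal input the paper invokes at this step.
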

It is not too hard to deduce the following result from Theorem \ref{hm}:
\begin{corollary}\label{hm-Z}
    Let $S^3\langle 3\rangle$ denote the $3$-connected cover of $S^3$. Then the
    Thom spectrum of the composite $\Omega^2 S^3\langle 3\rangle \to \Omega^2
    S^3\xrightarrow{\mu} \B\GL_1(\S^\wedge_p)$ is equivalent to $\H\Z_p$ as an
    $\E{2}$-ring.
\end{corollary}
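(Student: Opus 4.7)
The plan is to apply Proposition~\ref{thom} to the fiber sequence of $\E{2}$-spaces
$$\Omega^2 S^3\langle 3\rangle \to \Omega^2 S^3 \to \Omega^2 K(\Z,3)\simeq S^1,$$
obtained by double-looping the Postnikov stage $S^3\langle 3\rangle \to S^3 \to K(\Z,3)$, together with the Hopkins--Mahowald $\E{2}$-map $\mu$ of Theorem~\ref{hm}. Write $Y := (\Omega^2 S^3\langle 3\rangle)^\nu$ for the Thom spectrum in question, which is automatically a connective $\E{2}$-$\S^\wedge_p$-algebra. The proposition produces an $\E{2}$-map $\phi\colon S^1 \to \B\GL_1(Y)$ whose Thomification is identified with $(\Omega^2 S^3)^\mu \simeq \H\FF_p$ as an $\E{1}$-ring. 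Since $\Omega^2 S^3\langle 3\rangle$ is simply connected, $\pi_0(Y) = \Z_p$, and the unit $u \in \Z_p^\times$ classified by $\phi$ satisfies $\Z_p/(1-u) = \FF_p$, forcing $1-u$ to be $p$ up to a unit.

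Next I would compute $H_*(Y;\FF_p)$. Since $\B\GL_1(\H\FF_p) \simeq \B\FF_p^\times$ is a discrete space and $\Omega^2 S^3\langle 3\rangle$ is simply connected, the composite $\Omega^2 S^3\langle 3\rangle \to \B\GL_1(\S^\wedge_p) \to \B\GL_1(\H\FF_p)$ is null, so Corollary~\ref{thom-iso} (the Thom isomorphism) gives
$$H_*(Y;\FF_p) \cong H_*(\Omega^2 S^3\langle 3\rangle;\FF_p)$$
as Steenrod-comodule algebras. A classical Kudo--Araki--Dyer--Lashof computation identifies the right-hand side with the subcomodule algebra $H_*(\H\Z_p;\FF_p) \subset A_*$: at $p=2$ this is $\FF_2[\zeta_1^2,\zeta_2,\zeta_3,\ldots]$, and at odd primes $\FF_p[\zeta_1,\zeta_2,\ldots] \otimes E(\tau_1,\tau_2,\ldots)$. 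Both appear naturally as the kernel of the $\zeta_1$- (respectively $\tau_0$-) coaction inside $A_*$, matching exactly the mod-$p$ homology of the Eilenberg--MacLane spectrum $\H\Z_p$.

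Finally I would use the Postnikov truncation $f\colon Y \to \tau_{\leq 0} Y = \H\Z_p$, an $\E{2}$-ring map between connective $p$-complete spectra inducing the identity on $\pi_0$. Under the identification of $H_*(Y;\FF_p)$ with $H_*(\H\Z_p;\FF_p)$ from the previous step, the induced map $f_*$ is a Steenrod-comodule algebra endomorphism of $H_*(\H\Z_p;\FF_p)$ sending $1 \mapsto 1$; a standard inductive rigidity argument (each algebra generator $\zeta_1^p, \zeta_2, \zeta_3, \ldots$ and $\tau_i$ for $i\geq 1$ is pinned down in degree $|\zeta_i|$ or $|\tau_i|$ by its Steenrod coaction in terms of strictly lower-degree generators) forces such an endomorphism to be the identity. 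Hence $f$ is an isomorphism on $H_*(-;\FF_p)$ and therefore an equivalence of $p$-complete spectra; uniqueness of the $\E{2}$-algebra structure on $\H\Z_p$ upgrades this to an equivalence of $\E{2}$-rings.

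The main technical obstacle is the final rigidity step. One needs to verify carefully that the Thom isomorphism intertwines the Steenrod coactions (so that the identification in the second paragraph is genuinely one of comodules, not merely of $\FF_p$-algebras), and then to rule out any nontrivial self-maps of $H_*(\H\Z_p;\FF_p)$ by the degree induction sketched above---checking, for instance, that the only element in degree $|\zeta_i|$ of the subalgebra generated by $\zeta_j^p$ for $j<i$ together with $\zeta_k$ for $1<k<i$ is the appropriate decomposable monomial accounted for by the coaction formula.
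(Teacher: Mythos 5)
Your first step (applying Proposition~\ref{thom} to the double-looped Postnikov fibration and identifying the resulting unit with $1-p$ up to a unit) is fine, but the central step of your argument has a genuine gap: Corollary~\ref{thom-iso} does \emph{not} give an isomorphism of Steenrod comodules, only of $\H\FF_p$-modules (hence of algebras on homotopy). This is exactly the point flagged in the proof of Theorem~\ref{hm}: the Thom twisting is highly nontrivial, and the Thom isomorphism fails to intertwine the coactions. In fact the two comodules you want to identify are genuinely non-isomorphic: in $\H_\ast(\H\Z_p;\FF_p)$ the coaction on $\zeta_1^2$ (resp.\ on the bottom exterior generator at odd primes) contains the term $\zeta_1^2\otimes 1$ hitting the unit class --- dually, $\Sq^2$ (resp.\ $\mathrm{P}^1$-type operations) acts nontrivially on the degree-zero class of $\H\Z_p$, since $\H^\ast(\H\Z;\FF_2)\cong A/A\Sq^1$ is cyclic on its bottom class --- whereas for a suspension spectrum $\Sigma^\infty_+ X$ no positive-degree Steenrod operation acts nontrivially on the unit, so no reduced class of $\H_\ast(\Omega^2 S^3\langle 3\rangle_+;\FF_p)$ has a coaction term ending in the basepoint class. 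So the displayed comodule identification, and the claim that the space-level homology ``is'' $\H_\ast(\H\Z_p;\FF_p)\subset A_\ast$, are false as stated, and the final rigidity argument then has nothing to act on.

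There are two standard ways to repair this. One is to argue as in the paper's proof of Theorem~\ref{hm}: Thomify the covering map to get an $\E{2}$-map $Y\to(\Omega^2 S^3)^\mu\simeq\H\FF_p$, and use Steinberger's theorem together with the Dyer--Lashof hopping argument (Remark~\ref{hopping}) to show this map is injective on mod $p$ homology with image $\FF_2[\zeta_1^2,\zeta_2,\dots]$ (resp.\ its odd-primary analogue), which pins down the comodule structure; your truncation-plus-rigidity step then goes through. Alternatively, and more simply, you can avoid homology altogether using your own first paragraph: Proposition~\ref{thom} exhibits $\H\FF_p$ as the cofiber of $1-u$ on $Y$ with $1-u$ a unit multiple of $p$, so $Y/p\simeq\H\FF_p$; since $\Omega^2 S^3\langle 3\rangle$ has finite type and is simply connected, $Y$ is connective with $\pi_0 Y\cong\Z_p$ and each $\pi_n Y$ a finitely generated $\Z_p$-module, and the long exact sequence shows $\pi_n Y$ is $p$-divisible for $n\geq 1$, hence zero by Nakayama. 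Thus the Postnikov truncation $Y\to\H\pi_0 Y=\H\Z_p$, which is an $\E{2}$-map, is an equivalence. (Also a small point: $\B\GL_1(\H\FF_p)$ is a $K(\FF_p^\times,1)$, not a discrete space, though your nullity conclusion for a simply connected source is correct.)
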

\begin{remark}\label{hm-nilp}
    Theorem \ref{hm} implies a restrictive version of the nilpotence theorem: if
    $R$ is an $\E{2}$-ring spectrum, and $x\in \pi_\ast R$ is a simple
    $p$-torsion element which has trivial $\H\FF_p$-Hurewicz image, then $x$ is
    nilpotent. This is explained in \cite[Proposition
    4.19]{mathew-naumann-noel}. Indeed, to show that $x$ is nilpotent, it
    suffices to show that the localization $R[1/x]$ is contractible. Since $px =
    0$, the localization $R[1/x]$ is an $\E{2}$-ring in which $p=0$, so the
    universal property of Theorem \ref{thom-univ} implies that there is an
    $\E{2}$-map $\H\FF_p\to R[1/x]$. It follows that the unit $R\to R[1/x]$
    factors through the Hurewicz map $R\to R\wedge\H\FF_p$. In particular, the
    multiplication by $x$ map on $R[1/x]$ factors as the indicated dotted map:
    $$\xymatrix{
	\Sigma^{|x|} R \ar[d] \ar[r]^-x & R \ar[r] \ar[d] & R[1/x].\\
	\H\FF_p \wedge \Sigma^{|x|} R \ar[r]^-x & R \wedge \H\FF_p \ar@{-->}[ur]
	& 
    }$$
    However, the bottom map is null (because $x$ has trivial $\H\FF_p$-Hurewicz
    image), so $x$ must be null in $\pi_\ast R[1/x]$. This is possible if and
    only if $R[1/x]$ is contractible, as desired.
    See Proposition \ref{thom-Z-nilp} for the analogous connection
    between Corollary \ref{hm-Z} and nilpotence.
\end{remark}

Since an argument similar to the proof of Theorem \ref{hm} will be necessary
later in Step 2 of Section \ref{the-proof}, we will recall a proof of this
theorem. The key non-formal input is the following result of Steinberger's from
\cite[Theorems III.2.2 and III.2.3]{bmms}:
\begin{theorem}[Steinberger]\label{steinberger}
    Let $\zeta_i$ denote the conjugate to the Milnor generators $\xi_i$ of the
    dual Steenrod algebra, and similarly for $\tau_i$ at odd primes. Then
    \begin{equation}\label{steinberger-eqn}
	Q^{p^i} \zeta_i = \zeta_{i+1}, \ Q^{p^j} \tau_j = \tau_{j+1}
    \end{equation}
    for $i,j+1>0$.
\end{theorem}

\begin{proof}[Proof of Theorem \ref{hm}]
    By Corollary \ref{thom-univ}, the Thom spectrum $(\Omega^2 S^3)^\mu$ is the
    free $\E{2}$-ring with a nullhomotopy of $p$. Since $\H\FF_p$ is an
    $\E{2}$-ring with a nullhomotopy of $p$, we obtain an $\E{2}$-map $(\Omega^2
    S^3)^\mu \to \H\FF_p$. To prove that this map is a $p$-complete equivalence,
    it suffices to prove that it induces an isomorphism on mod $p$ homology.

    The mod $p$ homology of $(\Omega^2 S^3)^\mu$ can be calculated directly via
    the Thom isomorphism
    $\H\FF_p \wedge (\Omega^2 S^3)^\mu \simeq \H\FF_p \wedge \Sigma^\infty_+
    \Omega^2 S^3$.
    Note that this is \emph{not} an equivalence as $\H\FF_p\wedge
    \H\FF_p$-comodules: the Thom twisting is highly nontrivial.

    For simplicity, we will now specialize to the case $p=2$, although the same
    proof works at odd primes. The homology of $\Omega^2 S^3$ is classical: it
    is a polynomial ring generated by applying $\E{2}$-Dyer-Lashof operations to
    a single generator $x_1$ in degree $1$. Theorem \ref{steinberger} implies
    that the same is true for the mod $2$ Steenrod algebra: it, too, is a
    polynomial ring generated by applying $\E{2}$-Dyer-Lashof operations to the
    single generator $\zeta_1 = \xi_1$ in degree $1$. Since the map $(\Omega^2
    S^3)^\mu \to \H\FF_2$ is an $\E{2}$-ring map, it preserves
    $\E{2}$-Dyer-Lashof operations on mod $p$ homology. By the above discussion,
    it suffices to show that the generator $x_1\in \H_\ast(\Omega^2 S^3)^\mu
    \cong \H_\ast(\Omega^2 S^3)$ in degree $1$ is mapped to $\zeta_1\in
    \H_\ast\H\FF_2$.

    To prove this, note that $x_1$ is the image of the generator in degree $1$
    in homology under the double suspension $S^1\to \Omega^2 S^3$, and that
    $\zeta_1$ is the image of the generator in degree $1$ in homology under the
    canonical map $\S/p\to \H\FF_p$. It therefore suffices to show that the Thom
    spectrum of the spherical fibration $S^1\to \B\GL_1(\S^\wedge_p)$ detecting
    $1-p$ is simply $\S/p$. This is an easy exercise.
\end{proof}

\begin{remark}
    When $p=2$, one does not need to $p$-complete in Theorem \ref{hm}: the map
    $S^1\to \B\GL_1(\S^\wedge_2)$ factors as $S^1\to \BO \to B\GL_1(\S)$, where
    the first map detects the M\"obius bundle over $S^1$, and the second map is
    the $J$-homomorphism. 
\end{remark}

\begin{notation}\label{cup-1-defn}
    Let $\mathcal{Q}_1$ denote the (operadic nerve of the) \emph{cup-1 operad}
    from \cite[Example 1.3.6]{lawson-dl}: this is the operad whose $n$th space
    is empty unless $n=2$, in which case it is $S^1$ with the antipodal action
    of $\Sigma_2$. We will need to slightly modify the definition of $\cQ_1$
    when localized at an odd prime $p$: in this case, it will denote the operad
    whose $n$th space is a point if $n<p$, empty if $n>p$, and when $n=p$ is the
    ordered configuration space $\Conf_p(\RR^2)$ with the permutation action of
    $\Sigma_p$. Any homotopy commutative ring admits the structure of a
    $\mathcal{Q}_1$-algebra at $p=2$, but at other primes it is slightly
    stronger to be a $\cQ_1$-algebra than to be a homotopy commutative ring. If
    $k\geq 2$, any $\E{k}$-algebra structure on a spectrum restricts to a
    $\cQ_1$-algebra structure.
\end{notation}

\begin{remark}\label{Q1-opn}
    As stated in \cite[Proposition 1.5.29]{lawson-dl}, the operation $Q_1$
    already exists in the mod $2$ homology of any \emph{$\cQ_1$-ring} $R$, where
    $\mathcal{Q}_1$ is the cup-1 operad from Notation \ref{cup-1-defn} --- the
    entire $\E{2}$-structure is not necessary. With our modification of $\cQ_1$
    at odd primes as in Remark \ref{cup-1-defn}, this is also true at odd
    primes.
\end{remark}

\begin{remark}\label{cup-1}
    We will again momentarily specialize to $p=2$ for convenience. Steinberger's
    calculation in Theorem \ref{steinberger} can be rephrased as stating that
    $Q_1 \zeta_i = \zeta_{i+1}$, where $Q_1$ is the \emph{lower-indexed}
    Dyer-Lashof operation. (See \cite[Page 59]{bmms} for this notation.) As in
    Remark \ref{Q1-opn}, the operation $Q_1$ already exists in the mod $p$
    homology of any $\cQ_1$-ring $R$.
    Since homotopy commutative rings are $\mathcal{Q}_1$-algebras in spectra,
    this observation can be used to prove results of W\"urgler (\cite[Theorem
    1.1]{wurgler}) and Pazhitnov-Rudyak (\cite[Theorem in
    Introduction]{hf2-old}).
\end{remark}

\begin{remark}\label{hopping}
    The argument with Dyer-Lashof operations and Theorem \ref{steinberger} used
    in the proof of Theorem \ref{hm} will be referred to as the
    \emph{Dyer-Lashof hopping argument}. It will be used again (in the same
    manner) in the proof of Theorem \ref{main-thm}.
\end{remark}

\begin{remark}
    Theorem \ref{hm} is \emph{equivalent} to Steinberger's calculation (Theorem
    \ref{steinberger}), as well as to B\"okstedt's calculation of
    $\mathrm{THH}(\FF_p)$ (as a ring spectrum, and not just the calculation of
    its homotopy). 
    Let us sketch an argument.
    First, Theorem \ref{steinberger} implies Theorem \ref{hm} (by the proof
    above). The other direction (i.e., the calculation \eqref{steinberger-eqn})
    can be argued by observing that the Thom isomorphism $\H\FF_p \wedge \H\FF_p
    \simeq \H\FF_p \wedge \Sigma^\infty_+ \Omega^2 S^3$ is an equivalence of
    $\E{2}$-$\H\FF_p$-algebras, so that the Dyer-Lashof operations are
    determined by the operations in $\H_\ast(\Omega^2 S^3;\FF_p)$. But the
    Dyer-Lashof operations are \emph{defined} by classes in $\H_\ast(\Omega^2
    S^3;\FF_p)$, and Theorem \ref{steinberger} is a consequence of the fact that
    the iterates of $Q_1$ on the generator of $\H_1(\Omega^2 S^3;\FF_p)$
    describe all the polynomial generators $\H_\ast(\Omega^2 S^3;\FF_p)$.

    It remains to argue that Theorem \ref{hm} is equivalent to the calculation
    that $\mathrm{THH}(\FF_p) \simeq \FF_p[\Omega S^3]$ as an
    $\E{1}$-$\FF_p$-algebra. This is showed in \cite[Remark
    1.5]{krause-nikolaus-dvr}.
\end{remark}

\subsection{No-go theorems for higher chromatic heights}

In light of Theorem \ref{hm} and Corollary \ref{hm-Z}, it is natural to wonder
if appropriate higher chromatic analogues of $\H\FF_p$ and $\H\Z$, such as
$\BP{n}$, $\bo$, or $\tmf$ can be realized as Thom spectra of spherical
fibrations. The answer is known to be negative (see \cite{mahowald-bo-bu,
rudyak, hood-tmf}) in many cases:
\begin{theorem}[Mahowald, Rudyak, Chatham]\label{no-go}
    There is no space $X$ with a spherical fibration $\mu:X\to \B\GL_1(\S)$ (even
    after completion) such that $X^\mu$ is equivalent to $\BP{1}$ or $\bo$.
    Moreover, there is no $2$-local loop space $X'$ with a spherical fibration
    determined by an H-map $\mu:X'\to \B\GL_1(\S^\wedge_2)$ such that ${X'}^\mu$
    is equivalent to $\tmf^\wedge_2$.
\end{theorem}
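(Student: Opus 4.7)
The plan is to use the Thom isomorphism in mod-$p$ cohomology to translate the Steenrod algebra structure on $H^*(\bo;\FF_2)$, $H^*(\BP{1};\FF_p)$, or $H^*(\tmf;\FF_2)$ into constraints on the Stiefel--Whitney (or odd-primary Wu) classes of a putative spherical fibration, and then derive a contradiction. For $\bo$ at $p=2$: if $X^\mu \simeq \bo$, the Thom isomorphism identifies the Thom class $U \in H^0(X^\mu;\FF_2)$ with the bottom class of $H^*(\bo;\FF_2) = \mathcal{A}\mmod\mathcal{A}(1)$. Since this bottom class is annihilated by $\Sq^1$ and $\Sq^2$, the identity $\Sq^i U = w_i(\mu)\cdot U$ forces $w_1(\mu) = w_2(\mu) = 0$. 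Rudyak's argument \cite{rudyak} then propagates this vanishing through Wu's formula to show that no admissible collection of $w_i(\mu)$ can produce the full $\mathcal{A}$-module $\mathcal{A}\mmod\mathcal{A}(1)$, in which, for instance, $\Sq^4$ acts nontrivially on the bottom class. The $\BP{1}$ case at odd $p$ proceeds along the same lines, using $H^*(\BP{1};\FF_p) = \mathcal{A}\mmod E(Q_0, Q_1)$ in place of $\mathcal{A}\mmod\mathcal{A}(1)$ and odd-primary Wu classes for spherical fibrations.

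For $\tmf^\wedge_2$: the H-space hypothesis on $X'$ and the H-multiplicativity of $\mu$ ensure that $(X')^\mu$ is a homotopy associative ring spectrum. One begins as above: since $H^*(\tmf;\FF_2) = \mathcal{A}\mmod\mathcal{A}(2)$ has bottom class annihilated by exactly $\Sq^1, \Sq^2, \Sq^4, \Sq^8$, one gets $w_i(\mu) = 0$ for $i \in \{1,2,4,8\}$. Chatham's contribution \cite{hood-tmf} is then to leverage the multiplicative structure on $(X')^\mu$ to extract \emph{secondary} cohomological constraints --- corresponding to Massey products or secondary Steenrod operations --- whose compatibility with the Thom construction forces further vanishing that is incompatible with the nontrivial action of $\mathcal{A}$ on $\mathcal{A}\mmod\mathcal{A}(2)$.

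The principal obstacle is the $\tmf^\wedge_2$ case. For $\bo$ and $\BP{1}$, ruling out the Thom construction is essentially combinatorial bookkeeping with Wu's formula; for $\tmf$ the H-map hypothesis genuinely cannot be dispensed with (indeed, Theorem \ref{main-thm} will show that $\tmf$ \emph{is} a Thom spectrum, once one permits bundles of $B$-lines in place of spherical fibrations). The delicate step is therefore to pin down precisely which secondary operations are forced by H-multiplicativity, and to verify their incompatibility with the Steenrod module structure of $H^*(\tmf;\FF_2)$.
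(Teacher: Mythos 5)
There is a genuine gap, and it sits exactly at the step you wave through. For the first statement, the Thom isomorphism plus the vanishing of $\Sq^1$ and $\Sq^2$ on the bottom class of $\H^\ast(\bo;\FF_2)\cong\mathcal{A}\mmod\mathcal{A}(1)$ only tells you $w_1(\mu)=w_2(\mu)=0$ and $w_4(\mu)\neq 0$; there is no contradiction here, primary Wu--formula bookkeeping notwithstanding. A Spin-type twisting with $w_1=w_2=0$ and $w_4\neq 0$ is perfectly consistent at the level of primary operations (it is realized on the bottom Postnikov stage by honest Spin bundles, and the whole point of this paper is that the corresponding twisted comodule structure \emph{is} realized once one replaces spherical fibrations by bundles of $A$- or $B$-lines). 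So the sentence ``Wu's formula shows that no admissible collection of $w_i(\mu)$ can produce $\mathcal{A}\mmod\mathcal{A}(1)$'' is precisely the assertion that needs proof, and it is not how the cited arguments run: Mahowald's, Rudyak's and Chatham's proofs analyze the skeleta of $X$ and of the classifying map $\mu$ --- which elements of $\pi_\ast\S$ can appear as the twisting over the bottom cells, what the attaching maps of $X$ can be, and what orders and Toda brackets these would have to satisfy --- and the contradictions come from calculations with (unstable) homotopy groups of spheres, as the paper states. The closest cohomological argument appearing in the paper (Priddy's, for $\BP{n}$ over double loop maps) needs the double-loop hypothesis to control $\H^\ast(X)$ via the Eilenberg--Moore spectral sequence and uses genuine secondary operations (Liulevicius, Adams), not Wu classes; without such structural hypotheses your cohomological scheme has no teeth.

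There is also a concrete error in the $\tmf$ case: $\mathcal{A}(2)$ is generated by $\Sq^1,\Sq^2,\Sq^4$, and $\Sq^8$ does \emph{not} annihilate the bottom class of $\H^\ast(\tmf;\FF_2)\cong\mathcal{A}\mmod\mathcal{A}(2)$ --- its degree-$8$ generator is exactly $\Sq^8$ of the Thom class. So your conclusion should be $w_1=w_2=w_4=0$ and $w_8\neq 0$ (reflecting a $\sigma$-twisting over the bottom cell $S^8$), not $w_8=0$. Moreover, attributing Chatham's theorem to ``secondary operations forced by H-multiplicativity'' misdescribes it; the role of the loop-space/H-map hypothesis there is homotopy-theoretic (Mahowald-style control of the possible loop structures and unstable homotopy calculations), not a Massey-product bookkeeping device. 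Finally, note that the paper itself does not reprove this theorem: it is quoted from \cite{mahowald-bo-bu, rudyak, hood-tmf}, so any complete write-up would have to actually reproduce those skeleton-by-skeleton homotopy-theoretic arguments rather than defer to them under an inaccurate description.
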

The proofs rely on calculations in the unstable homotopy groups of spheres.

\begin{remark}\label{idk}
    Although not written down anywhere, a slight modification of the argument
    used by Mahowald to show that $\bu$ is not the Thom spectrum of a spherical
    fibration over a loop space classified by an H-map can be used to show that
    $\BP{2}$ at $p=2$ (i.e., $\tmf_1(3)$) is not the Thom spectrum of a
    spherical fibration over a loop space classified by an H-map. We do not know
    a proof that $\BP{n}$ is not the Thom spectrum of a spherical fibration over
    a loop space classified by an H-map for all $n\geq 1$ and all primes, but we
    strongly suspect this to be true.
\end{remark}

\begin{remark}
    A lesser-known no-go result due to Priddy appears in \cite[Chapter
    2.11]{lewis-thesis}, where it is shown that $\BPP$ cannot appear as the Thom
    spectrum of a double loop map $\Omega^2 X\to \B\GL_1(\S)$. In fact, the
    argument shows that the same result is true with $\BPP$ replaced by $\BP{n}$
    for $n\geq 1$; we had independently come up with this argument for $\BP{1}$
    before learning about Priddy's argument. Since Lewis' thesis is not
    particularly easy to acquire, we give a sketch of Priddy's argument. By the
    Thom isomorphism and the calculation (see \cite[Theorem
    4.3]{lawson-naumann}, as well as \cite[Proposition
    1.7]{wilson-omega-spectrum} and \cite[Proposition 5.3]{angeltveit-rognes}):
    $$\H_\ast(\BP{n-1};\FF_p) \cong \begin{cases}
	\FF_2[\zeta_1^2, \cdots, \zeta_{n-1}^2, \zeta_n^2, \zeta_{n+1}, \cdots]
	& p=2,\\
	\FF_p[\zeta_1, \zeta_2, \cdots]\otimes \Lambda_{\FF_p}(\tau_n,
	\tau_{n+1}, \cdots) & p>2,
    \end{cases}$$
    we find that the mod $p$ homology of $\Omega^2 X$ would be isomorphic as an
    algebra to a polynomial ring on infinitely many generators, possibly
    tensored with an exterior algebra on infinitely many generators. The
    Eilenberg-Moore spectral sequence then implies that the mod $p$ cohomology
    of $X$ is given by
    $$\H^\ast(X;\FF_p) \cong \begin{cases}
	\FF_2[b_1, \cdots, b_n, c_{n+1}, \cdots]
	& p=2,\\
	\FF_p[b_1, b_2, \cdots]\otimes \Lambda_{\FF_p}(c_{n+1}, \cdots) & p>2,
    \end{cases}$$
    where $|b_i| = 2p^i$ and $|c_i| = 2p^{i-1}+1$. If $p$ is odd, then since
    $|b_1| = 2p$, we have $\mathrm{P}^p(b_1) = b_1^p$. Liulevicius'
    formula for $\mathrm{P}^1$ in terms of secondary cohomology operations
    (\cite[Theorem 1]{liulevicius}) allows us to write $\mathrm{P}^p(b_1)$ as a
    sum $c_0 \cR(b_1) + \sum_\gamma c_{0,\gamma} \Gamma_\gamma(b_1)$, where
    $\cR(b_1)$ is a coset in $\H^{2p + 4(p-1)}(X; \FF_p)$ and $\Gamma_\gamma$ is
    an operation of odd degree, so that $\Gamma_\gamma(b_1)$ is in odd degree.
    We will not need to know what exactly the sum is indexed by, or what any of
    these operations are. Observe that $\Gamma_\gamma$ kills $b_1$ because
    everything is concentrated in even degrees in the relevant range, and $\cR$
    also kills $b_1$ since $|\cR(b_1)| = 4(p -1) + 2p^i$ is never a sum of
    numbers of the form $2p^k$ when $p > 2$. Using this, one can conclude that
    $b_1^p = 0$, which is a contradiction. A similar calculation works at $p=2$,
    using Adams' study of secondary mod $2$ cohomology operations in
    \cite{adams-hopf}.
\end{remark}
\begin{remark}
    Using the calculations of $\THH(\ko)$ and $\THH(\ku)$ from \cite{thh-bp1},
    Angeltveit-Hill-Lawson show in \cite{angeltveit-hill-lawson-e3} that neither
    $\ko$ not $\ku$ can appear as the Thom spectrum of a double loop map
    $\Omega^2 X\to \B\GL_1(\S)$.
\end{remark}

Our primary goal in this project is to argue that the issues in Theorem
\ref{no-go} are alleviated if we replace $\B\GL_1(\S)$ with the delooping of the
space of units of an appropriate replacement of $\S$. In the next section, we
will construct these replacements of $\S$.

\newpage
\section{Some Thom spectra}\label{A-and-B}

In this section, we introduce certain $\E{1}$-rings; most of them appear as Thom
spectra over the sphere. The following table summarizes the spectra introduced
in this section and gives references to their locations in the text. The spectra
$A$ and $B$ were introduced in \cite{tmf-witten}.
\begin{table}[h!]
    \centering
    \begin{tabular}{c | c c c}
	Thom spectrum & Definition & ``Height'' & $\BPP$-homology\\
	\hline
	$T(n)$ & Theorem \ref{tn-def} & $n$ & Theorem \ref{tn-def}\\
	$y(n)$ and $y_\Z(n)$ & Definition \ref{yn} & $n$ & Proposition
	\ref{yn-bp}\\
	$A$ & Definition \ref{A-def} & $1$ & Proposition \ref{A-bp}\\
	$B$ & Definition \ref{B-def} & $2$ & Proposition \ref{B-bp}\\
	\hline
    \end{tabular}
    \vspace{0.5cm}
    \caption{Certain Thom spectra and their homologies.}
    \label{thom-definitions}
\end{table}

\subsection{Ravenel's $X(n)$ spectra}\label{ravenel-xn}

The proof of the nilpotence theorem in \cite{dhs-i,dhs-ii} crucially relied upon
certain Thom spectra arising from Bott periodicity; these spectra first appeared
in Ravenel's seminal paper \cite{ravenel-loc}.
\begin{definition}
    Let $X(n)$ denote the Thom spectrum of the $\E{2}$-map $\Omega \SU(n)
    \subseteq \BU \xrightarrow{J} \B\GL_1(\S)$, where the first map arises from
    Bott periodicity.
\end{definition}

\begin{example}
    The $\E{2}$-ring $X(1)$ is the sphere spectrum, while $X(\infty)$ is $\MU$.
    Since the map $\Omega \SU(n) \to \BU$ is an equivalence in dimensions
    $\leq 2n-2$, the same is true for the map $X(n) \to \MU$; the first
    dimension in which $X(n)$ has an element in its homotopy which is not
    detected by $\MU$ is $2n-1$.
\end{example}

\begin{remark}\label{Xn-e3}
    The $\E{2}$-structure on $X(n)$ does \emph{not} extend to an
    $\E{3}$-structure (see \cite[Example 1.5.31]{lawson-dl}). If $X(n)$ admits
    such an $\E{3}$-structure, then the induced map $\H_\ast(X(n))\to
    \H_\ast(\H\FF_p)$ on mod $p$ homology would commute with $\E{3}$-Dyer-Lashof
    operations.  However, we know that the image of $\H_\ast(X(n))$ in
    $\H_\ast(\H\FF_p)$ is $\FF_p[\zeta_1^2, \cdots, \zeta_n^2]$; since
    Steinberger's calculation (Theorem \ref{steinberger}) implies that
    $Q_2(\zeta_i^2) = \zeta_{i+1}^2$ via the relation $Q_2(x^2) = Q_1(x)^2$, we
    find that the image of $\H_\ast(X(n))$ in $\H_\ast(\H\FF_p)$ cannot be
    closed under the $\E{3}$-Dyer-Lashof operation $Q_2$.
\end{remark}

\begin{remark}
    The proof of the nilpotence theorem shows that each of the $X(n)$ detects
    nilpotence. However, it is known (see \cite[Theorem 3.1]{ravenel-loc}) that
    $\langle X(n)\rangle > \langle X(n+1)\rangle$.
\end{remark}

After localizing at a prime $p$, the spectrum $\MU$ splits as a wedge of
suspensions of $\BPP$; this splitting comes from the Quillen idempotent on $\MU$.
The same is true of the $X(n)$ spectra, as explained in \cite[Section
6.5]{green}: a multiplicative map $X(n)_{(p)}\to X(n)_{(p)}$ is determined by a
polynomial $f(x) = \sum_{0\leq i\leq n-1} a_i x^{i+1}$, with $a_0 = 1$ and
$a_i\in\pi_{2i}(X(n)_{(p)})$. One can use this to define a truncated form of the
Quillen idempotent $\epsilon_n$ on $X(n)_{(p)}$ (see \cite[Proposition
1.3.7]{hopkins-thesis}), and thereby obtain a summand of $X(n)_{(p)}$. We
summarize the necessary results in the following theorem.

\begin{theorem}\label{tn-def}
    Let $n$ be such that $p^n\leq k\leq p^{n+1}-1$. Then $X(k)_{(p)}$ splits as
    a wedge of suspensions of the spectrum $T(n) = \epsilon_{p^n}\cdot
    X(p^n)_{(p)}$.
    \begin{itemize}
	\item The map $T(n) \to \BPP$ is an equivalence in dimensions $\leq
	    |v_{n+1}|-2$, so there is an indecomposable element $v_i\in \pi_\ast
	    T(n)$ which maps to an indecomposable element in $\pi_\ast \BPP$ for
	    $0\leq i\leq n$.
	\item This map induces the inclusion $\BPP_\ast T(n) =
	    \BPP_\ast[t_1,\cdots,t_n] \subseteq \BPP_\ast(\BPP)$ on
	    $\BPP$-homology, and the inclusions $\FF_2[\zeta_1^2, \cdots,
	    \zeta_n^2] \subseteq \FF_2[\zeta_1^2, \zeta_2^2, \cdots]$ and
	    $\FF_p[\zeta_1, \cdots, \zeta_n] \subseteq \FF_2[\zeta_1, \zeta_2,
	    \cdots]$ on mod $2$ and mod $p$ homology.
	\item $T(n)$ is a homotopy associative and $\cQ_1$-algebra spectrum.
    \end{itemize}
\end{theorem}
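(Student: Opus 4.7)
The plan is to construct $T(n)$ as a summand of $X(p^n)_{(p)}$ via a truncated form of the classical Quillen idempotent, following Hopkins' thesis and Ravenel's treatment. First, I would parametrize homotopy-multiplicative self-maps of $X(p^n)_{(p)}$ by polynomials $f(x) = x + a_1 x^2 + \cdots + a_{p^n - 1} x^{p^n}$ with $a_i \in \pi_{2i}(X(p^n)_{(p)})$: this is the analogue, truncated to the first $p^n$ terms, of Quillen's parametrization of multiplicative self-maps of $\MU_{(p)}$ by power series, and it reflects the Thom isomorphism $\BPP_\ast(X(p^n)_{(p)}) \cong \BPP_\ast[b_1, \ldots, b_{p^n-1}]$ coming from the equivalence $X(p^n) \simeq \Omega \SU(p^n)^\mu$. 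Choosing the coefficients $a_i$ inductively, exactly as in the classical construction of the Quillen idempotent, produces an endomorphism $\epsilon_{p^n}$ which is idempotent up to homotopy; the desired summand $T(n)$ is its telescope inside $X(p^n)_{(p)}$.

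To verify the wedge decomposition of $X(k)_{(p)}$ for $p^n \leq k \leq p^{n+1} - 1$, one applies the same construction to $X(k)_{(p)}$ and checks that the idempotent has image $T(n)$, with the ``extra'' Thom generators $b_i$ for $p^n \leq i \leq k-1$ contributing suspension shifts $\Sigma^{2i} T(n)$ to the wedge. The two homology calculations in the second bullet then follow from the Thom isomorphism $\H_\ast(X(p^n)_{(p)}; \FF_p) \cong \H_\ast(\Omega \SU(p^n); \FF_p)$ combined with an analysis of $\epsilon_{p^n}$ in mod $p$ homology, which selects exactly the subalgebras generated by the claimed $\zeta_i$-powers. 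The connectivity claim reduces to a comparison with the classical Quillen idempotent on $\MU_{(p)}$: since $X(p^n) \to \MU$ is an equivalence through degree $2p^n - 2$ and $\epsilon_{p^n}$ is compatible with the usual Quillen idempotent, the induced map $T(n) \to \BPP$ is an equivalence in the expected range below $|v_{n+1}|$.

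The third bullet, that $T(n)$ is homotopy associative and a $\cQ_1$-algebra, is the part I expect to require the most care. Homotopy associativity follows because the parametrizing polynomial $f(x)$ is compatible with the multiplicative structure on the underlying formal group law, which implies that $\epsilon_{p^n}$ respects the multiplication on $X(p^n)_{(p)}$ up to homotopy and hence descends to a homotopy-associative multiplication on $T(n)$. For the $\cQ_1$-structure, the difficulty is that idempotent splittings do not in general respect operadic structures above $\E{1}$; however, since $X(p^n)_{(p)}$ is $\E{2}$ (hence $\cQ_1$) and the $\cQ_1$-operad adds only the single binary operation $Q_1$ beyond the associative structure (see Remark \ref{Q1-opn}), it suffices to verify that $\epsilon_{p^n}$ intertwines $Q_1$ up to homotopy. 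The main obstacle is making this final check rigorous; I would approach it by tracking the construction of $\epsilon_{p^n}$ through the $\cQ_1$-operadic action on the Thom spectrum, making essential use of the explicit polynomial description of $\epsilon_{p^n}$ together with the compatibility between the truncated Quillen idempotent and the Dyer-Lashof operation $Q_1$ on $\BPP_\ast(X(p^n)_{(p)})$.
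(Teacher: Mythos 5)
Your proposal follows essentially the same route as the paper: the paper does not give its own proof of this theorem, but summarizes and cites exactly the construction you describe --- the parametrization of homotopy-multiplicative self-maps of $X(p^n)_{(p)}$ by truncated polynomials and the resulting truncated Quillen idempotent $\epsilon_{p^n}$, following Ravenel (Section 6.5 of the green book) and Hopkins' thesis. Your sketch, including the Thom-isomorphism homology identifications and the caveat that the homotopy ring and $\cQ_1$-structure on the idempotent summand require the most care, matches that classical argument.
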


\begin{remark}\label{tn-thom-def}
    It is known that that $T(n)$ admits the structure of an $\E{1}$-ring (see \cite[Section 7.5]{tn-e1}). We will interpret the phrase ``Thom spectrum $X^\mu$ of
    a map $\mu:X\to \B\GL_1(T(n))$'' where $\mu$ arises via a map
    $X\xrightarrow{\mu_0} \B\GL_1(X(p^{n+1}-1))$ to mean the base-change
    $X^{\mu_0} \wedge_{X(p^{n+1}-1)} T(n)$.
\end{remark}
It is believed that $T(n)$ in fact admits more structure (see \cite[Section 6]{angelini2017segal} for some discussion):
\begin{conjecture}\label{tn-e2}
    The $\cQ_1$-ring structure on $T(n)$ extends to an $\E{2}$-ring structure.
\end{conjecture}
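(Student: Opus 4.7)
The plan is to produce $T(n)$ as an $\E{2}$-summand of the known $\E{2}$-ring $X(p^n)_{(p)}$, by refining the truncated Quillen idempotent $\epsilon_{p^n}$ of \cite{hopkins-thesis} to an $\E{2}$-self-map of $X(p^n)_{(p)}$. Since $T(n) = \epsilon_{p^n} \cdot X(p^n)_{(p)}$ as a spectrum, an $\E{2}$-refinement of $\epsilon_{p^n}$ would automatically equip $T(n)$ with an $\E{2}$-structure, and one can check that this structure restricts to the $\cQ_1$-structure coming from the known splitting, since every $\E{2}$-algebra is canonically a $\cQ_1$-algebra.

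To carry this out, I would set up an obstruction theory for $\E{2}$-refinements of the idempotent. The space of $\E{2}$-endomorphisms of $X(p^n)_{(p)}$ lifting a fixed $\cQ_1$-endomorphism can be analyzed via the Postnikov tower obtained by filtering the $\E{2}$-operad by arity above $\cQ_1$, with successive layers controlled by operadic Andr\'e--Quillen cohomology groups of the form $\H^\ast_{\E{2}}(X(p^n)_{(p)}; \Sigma^\ast X(p^n)_{(p)})$. Since the $\BPP$- and mod-$p$ homology of $X(p^n)_{(p)}$ are known explicitly and concentrated in even degrees at odd primes, one can hope to show these obstruction groups vanish for sparsity reasons, at least at odd primes. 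After showing the obstructions vanish, I would check uniqueness of the refinement up to $\E{2}$-homotopy so that the resulting $\E{2}$-structure on $T(n)$ is intrinsic.

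An alternative and perhaps cleaner approach is to realize $T(n)$ directly as an $\E{2}$-Thom spectrum. One looks for an $\E{2}$-space $W_n$ equipped with a map $W_n \to \B\GL_1(\S_{(p)})$ whose Thomification is $T(n)$, in parallel with the Beardsley--Hahn--Lawson $\E{1}$-construction. The homology constraint $\H_\ast(T(n);\FF_p) \cong \FF_p[\zeta_1,\ldots,\zeta_n]$ at odd primes (and its squared analogue at $p=2$) suggests that $W_n$ should be an $\E{2}$-James-type construction built from cells in the degrees $|t_i|$, mapping into $\Omega \SU(p^{n+1}-1)$ in such a way that the Thom class hits precisely the $\epsilon_{p^n}$-summand.

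The main obstacle, in either approach, is compatibility with the Bott-periodicity $\E{2}$-structure on $X(p^n)_{(p)}$. The Quillen idempotent is assembled from multiplicative data (a polynomial power series on $\MU_\ast$), so lifting it past the $\cQ_1$-level requires control over higher coherences between the idempotent and the double loop space structure underlying $\Omega \SU(p^n)$. These coherences are not, to my knowledge, addressed in the existing literature, and their study is the principal source of difficulty; resolving them is likely to require either a delicate calculation in $\E{2}$-operadic cohomology in the spirit of Remark \ref{Xn-e3}, or a genuinely new Thom-theoretic model for $T(n)$ that bypasses the idempotent construction altogether.
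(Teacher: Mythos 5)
You have not given a proof, and indeed the paper does not contain one either: the statement you were asked about is stated as Conjecture \ref{tn-e2} and is left open, with only the cases $p=2$, $n=1$ (where $T(1)=X(2)$ is the Thom spectrum of the $2$-fold loop map $\Omega^2\BSU(2)\to\BU$) and $p=2$, $n=2$ (Remark \ref{t2}, using $\Omega V_2(\HH^2)\simeq\Omega\Sp(2)$ and an $\E{2}$-map to $\BU$) verified, precisely by the ``realize $T(n)$ as an $\E{2}$-Thom spectrum over a double loop space'' strategy that appears as your alternative approach. Your proposal is a research plan whose decisive steps are exactly the ones left unresolved, as you yourself acknowledge in the final paragraph; it therefore cannot be accepted as a proof of the conjecture.

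Concretely, the gaps are these. In the idempotent approach, the truncated Quillen idempotent $\epsilon_{p^n}$ is only known to be a homotopy-multiplicative self-map (it is classified by a polynomial $f(x)=\sum a_ix^{i+1}$, which controls multiplicative data up to homotopy only); the paper's Warning \ref{tn-thom-def} records that even the $\E{1}$-structure on $T(n)$ is unpublished work of Beardsley--Hahn--Lawson, so promoting $\epsilon_{p^n}$ to an $\E{2}$-map is not a refinement of something known but is essentially the whole problem. Your claim that the operadic obstruction groups ``vanish for sparsity reasons'' is unsubstantiated and, in this territory, unreliable: Lawson's results (cited in the paper as Remark \ref{Xn-e3}) show that $X(n)$ admits no $\E{3}$-structure because of secondary Dyer--Lashof-type constraints that evenness/sparsity arguments do not see, so one should expect the coherence problem for the idempotent against the Bott-periodicity $\E{2}$-structure to require genuinely new input rather than a routine vanishing argument. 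Likewise, in the Thom-spectrum approach, the existence of an $\E{2}$-space $W_n$ with a map to $\B\GL_1(\S_{(p)})$ (or to $\BU$) Thomifying to $T(n)$ is not known for $n\geq 3$ at $p=2$ or for general $n$ at odd primes (the paper explicitly says it does not know whether $T(n)$ is even a spherical Thom spectrum for $n\geq 3$), and your homological cell-count heuristic does not produce such a space. Until either the obstruction groups are actually computed or such a double-loop-space model is constructed, the conjecture remains open.
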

\begin{remark}
    This is true at $p=2$ and $n=1$. Indeed, in this case $T(1) = X(2)$ is the
    Thom spectrum of the bundle given by the $2$-fold loop map $\Omega S^3 =
    \Omega^2 \BSU(2)\to \BU$, induced by the inclusion $\BSU(2)\to
    \mathrm{B^3 U} = \BSU$.
\end{remark}
\begin{remark}\label{t2}
    Conjecture \ref{tn-e2} is true at $p=2$ and $n=2$. The Stiefel manifold
    $V_2(\HH^2)$ sits in a fiber sequence
    $$S^3\to V_2(\HH^2)\to S^7.$$
    There is an equivalence $V_2(\HH^2) \simeq \Sp(2)$, so $\Omega V_2(\HH^2)$
    admits the structure of a double loop space. There is an $\E{2}$-map
    $\mu:\Omega V_2(\HH^2)\to \BU$, given by taking double loops of the
    composite
    $$\B\Sp(2)\to \BSU(4)\to \BSU \simeq \B^3 \U.$$
    The map $\mu$ admits a description as the left vertical map in the following
    map of fiber sequences:
    $$\xymatrix{
	\Omega V_2(\HH^2) \ar[r] \ar[d]^-\mu & \Omega S^7 \ar[r] \ar[d] & S^3
	\ar[d]\\
	\BU \ar[r] & \ast \ar[r] & \B^2 \U.
    }$$
    Here, the map $S^3\to \B^2\U$ detects the generator of $\pi_2(\BU)$ (which
    maps to $\eta\in \pi_2(\BGL_1(\S))$ under the J-homomorphism).  The Thom
    spectrum $\Omega V_2(\HH^2)^\mu$ is equivalent to $T(2)$, and it follows
    that $T(2)$ admits the structure of an $\E{2}$-ring.
    We do not know whether $T(n)$ is the Thom spectrum of a $p$-complete
    spherical fibration over some space for $n\geq 3$.
\end{remark}
It is possible to construct $X(n+1)$ as an $X(n)$-algebra (see also
\cite{beardsley}):
\begin{construction}\label{chin}
    There is a fiber sequence
    $$\Omega\SU(n)\to \Omega\SU(n+1)\to \Omega S^{2n+1}.$$
    According to Proposition \ref{thom}, the spectrum $X(n+1)$ is the Thom
    spectrum of an $\E{1}$-map $\Omega S^{2n+1}\to \B\GL_1(\Omega \SU(n))^\mu =
    \B\GL_1(X(n))$. This $\E{1}$-map is the extension of a map $S^{2n}\to
    \B\GL_1(X(n))$ which detects an element $\chi_n\in \pi_{2n-1} X(n)$. This
    element is equivalently determined by the map $\Sigma^\infty_+ \Omega^2
    S^{2n+1} \to X(n)$ given by the Thomification of the nullhomotopic composite
    $$\Omega^2 S^{2n+1} \to \Omega\SU(n)\to \Omega\SU(n+1)\to \Omega \SU \simeq
    \BU,$$
    where the first two maps form a fiber sequence. By Proposition \ref{thom},
    $X(n+1)$ is the free $\E{1}$-$X(n)$-algebra with a nullhomotopy of $\chi_n$.
\end{construction}
\begin{remark}\label{alt-constr}
    Another construction of the map $\chi_n\in \pi_{2n-1} X(n)$ from
    Construction \ref{chin} is as follows. There is a map $i:\CP^{n-1} \to
    \Omega \SU(n)$ given by sending a line $\ell\subseteq \cc^n$ to the loop
    $S^1\to \SU(n) = \Aut(\cc^n, \langle,\rangle)$ defined as follows:
    $\theta\in S^1$ is sent to the (appropriate rescaling of the) unitary
    transformation of $\cc^n$ sending a vector to its rotation around the line
    $\ell$ by the angle $\theta$. The map $i$ Thomifies to a stable map
    $\Sigma^{-2} \CP^n\to X(n)$. The map $\chi_n$ is then the composite
    $$S^{2n-1}\to \Sigma^{-2} \CP^n\to X(n),$$
    where the first map is the desuspension of the generalized Hopf map
    $S^{2n+1} \to \CP^n$ which attaches the top cell of $\CP^{n+1}$. The fact
    that this map is indeed $\chi_n$ follows immediately from the commutativity
    of the following diagram:
    $$\xymatrix{
	S^{2n-1} \ar[r] \ar[d] & \CP^{n-1} \ar[r] \ar[d] & \CP^{n} \ar[d]\\
	\Omega^2 S^{2n+1} \ar[r] & \Omega \SU(n) \ar[r] & \Omega \SU(n+1),}$$
    where the top row is a cofiber sequence, and the bottom row is a fiber
    sequence.
\end{remark}

An easy consequence of the observation in Construction \ref{chin} is the
following lemma.
\begin{lemma}\label{tn-thom}
    Let $\sigma_n\in \pi_{|v_{n+1}|-1} T(n)$ denote the element
    $\chi_{p^{n+1}-1}$. Then the Thom spectrum of the composite $\Omega
    S^{|v_{n+1}|+1} \to \B\GL_1(X(p^{n+1}-1))\to \B\GL_1(T(n))$ is equivalent to
    $T(n+1)$.
\end{lemma}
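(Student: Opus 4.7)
The plan is to combine Construction \ref{chin} with the base-change property of Proposition \ref{thom-base}, and then to identify the resulting smash product with $T(n+1)$ via the Quillen splitting.

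First, I apply Construction \ref{chin} with its index $n$ replaced by $p^{n+1}-1$. Since $2(p^{n+1}-1)+1 = |v_{n+1}|+1$ and $\chi_{p^{n+1}-1}$ lives in degree $2(p^{n+1}-1)-1 = |v_{n+1}|-1$, this identifies $X(p^{n+1})$ as the Thom spectrum of an $\E{1}$-map $\mu_0: \Omega S^{|v_{n+1}|+1}\to \B\GL_1(X(p^{n+1}-1))$ whose bottom-cell restriction detects $\sigma_n = \chi_{p^{n+1}-1}$.

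Next, by Proposition \ref{thom-base}, read through the convention of Warning \ref{tn-thom-def}, the Thom spectrum of the composite $\Omega S^{|v_{n+1}|+1}\to \B\GL_1(X(p^{n+1}-1)) \to \B\GL_1(T(n))$ is the base change $X(p^{n+1})\wedge_{X(p^{n+1}-1)} T(n)$, where the map $X(p^{n+1}-1)_{(p)}\to T(n)$ is the multiplicative projection coming from the truncated Quillen idempotent $\epsilon_{p^n}$ recalled in Theorem \ref{tn-def}.

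The main obstacle is then the identification of this base change with $T(n+1)$. The clean approach is to use the compatibility of truncated Quillen idempotents: the idempotent $\epsilon_{p^{n+1}}$ on $X(p^{n+1})_{(p)}$ refines $\epsilon_{p^n}$ on $X(p^{n+1}-1)_{(p)}$ (both are determined by polynomials $\sum a_i x^{i+1}$, the former extending the latter by a single further coefficient), so base-changing the free rank-one $X(p^{n+1})$-module along the splitting-off $X(p^{n+1}-1)_{(p)} \to T(n)$ precisely cuts out the $T(n+1)$-summand of $X(p^{n+1})_{(p)}$, yielding $T(n+1)$. As a consistency check, the Thom isomorphism gives an identification
$$\H_*(X(p^{n+1})\wedge_{X(p^{n+1}-1)} T(n);\FF_p) \cong \H_*(\Omega S^{|v_{n+1}|+1};\FF_p) \otimes_{\FF_p} \H_*(T(n);\FF_p),$$
whose dimensions in each degree match those of $\H_*(T(n+1);\FF_p) \cong \FF_p[\zeta_1,\ldots,\zeta_{n+1}]$ (resp. $\FF_2[\zeta_1^2,\ldots,\zeta_{n+1}^2]$ at $p=2$), the extra polynomial generator $\zeta_{n+1}$ sitting in degree $|v_{n+1}|$ matching the bottom cell of $\Omega S^{|v_{n+1}|+1}$.
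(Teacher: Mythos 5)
Your proposal is correct and follows essentially the same route the paper intends: the paper offers no written argument beyond calling the lemma "an easy consequence of the observation in Construction \ref{chin}," read through the base-change convention of Warning \ref{tn-thom-def}, which is exactly your first two steps. Your extra detail on identifying $X(p^{n+1})\wedge_{X(p^{n+1}-1)}T(n)$ with $T(n+1)$ — compatibly chosen truncated Quillen idempotents supplying the comparison to $T(n+1)$, confirmed by the Thom-isomorphism homology count — is the natural way to fill in that one-line proof (to be fully rigorous one should phrase the "cuts out the summand" step as a comparison map induced by the compatible multiplicative projections, checked to be a mod $p$ homology isomorphism, rather than a literal summand extraction).
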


\begin{example}\label{alpha1}
    The element $\sigma_0\in\pi_{|v_1|-1} T(0) = \pi_{2p-3} \S_{(p)}$ is
    $\alpha_1$. 
\end{example}

\begin{example}\label{sigma1}
    Let us specialize to $p=2$. Theorem \ref{tn-def} implies that $\H_\ast T(n)
    \cong \FF_2[\zeta_1^2, \cdots, \zeta_n^2]$. Using this, one can observe that
    the $6$-skeleton of $T(1)$ is the smash product $C\eta\wedge C\nu$, and so
    $\sigma_1\in \pi_5(C\eta\wedge C\nu)$. This element can be described very
    explicitly: the cell structure of $C\eta\wedge C\nu$ is shown in Figure
    \ref{Ceta-Cnu}, and the element $\sigma_1$ shown corresponds to the map
    defined by the relation $\eta\nu=0$.
    \begin{figure}
        \begin{tikzpicture}[scale=0.75]
            \draw [fill] (0, 0) circle [radius=0.05];
            \draw [fill] (1, 0) circle [radius=0.05];
            \draw [fill] (2, 1) circle [radius=0.05];
            \draw [fill] (2, 0) circle [radius=0.05];
            \draw [fill] (3, 0) circle [radius=0.05];
    
            \draw (0,0) to node[below] {\footnotesize{$\eta$}} (1,0);
            \draw (2,0) to node[below] {\footnotesize{$\eta$}} (3,0);
    
	    \draw [->] (2,1) to 
	    node[right] {\footnotesize{$\sigma_1 = \eta$}} (2,0);

            \draw (0,0) to[out=90,in=90] node[below] {\footnotesize{$\nu$}}
            (2,0);
	    \draw (1,0) to[out=-90,in=-90] node[below] {\footnotesize{$\nu$}}
            (3,0);
        \end{tikzpicture}
        \caption{$C\eta\wedge C\nu$ shown horizontally, with
        $0$-cell on the left. The element $\sigma_1$ is given by the map $\eta$
	on the $4$-cell defined by a nullhomotopy of $\eta\nu = 0\in \pi_4(S^0)$, as indicated in the diagram above.}
	\label{Ceta-Cnu}
    \end{figure}
\end{example}

\begin{example}\label{cobar}
    The element $\sigma_n$ in the Adams-Novikov spectral sequence for $T(n)$ is
    represented by the element $[t_{n+1}]$ in the cobar complex. See \cite[Section 1]{ravenel-infinite-descent}, where $\sigma_{n-1}$ is denoted by $\alpha(\hat{v}_1/p)$.
\end{example}
A calculation with the Adams-Novikov spectral sequence (as in \cite[Theorem 3.17]{ravenel-infinite-descent}) proves the following:
\begin{lemma}\label{chin-torsion}
    The class $\sigma_{n-1}$ is killed by $p$ in $\pi_{|v_n|-1} X(p^n-1)$.
\end{lemma}
\begin{proof}
The argument is essentially the same as the classical observation that $\alpha_1\in \pi_{2p-3}(S^0)$ is simple $p$-torsion.
As mentioned in Example \ref{cobar}, $\sigma_{n-1} = \alpha(\hat{v}_1/p)$ in the notation of \cite{ravenel-infinite-descent}. If $\Gamma(n+1) = \BPP_\ast(\BPP)/(t_1, \cdots, t_n)$ denotes the Hopf algebroid of \cite{ravenel-infinite-descent} (so that $\Ext_{\BPP_\ast \BPP}(\BPP_\ast, \BPP_\ast(T(n))) \cong \Ext_{\Gamma(n+1)}(\BPP_\ast, \BPP_\ast)$), then $\alpha$ is the connecting homomorphism in cohomology over $\Gamma(n+1)$ for the short exact sequence
$$0 \to \BPP_\ast \to p^{-1} \BPP_\ast \to p^{-1} \BPP_\ast/\BPP_\ast \to 0.$$
Since $\hat{v}_1/p$ is of order $p$ in $p^{-1} \BPP_\ast/\BPP_\ast$, we see that $\alpha(\hat{v}_1/p)$ is of order $p$ in the $E_2$-page of the Adams-Novikov spectral sequence computing $\pi_\ast T(n)$. The class $\alpha(\hat{v}_1/p)$ survives to the $E_\infty$-page; one observes there are are no possible additive extensions, so $p\sigma_{n-1} = 0\in \pi_\ast T(n)$.
\end{proof}
In particular, the element $\sigma_{n-1} = \chi_{p^n-1}\in \pi_{|v_n|-1}
X(p^n-1)$ is $p$-torsion, and the following is a consequence of Example
\ref{cobar}:
\begin{prop}\label{vn-toda}
    The class $\sigma_{n-1}\in \pi_{|v_n|-1} X(p^n-1)$ is null in $\pi_\ast X(p^n)$, and the Toda bracket $\langle p, \sigma_{n-1}, 1_{X(p^n)}\rangle$ in
    $\pi_{|v_n|} X(p^n)$ contains an indecomposable $v_n$.
\end{prop}
\begin{corollary}\label{CP-vn}
    The element $\sigma_{n-1}\in \pi_{|v_n|-1} X(p^n-1)$ lifts to
    $\pi_{|v_n|+1}(\CP^{|v_n|/2})$ along the map $\Sigma^{-2} \CP^{|v_n|/2} \to
    X(p^n-1)$.
\end{corollary}
\begin{proof}
    By Remark \ref{alt-constr}, the map $\sigma_{n-1}: S^{|v_n|-1} \to X(p^n-1)$
    is given by the composite of the generalized Hopf map $S^{|v_n|-1} \to
    \Sigma^{-2} \CP^{p^n-1}$ with the map $\Sigma^{-2} \CP^{p^n-1}\to X(p^n-1)$.
    Moreover, this generalized Hopf map is the desuspension of the unstable
    generalized Hopf map $S^{|v_n|+1} \to \CP^{p^n-1}$, and so $\sigma_{n-1}$
    lifts to an element of the unstable homotopy group
    $\pi_{|v_n|+1}(\CP^{|v_n|/2})$.
%
\end{proof}

\subsection{Related Thom spectra}

We now introduce several Thom spectra related to the $\E{1}$-rings $T(n)$
described in the previous section; some of these were introduced in
\cite{tmf-witten}. (Relationships to $T(n)$ will be further discussed in Section
\ref{thom-wood}.) For the reader's convenience, we have included a table of the
spectra introduced below with internal references to their definitions at the
beginning of this section.
\begin{remark}\label{homology-bp-fp}
Recall (e.g., from \cite[Section 4.4]{green}) that under the map
$$\BPP_\ast(\BPP) \cong \BPP_\ast[t_1, t_2, \cdots] \to \H_\ast(\BPP; \FF_p) \cong \begin{cases}
    \FF_2[\zeta_1^2, \zeta_2^2, \cdots] & p=2,\\
    \FF_p[\zeta_1, \zeta_2, \cdots] & p>2,
\end{cases}$$
the class $t_i$ is sent to $\zeta_i^2$ (resp. $\zeta_i$) modulo decomposables when $p=2$ (resp. $p>2$). Moreover, under the map
$$\H_\ast(\BPP; \FF_p) \to \H_\ast(\H\FF_p; \FF_p) \cong \begin{cases}
    \FF_2[\zeta_1, \zeta_2, \cdots] & p=2,\\
    \FF_p[\zeta_1, \zeta_2, \cdots] \otimes E(\tau_0, \tau_1, \cdots) & p>2,
\end{cases}$$
the classes $\zeta_{i+1}$ (resp. $\tau_i$) at $p=2$ (resp. $p>2$) detect a nullhomotopy of $v_i\in \pi_{2p^i-2} \BPP$ in $\H\FF_p \otimes \H\FF_p$. This implies, for instance, that if $X$ is a spectrum such that $\BPP_\ast(X) \simeq \BPP_\ast/(p, \cdots, v_{j-1})[t_1, \cdots, t_m]$ with $j\leq m$, then $\H_\ast(X; \FF_p) \cong \FF_p[\zeta_1, \cdots, \zeta_m] \otimes E(\tau_1, \cdots, \tau_{j-1})$ for $p>2$, and $\H_\ast(X; \FF_2) \cong \FF_p[\zeta_1^2, \cdots, \zeta_j^2, \zeta_{j+1}, \cdots, \zeta_m]$.
\end{remark}
The following Thom spectrum was introduced in \cite{mrs}.
\begin{definition}\label{yn}
    Let $y(n)$ denote the Thom spectrum of the composite
    $$\Omega J_{p^n-1}(S^2) \to \Omega^2 S^3\xrightarrow{1-p}
    \B\GL_1(\S^\wedge_p).$$
    If $J_{p^n-1}(S^2)\langle 2\rangle$ denotes the $2$-connected cover of
    $J_{p^n-1}(S^2)$, then let $y_\Z(n)$ denote the Thom spectrum of the
    composite
    $$\Omega J_{p^n-1}(S^2)\langle 2\rangle \to \Omega^2 S^3\langle 3\rangle \to \Omega^2 S^3 \xrightarrow{1-p} \B\GL_1(\S^\wedge_p),$$
    so that both $y(n)$ and $y_\Z(n)$ admit the structure of $\E{1}$-rings via \cite[Corollary 3.2]{barthel-thom}.
\end{definition}
\begin{prop}\label{yn-bp}
    As $\BPP_\ast \BPP$-comodules, we have
    $$\BPP_\ast(y(n)) \cong \BPP_\ast/I_n[t_1,\cdots,t_n], \ \BPP_\ast(y_\Z(n))
    \cong \BPP_\ast/(v_1, \cdots,v_{n-1})[t_1,\cdots,t_n],$$
    where $I_n$ denotes the invariant ideal $(p, v_1, \cdots, v_{n-1})$.
\end{prop}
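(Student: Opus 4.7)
The plan is to mimic the argument for $y(n)$ in \cite[Equation 2.8]{mrs} and extend it to $y_\Z(n)$. The first step is to compute the mod $p$ homology via the Thom isomorphism. Since $1-p \equiv 1 \pmod p$ in $\pi_1 \B\GL_1(\H\FF_p) = \FF_p^\times$, the $\E{2}$-map $\Omega^2 S^3 \to \B\GL_1(\S^\wedge_p) \to \B\GL_1(\H\FF_p)$ is null by Hopkins-Mahowald (Theorem \ref{hm}), hence so is its restriction to $\Omega J_{p^n-1}(S^2)$ and to $\Omega J_{p^n-1}(S^2)\langle 2 \rangle$. By Corollary \ref{thom-iso}, this produces Thom isomorphisms $\H\FF_p \wedge y(n) \simeq \H\FF_p \wedge \Sigma^\infty_+ \Omega J_{p^n-1}(S^2)$ and similarly for $y_\Z(n)$.

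Next, I would compute $H_\ast(\Omega J_{p^n-1}(S^2); \FF_p)$ using the looped James fibration $\Omega J_{p^n-1}(S^2) \to \Omega^2 S^3 \to \Omega^2 S^{2p^n + 1}$. The mod $p$ homology of $\Omega^2 S^3$ is the free $\E{2}$-Dyer-Lashof algebra on one class in degree one, and restricting to the fiber truncates this tower after $n$ steps. Combined with the Hopkins-Mahowald identification $\H\FF_p \simeq (\Omega^2 S^3)^\mu$, the Thom twist sends the resulting generators to $\zeta_1, \ldots, \zeta_n$ (and $\tau_0, \ldots, \tau_{n-1}$ at odd $p$) inside $A_\ast$. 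Thus $H_\ast(y(n); \FF_p)$ is the corresponding sub-comodule algebra of $A_\ast$. For $y_\Z(n)$, the $2$-connected cover kills the degree-one generator, which at $p=2$ replaces $\zeta_1$ by $\zeta_1^2$ and at odd $p$ drops $\tau_0$.

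To upgrade to $\BPP$-homology, I would apply the Künneth spectral sequence
$$ \Tor^{\BPP_\ast}_{\ast, \ast}(\FF_p, \BPP_\ast y(n)) \Rightarrow H_\ast(y(n); \FF_p). $$
A direct Poincaré series comparison shows that $\BPP_\ast y(n) \cong \BPP_\ast / I_n [t_1, \ldots, t_n]$ is consistent with the mod $p$ answer: the Koszul contribution $\Lambda_{\FF_p}(e_0, \ldots, e_{n-1})$ from $\Tor^{\BPP_\ast}(\FF_p, \BPP_\ast/I_n)$ has exterior generators in degrees $|v_k|+1 = 2p^k - 1$, matching the odd-degree $\zeta_{k+1}$ (at $p=2$) or $\tau_k$ (at odd $p$), while the polynomial factor $\FF_p[t_1, \ldots, t_n]$ accounts for the remaining even-degree generators $\zeta_i^2$ (at $p=2$) or $\zeta_i$ (at odd $p$). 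The identification of the polynomial generators with $t_1, \ldots, t_n \in \BPP_\ast \BPP$ follows from the Dyer-Lashof hopping argument (Remark \ref{hopping}) applied to the $\cQ_1$-structure, in close analogy with the identification of $\BPP_\ast T(n)$ in Theorem \ref{tn-def}. For $y_\Z(n)$, removing the degree-one contribution corresponds to not killing $p$, giving $\BPP_\ast y_\Z(n) \cong \BPP_\ast/(v_1, \ldots, v_{n-1})[t_1, \ldots, t_n]$.

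The main obstacle is verifying the collapse of the Künneth spectral sequence and pinning down the $\BPP_\ast \BPP$-comodule structure. Collapse follows from parity and multiplicative constraints, since the $\Tor$-direction generators and the polynomial generators sit in distinct parity classes that cannot mix under higher differentials. The identification of the generators as specifically $t_1, \ldots, t_n$ (as opposed to other elements of the same degree in $\BPP_\ast \BPP$) reduces to the Dyer-Lashof calculation from the proof of Theorem \ref{tn-def} tracking the $\cQ_1$-action through the Thom diagonal.
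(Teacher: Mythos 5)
The paper gives no internal proof to compare against: Proposition \ref{yn-bp} is quoted, for $y(n)$, from \cite[Equation 2.8]{mrs} (and the $y_\Z(n)$ case comes from \cite{yz}), so any argument you give is necessarily going beyond the text. Judged on its own terms, your proposal has a genuine gap at its central step. The K\"unneth spectral sequence $\Tor^{\BPP_\ast}(\FF_p,\BPP_\ast y(n))\Rightarrow \H_\ast(y(n);\FF_p)$ runs in the wrong direction for your purposes: it computes mod $p$ homology \emph{from} $\BPP$-homology, so checking that $\BPP_\ast/I_n[t_1,\cdots,t_n]$ is ``consistent'' with the known mod $p$ answer (even granting collapse) does not determine $\BPP_\ast y(n)$. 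A Poincar\'e-series comparison cannot rule out $\BPP_\ast$-modules with higher-order torsion or different $v_i$-actions whose Koszul homology has the same size; to pass from $\H_\ast(y(n);\FF_p)$ to $\BPP_\ast y(n)$ you need an actual computation, e.g.\ the classical Adams spectral sequence for $\BPP\wedge y(n)$ with a change-of-rings to the relevant exterior subalgebra, or the identification $y(n)\simeq T(n)/I_n$ via a partial complex orientation as in Proposition \ref{tn-yn} combined with Construction \ref{tn-in} and Theorem \ref{tn-def} (note that argument is only carried out for $p>2$ in the paper, and its logical order is the reverse of yours: it does not presuppose \ref{yn-bp}).

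A second, related problem is the comodule identification. The Dyer-Lashof hopping argument (Steinberger's theorem) controls the $A_\ast$-comodule structure on $\H\FF_p$-homology of suitably structured Thom spectra; it says nothing about which elements of $\BPP_\ast\BPP$ the polynomial generators of $\BPP_\ast y(n)$ hit. Moreover, since $p=0$ in $\pi_0 y(n)$ there is no ring map $y(n)\to\BPP$ to imitate the identification of $\BPP_\ast T(n)$ in Theorem \ref{tn-def}; the standard way to pin down the $t_i$ is the partial orientation $\Sigma^{-2}\CP^{p^n-1}\to y(n)$, i.e.\ the map $T(n)\to y(n)$ of Proposition \ref{tn-yn}, whose effect on $\BPP$-homology is controlled by Theorem \ref{tn-def}. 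Your first step (the mod $p$ Thom isomorphism and the computation of $\H_\ast(y(n);\FF_p)$ and $\H_\ast(y_\Z(n);\FF_p)$) is fine, but it only recovers Corollary \ref{yn-hfp}, which in the paper is deduced \emph{from} \ref{yn-bp}, not the other way around.
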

\begin{proof}
The claim for $y(n)$ is \cite[Equation 2.8]{mrs}. There is an equivalence $y_\Z(n)/p \simeq y(n)$, so that $\BPP_\ast(y_\Z(n))/p \simeq \BPP_\ast(y(n))$. The Bockstein spectral sequence collapses, and the extensions on the $E_\infty$-page simply place $p$ in filtration $1$. This implies the second equivalence.
\end{proof}
One corollary is the following; this can be deduced from Proposition \ref{yn-bp} using Remark \ref{homology-bp-fp}.
We also refer to \cite[Lemma 2.3]{yz} for a direct proof of the following.
\begin{corollary}\label{yn-hfp}
    As $A_\ast$-comodules, we have
    $$\H_\ast(y(n);\FF_p) \cong \begin{cases}
	\FF_2[\zeta_1,\zeta_2,\cdots,\zeta_n] & p=2\\
	\FF_p[\zeta_1,\zeta_2,\cdots,\zeta_n] \otimes E(\tau_0, \cdots,
	\tau_{n-1}) & p\geq 3,
    \end{cases}$$
    and
    $$\H_\ast(y_\Z(n);\FF_p) \cong \begin{cases}
	\FF_2[\zeta_1^2,\zeta_2,\cdots,\zeta_n] & p=2\\
	\FF_p[\zeta_1,\zeta_2,\cdots,\zeta_n] \otimes E(\tau_1, \cdots,
	\tau_{n-1}) & p\geq 3.
    \end{cases}$$
\end{corollary}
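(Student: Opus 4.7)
The plan is to apply the Thom isomorphism after showing that the defining spherical fibration trivializes after base change to $\H\FF_p$. Since the $\E{2}$-map $\Omega^2 S^3 \to \B\GL_1(\S^\wedge_p)$ is the $\E{2}$-extension of the bottom-cell map $S^1 \to \B\GL_1(\S^\wedge_p)$ detecting $1-p\in\Z_p^\times = \pi_1\B\GL_1(\S^\wedge_p)$, its post-composition with $\B\GL_1(\S^\wedge_p)\to\B\GL_1(\H\FF_p)$ is the $\E{2}$-extension of a map detecting $1-p\equiv 1\in\FF_p^\times$, which is nullhomotopic. Restricting along the inclusions $\Omega J_{p^n-1}(S^2)\hookrightarrow\Omega^2 S^3$ and $\Omega J_{p^n-1}(S^2)\langle 2\rangle\hookrightarrow\Omega^2 S^3$, Corollary \ref{thom-iso} produces $\H\FF_p$-module equivalences
\[ \H\FF_p\wedge y(n) \simeq \H\FF_p\wedge\Sigma^\infty_+\Omega J_{p^n-1}(S^2), \qquad \H\FF_p\wedge y_\Z(n) \simeq \H\FF_p\wedge\Sigma^\infty_+\Omega J_{p^n-1}(S^2)\langle 2\rangle. \]
These upgrade to $A_*$-comodule equivalences because the $A_*$-coaction on the mod $p$ homology of a spectrum is determined by its $\H\FF_p$-module structure.

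It then remains to identify $\H_*(\Omega J_{p^n-1}(S^2);\FF_p)$ (and its $2$-connected variant) as the claimed sub-Hopf-algebras of $A_*$. Since $J_{p^n-1}(S^2)$ is the $2(p^n-1)$-skeleton of $J(S^2) = \Omega S^3$, the looped inclusion $\Omega J_{p^n-1}(S^2)\hookrightarrow\Omega^2 S^3$ identifies the target with a sub-algebra of $\H_*(\Omega^2 S^3;\FF_p)$, which is the free $\cQ_1$-algebra on a class $x_1\in\H_1$. Under the Hopkins-Mahowald equivalence (Theorem \ref{hm}), $x_1$ corresponds to $\zeta_1$, and Steinberger's theorem (Theorem \ref{steinberger}) identifies $Q_1^{i-1} x_1$ with $\zeta_i$; at odd primes the Bockstein of $\zeta_{i+1}$ additionally produces $\tau_i$. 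A dimension count using the top cell of $J_{p^n-1}(S^2)$ truncates the tower at $\zeta_n$ (and $\tau_{n-1}$), and passing to the $2$-connected cover eliminates the lowest generator ($\zeta_1$ at $p=2$, replaced by $\zeta_1^2$; and $\tau_0$ at odd primes) to produce the $y_\Z(n)$ formulas.

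The main subtlety is verifying the $A_*$-comodule structure through the Thom equivalence: Corollary \ref{thom-iso} gives an $\H\FF_p$-module identification immediately, but matching Dyer-Lashof generators on the base-space side with Milnor conjugates on the Steenrod side relies on the $\cQ_1$-compatibility of the Hopkins-Mahowald equivalence, analogous to the argument already used in the proof of Theorem \ref{hm}. An alternative, more algebraic route is to deduce the corollary directly from Proposition \ref{yn-bp} by base-changing the $\BPP_*\BPP$-comodule structure along $\BPP\to\H\FF_p$; under this base change $t_i\mapsto\zeta_i$, and at odd primes the exterior generators $\tau_i$ arise from the iterated cofiber sequences expressing $\H\FF_p$ as a quotient of $\BPP$ by the regular sequence $(p,v_1,v_2,\ldots)$.
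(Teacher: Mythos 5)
Your closing ``alternative, more algebraic route'' is in fact the paper's proof: Corollary \ref{yn-hfp} is stated as an immediate consequence of Proposition \ref{yn-bp}, i.e.\ one reads off the mod $p$ homology (with its coaction) from $\BPP_\ast(y(n))\cong \BPP_\ast/I_n[t_1,\dots,t_n]$ under $t_i\mapsto \zeta_i$, the exterior $\tau$-classes coming from the quotiented regular sequence. So that one-sentence aside is fine and matches the paper.

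Your primary route, however, breaks at exactly the point the statement is about. The claim that the equivalences ``upgrade to $A_\ast$-comodule equivalences because the $A_\ast$-coaction on the mod $p$ homology of a spectrum is determined by its $\H\FF_p$-module structure'' is false: the coaction is \emph{not} determined by the $\H\FF_p$-module $\H\FF_p\wedge X$ (for instance $\H\FF_2\wedge C\eta$ and $\H\FF_2\wedge(S^0\vee S^2)$ are equivalent $\H\FF_2$-modules with non-isomorphic comodule structures), and the paper flags precisely this in the proof of Theorem \ref{hm}: the Thom isomorphism is \emph{not} an equivalence of comodules, the Thom twisting being highly nontrivial. Nullity of the composite to $\B\GL_1(\H\FF_p)$ produces an $\H\FF_p$-Thom class and hence the $\H\FF_p$-module splitting of Corollary \ref{thom-iso}, but it does not force the Steenrod coaction on the Thom class to be trivial; so this step only yields the additive identification $\H_\ast(y(n))\cong \H_\ast(\Omega J_{p^n-1}(S^2))$. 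The comodule structure has to be computed separately, e.g.\ by Thomifying $\Omega J_{p^n-1}(S^2)\to \Omega^2 S^3$ to a ($\cQ_1$-)ring map $y(n)\to(\Omega^2 S^3)^\mu\simeq\H\FF_p$ which is injective on homology, so that the coaction is restricted from $A_\ast$, and then identifying the image by the Steinberger/Dyer--Lashof hopping argument. Your last paragraph gestures at exactly this repair, but the details as written are garbled at odd primes: the degree-one class $x_1$ maps to $\tau_0$ (degree $1$), not to $\zeta_1$ (degree $2p-2$); the polynomial generators arise from the exterior ones via the homology Bockstein ($\beta\tau_i=\zeta_i$ up to sign), whereas ``the Bockstein of $\zeta_{i+1}$ produces $\tau_i$'' is impossible on degrees. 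With those corrections your first route becomes the standard Mahowald--Ravenel--Shick argument; as stated, the justification offered for the comodule upgrade is the one step that cannot be waved through.
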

We will now relate $y(n)$ and $y_\Z(n)$ to $T(n)$.
\begin{construction}\label{tn-in}
    Let $m\leq n$, and let $I_m$ be the ideal generated by
    $p,v_1,\cdots,v_{m-1}$, where the $v_i$ are some choices of indecomposables
    in $\pi_{|v_i|}(T(n))$ which form a regular sequence. Inductively define
    $T(n)/I_m$ as the cofiber of the map
    $$T(n)/I_{m-1} \xrightarrow{v_m\wedge 1} T(n) \wedge T(n)/I_{m-1}\to
    T(n)/I_{m-1}.$$
    The $\BPP$-homology of $T(n)/I_m$ is $\BPP_\ast/I_m[t_1,\cdots,t_n]$. The
    spectrum $T(n)/(v_1, \cdots, v_{m-1})$ is defined similarly.
\end{construction}
\begin{prop}\label{tn-yn}
    Let $p>2$. There is an equivalence between $T(n)/I_n$ (resp. $T(n)/(v_1,
    \cdots, v_{n-1})$) and the spectrum $y(n)$ (resp. $y_\Z(n)$) of Definition
    \ref{yn}.
\end{prop}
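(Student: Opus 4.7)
The plan is to build a comparison map between the two spectra and verify it is an equivalence by computing mod $p$ homology. By Proposition~\ref{yn-bp} and the $\BPP$-homology computation coming from the defining cofiber sequences in Construction~\ref{tn-in} (using that each $v_m$ maps to an indecomposable in $\BPP_\ast$), both $y(n)$ and $T(n)/I_n$ have abstractly isomorphic $\BPP$-homology $\BPP_\ast/I_n[t_1,\ldots,t_n]$, and similarly for $y_\Z(n)$ and $T(n)/(v_1,\ldots,v_{n-1})$. The real content is realizing this abstract isomorphism by an actual map of spectra.

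To construct the map, I would proceed by induction on $n$. The base case $n=0$ is tautological: $J_0(S^2)$ is a point, so $y(0) = \S^\wedge_p = T(0)/I_0$. For the inductive step, both sides admit parallel step-by-step descriptions. On the $T$-side, Construction~\ref{chin} and Lemma~\ref{tn-thom} realize $T(n+1)$ as a Thom spectrum over $\Omega S^{|v_{n+1}|+1}$ built from $T(n)$ by killing $\sigma_n$, after which the further cofibers defining $I_{n+1}$ are taken. On the $y$-side, looping the James inclusion $J_{p^n-1}(S^2)\hookrightarrow J_{p^{n+1}-1}(S^2)$ and applying Proposition~\ref{thom} iteratively presents $y(n+1)$ as a Thom spectrum built from $y(n)$ by adjoining cells whose degrees match the Bockstein classes of the $v_i$. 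The comparison map is obtained by matching these inductive presentations; the analogue for $y_\Z(n)$ uses the $2$-connected cover $J_{p^n-1}(S^2)\langle 2\rangle$ in place of $J_{p^n-1}(S^2)$, which corresponds to dropping the $p$ from the regular sequence.

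Once the map is in place, checking that it is an equivalence reduces to checking it is an isomorphism on mod $p$ homology. By Corollary~\ref{yn-hfp} (here the hypothesis $p>2$ is used),
\[
\H_\ast(y(n);\FF_p)\cong \FF_p[\zeta_1,\ldots,\zeta_n]\otimes E(\tau_0,\ldots,\tau_{n-1}),
\]
and the Künneth-style computation on the iterated cofibers of Construction~\ref{tn-in}, combined with Theorem~\ref{tn-def}, gives the same formula for $T(n)/I_n$, with the exterior classes $\tau_i$ arising as the Bocksteins of the $v_i$-cofibers and the polynomial generators coming from $\H_\ast(T(n);\FF_p)$. Verifying that the constructed map carries these classes to each other as Steenrod comodule elements, and then invoking Whitehead's theorem for connective $p$-local spectra of finite type, completes the argument. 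The restriction $p>2$ is essential in this step: at $p=2$, $\H_\ast(T(n);\FF_2)$ contains only the squares $\zeta_i^2$, so the dimensions on the two sides no longer agree.

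The main obstacle is the map-construction step, since the two sides are built from very different ingredients: a Thom spectrum over a James-type loop space on one hand, and an iterated cofiber of a regular sequence on the other. Translating between the James filtration combinatorics and the combinatorics of the regular sequence $(p,v_1,\ldots,v_{n-1})$ takes care; an alternative would be to realize $T(n)/I_n$ directly as a Thom spectrum using the universal property of Theorem~\ref{thom-univ}, but the resulting $\E{1}$-quotient has too many cells and one must truncate. With the comparison map in hand, however, the remaining homology bookkeeping and the Whitehead argument are routine.
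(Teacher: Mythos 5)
There is a genuine gap, and it sits exactly where you flag ``the main obstacle'': the comparison map is never actually constructed. Your plan is to match two inductive Thom-spectrum presentations --- $T(n+1)$ built from $T(n)$ by killing $\sigma_n$ (Construction \ref{chin}, Lemma \ref{tn-thom}) on one side, and $y(n+1)$ built from $y(n)$ via looped James inclusions and Proposition \ref{thom} on the other --- but ``matching these inductive presentations'' is precisely the nontrivial identification: one must produce a map of the relevant fiber sequences of loop spaces (or of maps to $\B\GL_1$ of the relevant rings), identify the element detected on the bottom cell of each base with the corresponding $\sigma_n$ (resp.\ $v_n$), and check that forming the quotients of Construction \ref{tn-in} commutes with these Thom constructions. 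None of this is routine, and you give no argument for it; in addition, on the $T$-side $T(n+1)$ is built from $T(n)$, not from $T(n)/I_n$, so the two inductions are not literally parallel without further work. The paper avoids all of this by a different device: since $p>2$, Gray's theorem shows $\Omega J_{p^n-1}(S^2)$ is homotopy commutative, so $y(n)$ is a homotopy commutative ring and ring maps $T(n)\to y(n)$ correspond to partial complex orientations, i.e.\ factorizations of the unit through $\Sigma^{-2}\CP^{p^n-1}$; such an orientation is built cell by cell by obstruction theory (the obstructions lie in odd homotopy groups of $y(n)$, which vanish in the relevant range), and then the infinite height of the formal group law over $\H\FF_p$ forces $p,v_1,\dots,v_{n-1}$ to vanish in $\pi_\ast y(n)$, so the map factors through $T(n)/I_n$; the homology comparison then finishes, as in your last step. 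Your final Whitehead-plus-homology step is fine and matches the paper, but it presupposes the map whose existence is the whole content of the proposition.

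Two smaller points. First, your explanation of why $p>2$ is needed is not correct: at $p=2$ one has $\FF_2[\zeta_1,\dots,\zeta_n]\cong\FF_2[\zeta_1^2,\dots,\zeta_n^2]\otimes E(\zeta_1,\dots,\zeta_n)$ additively, so the graded dimensions of $\H_\ast(y(n);\FF_2)$ and of an iterated cofiber $T(n)/I_n$ do agree; the place $p>2$ genuinely enters the paper's argument is the homotopy commutativity of $\Omega J_{p^n-1}(S^2)$, which is what makes the orientation-theoretic construction of the ring map $T(n)\to y(n)$ available. Second, you should commit to a direction for the map ($T(n)/I_n\to y(n)$ is the natural one, and the one the paper uses), since the obstruction-theoretic or inductive inputs are quite different in the two directions.
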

\begin{proof}
    We will prove the result for $y(n)$; the analogous proof works for
    $y_\Z(n)$. By \cite{gray}, the space $\Omega J_{p^n-1}(S^2)$ is homotopy
    commutative (since $p>2$). Moreover, the map $\Omega J_{p^n-1}(S^2)\to
    \Omega^2 S^3$ is an H-map, so $y(n)$ is a homotopy commutative $\E{1}$-ring
    spectrum. It is known (see \cite[Section 6.5]{green}) that homotopy
    commutative maps $T(n)\to y(n)$ are equivalent to partial complex
    orientations of $y(n)$, i.e., factorizations
    $$\xymatrix{
	\S \ar[r] \ar[dr]_-1 & \Sigma^{-2} \CP^{p^n-1}
	\ar@{-->}[d]^-{\gamma_n}\\
	& y(n).
    }$$
    Such a $\gamma_n$ indeed exists by obstruction theory: suppose $k < p^n-1$,
    and we have a map $\Sigma^{-2} \CP^k \to y(n)$. Since there is a cofiber
    sequence
    $$S^{2k-1} \to \Sigma^{-2} \CP^k \to \Sigma^{-2} \CP^{k+1}$$
    of spectra, the obstruction to extending along $\Sigma^{-2} \CP^{k+1}$ is an
    element of $\pi_{2k-1} y(n)$. However, the homotopy of $y(n)$ is
    concentrated in even degrees in the appropriate range, so a choice of
    $\gamma_n$ does indeed exist. Moreover, this choice can be made such that
    they fit into a compatible family in the sense that there is a commutative
    diagram
    $$\xymatrix{
	\Sigma^{-2} \CP^{p^n-1} \ar[r] \ar[d]_-{\gamma_n} & \Sigma^{-2}
	\CP^{p^{n+1}-1} \ar[d]^-{\gamma_{n+1}}\\
	y(n) \ar[r] & y(n+1).
    }$$
    The formal group law over $\H\FF_p$ has infinite height; this forces the
    elements $p,v_1,\cdots,v_{n-1}$ (defined for the ``$(p^n-1)$-bud'' on
    $\pi_\ast y(n)$) to vanish in the homotopy of $y(n)$. It follows that the
    orientation $T(n)\to y(n)$ constructed above factors through the quotient
    $T(n)/I_n$. The induced map $T(n)/I_n\to y(n)$ can be seen to be an
    isomorphism on homology (via, for instance, Definition \ref{yn} and
    Construction \ref{tn-in}).
\end{proof}
\begin{remark}\label{y1-v1}
    Since $y(n)$ has a $v_n$-self-map, we can form the spectrum $y(n)/v_n$; its
    mod $p$ homology is
    $$\H_\ast(y(n)/v_n;\FF_p) \cong 
    \begin{cases}
	\FF_2[\zeta_1, \cdots, \zeta_n]\otimes \Lambda_{\FF_2}(\zeta_{n+1}) &
	p=2\\
	\FF_p[\zeta_1,\cdots, \zeta_n]\otimes \Lambda_{\FF_p}(\tau_0, \cdots,
	\tau_{n-1}, \tau_n) & p\geq 3.
    \end{cases}$$
    It is in fact possible to give a construction of $y(1)/v_1$ as a spherical
    Thom spectrum. We will work at $p=2$ for convenience. Define $Q$ to be the
    fiber of the map $2\eta:S^3\to S^2$. There is a map of fiber sequences
    $$\xymatrix{
	Q \ar[r] \ar[d] & S^3 \ar[r]^-{2\eta} \ar[d] & S^2 \ar[d]\\
	\B\GL_1(\S) \ar[r] & \ast \ar[r] & \mathrm{B^2 GL}_1(\S).
    }$$
    By \cite[Theorem 3.7]{davis-mahowald}, the Thom spectrum of the leftmost map
    is $y(1)/v_1$.
\end{remark}

We end this section by recalling the definition of two Thom spectra which,
unlike $y(n)$ and $y_\Z(n)$, are not indexed by integers (we will see that they
are only defined at ``heights $1$ and $2$''). These were both studied in
\cite{tmf-witten}.
\begin{definition}\label{A-def}
    Let $S^4\to \BSpin$ denote the generator of $\pi_4 \BSpin \cong \Z$, and let
    $\Omega S^5\to \BSpin$ denote the extension of this map, which classifies a
    real vector bundle of virtual dimension zero over $\Omega S^5$. Let $A$
    denote the Thom spectrum of this bundle.
\end{definition}
\begin{remark}
    As mentioned in the introduction, the spectrum $A$ has been intensely
    studied by Mahowald and his coauthors in (for instance) \cite{mahowald-thom,
    davis-mahowald, mahowald-v2-periodic, mahowald-bo-res, mahowald-imj,
    mahowald-unell-bott}, where it is often denoted $X_5$.
\end{remark}
\begin{remark}
    The map $\Omega S^5\to \BSpin$ is one of $\E{1}$-spaces, so the Thom
    spectrum $A$ admits the structure of an $\E{1}$-ring with an $\E{1}$-map
    $A\to \MSpin$.
\end{remark}
\begin{remark}\label{A-univ-prop}
    There are multiple equivalent ways to characterize this Thom spectrum. For
    instance, the $J$-homomorphism $\BSpin\to \B\GL_1(\S)$ sends the generator
    of $\pi_4 \BSpin$ to $\nu\in \pi_4 \B\GL_1(\S) \cong \pi_3 \S$. The
    universal property of Thom spectra in Theorem \ref{thom-univ} shows that $A$
    is the free $\E{1}$-ring $\S\mmod\nu$ with a nullhomotopy of $\nu$. Note
    that $A$ is defined integrally, and not just $p$-locally for some prime $p$.
\end{remark}
\begin{remark}\label{map-A-T1}
    There is a canonical map $A \to T(1)$ of $\E{1}$-rings, constructed as
    follows. By the universal property of $A$, it suffices to prove that the
    unit $\S \to T(1)$ extends along the inclusion $\S \to C\nu$, i.e., that
    $\nu = 0\in \pi_3 T(1)$ up to units. To see this, let us compute $\pi_3 C\eta$ via the
    exact sequence
    $$\pi_3 S^1 \xar{\eta} \pi_3 S^0 \to \pi_3 C\eta \to \pi_2 S^0 \xar{\eta}
    \pi_1 S^0.$$
    This can be identified with
    $$\Z/2\{\eta^2\} \xar{\eta} \Z/8\{\nu\} \to \pi_3 C\eta \to \Z/2\{\eta\}
    \xar{\eta} \Z/2\{\eta^2\};$$
    the final map is an isomorphism, and the first map sends $\eta^2 \mapsto
    \eta^3 = 4\nu$. Therefore, $\pi_3 C\eta \cong \Z/4\{\nu\}$. Now, since the
    class in $\H_4(T(1);\FF_2)$ is detected by a nontrivial $\Sq^4$, the
    attaching map of the $4$-cell in $T(1)$ must be $\pm \nu$. Therefore, one of $\pm \nu$ must be null in $T(1)$, which implies that there must be a map $C\nu \to T(1)$ (or $C(-\nu) \to T(1)$) as claimed.
\end{remark}
The following result is \cite[Proposition 2.7]{tmf-witten}; it is proved there
at $p=2$, but the argument clearly works for $p=3$ too.
\begin{prop}\label{A-bp}
    There is an isomorphism $\BPP_\ast(A) \cong \BPP_\ast[y_2]$, where $|y_2| =
    4$. There is a map $A_{(p)}\to \BPP$. Under the induced map on
    $\BPP$-homology, $y_2$ maps to $t_1^2$ mod decomposables at $p=2$, and to
    $t_1$ mod decomposables at $p=3$.
\end{prop}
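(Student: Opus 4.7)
The plan has three steps: identify $\BPP_\ast(A)$ via a Thom isomorphism; construct the map $A_{(p)} \to \BPP$ via the universal property of $A$; and track the image of $y_2$ via a factorization through $\MU$.

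\emph{Step 1.} Since $\nu \in \pi_3 \S$ maps to $0 \in \pi_3 \BPP$, and $\Omega S^5$ is the free $\E{1}$-space on $S^4$ (James), the $\E{1}$-extension $\Omega S^5 \to \B\GL_1(\BPP)$ of the classifying map of our bundle is null. Corollary \ref{thom-iso} then yields $A \wedge \BPP \simeq \BPP \wedge \Sigma^\infty_+ \Omega S^5$. By Bott--Samelson, $\BPP_\ast(\Omega \Sigma S^4)$ is the tensor algebra on a degree-$4$ generator $x_4$; since $x_4$ is even-dimensional this is the polynomial ring $\BPP_\ast[y_2]$, where $y_2$ denotes the class corresponding to $x_4$ under the Thom isomorphism.

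\emph{Step 2.} By Remark \ref{A-univ-prop}, $A = \S \mmod \nu$ is the free $\E{1}$-ring equipped with a nullhomotopy of $\nu$; since $\pi_3 \BPP = 0$, such a nullhomotopy exists in $\BPP$, and Theorem \ref{thom-univ} produces an $\E{1}$-map $A \to \BPP$. For concreteness I take the factorization $A \to \MU \to \BPP$, where the first arrow Thomifies $\Omega S^5 \to \BU$ (via the complexification $\BSpin \to \BU$) and the second is Quillen's idempotent; any other choice differs by a class in $\pi_4 \BPP \cong \Z_{(p)}$.

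\emph{Step 3.} The induced map $\BPP_\ast(A) \to \BPP_\ast(\BPP)$ factors as
$$ \BPP_\ast(A) \cong \BPP_\ast(\Omega S^5) \to \BPP_\ast(\BU) \cong \BPP_\ast(\MU) \to \BPP_\ast(\BPP), $$
with the horizontal isomorphisms being Thom, the middle arrow induced by $\Omega S^5 \to \BU$, and the last being the Quillen map. Since the generator of $\pi_4 \BU$ Hurewicz-maps to $\beta_2$ modulo decomposables in $\BPP_\ast(\BU) = \BPP_\ast[\beta_1, \beta_2, \ldots]$, the class $x_4$ maps to a unit multiple of $\beta_2$ modulo decomposables. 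Quillen's idempotent sends $\beta_{p^k - 1}$ to $t_k$ modulo decomposables: at $p = 3$ this gives $\beta_2 = \beta_{p-1} \mapsto t_1$ modulo decomposables, while at $p = 2$ a direct calculation with the $p$-typicalization formula for the universal formal group law yields $\beta_2 \mapsto t_1^2$ modulo decomposables (in particular, modulo terms involving $v_1$). Chasing $y_2$ through gives the claim.

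\emph{The main obstacle} is the $p = 2$ case of the final computation: tracking $\beta_2 \in \BPP_\ast(\MU)$ under Quillen's idempotent requires careful bookkeeping with Hazewinkel-style $p$-typicalization formulas and conventions for the Boardman basis of $\BPP_\ast(\MU)$. The non-canonicity of $A_{(p)} \to \BPP$ (up to a class in $\pi_4 \BPP$) introduces a mild ambiguity, but this is absorbed modulo decomposables.
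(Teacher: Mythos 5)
Your Steps 1 and 2 are fine (the Thom isomorphism over $\BPP$ and the construction of the $\E{1}$-map $A\to\BPP$ from the universal property of $\S\mmod\nu$, using $\pi_3\BPP=0$), and the $p=3$ conclusion can be salvaged. But the $p=2$ case — which is the real content of the statement — rests on a false input. The Hurewicz image of a generator of $\pi_4\BU$ is the primitive class $\beta_1^2-2\beta_2$ (plus coefficient terms such as $v_1\beta_1$ in $\BPP$-homology), not a unit multiple of $\beta_2$ modulo decomposables: modulo decomposables it is $-2\beta_2$, which is a unit multiple of $\beta_2$ at $p=3$ but not at $p=2$. Compounding this, your claim that the Quillen map sends $\beta_2\mapsto t_1^2$ modulo decomposables at $p=2$ is not correct: mapping further to the dual Steenrod algebra, $b_2$ goes to $0$ while $b_1^2\mapsto\zeta_1^4$, so the coefficient of $t_1^2$ in the image of $b_2$ is even. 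The actual source of the $t_1^2$ (equivalently of $\zeta_1^4$ in $\H_\ast(A;\FF_2)$, cf.\ Corollary \ref{A-hfp}) is the \emph{decomposable} term $\beta_1^2$ in the Hurewicz image, together with the $v_1\beta_1$ contribution — precisely the terms your ``mod decomposables'' bookkeeping discards. Indeed, since $t_1^2$ is itself decomposable in the ring $\BPP_\ast\BPP$, an argument that only tracks indecomposable parts cannot even formulate the $p=2$ assertion; one has to carry the full class through and land on the precise answer $y_2\mapsto t_1^2+v_1t_1$ (for a suitable choice of map), which is what the intended statement records. So the $p=2$ computation in Step 3 needs to be redone, keeping the $\beta_1^2$ and $v_1\beta_1$ terms, e.g.\ by restricting to the bottom Moore-type skeleton $C\nu\subset A$ and computing the image of the degree-$4$ class of $\BPP_\ast(C\nu)$ in $\BPP_\ast\BPP$ directly.

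Two smaller points. First, your parenthetical construction of $A\to\MU$ by ``Thomifying $\Omega S^5\to\BU$ via the complexification $\BSpin\to\BU$'' does not work: complexification sends the generator of $\pi_4\BSpin$ to twice a generator of $\pi_4\BU$, whose complex $J$-image is $2\nu$, so the Thom spectrum of that bundle is not $A$; either use the universal property ($\pi_3\MU=0$) to get an $\E{1}$-map $A\to\MU$, or lift the spherical fibration along a generator of $\pi_4\BU$ whose $J$-image is $\nu$. Second, note that the paper does not reprove this proposition but cites \cite[Proposition 2.6]{tmf-witten}; the route you chose is the natural one, it just has to be carried out without the two simplifications above.
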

\begin{remark}
    For instance, when $p=2$, we have $\BPP_\ast(A) \cong \BPP_\ast[t_1^2 + v_1
    t_1]$.
\end{remark}
One corollary (using Remark \ref{homology-bp-fp}) is the following.
\begin{corollary}\label{A-hfp}
    As $A_\ast$-comodules, we have
    $$\H_\ast(A;\FF_p) \cong \begin{cases}
	\FF_2[\zeta_1^4] & p=2\\
	\FF_3[\zeta_1] & p=3\\
	\FF_p[x_4] & p\geq 5,
    \end{cases}$$
    where $x_4$ is a polynomial generator in degree $4$.
\end{corollary}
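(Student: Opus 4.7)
The plan is to deduce $\H_\ast(A;\FF_p)$ and its $A_\ast$-comodule structure from the $\BPP$-homology computation of Proposition \ref{A-bp} via base change along the mod-$p$ reduction $\BPP\to\H\FF_p$. Since $\BPP_\ast(A)\cong\BPP_\ast[y_2]$ is free over $\BPP_\ast$, this base change immediately yields $\H_\ast(A;\FF_p)\cong\FF_p[x_4]$ as a graded $\FF_p$-module with $|x_4|=4$, where $x_4$ is the image of $y_2$; equivalently, one may apply the Thom isomorphism to the $\FF_p$-orientable Spin bundle $\Omega S^5\to\BSpin$ classifying $A$.

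To identify the comodule structure I consider the comodule-algebra map $\mu_\ast\colon\H_\ast(A;\FF_p)\to A_\ast$ induced by the composite $A_{(p)}\to\BPP\to\H\FF_p$. It is a map of $A_\ast$-comodules by naturality, and factors through the sub-Hopf-algebra $\H_\ast(\BPP;\FF_p)\subseteq A_\ast$, which equals $\FF_2[\zeta_1^2,\zeta_2^2,\ldots]$ at $p=2$ and $\FF_p[\zeta_1,\zeta_2,\ldots]$ at odd primes. In degree $4$ this subalgebra is one-dimensional, spanned by $\zeta_1^4$ at $p=2$ and by $\zeta_1$ at $p=3$, and it vanishes for $p\geq 5$ (since $|\zeta_1|=2p-2\geq 8$). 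Combining Proposition \ref{A-bp} with the standard identification $t_i\mapsto\zeta_i^2$ at $p=2$ (respectively $\zeta_i$ at odd primes) of the map $\BPP_\ast\BPP\to\H_\ast(\BPP;\FF_p)$ then forces $\mu_\ast(x_4)=\zeta_1^4$ at $p=2$ and $\mu_\ast(x_4)=\zeta_1$ at $p=3$ (after normalizing $y_2$); hence $\mu_\ast$ realizes the polynomial sub-algebra isomorphisms claimed, which are simultaneously comodule isomorphisms by naturality.

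At $p\geq 5$, the dual Steenrod algebra $A_\ast$ itself vanishes in degrees $2, 3, 4$ (the only generators in that range are $\tau_0$ in degree $1$, with $\zeta_1$ in degree $\geq 8$ and $\tau_1$ in degree $\geq 9$), so $\psi(x_4)=1\otimes x_4$ purely by degree; multiplicativity of the coaction (from the $\E{1}$-structure on $A$, which makes $\H_\ast(A;\FF_p)$ a comodule algebra over $A_\ast$) then extends this to the trivial coaction on all of $\FF_p[x_4]$, matching the stated comodule structure. The main obstacle, which is fairly mild, is unpacking the ``mod decomposables'' clause of Proposition \ref{A-bp} to conclude that $\mu_\ast(x_4)$ hits the unique nonzero class in the one-dimensional degree-$4$ piece of $\H_\ast(\BPP;\FF_p)$ at $p=2,3$; that one-dimensionality is what makes the comodule structure rigid enough to pin down on the nose.
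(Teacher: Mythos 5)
Your proposal is correct and is exactly the intended deduction: the paper states this as an immediate corollary of Proposition \ref{A-bp}, obtained by base-changing the free $\BPP_\ast$-module $\BPP_\ast(A)\cong\BPP_\ast[y_2]$ (equivalently, the Thom isomorphism) for the additive statement and using the map $A_{(p)}\to\BPP\to\H\FF_p$, with $t_1\mapsto\zeta_1^2$ (resp.\ $\zeta_1$), to pin down the comodule structure. Your handling of the one-dimensionality of the degree-$4$ piece and the degree argument at $p\geq 5$ fills in the details in the same spirit, so there is nothing to flag.
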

\begin{example}\label{sigma1-A}
    Let us work at $p=2$ for convenience. Example \ref{sigma1} showed that
    $\sigma_1$ is the element in $\pi_5(C\eta \wedge C\nu)$ given by the lift of
    $\eta$ to the $4$-cell (which is attached to the bottom cell by $\nu$) via a
    nullhomotopy of $\eta\nu$. In particular, $\sigma_1$ already lives in
    $\pi_5(C\nu)$, and as such defines an element of $\S\mmod\nu = A$ (by
    viewing $C\nu$ as the $4$-skeleton of $A$); note that, by construction, this
    element is $2$-torsion. The image of $\sigma_1\in \pi_5(A)$ under the
    canonical map of Remark \ref{map-A-T1} is its namesake in $\pi_5(T(1))$.
    See Figure \ref{A-15-skeleton}.
\end{example}
\begin{remark}\label{sigma1-integral}
    The element $\sigma_1\in \pi_5(A_{(2)})$ defined in Example \ref{sigma1-A}
    in fact lifts to an element of $\pi_5(A)$, because the relation $\eta\nu =
    0$ is true integrally, and not just $2$-locally. An alternate construction
    of this map is the following. The Hopf map $\eta_4:S^5\to S^4$ (which lives
    in the stable range) defines a map $S^5\to S^4\to \Omega S^5$ whose
    composite to $\BSpin$ is null (since $\pi_5(\BSpin) = 0$). Upon
    Thomification of the composite $S^5\to \Omega S^5\to \BSpin$, one therefore
    gets a map $S^5\to A$ whose composite with $A\to \MSpin$ is null. The map
    $S^5\to A$ is the element $\sigma_1\in \pi_5(A)$.
\end{remark}
\begin{figure}
    \begin{tikzpicture}[scale=0.75]
        \draw [fill] (0, 0) circle [radius=0.05];
        \draw [fill] (1, 0) circle [radius=0.05];
        \draw [fill] (1, 1) circle [radius=0.05];
        \draw [fill] (2, 0) circle [radius=0.05];
        \draw [fill] (3, 0) circle [radius=0.05];

        \draw (0,0) to node[below] {\footnotesize{$\nu$}} (1,0);
        \draw (1,0) to node[below] {\footnotesize{$2\nu$}} (2,0);
        \draw (2,0) to node[below] {\footnotesize{$3\nu$}} (3,0);

	\draw [->] (1,1) to node[left] {\footnotesize{$\eta$}} node[right]
	{\footnotesize{$\sigma_1$}} (1,0);

        \draw (0,0) to[out=-90,in=-90] node[below] {\footnotesize{$\sigma$}}
        (2,0);
    \end{tikzpicture}
    \caption{$15$-skeleton of $A$ at the prime $2$ shown horizontally, with
    $0$-cell on the left. The element $\sigma_1$ given by the map $\eta$ on the
    $4$-cell, as indicated in the diagram above.}
    \label{A-15-skeleton}
\end{figure}

Finally, we have:
\begin{definition}\label{B-def}
    Let $\B N$ be the space defined by the homotopy pullback
    $$\xymatrix{
	\B N \ar[r] \ar[d] & S^{13} \ar[d]^-f\\
	\BO(9) \ar[r] & \BO(10),
    }$$
    where the map $f:S^{13}\to \BO(10)$ detects an element of $\pi_{12} \O(10)
    \cong \Z/12$. There is a fiber sequence
    $$S^9 \to \BO(9) \to \BO(10),$$
    and the image of $f$ under the boundary map in the long exact sequence of
    homotopy detects $2\nu\in \pi_{12}(S^9) \cong \Z/24$. In particular, there
    is a fiber sequence
    $$S^9\to \B N\to S^{13}.$$
    If $N$ is defined to be $\Omega \B N$, then there is a fiber sequence
    $$N\to \Omega S^{13}\to S^9.$$
    Define a map $N\to \BString$ via the map of fiber sequences
    $$\xymatrix{
	N \ar[r] \ar[d] & \Omega S^{13} \ar[r] \ar[d] & S^9 \ar[d]\\
	\BString \ar[r] & \ast \ar[r] & \mathrm{B^2 String},
    }$$
    where the map $S^9\to \mathrm{B^2 String}$ detects a generator of $\pi_8
    \BString$. Let $B$ denote the Thom spectrum of the induced bundle over $N$.
\end{definition}
\begin{remark}
    The map $N\to \BString$ is in fact one of $\E{1}$-spaces, so $B$ admits the
    structure of an $\E{1}$-ring. To prove this, it suffices to show that there
    is a map $\B N \to \mathrm{B^2 String}$. Recall that $\BString = \tau_{\geq
    8} \Omega^\infty \KO$, so the desired map is the same as a class in
    $\KO^1(\B N)$. Using the Serre spectral sequence for the fiber sequence
    defining $\B N$, one can calculate that there is a class in $\KO^1(\B N)$
    which lifts the generator of $\KO^1(S^9) \cong \pi_8 \KO \cong \Z$.
\end{remark}
We introduced the spectrum $B$ and studied its Adams-Novikov spectral sequence
in \cite{tmf-witten}. The Steenrod module structure of the $20$-skeleton of $B$
is shown in \cite[Figure 1]{tmf-witten}, and is reproduced here as Figure
\ref{B-cell}.
\begin{figure}
    \begin{tikzpicture}[scale=0.75]
        \draw [fill] (0, 0) circle [radius=0.05];
        \draw [fill] (2, 0) circle [radius=0.05];
        \draw [fill] (3, 0) circle [radius=0.05];
        \draw [fill] (4, 0) circle [radius=0.05];
        \draw [fill] (5, 0) circle [radius=0.05];

        \draw [fill] (3, 1) circle [radius=0.05];

	\draw (2,0) to node[below] {\footnotesize{$\nu$}} (3,0);
        \draw (4,0) to (5,0);

	\draw (0,0) to[out=90,in=90] node[below] {\footnotesize{$\sigma$}}
	(2,0);
        \draw (3,0) to[out=-90,in=-90] (5,0);

        \draw (0,0) to[out=-90,in=-90] (4,0);

	\draw [->] (3,1) to node[left] {\footnotesize{$\eta$}} node[right]
	{\footnotesize{$\sigma_2$}} (3,0);
    \end{tikzpicture}
    \caption{Steenrod module structure of the $20$-skeleton of $B$; the bottom
    cell (in dimension $0$) is on the left; straight lines are $\Sq^4$, and
    curved lines correspond to $\Sq^8$ and $\Sq^{16}$, in order of increasing
    length. The bottom two attaching maps of $B$ are labeled. The map $\sigma_2$
    is shown.}
    \label{B-cell}
\end{figure}
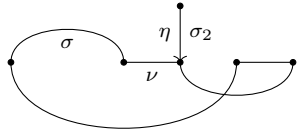
As mentioned in the introduction, the spectrum $B$ has
been briefly studied under the name $\ol{X}$ in
\cite{hopkins-mahowald-orientations}.
\begin{remark}\label{B-iterated-thom}
    As with $A$, there are multiple different ways to characterize $B$. There is
    a fiber sequence
    $$\Omega S^9 \to N\to \Omega S^{13},$$
    and the map $\Omega S^9 \to N\to \BString$ is an extension of the map
    $S^8\to \BString$ detecting a generator. Under the $J$-homomorphism
    $\BString\to \B\GL_1(\S)$, this generator maps to $\sigma\in \pi_8
    \B\GL_1(\S) \cong \pi_7 \S$, so the Thom spectrum of the bundle over $\Omega
    S^9$ determined by the map $\Omega S^9\to \BString$ is the free $\E{1}$-ring
    $\S\mmod\sigma$ with a nullhomotopy of $\nu$. Proposition \ref{thom} now
    implies that $N$ is the Thom spectrum of a map $\Omega S^{13}\to
    \B\GL_1(\S\mmod\sigma)$. While a direct definition of this map is not
    obvious, we note that the restriction to the bottom cell $S^{12}$ of the
    source detects an element $\wt{\nu}$ of $\pi_{12} \B\GL_1(\S\mmod\sigma)\cong
    \pi_{11} \S\mmod\sigma$. This in turn factors through the $11$-skeleton of
    $\S\mmod\sigma$, which is the same as the $8$-skeleton of $\S\mmod\sigma$
    (namely, $C\sigma$). This element is precisely a lift of the map $\nu:S^{11}
    \to S^8$ to $C\sigma$ determined by a nullhomotopy of $\sigma\nu$ in
    $\pi_\ast \S$. Although $\tilde{\nu}\in \pi_{11} C\sigma$ does not come from
    a class in $\pi_{11} \S$, its representative in the Adams spectral
    sequence for $C\sigma$ is the image of $h_{22}$ in the Adams spectral
    sequence for the sphere.
\end{remark}
The following result is \cite[Proposition 3.2]{tmf-witten}; it is proved there
at $p=2$, but the argument clearly works for $p\geq 3$ too.
\begin{prop}\label{B-bp}
    The $\BPP_\ast$-algebra $\BPP_\ast(B)$ is isomorphic to a polynomial ring
    $\BPP_\ast[b_4, y_6]$, where $|b_4| = 8$ and $|y_6| = 12$. There is a map
    $B_{(p)} \to \BPP$. On $\BPP_\ast$-homology, the elements $b_4$ and $y_6$
    map to $t_1^4$ and $t_2^2$ mod decomposables at $p=2$, and $y_6$ maps to
    $t_1^3$ mod decomposables at $p=3$.
\end{prop}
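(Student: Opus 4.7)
The plan is to identify $\BPP_\ast(B)$ via a Thom isomorphism coupled with a Serre spectral sequence, then to produce the map $B_{(p)} \to \BPP$ and compute its action on the specified generators by comparison with the $T(n)$ spectra of Theorem \ref{tn-def}.

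First I would show that the composite $N \to \BString \to \B\GL_1(\S) \to \B\GL_1(\BPP)$ is null. Since $\BPP$ has homotopy concentrated in even degrees, $\B\GL_1(\BPP)$ has homotopy concentrated in odd degrees; on the other hand, the fibration $\Omega S^9 \to N \to \Omega S^{13}$ exhibits $N$ as $7$-connected with a CW structure concentrated in even dimensions (namely $8,12,16,20,\ldots$, combining the James filtrations of the two factors). Cellular obstruction theory then kills the map, and Corollary \ref{thom-iso} produces a $\BPP$-module equivalence $\BPP \wedge B \simeq \BPP \wedge \Sigma^\infty_+ N$. Next I would run the Serre spectral sequence for $\Omega S^9 \to N \to \Omega S^{13}$ with $\BPP$-coefficients: since $\BPP_\ast(\Omega S^{2n+1})$ is polynomial on a class in degree $2n$, the $E_2$-page is $\BPP_\ast[b_4, y_6]$ with $|b_4|=8$ from the fiber and $|y_6|=12$ from the base. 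Every class is in even total degree while each $d_r$ shifts total degree by $-1$, so the spectral sequence collapses at $E_2$. Combined with the equivalence above, and with multiplicativity of the Thom class since $B$ is an $\E{1}$-ring, this identifies $\BPP_\ast(B)$ with $\BPP_\ast[b_4, y_6]$ as a $\BPP_\ast$-algebra.

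The map $B_{(p)} \to \BPP$ is then obtained by choosing a $\BPP$-linear projection of the free $\BPP$-module $\BPP \wedge B$ onto its bottom summand, yielding a unital (but not a priori multiplicative) map $B \to \BPP \wedge B \to \BPP$. To identify the images in $\BPP_\ast(\BPP)$, I would compare $B$ with $T(2)_{(2)}$ and $T(1)_{(3)}$. The generator $b_4$ arises from the $8$-cell of $N$, which is attached to the unit by the class $\sigma$ (image under $J$ of the generator of $\pi_8\BString$); tracing this attaching datum through the Thom isomorphism and Theorem \ref{tn-def} forces $b_4 \mapsto t_1^4$ modulo decomposables at $p=2$. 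Similarly, the $12$-cell is attached via the lift $\wt{\nu}$ of Remark \ref{B-iterated-thom}, which forces $y_6$ to reduce to $t_2^2$ at $p=2$ and to $t_1^3$ at $p=3$ by a degree and indecomposability count in the relevant subalgebra of $\BPP_\ast(\BPP)$.

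The main obstacle will be this last step. The algebraic identification of $\BPP_\ast(B)$ in the previous paragraph is essentially formal once the parity observation is in place, but pinning down the specific Milnor-generator images requires genuinely unstable input about the attaching maps of $N$. Concretely, one must produce either an explicit comparison map to the appropriate $T(n)_{(p)}$ or an explicit formal-group-law calculation using the bottom cells of $N$ and the Thom diagonal. This is also the step at which the proof genuinely bifurcates between $p=2$ and $p=3$, since the relation between the $8$- and $12$-dimensional cells and the Milnor generators $t_1,t_2$ changes with the prime.
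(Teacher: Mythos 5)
Your formal steps are fine and are essentially the expected route: the parity argument (all cells of $N$ in dimensions $8a+12b$, all homotopy of $\B\GL_1(\BPP)$ in odd degrees) kills the composite $N\to\B\GL_1(\S)\to\B\GL_1(\BPP)$, Corollary \ref{thom-iso} then gives $\BPP\wedge B\simeq \BPP\wedge\Sigma^\infty_+N$, the multiplicative Serre spectral sequence for $\Omega S^9\to N\to\Omega S^{13}$ collapses for parity reasons, and a unital map $B_{(p)}\to\BPP$ is obtained as a Thom class/projection onto the bottom $\BPP$-summand (one small point you elide: $N$ is only a loop space, so commutativity of the Pontryagin product, i.e.\ that $\BPP_\ast(B)$ is literally a polynomial ring rather than just having polynomial associated graded, deserves a sentence). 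Bear in mind that the paper itself gives no proof of this proposition; it is quoted from \cite[Proposition 3.2]{tmf-witten}, so the only real question is whether your sketch reconstructs that computation.

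It does not, at the one point where the content lies. Identifying the images of $b_4$ and $y_6$ in $\BPP_\ast\BPP$ is precisely the nonformal part, and a ``degree and indecomposability count'' cannot decide it: at $p=2$ the degree-$8$ candidates modulo coefficient-decomposables include both $t_1^4$ and $t_1t_2$, and in degree $12$ one has $t_2^2$, $t_1^3t_2$, and $t_1^6$; nothing about degrees forces the stated answer, and a wrong choice here would propagate to Corollary \ref{B-hfp} (for instance $t_1t_2$ would contribute $\zeta_1^2\zeta_2^2$ rather than $\zeta_1^8$ to $\H_\ast(B;\FF_2)$). Pinning this down requires an actual computation of the coaction: for example, computing the $A_\ast$-comodule structure on $\H_\ast(B;\FF_p)$ from the characteristic classes of the String bundle over $N$ (equivalently, feeding the attaching data $\sigma$ and $\wt{\nu}$ of Remark \ref{B-iterated-thom} through the right-unit/formal-group-law formulas in $\BPP_\ast\BPP$), or constructing and analyzing an explicit comparison map to $T(2)_{(2)}$, resp.\ $T(1)_{(3)}$. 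You correctly flag this as the main obstacle, but you do not supply the argument; since everything before it is formal, what is missing is exactly the computation that the cited reference exists to establish, so the proposal as written leaves the essential claim of the proposition unproven.
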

One corollary (using Remark \ref{homology-bp-fp}) is the following.
\begin{corollary}\label{B-hfp}
    As $A_\ast$-comodules, we have
    $$\H_\ast(B;\FF_p) \cong \begin{cases}
	\FF_2[\zeta_1^8,\zeta_2^4] & p=2\\
	\FF_3[\zeta_1^3, b_4] & p=3\\
	\FF_5[\zeta_1, x_{12}] & p=5\\
	\FF_p[x_8, x_{12}] & p\geq 7,
    \end{cases}$$
    where $x_8$ and $x_{12}$ are polynomial generators in degree $8$ and $12$,
    and $b_4$ is an element in degree $8$.
\end{corollary}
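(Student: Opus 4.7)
The plan is to derive Corollary \ref{B-hfp} from Proposition \ref{B-bp} by base-changing the $\BPP$-homology of $B$ along the canonical map $\BPP \to \H\FF_p$, in exact parallel with how Corollary \ref{A-hfp} is stated to follow from Proposition \ref{A-bp}. The key structural input is that $\BPP_\ast(B) \cong \BPP_\ast[b_4, y_6]$ is free, hence flat, as a $\BPP_\ast$-module, so the Künneth spectral sequence
$$\Tor^{\BPP_\ast}_{s,t}(\BPP_\ast(B), \H_\ast(\BPP;\FF_p)) \;\Longrightarrow\; \H_{s+t}(B;\FF_p)$$
collapses at the $E_2$-page, yielding the base-change identification
$$\H_\ast(B;\FF_p) \;\cong\; \BPP_\ast(B) \otimes_{\BPP_\ast} \H_\ast(\BPP;\FF_p).$$
I would then substitute the classical identifications $\H_\ast(\BPP;\FF_2) \cong \FF_2[\zeta_i^2 : i \geq 1]$ at $p = 2$ and $\H_\ast(\BPP;\FF_p) \cong \FF_p[\zeta_i : i \geq 1] \otimes E(\tau_i : i \geq 1)$ at odd $p$, recalling that $t_i \in \BPP_\ast\BPP$ hits $\zeta_i^2$ at $p = 2$ and $\zeta_i$ at odd $p$ under the map $\BPP \wedge \BPP \to \H\FF_p \wedge \BPP$.

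Combining these ingredients with Proposition \ref{B-bp} then handles each prime separately. At $p = 2$, the images $b_4 \mapsto t_1^4$ and $y_6 \mapsto t_2^2$ modulo decomposables translate, under the substitution $t_i \mapsto \zeta_i^2$, to $\zeta_1^8$ and $\zeta_2^4$, giving $\FF_2[\zeta_1^8, \zeta_2^4]$. At $p = 3$, Proposition \ref{B-bp} gives $y_6 \mapsto \zeta_1^3$ modulo decomposables; for $b_4$ one argues by degree, noting that $|\zeta_1| = 4$ at $p = 3$ forces the only indecomposable of degree $8$ in $\H_\ast(\BPP;\FF_3)$ to be $\zeta_1^2$, yielding $\FF_3[\zeta_1^2, b_4]$. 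At $p \geq 5$, the generators $\zeta_i, \tau_i$ of $\H_\ast(\BPP;\FF_p)$ are too sparse to contribute additional polynomial generators in the relevant range of degrees, and a short degree check identifies the surviving polynomial generators with the images of $b_4$ and $y_6$ — renamed $x_8, x_{12}$ when no natural $\zeta_i$-interpretation is available.

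The $A_\ast$-comodule structure is inherited from the map $\H_\ast(B;\FF_p) \to \H_\ast(\BPP;\FF_p)$ induced by $B_{(p)} \to \BPP$, together with the Thom diagonal; these together pin down the coaction on each polynomial generator and show that the image lies in a sub-Hopf-algebra of $A_\ast$ whose coaction is standard. The only mild obstacle is the bookkeeping at the intermediate primes $p = 3, 5$, where some polynomial generators of $\H_\ast(B;\FF_p)$ descend from Hurewicz images of elements of $\H_\ast(\BPP;\FF_p)$ while others must be labeled generically — there is no conceptual difficulty beyond Proposition \ref{B-bp}.
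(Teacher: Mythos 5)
Your overall strategy---deduce the corollary from Proposition \ref{B-bp} by reducing mod $p$ and naming the generators via the map $B_{(p)}\to \BPP$---is exactly how the paper treats it (the corollary is stated there as an immediate consequence of Proposition \ref{B-bp}). But your key base-change step is wrong as written. The K\"unneth spectral sequence $\Tor^{\BPP_\ast}_{\ast,\ast}(\BPP_\ast(B),\H_\ast(\BPP;\FF_p))$ converges to $\pi_\ast\bigl((\BPP\wedge B)\wedge_{\BPP}(\H\FF_p\wedge \BPP)\bigr)\cong \H_\ast(\BPP\wedge B;\FF_p)$, not to $\H_\ast(B;\FF_p)$: since $\BPP_\ast(B)$ is free, your formula outputs $\H_\ast(\BPP;\FF_p)[b_4,y_6]$, a polynomial ring on infinitely many generators, which is visibly not $\FF_2[\zeta_1^8,\zeta_2^4]$. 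The correct base change is along the truncation $\BPP\to \H\FF_p$ itself: because $\BPP\wedge B$ splits as a wedge of suspensions of $\BPP$, one gets $\H\FF_p\wedge B\simeq \H\FF_p\wedge_{\BPP}(\BPP\wedge B)$ and hence $\H_\ast(B;\FF_p)\cong \BPP_\ast(B)\otimes_{\BPP_\ast}\FF_p\cong \FF_p[b_4,y_6]$ (equivalently, use the Thom isomorphism over $N$). Only afterwards does one use the comodule-algebra map $\H_\ast(B;\FF_p)\to \H_\ast(\BPP;\FF_p)$ together with Proposition \ref{B-bp} to identify the generators and the coaction.

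A second gap: your identification of $\H_\ast(B;\FF_p)$ with the subalgebra of $A_\ast$ generated by the images of $b_4$ and $y_6$ tacitly assumes the map $\H_\ast(B;\FF_p)\to \H_\ast(\BPP;\FF_p)$ is injective. This is fine at $p=2$, where the image is precisely $\FF_2[\zeta_1^8,\zeta_2^4]$, but it fails at odd primes: at $p=5$ the ring $\FF_5[\zeta_1,\zeta_2,\dots]$ ($|\zeta_1|=8$, $|\zeta_2|=48$) has nothing in degree $12$, so the degree-$12$ generator maps to zero and must be kept abstract as $x_{12}$; and at $p=3$ your degree argument that $b_4$ maps to $\zeta_1^2$ cannot be correct---$\zeta_1^2$ is decomposable, not indecomposable, and comodule naturality rules it out, since $\psi(\zeta_1^2)$ contains the term $2\zeta_1\otimes\zeta_1$ whereas $\H_4(B;\FF_3)=0$. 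So at $p=3,5$ the coaction on the generators has to be pinned down by low-degree constraints (the $p$-local attaching maps in the cell structure of $B$), not by treating the map to $\H_\ast(\BPP;\FF_p)$ as an inclusion; as stated, your argument only establishes the $p=2$ case.
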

\begin{example}\label{sigma2-B}
    For simplicity, let us work at $p=2$. There is a canonical ring map $B\to
    T(2)$, and the element $\sigma_2\in \pi_{13} T(2)$ lifts to $B$. We can be
    explicit about this: the $12$-skeleton of $B$ is shown in Figure
    \ref{B-cell}, and $\sigma_2$ is the element of $\pi_{13}(B)$ existing thanks
    to the relation $\eta\nu = 0$ and the fact that the Toda bracket $\langle
    \eta, \nu, \sigma\rangle$ contains $0$. This also shows that $\sigma_2\in
    \pi_{13}(B)$ is $2$-torsion.
\end{example}
\begin{remark}\label{sigma2-integral}
    The element $\sigma_2\in \pi_{13}(B_{(2)})$ defined in Example
    \ref{sigma2-B} in fact lifts to an element of $\pi_{13}(B)$, because the
    relations $\nu\sigma = 0$, $\eta\nu = 0$, and $0\in \langle \eta, \nu,
    \sigma\rangle$ are all true integrally, and not just $2$-locally. An
    alternate construction of this map $S^{13}\to B$ is the following. The Hopf
    map $\eta_{12}:S^{13}\to S^{12}$ (which lives in the stable range) defines a
    map $S^{13}\to S^{12}\to \Omega S^{13}$. Moreover, the composite $S^{13}\to
    \Omega S^{13}\to S^9$ is null, since it detects an element of $\pi_{13}(S^9)
    = 0$; choosing a nullhomotopy of this composite defines a lift $S^{13}\to
    N$. (In fact, this comes from a map $S^{14}\to \B N$.) The composite
    $S^{13}\to N\to \BString$ is null (since $\pi_{13}(\BString) = 0$). Upon
    Thomification, we obtain a map $S^{13}\to B$ whose composite with $B\to
    \MString$ is null; the map $S^{13}\to B$ is the element $\sigma_2\in
    \pi_{13}(B)$.
\end{remark}
The following theorem packages some information contained in this section.
\begin{theorem}\label{sigman-exists}
    Let $R$ denote any of the spectra in Table \ref{thom-definitions}, and let
    $n$ denote its ``height''. If $R = T(n), y(n)$, or $y_\Z(n)$, then there is
    a map $T(n)\to R$, and if $R = A$ (resp. $B$), then there is a map from $R$ to
    $T(1)$ (resp. $T(2)$). In the first three cases, there is an element
    $\sigma_n\in \pi_{|v_{n+1}|-1} R$ coming from $\sigma_n\in \pi_{|v_{n+1}|-1}
    T(n)$, and in the cases $R = A$ and $B$, there are elements $\sigma_1\in
    \pi_5(A)$ and $\sigma_2\in \pi_{13}(B)$ mapping to the corresponding
    elements in $T(1)_{(2)}$ and $T(2)_{(2)}$, respectively.  Moreover,
    $\sigma_n$ is $p$-torsion in $\pi_\ast R$; similarly, $\sigma_1$ and
    $\sigma_2$ are $2$-torsion in $\pi_\ast A_{(2)}$ and $\pi_\ast B_{(2)}$.
\end{theorem}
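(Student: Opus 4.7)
The proof is largely a matter of packaging earlier results, so the plan is to go through each class of spectrum in Table \ref{thom-definitions} and assemble the maps and elements already constructed. First, for $R = T(n)$ itself, the identity map and the element $\sigma_n = \chi_{p^{n+1}-1} \in \pi_{|v_{n+1}|-1} T(n)$ of Lemma \ref{tn-thom} are immediate, and Lemma \ref{chin-torsion} provides the $p$-torsion claim (since $\pi_{|v_{n+1}|-1} X(p^{n+1}-1) \cong \Z/p$, and $\sigma_n$ is the image of $\chi_{p^{n+1}-1}$ under the splitting $X(p^{n+1}-1)_{(p)} \to T(n)$).

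Next, for $R = y(n)$ and $R = y_\Z(n)$, Proposition \ref{tn-yn} (together with Construction \ref{tn-in}) provides canonical ring maps $T(n) \to T(n)/I_n \simeq y(n)$ and $T(n) \to T(n)/(v_1,\cdots,v_{n-1}) \simeq y_\Z(n)$ at odd primes, with the analogous statement at $p=2$ following from the same obstruction-theoretic argument. Pushing $\sigma_n$ forward along these maps yields the desired $p$-torsion element in each target.

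For $R = A$, the map $T(1)_{(2)} \to A_{(2)}$ exists because $A$ is the free $\E{1}$-ring $\S\mmod \nu$ (Remark \ref{A-univ-prop}), and $T(1)_{(2)}$ receives a map from $\S$ carrying $\nu$ to zero (since the $4$-cell of $T(1)$ kills $\nu$, as $T(1) = X(2)$ has $\H_\ast(T(1)) \cong \FF_2[\zeta_1^2]$). The lift $\sigma_1 \in \pi_5(A)$ is supplied by Example \ref{sigma1-A} and Remark \ref{sigma1-integral}, where $\sigma_1$ is constructed either as the element of $\pi_5(C\nu) \subseteq \pi_5(A)$ witnessing the relation $\eta\nu = 0$, or, integrally, by Thomifying the null map $S^5 \xrightarrow{\eta_4} S^4 \to \Omega S^5 \to \BSpin$. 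In either description the class is manifestly $2$-torsion, and compatibility with $\sigma_1 \in \pi_5(T(1)_{(2)})$ is built into the construction. The argument for $B$ is parallel, using Example \ref{sigma2-B} and Remark \ref{sigma2-integral}: the canonical ring map $B \to T(2)$ (visible from the $\BPP$-homology calculation of Proposition \ref{B-bp}, which shows $\BPP_\ast(B)$ mapping into $\BPP_\ast(T(2))$), and $\sigma_2 \in \pi_{13}(B)$ arises from the Toda bracket $\langle \eta, \nu, \sigma\rangle \ni 0$ together with $\eta\nu = 0$ and $\nu\sigma = 0$ (equivalently, from the Thomification of $S^{13} \xrightarrow{\eta_{12}} S^{12} \to \Omega S^{13}$ after lifting through $N$), giving a $2$-torsion element whose image in $\pi_{13}(T(2)_{(2)})$ is $\sigma_2$.

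The only genuinely substantive point, and thus the main thing to check carefully, is the compatibility between the two possible definitions of each $\sigma_n$ in $A$ and $B$: the one coming from the map $T(n) \to R$ (which is simply the image of $\chi_{p^{n+1}-1}$) and the one coming from the explicit constructions via nullhomotopies. This is essentially a diagram chase on the level of cell structures (cf. Figures \ref{A-15-skeleton} and \ref{B-cell}), comparing the attaching map description of $\sigma_n \in \pi_\ast T(n)$ in Example \ref{sigma1} (and its analogue for $B$) with the explicit lifts to $A$ and $B$; once this is verified, the $p$-torsion property for $A$ and $B$ follows directly from the construction rather than needing to be deduced from Lemma \ref{chin-torsion}, which is why these cases are stated separately.
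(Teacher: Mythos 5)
Your overall strategy---assembling Lemma \ref{tn-thom} and Lemma \ref{chin-torsion} for $T(n)$, Proposition \ref{tn-yn} for $y(n)$ and $y_\Z(n)$, and the explicit constructions of Examples \ref{sigma1-A}, \ref{sigma2-B} and Remarks \ref{sigma1-integral}, \ref{sigma2-integral} for $A$ and $B$---is exactly the paper's proof, which is just a short packaging of those results. However, there is a concrete error in your treatment of $A$: you claim a map $T(1)_{(2)}\to A_{(2)}$ and justify it by the universal property of $A=\S\mmod\nu$ together with the vanishing of $\nu$ in $\pi_3 T(1)$. That universal property (Remark \ref{A-univ-prop}, Theorem \ref{thom-univ}) runs the other way: it produces an $\E{1}$-map \emph{out of} $A$ into any $\E{1}$-ring in which $\nu$ is nullhomotopic, i.e.\ precisely the canonical map $A\to T(1)$ of Example \ref{sigma1-A}. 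In fact no unital map $T(1)\to A$ can exist: $T(1)=X(2)=\S\mmod\eta$ kills $\eta$, whereas $\eta$ survives in $\pi_1 A$ (the cells of $A$ lie in dimensions $0,4,8,\dots$), and the same objection rules out a unital map $T(2)\to B$. The maps the argument actually needs---and the only ones compatible with the second half of the statement, which asserts that $\sigma_1\in\pi_5(A)$ and $\sigma_2\in\pi_{13}(B)$ map \emph{to} their namesakes in $T(1)_{(2)}$ and $T(2)_{(2)}$---are $A\to T(1)$ and $B\to T(2)$; the phrase ``a map from $T(1)$ to $R$'' in the statement is loose, but the paper's cited constructions are unambiguous about the direction.

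This direction confusion also undermines the ``main thing to check'' in your last paragraph: there is no map $T(n)\to A$ or $T(n)\to B$ along which to push $\chi_{p^{n+1}-1}$, so the comparison cannot be set up that way. What must be verified (and is, in Examples \ref{sigma1}, \ref{sigma1-A} and \ref{sigma2-B}) is the reverse: the classes $\sigma_1\in\pi_5(A)$ and $\sigma_2\in\pi_{13}(B)$ built from $\eta\nu=0$ and $0\in\langle\eta,\nu,\sigma\rangle$ are carried by $A\to T(1)$ and $B\to T(2)$ to the elements of Lemma \ref{tn-thom}. Two smaller points: the $\BPP$-homology computation of Proposition \ref{B-bp} does not by itself construct a map $B\to T(2)$---the paper obtains that map from the universal property of $B$ as an iterated Thom spectrum (Remark \ref{B-iterated-thom}; cf.\ the proof of Proposition \ref{B-wood})---and your appeal to ``the same obstruction-theoretic argument'' at $p=2$ for Proposition \ref{tn-yn} needs care, since that proof uses homotopy commutativity of $\Omega J_{p^n-1}(S^2)$, quoted from Gray only for $p>2$ (though the paper's own one-line citation is equally terse on this point).
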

\begin{proof}
    The existence statement for $T(n)$ is contained in Theorem \ref{tn-def},
    while the torsion statement is the content of Lemma \ref{chin-torsion}. The
    claims for $y(n)$ and $y_\Z(n)$ now follow from Proposition \ref{tn-yn}. The
    existence and torsion statements for $A$ and $B$ are contained in Examples
    \ref{sigma1-A} and \ref{sigma2-B}.
\end{proof}
The elements in Theorem \ref{sigman-exists} can in fact be extended to infinite
families; this is discussed in Section \ref{infinite}.

\subsection{Centers of Thom spectra}

In this section, we review some of the theory of $\E{k}$-centers and state
Conjecture \ref{centrality-conj}. We begin with the following important result,
and refer to \cite{francis} and \cite[Section 5.5.4]{HA} for proofs.
\begin{theorem}[{\cite[Example 5.5.4.16]{HA}, \cite[Definition
    2.5]{francis}}]\label{fact-hmlgy}
    Let $\cC$ be a symmetric monoidal presentable $\infty$-category, and let $A$
    be an $\E{k}$-algebra in $\cC$. Then the category of $\E{k}$-$A$-modules is
    equivalent to the category of left modules over the factorization homology
    $U(A) = \int_{S^{k-1}\times \RR} A$ (known as the \emph{enveloping algebra}
    of $A$), which is an $\E{1}$-algebra in $\cC$.
\end{theorem}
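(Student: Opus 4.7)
The plan is to reduce the statement to a careful identification of ``module operations'' for an $\E{k}$-algebra with configurations of disks in $S^{k-1}\times \RR$, and then to recognize the resulting $\E{1}$-algebra as the factorization homology $\int_{S^{k-1}\times \RR} A$. Concretely, I would start from Lurie's colored operad $\E{k}^{\mathrm{mod}}$, whose algebras are pairs $(A, M)$ consisting of an $\E{k}$-algebra $A$ acting on a module $M$, and whose space of operations with one module input and $n$ algebra inputs is equivalent to the configuration space of $n$ little disks in $\RR^k\setminus\{0\}$ (with the module input pinned at the origin). The first step is to translate the definition of an $\E{k}$-$A$-module into this operadic language, so that the problem becomes one about actions of an $\E{1}$-algebra built out of these operation spaces.

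Next, I would use the diffeomorphism $\RR^k \setminus \{0\} \cong S^{k-1}\times \RR$, under which the radial $\RR$-coordinate equips the totality of module operations with a canonical $\E{1}$-structure: one stacks operations supported in nested concentric annuli. This is precisely the $\E{1}$-structure that factorization homology assigns to a manifold of the form $M\times \RR$. Combined with the fact that factorization homology on a disjoint union of disks in $S^{k-1}\times \RR$ computes tensor powers of $A$, this identifies the $\E{1}$-algebra of module operations with the factorization homology $U(A) = \int_{S^{k-1}\times \RR} A$. I would then verify that the action of this $\E{1}$-algebra on any $\E{k}$-$A$-module $M$ extracted from the $\E{k}^{\mathrm{mod}}$-structure agrees with the canonical action, and conversely that any left $U(A)$-module canonically inherits an $\E{k}^{\mathrm{mod}}$-algebra structure on $(A, M)$ via restriction along inclusions of configurations.

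The final step is to check that these assignments are mutually inverse equivalences of $\infty$-categories. This is formal once the operadic framework is in place: both categories are presentable (by the adjoint functor theorem and stability of module categories under colimits in $\cC$), both functors preserve colimits, and both restrict to the identity on free objects $U(A)\otimes(-)$, which generate under colimits.

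The main obstacle is setting up the operadic framework rigorously in the generality of an arbitrary symmetric monoidal presentable $\infty$-category $\cC$ and making precise the identification of $\E{k}^{\mathrm{mod}}$-operations with configuration spaces. This is ultimately a coherence problem: the identification of module operations with configurations in $S^{k-1}\times \RR$ must be compatible with composition and with the $\E{1}$-stacking operation in a fully homotopy-coherent way. Both \cite{francis} and \cite[\S 5.5.4]{HA} handle this carefully, and rather than reproducing their arguments I would invoke them directly, contenting myself with the conceptual picture above as the ``moral proof''.
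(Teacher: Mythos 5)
The paper gives no independent proof of this statement: it simply cites \cite{francis} and \cite[Section 5.5.4]{HA}, which is exactly what your proposal ultimately does, and your conceptual sketch (module operations as configurations in $\RR^k\setminus\{0\}\cong S^{k-1}\times\RR$, with the radial stacking giving the $\E{1}$-structure identified with factorization homology) is an accurate summary of the argument in those references. So your proposal is correct and takes essentially the same approach as the paper.
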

\begin{definition}\label{ek-1-center}
    The \emph{$\E{k+1}$-center} $\fr{Z}(A)$ of an $\E{k}$-algebra $A$ in $\cC$
    is the ($\E{k+1}$-)Hochschild cohomology $\End_{U(A)}(A)$, where $A$ is
    regarded as a left module over its enveloping algebra via Theorem
    \ref{fact-hmlgy}.
\end{definition}
\begin{remark}
    We are using slightly different terminology than the one used in
    \cite[Section 5.3]{HA}: our $\E{k+1}$-center is his $\E{k}$-center. In other
    words, Lurie's terminology expresses the structure on the input, while our
    terminology expresses the structure on the output.
\end{remark}
The following proposition summarizes some results from \cite{francis} and
\cite[Section 5.3]{HA}.
\begin{prop}[{\cite[Theorem 5.3.2.5]{HA}, \cite[Theorem
    1.1]{francis}}]\label{center-theorems}
    The $\E{k+1}$-center $\fr{Z}(A)$ of an $\E{k}$-algebra $A$ in a symmetric
    monoidal presentable $\infty$-category $\cC$ exists, and satisfies the
    following properties:
    \begin{enumerate}
	\item $\fr{Z}(A)$ is the universal $\E{k}$-algebra of $\cC$ which fits
	    into a commutative diagram
	    $$\xymatrix{ A \ar[r] \ar@{=}[dr] & A\otimes \fr{Z}(A) \ar[d]\\
		& A}$$
		in $\Alg_{\E{k}}(\cC)$.
	    \item The $\E{k}$-algebra $\fr{Z}(A)$ of $\cC$ defined via this
		universal property in fact admits the structure of an
		$\E{k+1}$-algebra in $\cC$.
	    \item There is a fiber sequence
		$$\GL_1(\fr{Z}(A)) \to \GL_1(A)\to \Omega^{k-1}
		\End_{\Alg_{\E{k}}(\cC)}(A)$$
		of $k$-fold loop spaces.
    \end{enumerate}
\end{prop}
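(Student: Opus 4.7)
The plan is to realize each part as an unwinding of the factorization-homology picture of Theorem \ref{fact-hmlgy} together with the standard theory of endomorphism objects; this is essentially the treatment in \cite[Section 5.3]{HA} and \cite{francis}, and I would cite those references liberally. For part (a), I would use Theorem \ref{fact-hmlgy} to reinterpret $\E{k}$-$A$-modules as $\LMod_{U(A)}(\cC)$, under which $A$ itself corresponds to the distinguished left $U(A)$-module given by its multiplication. The datum of a commutative diagram $A \to A \otimes B \to A$ in $\Alg_{\E{k}}(\cC)$ with composite the identity is, by inspection, the same as equipping $A$ with a $B$-linear $\E{k}$-algebra structure, which in turn is a map of $\E{k}$-algebras $B \to \End^{\E{k}}_{A}(A)$. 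Under the equivalence of Theorem \ref{fact-hmlgy}, this $\E{k}$-endomorphism object identifies with $\End_{U(A)}(A) = \fr{Z}(A)$, so the universal property follows immediately from the universal property of endomorphism objects.

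For part (b), the key input is Dunn additivity, $\Alg_{\E{k+1}}(\cC) \simeq \Alg_{\E{1}}(\Alg_{\E{k}}(\cC))$. The $\infty$-category $\LMod_{U(A)}(\cC)$ inherits an $\E{k}$-monoidal structure from the $\E{k}$-structure on $U(A)$ (which in turn is visible from the configuration of $k$ disjoint cylinders $S^{k-1} \times \RR$ assembling into an $\E{k}$-operad under stacking). The general principle that the endomorphism object of any object in a monoidal $\infty$-category carries a canonical $\E{1}$-algebra structure then produces an $\E{1}$-algebra structure on $\End_{U(A)}(A)$ inside $\Alg_{\E{k}}(\cC)$, which by Dunn additivity is the same as an $\E{k+1}$-algebra structure on $\fr{Z}(A)$ in $\cC$.

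Part (c) is the main obstacle, and requires more care. I would proceed by first constructing, from the universal property in (a), a pullback square of pointed mapping spaces relating $\E{k}$-algebra maps $A \to A$ to $\fr{Z}(A)$-linear endomorphisms, with the appropriate identifications of unit components. One then passes to connective covers to obtain a fiber sequence of spectra
$$\fr{Z}(A) \longrightarrow A \longrightarrow \Sigma^{1-k}\tau_{\geq 0}\End_{\Alg_{\E{k}}(\Sp)}(A),$$
whose associated sequence of $\GL_1$-spaces is the claimed fiber sequence. The subtlety is to correctly identify the $\GL_1$-fiber with $\GL_1(\fr{Z}(A))$, rather than with the full connective cover of $\fr{Z}(A)$; this rests on the compatibility of $\GL_1$ with the universal property in (a), since a unit of $\fr{Z}(A)$ is precisely a self-equivalence of $A$ as an $\E{k}$-$A$-module. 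All of this is carried out in detail in \cite[Section 5.3]{HA}, and I would invoke that treatment rather than reproduce it.
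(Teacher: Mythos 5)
The paper does not actually prove this proposition: it is stated as a summary of results from \cite{francis} and \cite[Section 5.3]{HA}, so your plan of ultimately invoking those sources is the same move the paper makes, and your sketches of (a) and (b) follow the standard line (the center as the endomorphism object of $A$ in $\E{k}$-$A$-modules; the $\E{k+1}$-structure from an associative multiplication on the center inside $\Alg_{\E{k}}(\cC)$ plus Dunn additivity).

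Two of your intermediate claims are wrong as stated, though, and you should not let them stand. First, $U(A)=\int_{S^{k-1}\times\RR}A$ is only an $\E{1}$-algebra: a cylinder has a single stacking direction, and already in the case $k=1$ the relevant monoidal structure on $\LMod_{U(A)}(\cC)\simeq$ ($A$-bimodules) is the relative tensor product $\otimes_A$, which is not induced by any $\E{2}$-structure on $U(A)=A\otimes A^{\mathrm{op}}$. So the $\E{k}$-monoidal structure on $\E{k}$-$A$-modules does not come ``from the $\E{k}$-structure on $U(A)$''; it is supplied by coherence of the $\E{k}$-operad (\cite[Theorem 3.4.4.2]{HA}), or, as in Lurie's actual proof of (b), one shows directly that the centralizer construction makes $\fr{Z}(A)$ an associative algebra object of $\Alg_{\E{k}}(\cC)$ and then applies Dunn additivity. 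Second, part (c) is not carried out in \cite[Section 5.3]{HA}; it is a theorem of \cite{francis}, and your proposed derivation does not parse: $\End_{\Alg_{\E{k}}(\Sp)}(A)$ is a mapping space, not a spectrum, so ``$\Sigma^{1-k}\tau_{\geq 0}$'' of it is not meaningful, and the claimed sequence is not a fiber sequence of spectra whose $\GL_1$'s yield the statement (the third term is not $\GL_1$ of anything, so applying $\GL_1$ to a fiber sequence of spectra cannot produce it). The correct assertion is that the fiber over the identity of the $k$-fold loop map $\GL_1(A)\to\Omega^{k-1}\End_{\Alg_{\E{k}}(\Sp)}(A)$ encoding the higher conjugation action is $\GL_1(\fr{Z}(A))$ --- a unit of the center is exactly a unit of $A$ whose conjugation action is trivialized $\E{k}$-linearly --- and this identification is the content of Francis's result, which is what should be cited.
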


In the sequel, we will need a more general notion:
\begin{definition}\label{ekm-center}
    Let $m\geq 1$. The \emph{$\E{k+m}$-center} $\fr{Z}_{{k+m}}(A)$ of an
    $\E{k}$-algebra $A$ in a presentable symmetric monoidal $\infty$-category
    $\cC$ with all limits is defined inductively as the $\E{k+m}$-center of
    the $\E{k+m-1}$-center $\fr{Z}_{k+m-1}(A)$. In other words, it is the
    universal $\E{k+m}$-algebra of $\cC$ which fits into a commutative diagram 
    $$\xymatrix{ \fr{Z}_{k+m-1}(A) \ar[r] \ar@{=}[dr] & \fr{Z}_{k+m-1}(A)\otimes
    \fr{Z}_{k+m}(A) \ar[d]\\
    & \fr{Z}_{k+m-1}(A)}$$
    in $\Alg_{\E{k+m-1}}(\cC)$.
\end{definition}
Proposition \ref{center-theorems} gives:
\begin{corollary}\label{center-ek}
    Let $m\geq 1$. The $\E{k+m-1}$-algebra $\fr{Z}_{{k+m}}(A)$ associated to an
    $\E{k}$-algebra object $A$ of $\cC$ exists, and in fact admits the structure
    of an $\E{k+m}$-algebra in $\cC$.
\end{corollary}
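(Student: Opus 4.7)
The plan is to prove Corollary \ref{center-ek} by a straightforward induction on $m\geq 1$, leveraging Proposition \ref{center-theorems} as a black box at each stage. The base case $m=1$ is precisely the content of Proposition \ref{center-theorems}: given an $\E{k}$-algebra $A$ in $\cC$, the $\E{k+1}$-center $\fr{Z}_{k+1}(A) = \fr{Z}(A)$ exists as an $\E{k}$-algebra (part (a)) and in fact refines to an $\E{k+1}$-algebra (part (b)). In particular, the corollary holds for $m=1$ without further work.

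For the inductive step, fix $m\geq 2$ and assume the statement for $m-1$, so that $\fr{Z}_{k+m-1}(A)$ exists as an $\E{k+m-2}$-algebra in $\cC$ admitting a refinement to an $\E{k+m-1}$-algebra in $\cC$. Now I would apply Proposition \ref{center-theorems} to the $\E{k+m-1}$-algebra $\fr{Z}_{k+m-1}(A)$, with $k$ in the statement of that proposition replaced by $k+m-1$. Part (a) then produces a universal $\E{k+m-1}$-algebra $\fr{Z}(\fr{Z}_{k+m-1}(A))$ of $\cC$ fitting into the commutative diagram
\[
\xymatrix{
\fr{Z}_{k+m-1}(A)\ar[r]\ar@{=}[dr] & \fr{Z}_{k+m-1}(A)\otimes \fr{Z}(\fr{Z}_{k+m-1}(A))\ar[d]\\
& \fr{Z}_{k+m-1}(A),
}
\]
which by Definition \ref{ekm-center} is precisely $\fr{Z}_{k+m}(A)$. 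This establishes the existence claim, with the native structure of an $\E{k+m-1}$-algebra. Part (b) of Proposition \ref{center-theorems}, applied in the same way, then promotes this $\E{k+m-1}$-algebra structure on $\fr{Z}_{k+m}(A)$ to an $\E{k+m}$-algebra structure in $\cC$, closing the induction.

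The only hypothesis to check when invoking Proposition \ref{center-theorems} at each inductive step is that $\cC$ continues to be a symmetric monoidal presentable $\infty$-category admitting all limits; this is unchanged from the ambient hypothesis on $\cC$ in Definition \ref{ekm-center}, and the construction stays inside $\cC$ throughout, so no new hypotheses accrue. Consequently, there is no genuine obstacle: the corollary is essentially a formal bookkeeping consequence of iterating Proposition \ref{center-theorems}. If anything, the main subtlety worth flagging in the write-up is just to be careful with the indexing convention — namely, that our $\E{k+1}$-center corresponds to Lurie's $\E{k}$-center (as noted in the Remark preceding the proposition) — so that at the inductive step we are indeed feeding an $\E{k+m-1}$-algebra into a statement about $\E{k+m-1}$-algebras and extracting an $\E{k+m}$-structure on the output, not an $\E{k+m-1}$-structure.
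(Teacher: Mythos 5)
Your induction is exactly the argument the paper intends: the corollary is stated as an immediate consequence of Proposition \ref{center-theorems}, obtained by iterating it along Definition \ref{ekm-center}, which is precisely what you spell out (base case $m=1$ is the proposition itself, and the inductive step feeds the $\E{k+m-1}$-algebra $\fr{Z}_{k+m-1}(A)$ back into the proposition). Your proposal is correct and takes essentially the same route as the paper, just written out in more detail.
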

We can now finally state Conjecture \ref{centrality-conj}:
\begin{conj-cent}
    Let $n\geq 0$ be an integer. Let $R$ denote $X(p^{n+1}-1)_{(p)}$, $A$ (in
    which case $n=1$), or $B$ (in which case $n=2$). Then the element
    $\sigma_n\in \pi_{|\sigma_n|} R$ lifts to the $\E{3}$-center $\fr{Z}_3(R)$
    of $R$, and is $p$-torsion in $\pi_\ast \fr{Z}_3(R)$ if $R = X(p^{n+1} -
    1)_{(p)}$, and is $2$-torsion in $\pi_\ast \fr{Z}_3(R)$ if $R = A$ or $B$.
\end{conj-cent}
\begin{remark}
    If $R$ is $A$ or $B$, then $\fr{Z}_3(R)$ is the $\E{3}$-center of the
    $\E{2}$-center of $R$. This is a rather unwieldy object, so it would be
    quite useful to show that the $\E{1}$-structure on $A$ or $B$ admits an
    extension to an $\E{2}$-structure; we do not know if such extensions exist.
    Since neither $\Omega S^5$ nor $N$ admit the structure of a double loop
    space, such an $\E{2}$-structure would not arise from their structure as
    Thom spectra. In any case, if such extensions do exist, then $\fr{Z}_3(R)$
    in Conjecture \ref{centrality-conj} should be interpreted as the
    $\E{3}$-center of the $\E{2}$-ring $R$. However, we showed in \cite[Theorem
    4.2]{hodge} that $(\tmf \wedge A)[x_2]$ admits an $\E{2}$-algebra structure,
    where $|x_2| = 2$.
\end{remark}
\begin{remark}
    In the introduction, we stated Conjecture \ref{hahn-conjecture}, which
    instead asked about whether $v_n\in \pi_{|v_n|} X(p^n)$ lifts to $\pi_\ast
    \fr{Z}_3(X(p^n))$. It is natural to ask about the connection between
    Conjecture \ref{centrality-conj} and Conjecture \ref{hahn-conjecture}.
    Proposition \ref{vn-toda} implies that if $\fr{Z}_3(X(p^n))$ admitted an
    $X(p^n-1)$-orientation factoring the canonical $X(p^n-1)$-orientation
    $X(p^n-1)\to X(p^n)$, and $\sigma_{n-1}\in \pi_{|v_n|-1} X(p^n-1)$ was
    killed by the map $X(p^n-1)\to \fr{Z}_3(X(p^n))$, then Conjecture
    \ref{centrality-conj} implies Conjecture \ref{hahn-conjecture}. However, we
    do not believe that either of these statements are true.
\end{remark}
\begin{remark}\label{centrality-remark}
    One of the main results of \cite{klang} implies that the $\E{3}$-center of
    $X(n)$ (which, recall, is the Thom spectrum of a bundle over $\Omega^2
    \BSU(n)$) is $\Hom_{\SU(n)_+}(\S, X(n)) \simeq X(n)^{h\SU(n)}$, where
    $\SU(n)$ acts on $X(n)$ by a Thomification of the conjugation action on
    $\Omega \SU(n)$.
\end{remark}
\begin{remark}\label{grassmannian}
    Note that the conjugation action of $\SU(n)$ on $X(n)$ can be described very
    explicitly, via a concrete model for $\Omega \SU(n)$. As explained in
    \cite{pressley-segal, zhu-grass}, if $G$ is a reductive linear algebraic
    group over $\cc$, the loop space $\Omega G(\cc)$ of its complex points
    (viewed as a complex Lie group) is equivalent to the homogeneous space
    $G(\cc(\!(t)\!))/G(\cc[\![t]\!])$; this is also commonly studied as the
    complex points of the affine Grassmannian $\Gr_G$ of $G$. The conjugation
    action of $G(\cc)$ on $\Omega G(\cc)$ arises by restricting the descent (to
    $G(\cc(\!(t)\!))/G(\cc[\![t]\!])$) of the translation action by
    $G(\cc[\![t]\!])$ on $G(\cc(\!(t)\!))$ to the subgroup $G(\cc) \subseteq
    G(\cc[\![t]\!])$. Setting $G = \SL_n$ gives a description of the conjugation
    action of $\SU(n)$ on $\Omega \SU(n)$. In light of its connections to
    geometric representation theory, we believe that there may be an
    algebro-geometric approach to proving that $\chi_n$ is $\SU(n)$-trivial in
    $X(n)$ and in $\Omega \SU(n)$.
\end{remark}
\begin{example}\label{X2-central}
    The element $\chi_2\in \pi_3 X(2)$ is central. To see this, note that
    $\alpha\in \pi_\ast R$ (where $R$ is an $\E{k}$-ring) is in the
    $\E{k+1}$-center of $R$ if and only if $\alpha$ is in the $\E{k+1}$-center
    of $R_{(p)}$ for all primes $p\geq 0$. It therefore suffices to show that
    $\chi_2$ is central after $p$-localizing for all $p$.  First, note that
    $\chi_2$ is torsion, so it is nullhomotopic (and therefore central) after
    rationalization. Next, if $p>2$, then $X(2)_{(p)}$ splits as a wedge of
    suspensions of spheres. If $\chi_2$ is detected in $\pi_3$ of a sphere
    living in dimension $3$, then it could not be torsion, so it must be
    detected in $\pi_3$ of a sphere living in dimension $3-k$ for some $0\leq
    k\leq 2$. If $k=1$ or $2$, then $\pi_3(S^{3-k})$ is either $\pi_1(S^0)$ or
    $\pi_2(S^0)$, but both of these groups vanish for $p>2$. Therefore, $\chi_2$
    must be detected in $\pi_3$ of the sphere in dimension $0$, i.e., in $\pi_3
    X(1)$. This group vanishes for $p>3$, and when $p=3$, it is isomorphic to
    $\Z/3$ (generated by $\alpha_1$). Since $X(1) = S^0$ is an $\Eoo$-ring, we
    conclude that $\chi_2$ is central in $X(2)_{(p)}$ for all $p>2$.

    At $p=2$, we know the cell structure of $X(2)$ in the bottom few dimensions
    (see Example \ref{sigma1}; note that $\sigma_1$ is \emph{not} $\chi_2$). In
    dimensions $\leq 3$, it is equivalent to $C\eta$, so $\pi_3 X(2) \cong \pi_3
    C\eta$. However, it is easy to see that the canonical map $\pi_3 \S \simeq
    \Z/8\{\nu\} \to \pi_3 C\eta$ is surjective and exhibits an isomorphism
    $\pi_3 C\eta \cong \Z/4\{\nu\}$. Therefore, $\chi_2$ is in the image of the
    unit $\S \to X(2)$, and is therefore vacuously central. We conclude from the
    above discussion that $\chi_2$ is indeed central in $X(2)$.
\end{example}

\newpage
\section{Review of some unstable homotopy theory}\label{unstable-review}

\subsection{Charming and Gray maps}\label{gray}

A major milestone in unstable homotopy theory was Cohen-Moore-Neisendorfer's
result on the $p$-exponent of unstable homotopy groups of spheres from
\cite{cmn-1, cmn-2, cmn-3}. They defined for all $p>2$ and $k\geq 1$ a map
$\phi_n:\Omega^2 S^{2n+1}\to S^{2n-1}$ (the integer $k$ is assumed implicit)
such that the composite of $\phi_n$ with the double suspension $E^2:S^{2n-1} \to
\Omega^2 S^{2n+1}$ is homotopic to the $p^k$-th power map. By induction on $n$,
they concluded via a result of Selick's (see \cite{selick}) that $p^n$ kills the
$p$-primary component of the homotopy of $S^{2n+1}$. Such maps will be important
in the rest of this article, so we will isolate their desired properties in the
definition of a \emph{charming map}, inspired by \cite{gray-maps}. (Our choice
of terminology is non-standard, and admittedly horrible, but it does not seem
like the literature has chosen any naming convention for the sort of maps we
desire.)
\begin{definition}\label{gray-map-def}
    A $p$-local map $f:\Omega^2 S^{2np+1}\to S^{2np-1}$ is called a \emph{Gray
    map} if the composite of $f$ with the double suspension $E^2$ is the degree
    $p$ map, and the composite
    $$\Omega^2 S^{2n+1} \xrightarrow{\Omega H} \Omega^2 S^{2np+1}\xrightarrow{f}
    S^{2np-1}$$
    is nullhomotopic. Moreover, a $p$-local map $f:\Omega^2 S^{2np+1}\to
    S^{2np-1}$ is called a \emph{charming map} if the composite of $f$ with the
    double suspension $E^2$ is the degree $p$ map, the fiber of $f$ admits the
    structure of a $\cQ_1$-space, and if there is a space $BK$ which sits in a
    fiber sequence
    $$S^{2np-1} \to BK\to \Omega S^{2np+1}$$
    such that the boundary map $\Omega^2 S^{2np+1}\to S^{2np-1}$ is homotopic to
    $f$.
\end{definition}
\begin{remark}
    If $f$ is a charming map, then the fiber of $f$ is a loop space. Indeed,
    $\fib(f) \simeq \Omega BK$.
\end{remark}
\begin{example}\label{cmn-map}
    Let $f$ denote the Cohen-Moore-Neisendorfer map with $k=1$. Anick proved
    (see \cite{anick-book, anick-elementary}) that the fiber of $f$ admits a
    delooping, i.e., there is a space $T^{2np+1}(p)$ (now known as an
    \emph{Anick space}) which sits in a fiber sequence
    $$S^{2np-1} \to T^{2np+1}(p)\to \Omega S^{2np+1}.$$
    It follows that $f$ is a charming map.
\end{example}
\begin{remark}\label{anick-S3}
    We claim that $T^{2p+1}(p) = \Omega S^3\langle 3\rangle$, where $S^3\langle
    3\rangle$ is the $3$-connected cover of $S^3$. To prove this, we will
    construct a $p$-local fiber sequence
    $$S^{2p-1} \to \Omega S^3\langle 3\rangle\to \Omega S^{2p+1}.$$
    This fiber sequence was originally constructed by Toda in \cite{toda}. To
    construct this fiber sequence, we first note that there is a $p$-local fiber
    sequence
    $$S^{2p-1} \to J_{p-1}(S^2)\to \CP^\infty,$$
    where the first map is the factorization of $\alpha_1:S^{2p-1}\to \Omega
    S^3$ through the $2(p-1)$-skeleton of $\Omega S^3$, and the second map is
    the composite $J_{p-1}(S^2)\to \Omega S^3\to \CP^\infty$. This fiber
    sequence is simply an odd-primary version of the Hopf fibration $S^3\to
    S^2\to \CP^\infty$; the identification of the fiber of the map
    $J_{p-1}(S^2)\to \CP^\infty$ is a simple exercise with the Serre
    spectral sequence. Next, we have the EHP sequence
    $$J_{p-1}(S^2)\to \Omega S^3\to \Omega S^{2p+1}.$$
    Since $\Omega S^3\langle 3\rangle$ is the fiber of the map $\Omega S^3\to
    \CP^\infty$, the desired fiber sequence is obtained by taking vertical
    fibers in the following map of fiber sequences:
    $$\xymatrix{
	J_{p-1}(S^2) \ar[r] \ar[d] & \Omega S^3 \ar[r] \ar[d] & \Omega S^{2p+1}
	\ar[d]\\
	\CP^\infty \ar@{=}[r] & \CP^\infty \ar[r] & \ast.
    }$$
\end{remark}
\begin{example}
    Let $W_n$ denote the fiber of the double suspension $S^{2n-1}\to \Omega^2
    S^{2n+1}$. Gray proved in \cite{gray-1, gray-2} that $W_n$ admits a
    delooping $BW_n$, and that after $p$-localization, there is a fiber sequence
    $$BW_n\to \Omega^2 S^{2np+1} \xrightarrow{f} S^{2np-1}$$
    for some map $f$. As suggested by the naming convention, $f$ is a Gray map.
\end{example}

As proved in \cite{gray-maps}, Gray maps satisfy an important rigidity property:
\begin{prop}[Selick-Theriault]\label{selick-theriault}
    The fiber of any Gray map admits an H-space structure, and is H-equivalent
    to $BW_n$.
\end{prop}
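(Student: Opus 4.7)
The plan is to use the rigidity imposed by the two defining conditions of a Gray map to identify the fiber $F_f := \fib(f)$ of an arbitrary Gray map $f : \Omega^2 S^{2np+1} \to S^{2np-1}$ with Gray's fiber $BW_n$, and then to upgrade this identification to an H-equivalence. The H-space structure is not an issue in isolation: $BW_n$ is a loop space by Gray's delooping theorem, so any space homotopy equivalent to $BW_n$ inherits an H-structure by transport. The substantive content is therefore the H-equivalence.

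First I would produce a comparison map $BW_n \to F_f$ by showing that, up to a $p$-local unit, any two Gray maps are homotopic. The strategy is to analyze the homotopy class group $[\Omega^2 S^{2np+1}, S^{2np-1}]$ subject to the two Gray conditions. At odd primes, James' fiber sequence $J_{p-1}(S^{2n}) \to \Omega S^{2n+1} \xrightarrow{H} \Omega S^{2np+1}$ (looped once) expresses $\Omega^2 S^{2np+1}$ as an extension built from $\Omega^2 S^{2n+1}$ and $\Omega J_{p-1}(S^{2n})$. Composing with $E^2$ detects the portion of $f$ visible on the $S^{2np-1}$-piece, and composing with $\Omega H$ detects the portion pulled back from $\Omega^2 S^{2n+1}$; imposing that these composites equal $p$ and $\ast$ respectively, I would show that the remaining indeterminacy is captured by a unit in $\Z_{(p)}^{\times}$ acting on Gray's map. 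Pulling back the resulting equivalence $f \simeq u \cdot f_{\mathrm{Gray}}$ to fibers gives a homotopy equivalence $F_f \simeq BW_n$.

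The H-equivalence is the part I expect to be the main obstacle. A direct approach is to use rigidity and atomicity of $BW_n$: results of Selick and Theriault in closely related contexts show that $BW_n$ has a very small space of self-maps, which forces any comparison that induces the identity on the bottom-cell mod $p$ homology to be an H-map. An alternative is to construct the comparison map functorially in $f$, so that the H-structure on both sides arises from the same categorical input --- Gray's fiber sequence together with a choice of null-homotopy of the relevant composite --- which would yield the H-equivalence essentially for free. I expect the cleanest proof to combine both: use functoriality where possible, and apply the atomicity argument to handle the remaining ambiguity in the choice of null-homotopy that witnesses $f \circ \Omega H \simeq \ast$.
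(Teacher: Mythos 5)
There is a genuine gap, and it sits in your first step. The paper does not prove this proposition at all --- it is quoted from Selick--Theriault \cite{gray-maps} --- so the question is whether your sketch could stand as a proof, and the claim that ``up to a $p$-local unit, any two Gray maps are homotopic'' is where it breaks. The looped James fibration $\Omega J_{p-1}(S^{2n})\to \Omega^2 S^{2n+1}\xrightarrow{\Omega H}\Omega^2 S^{2np+1}$ exhibits $\Omega^2 S^{2np+1}$ as a \emph{base}, not as a space built from $\Omega^2 S^{2n+1}$ and $\Omega J_{p-1}(S^{2n})$, so it gives you no way to compute $[\Omega^2 S^{2np+1}, S^{2np-1}]$ from the two restrictions $f\circ E^2$ and $f\circ \Omega H$. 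The actual indeterminacy after fixing those two composites is governed by maps out of the higher James/Snaith filtration pieces of $\Omega^2 S^{2np+1}$ into $S^{2np-1}$, i.e.\ by large unstable homotopy groups that have no reason to vanish; nothing forces the difference of two Gray maps to be a unit multiple. Moreover, such a uniqueness statement is strictly stronger than what Selick--Theriault establish, and together with the rest of your argument it would essentially resolve comparisons (e.g.\ of the Cohen--Moore--Neisendorfer map with Gray's map) that are open; that is a strong signal the step is not available by the soft analysis you propose.

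The known argument avoids comparing the maps altogether and works directly with the fibers: starting from Gray's fibration $BW_n\to \Omega^2 S^{2np+1}\to S^{2np-1}$, one uses the two defining properties of a Gray map $f$ to show the relevant composite into $S^{2np-1}$ is null, lifts $BW_n$ into $\fib(f)$ (or conversely), checks the lift is an isomorphism on the bottom mod $p$ homology class, and then invokes the atomicity and H-rigidity of $BW_n$ to conclude it is an H-equivalence. Your final paragraph gestures at exactly these rigidity results, but in your outline they are only used to upgrade an already-constructed equivalence to an H-map; in the correct proof they are what produce the equivalence in the first place, and the uniqueness-of-maps step should be discarded rather than repaired.
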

\begin{remark}\label{motivate-conj}
    It has been conjectured by Cohen-Moore-Neisendorfer and Gray in the papers
    cited above that there is an equivalence $BW_n\simeq \Omega T^{2np+1}(p)$,
    and that $\Omega T^{2np+1}(p)$ retracts off of $\Omega^2 P^{2np+1}(p)$ as an
    H-space, where $P^k(p)$ is the mod $p$ Moore space $S^{k-1}\cup_p e^k$
    with top cell in dimension $k$. For our purposes, we shall require something
    slightly stronger: namely, the retraction should be one of $\cQ_1$-spaces.
    The first part of this conjecture would follow from Proposition
    \ref{selick-theriault} if the Cohen-Moore-Neisendorfer map were a Gray map.
    In \cite{amelotte-kervaire}, it is shown that the existence of $p$-primary
    elements of Kervaire invariant one would imply equivalences of the form
    $BW_{p^{n-1}}\simeq \Omega T^{2p^n+1}(p)$.
\end{remark}
Motivated by Remark \ref{motivate-conj} and Proposition \ref{selick-theriault},
we state the following conjecture; it is slightly weaker than the conjecture
mentioned in Remark \ref{motivate-conj}, and is an amalgamation of slight
modifications of conjectures of Cohen, Moore, Neisendorfer, Gray, and Mahowald
in unstable homotopy theory, as well as an analogue of Proposition
\ref{selick-theriault}. (For instance, we strengthen having an H-space
retraction to having a $\cQ_1$-space retraction).
\begin{conj-moore}
    The following statements are true:
    \begin{enumerate}
	\item The homotopy fiber of any charming map is equivalent as a loop
	    space to the loop space on an Anick space.
	\item There exists a $p$-local charming map $f:\Omega^2 S^{2p^n+1}\to
	    S^{2p^n-1}$ whose homotopy fiber admits a $\cQ_1$-space retraction
	    off of $\Omega^2 P^{2p^n+1}(p)$. There are also integrally defined
	    maps $\Omega^2 S^9\to S^7$ and $\Omega^2 S^{17} \to S^{15}$ whose
	    composite with the double suspension on $S^7$ and $S^{15}$
	    respectively is the degree $2$ map, whose homotopy fibers $K_2$ and
	    $K_3$ (respectively) admit deloopings, and which admits a
	    $\cQ_1$-space retraction off $\Omega^2 P^9(2)$ and $\Omega^2
	    P^{17}(2)$ (respectively).
    \end{enumerate}
\end{conj-moore}
\begin{remark}\label{hm-Z-alt}
    Conjecture \ref{moore-splitting} is already not known when $n=1$. In this
    case, it asserts that $\Omega^2 S^3\langle 3\rangle$ retracts off of
    $\Omega^2 P^{2p+1}(p)$. A theorem of Selick's states that $\Omega^2
    S^3\langle 3\rangle$ retracts off of $\Omega^2 S^{2p+1}\{p\}$ for $p$ odd,
    where $\Omega^2 S^{2p+1}\{p\}$ is the fiber of the degree $p$ map on
    $\Omega^2 S^{2p+1}$.  This implies that $\Omega^2 S^3\langle 3\rangle$
    retracts off of $\Omega^3 P^{2p+2}(p)$.  In \cite[Observation
    9.2]{cohen-course}, the question of whether $\Omega^2 S^3\langle 3\rangle$
    retracts off of $\Omega^2 P^{2p+1}(p)$ was shown to be equivalent to the
    question of whether there is a map $\Sigma^2 \Omega^2 S^3\langle 3\rangle\to
    P^{2p+1}(p)$ which is onto in homology. Some recent results regarding
    Conjecture \ref{moore-splitting} for $n=1$ can be found in
    \cite{kahn-priddy-unstable}.

    It follows that a retraction of $\Omega^2 S^3\langle 3\rangle$ off $\Omega^2
    P^{2p+1}(p)$ will be compatible with the canonical map $\Omega^2 S^3\langle
    3\rangle\to \Omega^2 S^3$ in the following manner. The $p$-torsion element
    $\alpha_1\in \pi_{2p}(S^3)$ defines a map $P^{2p-1}(p)\to \Omega^2 S^3$,
    which extends to an $\E{2}$-map $\Omega^2 P^{2p+1}(p)\to \Omega^2 S^3$. We
    will abusively denote this extension by $\alpha_1$. The resulting composite
    $$\Omega^2 S^3\langle 3\rangle \to \Omega^2 P^{2p+1}(p)
    \xrightarrow{\alpha_1} \Omega^2 S^3$$
    is homotopic to the canonical map $\Omega^2 S^3\langle 3\rangle\to \Omega^2
    S^3$.

    The element $\alpha_1\in \pi_{2p-3}(\S_{(p)})$ defines a map $S^{2p-2}\to
    \BGL_1(\S_{(p)})$, and since it is $p$-torsion, admits an extension to a map
    $P^{2p-1}(p)\to \BGL_1(\S_{(p)})$. (This extension is in fact unique,
    because $\pi_{2p-1}(\BGL_1(\S_{(p)})) \cong \pi_{2p-2}(\S_{(p)})$ vanishes.)
    Since $\BGL_1(\S_{(p)})$ is an infinite loop space, this map further extends
    to a map $\Omega^2 P^{2p+1}(p)\to \BGL_1(\S_{(p)})$. The discussion in the
    previous paragraph implies that if Conjecture \ref{moore-splitting} is true
    for $n=1$, then the map $\mu:\Omega^2 S^3\langle 3\rangle\to
    \BGL_1(\S_{(p)})$ from Corollary \ref{hm-Z} is homotopic to the composite
    $$\Omega^2 S^3\langle 3\rangle \to \Omega^2 P^{2p+1}(p)\to
    \BGL_1(\S_{(p)}).$$
\end{remark}
\subsection{Fibers of charming maps}\label{fib-thom}

We shall need the following proposition.
\begin{prop}\label{homology-anick}
    Let $f:\Omega^2 S^{2p^n+1}\to S^{2p^n-1}$ be a charming map. Then there are 
    isomorphisms of coalgebras:
    $$\H_\ast(\fib(f);\FF_p) \cong \begin{cases}
	\FF_p[x_{2^{n+1}-1}^2] \otimes \bigotimes_{k>1} \FF_p[x_{2^{n+k}-1}] &
	p=2\\
	\bigotimes_{k>0} \FF_p[y_{2(p^{n+k}-1)}]\otimes \bigotimes_{j>0}
	\Lambda_{\FF_p}[x_{2p^{n+j}-1}] & p>2.
    \end{cases}$$
\end{prop}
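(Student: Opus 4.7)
The plan is to run the Serre spectral sequence for the fibration $S^{2p^n-1}\to BK\to \Omega S^{2p^n+1}$ guaranteed by the charming-map hypothesis, then the path-loop spectral sequence for $BK$, with the $\cQ_1$-structure on $\fib(f)=\Omega BK$ providing the Dyer-Lashof hopping argument of Remark \ref{hopping} needed to identify the full generating set.

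First, I would compute $\H_*(BK;\FF_p)$. From the identity $f\circ E^2=[p]$ and the long exact sequence of homotopy groups associated to $S^{2p^n-1}\to BK\to \Omega S^{2p^n+1}$, one finds that $BK$ is $(2p^n-2)$-connected with $\pi_{2p^n-1}(BK)\cong \Z/p$, so $\H_{2p^n-1}(BK;\FF_p)\cong \FF_p$. In the Serre spectral sequence the base homology is $\FF_p[y]$ with $|y|=2p^n$ and the fiber homology is $\Lambda[s]$ with $|s|=2p^n-1$; the connectivity computation above forces the transgression $d_{2p^n}(s)\in \FF_p\cdot y$ to vanish, yielding $\H_*(BK;\FF_p)\cong \FF_p[y]\otimes \Lambda[s]$ additively (and, by multiplicative comparison, as a coalgebra).

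Second, I would use the path-loop Serre spectral sequence $\Omega BK\to P(BK)\to BK$, which converges to $\FF_p$. The transgressions $\tau(s)$ and $\tau(y)$ produce the lowest-degree classes of $\H_*(\fib(f);\FF_p)$: a polynomial generator in degree $2p^n-2$ (playing the role of $x_{2^{n+1}-1}^2$ at $p=2$ and $y_{2(p^{n+1}-1)/p}$-type at odd $p$) and, when applicable, an exterior generator in degree $2p^n-1$.

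Third, I would invoke the Dyer-Lashof hopping argument (Remark \ref{hopping}). Because $\fib(f)$ is a $\cQ_1$-space by the charming-map hypothesis, the operation $Q_1$ at $p=2$, respectively the Nishida-type operations $\beta Q^s$ at odd $p$, iterate on these bottom classes and generate the tower $\{x_{2p^{n+j}-1},\,y_{2(p^{n+k}-1)}\}$. A Poincaré-series check, via the Serre spectral sequence for the original fiber sequence $\fib(f)\to \Omega^2 S^{2p^n+1}\to S^{2p^n-1}$ with the known Dyer-Lashof-generated homology of $\Omega^2 S^{2p^n+1}$ as the abutment, confirms that no further generators appear and pins down the coalgebra structure.

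The main obstacle I expect is controlling the $\cQ_1$-structure precisely---verifying that iterated Dyer-Lashof operations on the bottom generators remain nonzero and polynomially independent, and that no extraneous classes are created by Bocksteins or multiplicative extensions. The essential comparison tool here is the natural $\cQ_1$-map $\fib(f)\to \Omega^2 S^{2p^n+1}$ coming from the fiber sequence itself, which lets one pull back the well-understood Dyer-Lashof structure on the double loop space and identify the operations on $\H_*(\fib(f);\FF_p)$ by compatibility.
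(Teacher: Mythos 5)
Your route is genuinely different from the paper's: the paper simply runs the Serre spectral sequence of $\Omega S^{2p^n-1}\to \fib(f)\to \Omega^2 S^{2p^n+1}$ against the known coalgebra $\H_*(\Omega^2 S^{2p^n+1};\FF_p)$, in one step, whereas you go up to $BK$ and then try to loop back down. Your Step 1 is fine (the vanishing of the mod $p$ transgression follows from $\pi_{2p^n-1}(BK)\cong\Z/p$, and cohomological multiplicativity kills the higher differentials, giving $\H_*(BK;\FF_p)\cong\FF_p[y]\otimes\Lambda[s]$). The gap is in Steps 2--4. Passing from $\H_*(BK)$ to $\H_*(\Omega BK)$ is the hard direction: it is an Eilenberg--Moore/bar spectral sequence computation with nontrivial differentials and multiplicative extensions, exactly as hard as computing $\H_*(\Omega^2 S^{2m+1})$ from $\H_*(\Omega S^{2m+1})$, and you only extract the two bottom classes before outsourcing everything else to ``Dyer--Lashof hopping'' plus a ``Poincar\'e-series check.'' That check is literally the paper's argument (the Serre spectral sequence of the original fibration against the known homology of the double loop space), it is not carried out, and it is doing all the work; if you can run it, Steps 1--3 are superfluous. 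Moreover, your stated detection tool fails where you need it most: the bottom class of $\fib(f)$ lives in degree $2p^n-2$, where $\H_*(\Omega^2 S^{2p^n+1};\FF_p)$ vanishes, so it maps to zero under $\fib(f)\to\Omega^2 S^{2p^n+1}$ and you cannot ``pull back'' Dyer--Lashof operations on it by compatibility; you would separately have to show its operations contribute nothing beyond the single polynomial generator, which the pushforward cannot see.

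There is also a concrete bookkeeping error. Your own (correct) computation forces the bottom polynomial generator into degree $2p^n-2$: indeed $\pi_{2p^n-2}(\fib(f))\cong\Z/p$ because $f\circ E^2=[p]$, and for $n=1$ the fiber is $\Omega^2 S^3\langle 3\rangle$, whose bottom class sits in degree $2p-2$. But you then declare this class to ``play the role of $x_{2^{n+1}-1}^2$,'' which has degree $2(2^{n+1}-1)$, and the phrase ``$y_{2(p^{n+1}-1)/p}$-type'' does not name a class of integral degree. You cannot identify classes in different degrees, and the mismatch should have been confronted rather than smoothed over: the generators of $\H_*(\fib(f);\FF_p)$ must start in degrees $2(p^n-1)$ (polynomial) and $2p^n-1$ (exterior for $p$ odd), i.e.\ the displayed indices should run over $k\geq 0$ and $j\geq 0$ rather than $k>0$ and $j>0$, as both the $n=1$ case and the later use of this proposition in Step 2 of the proof of Theorem \ref{main-thm} (matching $\H_*(R)\otimes\H_*(K_n)$ with $\H_*(\TT(R))$, e.g.\ $\FF_2[\zeta_1^4]\otimes\H_*(K_2)\cong\H_*(\bo;\FF_2)$) require. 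As written, your proposal produces classes incompatible with the formula it claims to prove and resolves the tension by relabeling rather than by argument.
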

\begin{proof}
    This is an easy consequence of the Serre spectral sequence coupled with the
    well-known coalgebra isomorphisms
    $$\H_\ast(\Omega^2 S^{2n+1};\FF_p) \cong \begin{cases}
	\bigotimes_{k>0} \FF_p[x_{2^k n-1}] & p=2 \\
	\bigotimes_{k>0} \FF_p[y_{2(np^k-1)}]\otimes \bigotimes_{j\geq 0}
	\Lambda_{\FF_p}[x_{2np^j-1}] & p>2,
    \end{cases}$$
    where these classes are generated by the one in dimension $2n-1$ via the
    single Dyer-Lashof operation (coming already from the cup-1 operad; see
    Remark \ref{cup-1}).
\end{proof}

\begin{remark}
    The Anick spaces $T^{2np+1}(p)$ from Example \ref{cmn-map} sit in fiber
    sequences
    $$S^{2np-1} \to T^{2np+1}(p)\to \Omega S^{2np+1},$$
    and are homotopy commutative H-spaces. A Serre spectral sequence calculation
    gives an identification of coalgebras
    $$\H_\ast(T^{2np+1}(p);\FF_p)\cong \FF_p[a_{2np}]\otimes
    \Lambda_{\FF_p}[b_{2np-1}],$$
    with $\beta(a_{2np}) = b_{2np-1}$, where $\beta$ is the Bockstein
    homomorphism. An argument with the bar spectral sequence recovers the result
    of Proposition \ref{homology-anick} in this particular case.
\end{remark}
\begin{remark}\label{integral-homology}
    Suppose that $X$ is a space which sits in a fiber sequence
    $$S^{2np-1} \to X \to \Omega S^{2np+1}$$
    such that the boundary map
    $\Omega^2 S^{2np+1} \to S^{2np-1}$ has degree $p^j$ on the bottom cell of the source.
    The Serre spectral sequence then only has a differential on the $E_{2np-1}$-page, and:
    $$\H_i(BK;\Z) \cong \begin{cases}
	\Z & i = 0\\
	\Z/p^j k & \text{if }i = 2npk - 1\\
	0 & \text{else}.
    \end{cases}$$
\end{remark}

We conclude this section by investigating Thom spectra of bundles defined over
fibers of charming maps. Let $R$ be a $p$-local $\E{1}$-ring, and let $\mu:K\to
\B\GL_1(R)$ denote a map from the fiber $K$ of a charming map $f:\Omega^2
S^{2np+1} \to S^{2np-1}$.  There is a fiber sequence
$\Omega S^{2np-1} \to K\to \Omega^2 S^{2np+1}$
of loop spaces, so we obtain a map $\Omega S^{2np-1} \to \B\GL_1(R)$. Such a map
gives an element $\alpha\in \pi_{2np-3} R$ via the effect on the bottom cell
$S^{2np-2}$.

Theorem \ref{thom-univ} implies that the Thom spectrum of the map $\Omega
S^{2np-1} \to \B\GL_1(R)$ should be thought of as the $\E{1}$-quotient $R\mmod
\alpha$, although this may not make sense if $R$ is not at least $\E{2}$.
However, in many cases (such as the ones we are considering here), the Thom
$R$-module $R\mmod\alpha$ is in fact an $\E{1}$-ring such that the map $R\to
R\mmod{\alpha}$ is an $\E{1}$-map. By Proposition \ref{thom}, there is an
induced map $\phi:\Omega^2 S^{2np+1} \to \B\GL_1(R\mmod\alpha)$ whose Thom
spectrum is equivalent as an $\E{1}$-ring to $K^\mu$. We would like to determine
the element\footnote{Technically, this is bad terminology: there are multiple
possibilities for the map $\phi$, and each gives rise to a map $S^{2np-1}\to
\B\GL_1(R\mmod\alpha)$. The elements in $\pi_{2np-2}(R\mmod\alpha)$ determined in
this way need not agree, but they are the same modulo the indeterminacy of the
Toda bracket $\langle p, \alpha, 1_{R\mmod\alpha}\rangle$.} of $\pi_\ast
R\mmod\alpha$ detected by the restriction to the bottom cell $S^{2np-1}$ of the
source of $\phi$. First, we note:
\begin{lemma}
    The element $\alpha\in \pi_{2np-3} R$ is $p$-torsion.
\end{lemma}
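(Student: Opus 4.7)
The plan is to exploit the defining property of a charming map --- namely that $f \circ E^2 : S^{2np-1} \to \Omega^2 S^{2np+1} \to S^{2np-1}$ equals the degree $p$ self-map of $S^{2np-1}$ --- together with the fact that $\partial : \Omega S^{2np-1} \to K$ is the connecting map of the fiber sequence $K \to \Omega^2 S^{2np+1} \xrightarrow{f} S^{2np-1}$, so in the Puppe extension the composite $\partial \circ \Omega f$ is nullhomotopic.

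By construction, $\alpha \in \pi_{2np-3}(R) \cong \pi_{2np-2}(\B\GL_1(R))$ is represented by the composite
$$S^{2np-2} \xrightarrow{E} \Omega S^{2np-1} \xrightarrow{\partial} K \xrightarrow{\mu} \B\GL_1(R),$$
where $E$ denotes the inclusion of the bottom cell (adjoint to the identity of $S^{2np-1}$). To prove $p \cdot \alpha = 0$, I would show more strongly that $p \cdot (\partial \circ E) = 0$ in $\pi_{2np-2}(K)$, after which composing with $\mu$ yields $p\cdot\alpha = 0$.

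The element $p \cdot E \in \pi_{2np-2}(\Omega S^{2np-1})$ coincides with the composite $\Omega(p_{2np-1}) \circ E$, where $p_{2np-1}$ denotes the degree-$p$ self-map of $S^{2np-1}$; this is the standard identification of the group structure on $\pi_*(\Omega S^{2np-1})$ with post-composition under looped maps. The charming map property gives $p_{2np-1} = f \circ E^2$, so $\Omega(p_{2np-1}) = \Omega f \circ \Omega E^2$. Therefore
$$p \cdot (\partial \circ E) \;=\; \partial \circ \Omega f \circ \Omega E^2 \circ E \;:\; S^{2np-2} \to K.$$
Since $\partial \circ \Omega f$ is null, being two consecutive arrows of the fiber sequence $\Omega^3 S^{2np+1} \xrightarrow{\Omega f} \Omega S^{2np-1} \xrightarrow{\partial} K$, the whole composite is null, and the claim follows.

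There is essentially no obstacle here: the result is a direct consequence of the defining property of a charming map and the Puppe sequence. The only point that deserves a word of care is the identity $p \cdot E = \Omega(p_{2np-1}) \circ E$, which is immediate from the Eckmann--Hilton argument identifying the H-space structure on $\Omega S^{2np-1}$ with the usual group structure on $\pi_*$.
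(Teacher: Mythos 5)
Your argument is correct and is essentially the paper's own proof: both use the charming-map identity $f\circ E^2 = p$ on the bottom cell together with the nullhomotopy of $\partial\circ\Omega f$ coming from consecutive maps in the (Puppe extension of the) fiber sequence $K\to\Omega^2 S^{2np+1}\xrightarrow{f} S^{2np-1}$, the paper phrasing the latter via the delooping $BK$ from the definition of a charming map. The bookkeeping point you flag, $p\cdot E = \Omega(p_{2np-1})\circ E$, is handled the same way implicitly in the paper, so there is no substantive difference.
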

\begin{proof}
    Since $f$ is a charming map, the composite
    $S^{2np-1} \to \Omega^2 S^{2np-1} \xrightarrow{f} S^{2np-1}$
    is the degree $p$ map. Therefore, the element $p\alpha\in \pi_{2np-3} R$ is
    detected by the composite
    $$S^{2np-2} \to \Omega S^{2np-1} \to \Omega^3 S^{2np-1} \xrightarrow{\Omega
    f} \Omega S^{2np-1} \to K \xrightarrow{\mu} \B\GL_1(R).$$
    But there is a fiber sequence
    $\Omega^2 S^{2np-1} \xrightarrow{f} S^{2np-1} \to BK$
    by the definition of a charming map, so the composite detecting $p\alpha$ is
    null, as desired.
\end{proof}
There is now a square
$$\xymatrix{
    S^{2np-2}/p \ar[r] \ar[d] & S^{2np-1} \ar[d]\\
    K \ar[r] \ar[d]_-\alpha & \Omega^2 S^{2np+1} \ar[d]\\
    \B\GL_1(R) \ar[r] & B\GL_1(R\mmod\alpha),
}$$
and the following result is a consequence of the lemma and the definition of
Toda brackets:
\begin{lemma}\label{important-bracket}
    The element in $\pi_{2np-2}(R\mmod \alpha)$ detected by the vertical map
    $S^{2np-1} \to \B\GL_1(R\mmod\alpha)$ lives in the Toda bracket $\langle p,
    \alpha, 1_{R\mmod\alpha}\rangle$.
\end{lemma}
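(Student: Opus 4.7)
The plan is to recognize the commutative square as an explicit rendering of the standard construction of the Toda bracket $\langle p, \alpha, 1_{R\mmod\alpha}\rangle$. The element $\alpha \in \pi_{2np-3}(R)$ is the same data as a pointed map $S^{2np-2}\to \B\GL_1(R)$, and the identity $p\alpha = 0$ supplied by the preceding lemma provides a nullhomotopy witnessing an extension of this map along the bottom-cell inclusion $S^{2np-2}\hookrightarrow S^{2np-2}/p = P^{2np-1}(p)$. The first step is to identify the left vertical composite $S^{2np-2}/p \to K \xrightarrow{\mu} \B\GL_1(R)$ in the diagram precisely as such an extension; this follows by tracing through the construction of the lift $S^{2np-2}/p \to K$ along the fiber sequence $K \to \Omega^2 S^{2np+1} \xrightarrow{f} S^{2np-1}$, using the fact that, by Definition \ref{gray-map-def}, the composite $S^{2np-2}/p \to S^{2np-1}\to \Omega^2 S^{2np+1}\xrightarrow{f} S^{2np-1}$ is $p$ times the top-cell collapse and is therefore null.

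Next, since $R\mmod\alpha$ is by Theorem \ref{thom-univ} the free $\E{1}$-$R$-algebra in which $\alpha$ becomes nullhomotopic, the composite $S^{2np-2} \to \B\GL_1(R) \to \B\GL_1(R\mmod\alpha)$ is canonically null. Using the cofiber sequence $S^{2np-2} \xrightarrow{p} S^{2np-2} \to S^{2np-2}/p \to S^{2np-1}$, this second nullhomotopy lets the composite $S^{2np-2}/p \to \B\GL_1(R) \to \B\GL_1(R\mmod\alpha)$ be extended over the top-cell collapse $S^{2np-2}/p \to S^{2np-1}$; the resulting map $S^{2np-1} \to \B\GL_1(R\mmod\alpha)$ must agree, by the commutativity of the square and the fact that the right column is produced by the Thom spectrum recipe of Proposition \ref{thom}, with the right vertical map in the displayed square.

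The pair of nullhomotopies just produced, one of $p\alpha$ in $R$ and one of $\alpha$ in $R\mmod\alpha$, is by definition the data that splices together to give a representative of the three-fold Toda bracket $\langle p, \alpha, 1_{R\mmod\alpha}\rangle \subseteq \pi_{2np-2}(R\mmod\alpha)$. Under the canonical identification $\pi_{2np-1}\B\GL_1(R\mmod\alpha) \cong \pi_{2np-2}(R\mmod\alpha)$, the element detected by the map $S^{2np-1} \to \B\GL_1(R\mmod\alpha)$ is therefore seen to lie in this Toda bracket, modulo the usual indeterminacy coming from the choices of nullhomotopies (acknowledged in the footnote attached to the statement).

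The proof is thus essentially tautological: the diagram is set up precisely to encode the standard Toda bracket construction. The only real (and minor) obstacle is bookkeeping, namely verifying that the implicit nullhomotopies packaged into the commutative square are the same as those used to define the Toda bracket rather than some shifted or sign-twisted variant. This requires care in translating between the classifying-map picture in $\B\GL_1$ and the element picture in $\pi_\ast R$, and in tracking the James-filtration suspensions implicit in identifying $R\mmod\alpha$ as the Thom spectrum of the $\E{1}$-map $\Omega S^{2np-1}\to \B\GL_1(R)$ extending $\alpha$.
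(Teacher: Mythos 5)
Your proposal is correct and is essentially the argument the paper intends: the paper offers no further proof beyond asserting that the displayed square together with the $p$-torsion lemma and the definition of Toda brackets gives the result, and your write-up simply makes that explicit (left column = extension of $\alpha$ over the Moore space via a nullhomotopy of $p\alpha$, right column = the induced extension over the top-cell collapse using the vanishing of $\alpha$ in $\pi_\ast(R\mmod\alpha)$). The only point to phrase carefully is that the right vertical map is not forced to equal a chosen extension but is itself, by commutativity, one such extension, so it represents an element of the bracket up to the indeterminacy you already acknowledge.
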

The upshot of this discussion is the following:
\begin{prop}\label{toda-bracket}
    Let $R$ be a $p$-local $\E{1}$-ring, and let $\mu:K\to \B\GL_1(R)$ denote a
    map from the fiber $K$ of a charming map $f:\Omega^2 S^{2np+1} \to
    S^{2np-1}$, providing an element $\alpha\in \pi_{2np-3} R$. Assume that the
    Thom spectrum $R\mmod\alpha$ of the map $\Omega S^{2np-1}\to \B\GL_1(R)$ is
    an $\E{1}$-$R$-algebra. Then there
    is an element $v\in \langle p, \alpha, 1_{R\mmod\alpha}\rangle$ such
    that $K^\mu$ is equivalent to the Thom spectrum of the map $\Omega^2
    S^{2np+1}\xrightarrow{v} \B\GL_1(R\mmod\alpha)$.
\end{prop}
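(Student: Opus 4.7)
The plan is to assemble the two ingredients that have already been prepared in the paragraphs leading up to the statement, and observe that together they force precisely the conclusion claimed. First, apply Proposition \ref{thom} to the fiber sequence $\Omega S^{2np-1} \to K \to \Omega^2 S^{2np+1}$ and the map $\mu: K \to \B\GL_1(R)$. The Thom spectrum of the restricted map $\Omega S^{2np-1} \to \B\GL_1(R)$ is, by its universal property and by Theorem \ref{thom-univ}, precisely the $\E{1}$-quotient $R \mmod \alpha$ (this is where the standing hypothesis that $R \mmod \alpha$ is an $\E{1}$-$R$-algebra is invoked to guarantee that the output of Proposition \ref{thom} makes sense as a Thom spectrum over $\B\GL_1(R \mmod \alpha)$). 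The conclusion of Proposition \ref{thom} is then a map $\phi: \Omega^2 S^{2np+1} \to \B\GL_1(R \mmod \alpha)$ together with an equivalence $K^\mu \simeq (\Omega^2 S^{2np+1})^\phi$ of $R$-modules (indeed, of $\E{1}$-rings under the standing assumptions).

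Second, identify the element of $\pi_{2np-2}(R \mmod \alpha)$ detected by the restriction of $\phi$ to the bottom cell $S^{2np-1} \hookrightarrow \Omega^2 S^{2np+1}$. This is exactly the task accomplished by Lemma \ref{important-bracket}: chasing the square appearing immediately before that lemma, and using the fact (established just above it) that $\alpha$ is $p$-torsion so that the top map $S^{2np-2}/p \to K$ exists, forces the element $v$ detected on the bottom cell to land in the Toda bracket $\langle p, \alpha, 1_{R \mmod \alpha}\rangle$, with indeterminacy exactly matching the ambiguity in the choice of $\phi$.

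Combining these two ingredients yields the statement: denoting by $v \in \pi_{2np-2}(R \mmod \alpha)$ the bottom-cell restriction of the map $\phi$ produced by Proposition \ref{thom}, we have $K^\mu \simeq (\Omega^2 S^{2np+1})^\phi$, while $v \in \langle p, \alpha, 1_{R \mmod \alpha}\rangle$ by Lemma \ref{important-bracket}. Since $\B\GL_1(R \mmod \alpha)$ is an infinite loop space, any map $S^{2np-1} \to \B\GL_1(R \mmod \alpha)$ extends, essentially uniquely, to the $\E{2}$-map $\Omega^2 S^{2np+1} \to \B\GL_1(R \mmod \alpha)$ one hopes for, justifying the abuse of writing $\phi$ simply as $v$ in the statement. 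There is no genuine obstacle to overcome here; the content of the proposition is a bookkeeping consequence of the two preceding lemmas, and the only mild subtlety is to track the indeterminacy of the Toda bracket against the indeterminacy in the choice of $\phi$ (both of which come from the extension of the null $S^{2np-1} \to K$ coming from $\fib(f) \simeq \Omega BK$).
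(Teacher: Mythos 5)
Your argument is the paper's own: apply Proposition \ref{thom} to the fiber sequence $\Omega S^{2np-1}\to K\to \Omega^2 S^{2np+1}$ to produce $\phi:\Omega^2 S^{2np+1}\to \B\GL_1(R\mmod\alpha)$ with $(\Omega^2 S^{2np+1})^\phi\simeq K^\mu$, and then invoke the $p$-torsion lemma and Lemma \ref{important-bracket} to place the bottom-cell element $v$ in $\langle p,\alpha,1_{R\mmod\alpha}\rangle$; this is exactly how the proposition is obtained as the ``upshot of this discussion.''

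One caveat on your closing paragraph: the claim that $\B\GL_1(R\mmod\alpha)$ is an infinite loop space, so that $v$ extends essentially uniquely to an $\E{2}$-map, is not justified under the hypotheses --- $R\mmod\alpha$ is only assumed to be an $\E{1}$-$R$-algebra, so $\B\GL_1(R\mmod\alpha)$ is merely a space (a delooping of the grouplike $\E{1}$-space $\GL_1$), and no canonical extension of $v$ along the double suspension exists in general. Fortunately this step is not needed: the map in the statement is the $\phi$ furnished by Proposition \ref{thom}, and writing it as ``$v$'' is an abuse of notation (the paper flags exactly this in a footnote, noting that different choices of $\phi$ give bottom-cell elements agreeing only up to the indeterminacy of the Toda bracket). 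Dropping that remark leaves a correct proof identical in substance to the paper's.
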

\begin{remark}\label{whitehead}
    Let $R$ be an $\E{1}$-ring, and let $\alpha\in \pi_d R$. Then $\alpha$
    defines a map $S^{d+1} \to \BGL_1(R)$, and it is natural to ask when
    $\alpha$ extends along $S^{d+1} \to \Omega S^{d+2}$, or at least along
    $S^{d+1} \to J_k(S^{d+1})$ for some $k$. This is automatic if $R$ is an
    $\E{2}$-ring, but not necessarily so if $R$ is only an $\E{1}$-ring. Recall
    that there is a cofiber sequence
    $$S^{(k+1)(d+1)-1} \to J_k(S^{d+1})\to J_{k+1}(S^{d+1}),$$ 
    where the first map is the $(k+1)$-fold iterated Whitehead product
    $[\iota_{d+1}, [\cdots, [\iota_{d+1}, \iota_{d+1}]], \cdots]$. In
    particular, the map $S^{d+1} \to \BGL_1(R)$ extends along the map $S^{d+1}
    \to J_k(S^{d+1})$ if and only if there are compatible nullhomotopies of the
    $n$-fold iterated Whitehead products $[\alpha, [\cdots, [\alpha, \alpha]],
    \cdots] \in \pi_\ast \BGL_1(R)$ for $n\leq k$. These amount to properties of
    Toda brackets in the homotopy of $R$. We note, for instance, that the
    Whitehead bracket $[\alpha, \alpha]\in \pi_{2d+1} \BGL_1(R) \cong \pi_{2d}
    R$ is the element $2\alpha^2$; therefore, the map $S^{d+1}\to \BGL_1(R)$
    extends to $J_2(S^{d+1})$ if and only if $2\alpha^2 = 0$.
\end{remark}
\begin{remark}
    Let $R$ be a $p$-local $\E{2}$-ring, and let $\alpha\in \pi_d(R)$ with $d$
    even. Then $\alpha$ defines an element $\alpha\in \pi_{d+2} \B^2\GL_1(R)$.
    The $p$-fold iterated Whitehead product $[\alpha, \cdots, \alpha]\in
    \pi_{p(d+2)-(p-1)} \B^2\GL_1(R) \cong \pi_{pd + (p-1)} R$ is given by $p!
    Q_1(\alpha)$ modulo decomposables.  This is in fact true more generally. Let
    $R$ be an $\E{n}$-ring, and suppose $\alpha\in \pi_d(R)$. Let $i<n$, so
    $\alpha$ defines an element $\alpha\in \pi_{d+i} \B^i\GL_1(R)$.  The
    $p$-fold iterated Whitehead product $[\alpha, \cdots, \alpha]\in
    \pi_{p(d+i)-(p-1)} \B^i\GL_1(R) \cong \pi_{pd + (i-1)(p-1)} R$ is given by
    $p! Q_{i-1}(\alpha)$ modulo decomposables.
    
    We will describe this in detail in forthcoming work: the basic idea is to reduce to the universal example of an $\E{n}$-ring, and relate Whitehead products on $\pi_\ast(S^n)$ to the $\E{d}$-Browder bracket on $\Omega^d S^n_+$ (where $d\geq n$). Recall the isomorphism $\pi_j S^n \cong \pi_{j-d} \Omega^d S^n$. If $\alpha\in \pi_i S^n$ and $\beta\in \pi_j S^n$, then we will show in future work that the stabilization of the Whitehead product $[\alpha, \beta]\in \pi_{i+j-1} S^n \cong \pi_{i+j-d} \Omega^d S^n$ is closely related to the $\E{d}$-Browder bracket $[\alpha, \beta]_{\E{d}}$.
\end{remark}

\newpage
\section{Chromatic Thom spectra}\label{proof-section}

\subsection{Statement of the theorem}

To state the main theorem of this section, we set some notation. Fix an integer
$n\geq 1$, and work in the $p$-complete stable category. For each Thom spectrum
$R$ of height $n-1$ in Table \ref{the-table}, let
$\sigma_{n-1}:S^{|\sigma_{n-1}|}\to \B\GL_1(R)$ denote a map detecting
$\sigma_{n-1}\in \pi_{|\sigma_{n-1}|}(R)$ (which exists by Theorem
\ref{sigman-exists}). Let $K_{n}$ denote the fiber of a $p$-local charming map
$\Omega^2 S^{2p^{n}+1} \to S^{2p^{n}-1}$ satisfying the hypotheses of Conjecture
\ref{moore-splitting}, and let $K_2$ (resp.  $K_3$) denote the fiber of an
integrally defined charming map $\Omega^2 S^9\to S^7$ (resp.  $\Omega^2
S^{17}\to S^{15}$) satisfying the hypotheses of Conjecture
\ref{moore-splitting}.

Then:
\begin{thm-main}
    Let $R$ be a height $n-1$ spectrum as in the second line of Table
    \ref{the-table}.
    Then Conjectures \ref{moore-splitting} and \ref{centrality-conj} imply that
    there is a map $K_{n}\to \B\GL_1(R)$ such that the mod $p$ homology of the
    Thom spectrum $K_{n}^\mu$ is isomorphic to the mod $p$ homology of the
    associated designer chromatic spectrum $\TT(R)$ as a Steenrod
    comodule.

    If $R$ is any base spectrum other than $B$, the Thom spectrum $K_{n}^\mu$
    is equivalent to $\TT(R)$ upon $p$-completion for every prime $p$. If
    Conjecture \ref{tmf-conj} is true, then the same is true for $B$: the Thom
    spectrum $K_{n}^\mu$ is equivalent to $\TT(B) = \tmf$ upon $2$-completion.
\end{thm-main}
We emphasize again that na\"ively making sense of Theorem \ref{main-thm} relies
on knowing that $T(n)$ admits the structure of an $\E{1}$-ring; we shall
interpret this phrase as in Warning \ref{tn-thom-def}.
\begin{remark}
    Theorem \ref{main-thm} is proved independently of the nilpotence theorem.
    (In fact, it is even independent of Quillen's identification of $\pi_\ast
    \MU$ with the Lazard ring, provided one regards the existence of designer
    chromatic spectra as being independent of Quillen's identification.) We
    shall elaborate on the connection between Theorem \ref{main-thm} and the
    nilpotence theorem in future work; a sketch is provided in Remark
    \ref{nilpotence-proof}.
\end{remark}
\begin{remark}\label{uncond}
    Theorem \ref{main-thm} is true unconditionally when $n=1$, since that case
    is simply Corollary \ref{hm-Z}.
\end{remark}
\begin{remark}
    Note that Table \ref{thom-definitions} implies that the homology of each of
    the Thom spectra in Table \ref{the-table} are given by the $Q_0$-Margolis
    homology of their associated designer chromatic spectra. In particular, the
    map $R\to \TT(R)$ is a rational equivalence.
\end{remark}

Before we proceed with the proof of Theorem \ref{main-thm}, we observe some
consequences.
\begin{cor-bpn}
    Conjecture \ref{moore-splitting} and Conjecture \ref{centrality-conj} imply
    Conjecture \ref{hopkins-conj}.
\end{cor-bpn}
\begin{proof}
    This follows from Theorem \ref{main-thm}, Proposition \ref{toda-bracket},
    and Proposition \ref{vn-toda}.
\end{proof}
\begin{remark}
    Corollary \ref{bpn} is true \emph{unconditionally} when $n=1$, since Theorem
    \ref{main-thm} is true unconditionally in that case by Remark \ref{uncond}.
    See also Remark \ref{anick-S3}.
\end{remark}
\begin{remark}
    We can attempt to apply Theorem \ref{main-thm} for $R = A$ in conjunction
    with Proposition \ref{toda-bracket}.  Theorem \ref{main-thm} states that
    Conjecture \ref{moore-splitting} and Conjecture \ref{centrality-conj} imply
    that there is a map $K_2\to \BGL_1(A)$ whose Thom spectrum is equivalent to
    $\bo$. There is a fiber sequence
    $$\Omega S^7\to K_2\to \Omega^2 S^9,$$
    so we obtain a map $\mu:\Omega S^7\to K_2\to \BGL_1(A)$. The proof of
    Theorem \ref{main-thm} shows that the bottom cell $S^6$ of the source
    detects $\sigma_1\in \pi_5(A)$. A slight variation of the argument used to
    establish Proposition \ref{toda-bracket} supplies a map $\Omega^2 S^9\to
    \B\Aut((\Omega S^7)^\mu)$ whose Thom spectrum is $\bo$. The spectrum
    $(\Omega S^7)^\mu$ has mod $2$ homology $\FF_2[\zeta_1^4, \zeta_2^2]$.
    However, unlike $A$, it does not naturally arise an $\E{1}$-Thom spectrum
    over the sphere spectrum; this makes it unamenable to study via techniques
    of unstable homotopy.

    More precisely, $(\Omega S^7)^\mu$ is not the Thom spectrum of an $\E{1}$-map
    $X\to \BGL_1(\S)$ from a loop space $X$ which sits in a fiber sequence
    $$\Omega S^5\to X\to \Omega S^7$$
    of loop spaces. Indeed, $\B X$ would be a $S^5$-bundle over $S^7$, which by
    \cite[Lemma 4]{mahowald-bo-bu} implies that $X$ is then equivalent as a loop
    space to $\Omega S^5\times \Omega S^7$. The resulting $\E{1}$-map $\Omega
    S^7\to \BGL_1(\S)$ is specified by an element of $\pi_5(\S) \cong 0$, so
    $(\Omega S^7)^\mu$ must then be equivalent as an $\E{1}$-ring to $A \wedge
    \Sigma^\infty_+ \Omega S^7$. In particular, $\sigma_1\in \pi_5(A)$ would map
    nontrivially to $(\Omega S^7)^\mu$, which is a contradiction.
\end{remark}
The proof of Theorem \ref{main-thm} will also show:
\begin{corollary}\label{tmf-univ}
    Let $R$ be a height $n-1$ spectrum as in the second line of Table
    \ref{the-table}, and assume Conjecture \ref{tmf-conj} if $R = B$. Let $M$ be
    an $\E{3}$-$R$-algebra.  Conjecture \ref{moore-splitting} and Conjecture
    \ref{centrality-conj} imply that if:
    \begin{enumerate}
	\item the composite $\fr{Z}_3(R)\to R \to M$ is an $\E{3}$-algebra map,
	\item the element $\sigma_{n-1}$ in $\pi_\ast M$ is nullhomotopic,
	\item and the bracket $\langle p, \sigma_{n-1}, 1_M\rangle$ contains
	    zero,
    \end{enumerate}
    then there is a unital map $\TT(R)\to M$.
\end{corollary}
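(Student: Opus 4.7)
The plan is to invoke Theorem \ref{main-thm} to identify $\TT(R)$ with a Thom spectrum $K_n^\mu$, and then construct the desired map by iteratively applying the universal property of Thom spectra (Theorem \ref{thom-univ}) twice, trivializing successively the elements $\sigma_{n-1}$ and $v_n$ that arise from the two-step fiber sequence structure of $K_n$.

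First I would use Theorem \ref{main-thm} to obtain an equivalence $\TT(R) \simeq K_n^\mu$, where $K_n$ is the fiber of a charming map $\Omega^2 S^{2p^n+1} \to S^{2p^n-1}$ and $\mu: K_n \to \B\GL_1(R)$ is the map produced by the theorem. The defining fiber sequence $\Omega S^{2p^n-1} \to K_n \to \Omega^2 S^{2p^n+1}$ together with Proposition \ref{thom} expresses $K_n^\mu$ as an iterated Thom spectrum: the restriction of $\mu$ to $\Omega S^{2p^n-1}$ is the bundle whose Thom spectrum is $R\mmod \sigma_{n-1}$, and Proposition \ref{toda-bracket} then exhibits $K_n^\mu$ as the Thom spectrum of a map $\Omega^2 S^{2p^n+1} \to \B\GL_1(R\mmod \sigma_{n-1})$ whose restriction to the bottom cell $S^{2p^n-1}$ detects an element $v_n$ lying in the Toda bracket $\langle p, \sigma_{n-1}, 1_{R\mmod \sigma_{n-1}}\rangle$.

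Next, I would use hypotheses (1) and (2) to construct a unital map $R\mmod \sigma_{n-1} \to M$ of $\E{1}$-$R$-algebras. Here the key point is that by Conjecture \ref{centrality-conj}, $\sigma_{n-1}$ lifts to the $\E{3}$-center $\fr{Z}_3(R)$; this is precisely what is needed to make $R\mmod \sigma_{n-1}$ into an $\E{1}$-$R$-algebra (despite $R$ itself being only a $\cQ_1$-ring, as in Warning \ref{tn-thom-def}). Hypothesis (1) then ensures that $M$ inherits the appropriate structure making the composite $\fr{Z}_3(R) \to M$ an $\E{3}$-map, and the nullhomotopy of $\sigma_{n-1}$ in $M$ supplied by hypothesis (2), combined with Theorem \ref{thom-univ}, yields the desired unital map. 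Composing with $R\mmod \sigma_{n-1} \to M$, the element $v_n \in \pi_*(R\mmod \sigma_{n-1})$ has image in $M$ lying in the Toda bracket $\langle p, \sigma_{n-1}, 1_M\rangle$, which by hypothesis (3) contains zero. Choosing a nullhomotopy accordingly and invoking Theorem \ref{thom-univ} once more extends this to a unital map $\TT(R) \simeq K_n^\mu \to M$, as desired. In the case $R = B$, the identification $K_n^\mu \simeq \tmf$ requires Conjecture \ref{tmf-conj}, which is why that hypothesis is needed there.

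The main obstacle is the multiplicative bookkeeping, not the strategy. Theorem \ref{thom-univ} needs the ambient $R$-algebra to carry one extra level of operadic structure beyond the quotient being formed, and since the base rings $R$ in Table \ref{the-table} are (a priori) only $\cQ_1$-rings or $\E{1}$-rings, every application must be routed through the $\E{3}$-center $\fr{Z}_3(R)$. A secondary difficulty is the indeterminacy of the Toda bracket $\langle p, \sigma_{n-1}, 1_M\rangle$: different choices of nullhomotopy of $v_n$ in $M$ give different maps $K_n^\mu \to M$, so one must verify that a choice compatible with the chosen nullhomotopy of $\sigma_{n-1}$ exists, which is what hypothesis (3) guarantees. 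Unlike the structured splittings one might hope for, the resulting map $\TT(R) \to M$ is only unital, because Theorem \ref{thom-univ} produces a map of $\E{m}$-algebras only after a nullhomotopy is fixed at each stage, and we make no attempt to coherently combine the two stages.
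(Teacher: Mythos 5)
Your overall skeleton --- identify $\TT(R)\simeq K_n^\mu$ via Theorem \ref{main-thm}, and use hypotheses (2) and (3) to trivialize, respectively, $\sigma_{n-1}$ and the bracket element $v_n$ attached to the two stages of the fibration $\Omega S^{2p^n-1}\to K_n\to \Omega^2 S^{2p^n+1}$ --- correctly locates where each hypothesis must enter, and your first stage (a unital map $R\mmod\sigma_{n-1}\to M$ using the lift of $\sigma_{n-1}$ to $\fr{Z}_3(R)$) is fine. The gap is the second invocation of Theorem \ref{thom-univ}. That theorem only describes maps out of the Thom spectrum of the \emph{canonical $\E{m}$-extension} $\Omega^m\Sigma^m Y\to \B\GL_1(R)$ of a map from $Y$, and it requires the base to be an $\E{k+1}$-ring with $k\geq m$. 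In your second stage the Thom spectrum is taken over $\Omega^2 S^{2p^n+1}=\Omega^2\Sigma^2 S^{2p^n-1}$, but the classifying map $\Omega^2 S^{2p^n+1}\to \B\GL_1(R\mmod\sigma_{n-1})$ supplied by Proposition \ref{toda-bracket} is merely a map of spaces: $R\mmod\sigma_{n-1}$ is at best an $\E{1}$-$R$-algebra, so $\B\GL_1(R\mmod\sigma_{n-1})$ carries no double-loop structure and the map is not the $\E{2}$-extension of its bottom-cell restriction. Consequently a nullhomotopy of the image of $v_n$ in $\pi_\ast M$ controls only the restriction of the classifying map to $S^{2p^n-1}$ and gives no map out of $K_n^\mu$. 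This is not bookkeeping: the failure of $X(p^{n+1}-1)$ and $T(n)$ to be $\E{3}$ (Remark \ref{Xn-e3}) is precisely why no such universal property exists, and if it did exist one could dispense with hypothesis (1) (and with Conjecture \ref{centrality-conj}) in Theorem \ref{mstring} altogether.

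The paper's argument avoids the two-stage quotient and works one level up, where the structure actually lives. By Conjecture \ref{moore-splitting}, $K_n$ retracts off $\Omega^2 P^{|\sigma_{n-1}|+4}(p)$ as a $\cQ_1$-space, and by Conjecture \ref{centrality-conj} the map $\wt{\mu}$ constructed in Step 1 of the proof of Theorem \ref{main-thm} is a double loop map into $\B\GL_1(\fr{Z}_3(R))$. Hypothesis (1) makes $\B\GL_1(\fr{Z}_3(R))\to \B\GL_1(M)$ a two-fold loop map, so the composite $\Omega^2 P^{|\sigma_{n-1}|+4}(p)\to \B\GL_1(M)$ is the canonical double-loop extension of its restriction to $P^{|\sigma_{n-1}|+2}(p)$; hypothesis (2) kills that restriction on the bottom cell, and hypothesis (3) kills the induced map off the top cell (exactly as in the proof of Proposition \ref{thom-Z-nilp}), so the whole composite $K_n\to \B\GL_1(R)\to \B\GL_1(M)$ is null. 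Corollary \ref{thom-iso} then gives $K_n^\mu\wedge_R M\simeq M\wedge \Sigma^\infty_+ K_n$, and the composite $K_n^\mu\to K_n^\mu\wedge_R M\simeq M\wedge\Sigma^\infty_+ K_n\to M$ is the desired unital map $\TT(R)\to M$ (with Conjecture \ref{tmf-conj} entering only to identify $K_n^\mu$ with $\tmf$ when $R=B$). If you wish to keep your outline, the fix is to replace your second use of Theorem \ref{thom-univ} by this argument that the classifying map becomes null after composing to $\B\GL_1(M)$ --- that is the step where the double-loop structure guaranteed by hypothesis (1) is genuinely needed.
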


\subsection{The proof of Theorem \ref{main-thm}}\label{the-proof}

This section is devoted to giving a proof of Theorem \ref{main-thm}, dependent
on Conjecture \ref{moore-splitting} and Conjecture \ref{centrality-conj}. The
proof of Theorem \ref{main-thm} will be broken down into multiple steps. The
result for $y(n)$ and $y_\Z(n)$ follow from the result for $T(n)$ by Proposition
\ref{tn-yn}, so we shall restrict ourselves to the cases of $R$ being $T(n)$,
$A$, and $B$.

Fix $n\geq 1$. If $R$ is $A$ or $B$, we will restrict to $p=2$, and let $K_2$
and $K_3$ denote the integrally defined spaces from Conjecture
\ref{moore-splitting}. By Remarks \ref{sigma1-integral} and
\ref{sigma2-integral}, the elements $\sigma_1\in \pi_5(A)$ and $\sigma_2\in
\pi_{13}(B)$ are defined integrally. We will write $\sigma_{n-1}$ to generically
denote this element, and will write it as living in degree $|\sigma_{n-1}|$.
We shall also write $R$ to denote $X(p^{n}-1)$ and \emph{not} $T(n)$; this will
be so that we can apply Conjecture \ref{moore-splitting}.
We apologize for the inconvenience, but hope that this is worth circumventing
the task of having to read through essentially the same proofs for these
slightly different cases.

\subsubsection*{Step 1}

We begin by constructing a map $\mu:K_n\to \B\GL_1(R)$ as required by the
statement of Theorem \ref{main-thm}; the construction in the case $n=1$ follows
Remark \ref{hm-Z-alt}. By Conjecture \ref{moore-splitting}, the space $K_n$
splits off $\Omega^2 P^{|\sigma_{n-1}|+4}(p)$ (if $R = T(n)$, then
$|\sigma_{n-1}|+4 = |v_n|+3$).  We are therefore reduced to constructing a map
$\Omega^2 P^{|\sigma_{n-1}|+4}(p)\to \B\GL_1(R)$.  Theorem \ref{sigman-exists}
shows that the element $\sigma_{n-1}\in \pi_\ast R$ is $p$-torsion, so the map
$S^{|\sigma_{n-1}|+1} \to \B\GL_1(R)$ detecting $\sigma_{n-1}$ extends to a map
\begin{equation}\label{map-from-moore}
    S^{|\sigma_{n-1}|+1}/p = P^{|\sigma_{n-1}|+2}(p)\to \B\GL_1(R).
\end{equation}
Since $\Omega^2 P^{|\sigma_{n-1}|+4}(p)\simeq \Omega^2 \Sigma^2
P^{|\sigma_{n-1}|+2}(p)$, we would obtain an extension $\wt{\mu}$ of this map
through $\Omega^2 P^{|\sigma_{n-1}|+4}(p)$ if $R$ admits an $\E{3}$-structure.

Unfortunately, this is not true; but this is where Conjecture
\ref{centrality-conj} comes in: it says that the element $\sigma_{n-1}\in
\pi_{|\sigma_{n-1}|} R$ lifts to the $\E{3}$-center $\fr{Z}_3(R)$, where it has
the same torsion order as in $R$. (Here, we are abusively writing
$\fr{Z}_3(T(n-1))$ to denote the $\E{3}$-center of $X(p^n-1)_{(p)}$.) The
lifting of $\sigma_{n-1}$ to $\pi_{|\sigma_{n-1}|} \fr{Z}_3(R)$ provided by
Conjecture \ref{centrality-conj} gives a factorization of the map from
\eqref{map-from-moore} as
$$S^{|\sigma_{n-1}|+1}/p = P^{|\sigma_{n-1}|+2}(p)\to \B\GL_1(\fr{Z}_3(R))\to
\B\GL_1(R).$$
Since $\fr{Z}_3(R)$ is an $\E{3}$-ring, $\B\GL_1(\fr{Z}_3(R))$ admits the
structure of an $\E{2}$-space. In particular, the map
$P^{|\sigma_{n-1}|+2}(p)\to \B\GL_1(\fr{Z}_3(R))$ factors through $\Omega^2
P^{|\sigma_{n-1}|+4}(p)$, as desired. We let $\wt{\mu}$ denote the resulting
composite
$$\wt{\mu}:\Omega^2 P^{|\sigma_{n-1}|+4}(p)\to \B\GL_1(\fr{Z}_3(R))\to
\B\GL_1(R).$$

\subsubsection*{Step 2}

Theorem \ref{main-thm} asserts that there is an identification between the Thom
spectrum of the induced map $\mu:K_n\to \B\GL_1(R)$ and the associated designer
chromatic spectrum $\TT(R)$ via Table \ref{the-table}. We shall identify the
Steenrod comodule structure on the mod $p$ homology of $K_n^\mu$, and show that
it agrees with the mod $p$ homology of $\TT(R)$.

\begin{table}[h!]
    \centering
    \begin{tabular}{c | @{}c@{}}
	Designer chromatic spectrum & Mod $p$ homology\\
	\hline
	$\BP{n-1}$ & \begin{tabular}{c|c}
	    $p=2$ & $\FF_2[\zeta_1^2, \cdots, \zeta_{n-1}^2, \zeta_n^2,
	    \zeta_{n+1}, \cdots]$\\
	    $p>2$ & $\FF_p[\zeta_1, \zeta_2, \cdots]\otimes
	    \Lambda_{\FF_p}(\tau_n, \tau_{n+1}, \cdots)$
	\end{tabular}\\
	\hline
	$k(n-1)$ & \begin{tabular}{c|c}
	    $p=2$ & $\FF_2[\zeta_1, \cdots, \zeta_{n-1}, \zeta_n^2, \zeta_{n+1},
	    \cdots]$\\
	    $p>2$ & $\FF_p[\zeta_1, \zeta_2, \cdots]\otimes
	    \Lambda_{\FF_p}(\tau_0, \cdots, \tau_{n-2}, \tau_n, \tau_{n+1},
	    \cdots)$
	\end{tabular}\\
	\hline
	$k_\Z(n-1)$ & \begin{tabular}{c|c}
	    $p=2$ & $\FF_2[\zeta_1^2, \zeta_2, \cdots, \zeta_{n-1}, \zeta_n^2,
	    \zeta_{n+1}, \cdots]$\\
	    $p>2$ & $\FF_p[\zeta_1, \zeta_2, \cdots]\otimes
	    \Lambda_{\FF_p}(\tau_1, \cdots, \tau_{n-2}, \tau_n, \tau_{n+1},
	    \cdots)$
	\end{tabular}\\
	\hline
	$\bo$ & \begin{tabular}{c|c}
	    $p=2$ & $\FF_2[\zeta_1^4, \zeta_2^2, \zeta_3, \cdots]$\\
	    $p>2$ & $\FF_p[x_4]/v_1 \otimes \FF_p[\zeta_1, \zeta_2,
	    \cdots]\otimes \Lambda_{\FF_p}(\tau_2, \tau_3, \cdots)$
	\end{tabular}\\
	\hline
	$\tmf$ & \begin{tabular}{c|c}
	    $p=2$ & $\FF_2[\zeta_1^8, \zeta_2^4, \zeta_3^2, \zeta_4, \cdots]$\\
	    $p=3$ & $\Lambda_{\FF_3}(b_4) \otimes \FF_3[\zeta_1^3, \zeta_2,
	    \cdots] \otimes \Lambda_{\FF_3}(\tau_3, \tau_4, \cdots)$\\
	    $p\geq 5$ & $\FF_p[c_4, c_6]/(v_1, v_2) \otimes \FF_p[\zeta_1,
	    \zeta_2, \cdots]\otimes \Lambda_{\FF_p}(\tau_3, \tau_4, \cdots)$
	\end{tabular}\\
	\hline
    \end{tabular}
    \vspace{0.5cm}
    \caption{The mod $p$ homology of designer chromatic spectra. 
    See \cite[Theorem 4.3]{lawson-naumann}, as well as \cite[Proposition 1.7]{wilson-omega-spectrum} and \cite[Proposition 5.3]{angeltveit-rognes} for a proof of the statement for $\H_\ast(\BP{n-1}; \FF_p)$; this implies the calculations of $\H_\ast(k(n-1); \FF_p)$ and $\H_\ast(k_\Z(n-1); \FF_p)$. See \cite[Proposition 6.1]{angeltveit-rognes} for a proof of the statements for $\H_\ast(\bo; \FF_2)$ and $\H_\ast(\tmf; \FF_2)$, and \cite[Theorem 21.5]{rezk-512} for $\H_\ast(\tmf; \FF_p)$ for any $p$. For odd $p$, $\bo_{(p)}$ is a sum of shifts of $\BP{1}$, which implies the statement about $\H_\ast(\bo; \FF_p)$.}
    \label{table-homology}
\end{table}

In Table \ref{table-homology}, we have recorded the mod $p$ homology of the
designer chromatic spectra in Table \ref{the-table} (see \cite[Theorem
4.3]{lawson-naumann} for $\BP{n-1}$).
It follows from Proposition \ref{homology-anick} that there is an isomorphism
$$\H_\ast(K_n^\mu) \cong
\begin{cases}
    \H_\ast(R) \otimes \FF_2[x_{2^{n+1}-1}^2] \otimes \bigotimes_{k>1}
    \FF_2[x_{2^{n+k}-1}] & p=2\\
    \H_\ast(R)\otimes \bigotimes_{k>0} \FF_p[y_{2(p^{n+k}-1)}] \otimes
    \bigotimes_{j>0} \Lambda_{\FF_p}[x_{2p^{n+j}-1}] & p>2.
\end{cases}$$
Combining this isomorphism with Theorem \ref{tn-def}, Proposition \ref{yn-hfp},
Proposition \ref{A-hfp}, and Proposition \ref{B-hfp}, we find that there is an
\emph{abstract} equivalence between the mod $p$ homology of $K_n^\mu$ and
the mod $p$ homology of $\TT(R)$.

We shall now work at $p=2$ for the remainder of the proof; the same argument
goes through with slight modifications at odd primes. We now identify the
Steenrod comodule structure on $\H_\ast(K_n^\mu)$. Recall that $\wt{\mu}$ is the
map $\Omega^2 P^{|\sigma_{n-1}|+4}(p)\to \B\GL_1(R)$ from Step 1. By
construction, there is a map $K_n^\mu\to \Omega^2
P^{|\sigma_{n-1}|+4}(p)^{\wt{\mu}}$. The map $\Phi$ factors through a map
$\wt{\Phi}: \Omega^2 P^{|\sigma_{n-1}|+4}(p)^{\wt{\mu}} \to \TT(R)$. The Thom
spectrum $\Omega^2 P^{|\sigma_{n-1}|+4}(p)^{\wt{\mu}}$ admits the structure of a
$\cQ_1$-ring. Indeed, it is the smash product $\Omega^2
P^{|\sigma_{n-1}|+4}(p)^{\phi} \wedge_{\fr{Z}_3(R)} R$, where $\phi: \Omega^2
P^{|\sigma_{n-1}|+4}(p)\to \B\GL_1(\fr{Z}_3(R))$; it therefore suffices to
observe that the Thom spectrum $\Omega^2 P^{|\sigma_{n-1}|+4}(p)^{\phi}$ admits
the structure of an $\E{1} \otimes \cQ_1$-ring. (Here, $\E{1}\otimes \cQ_1$ denotes the Boardman-Vogt tensor product of the $\E{1}$- and $\cQ_1$-operads.) Since there is a map $\cQ_1 \to \E{2}$ of $\infty$-operads, this is a consequence of the
fact that $\phi$ is a double loop map, and hence an $\E{1}\otimes \cQ_1$-algebra
map.  Moreover, the image of $\H_\ast(K_n^\mu)$ in $\H_\ast(\Omega^2
P^{|\sigma_{n-1}|+4}(p)^{\wt{\mu}})$ is generated under the single Dyer-Lashof
operation (arising from the cup-1 operad; see Remark \ref{cup-1}) by the
indecomposables in the image of the map $\H_\ast(R)\to \H_\ast(\Omega^2
P^{|\sigma_{n-1}|+4}(p)^{\wt{\mu}})$.

The Postnikov truncation map $\Omega^2 P^{|\sigma_{n-1}|+4}(p)^{\wt{\mu}} \to
\H\pi_0 \left(\Omega^2 P^{|\sigma_{n-1}|+4}(p)^{\wt{\mu}}\right)$ is one of
$\cQ_1$-rings. Since $\Omega^2 P^{|\sigma_{n-1}|+4}(p)$ is highly connected,
$\pi_0 \left(\Omega^2 P^{|\sigma_{n-1}|+4}(p)^{\wt{\mu}}\right) \cong \pi_0(R)$.
In particular, there is an $\Eoo$-map $\H\pi_0 \left(\Omega^2
P^{|\sigma_{n-1}|+4}(p)^{\wt{\mu}}\right)\to \H\FF_p$. The composite
$$\Omega^2 P^{|\sigma_{n-1}|+4}(p)^{\wt{\mu}} \to \H\pi_0 \left(\Omega^2
P^{|\sigma_{n-1}|+4}(p)^{\wt{\mu}}\right)\to \H\FF_p$$
is therefore a $\cQ_1$-algebra map. Moreover, the composite
$$R\to \Omega^2 P^{|\sigma_{n-1}|+4}(p)^{\wt{\mu}} \to \H\pi_0 \left(\Omega^2
P^{|\sigma_{n-1}|+4}(p)^{\wt{\mu}}\right)\to \H\FF_p$$
is simply the Postnikov truncation for $R$.
It follows that the indecomposables in $\H_\ast(\Omega^2
P^{|\sigma_{n-1}|+4}(p)^{\wt{\mu}})$ which come from the indecomposables in
$\H_\ast(R)$ are sent to the indecomposables in $\H_\ast(\H\FF_p)$. Using the
discussion in the previous paragraph, Steinberger's calculation (Theorem
\ref{steinberger}), and the Dyer-Lashof hopping argument of Remark
\ref{hopping}, we may conclude that the Steenrod comodule structure on
$\H_\ast(K_n^\mu)$ (which, recall, is abstractly isomorphic to
$\H_\ast(\TT(R))$) agrees with the Steenrod comodule structure on
$\H_\ast(\TT(R))$.

\subsubsection*{Step 3}

By Step 2, the mod $p$ homology of the Thom spectrum $K_{n}^\mu$ is isomorphic
to the mod $p$ homology of the associated designer chromatic spectrum $\TT(R)$
as a Steenrod comodule. The main result of \cite{angeltveit-lind} and
\cite[Theorem 1.1]{adams-priddy} now imply that unless $R = B$, the Thom
spectrum $K_{n}^\mu$ is equivalent to $\TT(R)$ upon $p$-completion for every
prime $p$. Finally, if Conjecture \ref{tmf-conj} is true, then the same
conclusion can be drawn for $B$: the Thom spectrum $K_{n}^\mu$ is equivalent to
$\TT(B) = \tmf$ upon $p$-completion for every prime $p$.

This concludes the proof of Theorem \ref{main-thm}.

\subsection{Remark on the proof}

Before proceeding, we note the following consequence of the proof of Theorem
\ref{main-thm}.
\begin{prop}\label{reduction}
    Let $p$ be an odd prime. Assume Conjecture \ref{moore-splitting} and
    Conjecture \ref{centrality-conj}. Then the composite
    $$g_2:\Omega^2 S^{|\sigma_{n-1}|+3} \to \Omega^2 P^{|\sigma_{n-1}|+4}(p)
    \xrightarrow{\wt{\mu}} \B\GL_1(X(p^n-1))\to \B\GL_1(\BP{n-1})$$
    is null.
\end{prop}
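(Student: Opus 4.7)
The plan is to trace the construction of $\wt{\mu}$ from Step~1 of the proof of Theorem~\ref{main-thm} and observe that the bottom-cell invariant of $g_2$ vanishes at odd primes for parity reasons, then to propagate this vanishing using the loop-space structure on the target. Concretely, $\wt{\mu}$ factors as a double-loop map $\beta\colon \Omega^2 P^{|\sigma_{n-1}|+4}(p)\to \B\GL_1(\fr{Z}_3(X(p^n-1)))$, whose $\Omega^2$-adjoint is the $\Sigma^2$-delooping of the Moore-space map $P^{|\sigma_{n-1}|+2}(p)\to \B\GL_1(\fr{Z}_3(X(p^n-1)))$ extending $\sigma_{n-1}$, followed by the pointed map induced by $\fr{Z}_3(X(p^n-1))\to X(p^n-1)$. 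Pre-composing $\beta$ with $\Omega^2 i$, where $i\colon S^{|\sigma_{n-1}|+3}\hookrightarrow P^{|\sigma_{n-1}|+4}(p)$ is the bottom-cell inclusion, the resulting double-loop map has $\Omega^2$-adjoint $S^{|\sigma_{n-1}|+3}\to \B^3\GL_1(\fr{Z}_3(X(p^n-1)))$ detecting $\sigma_{n-1}$.

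Next, I would chase the image of $\sigma_{n-1}$ along $\fr{Z}_3(X(p^n-1))\to X(p^n-1)\to \BP{n-1}$. Its image lies in $\pi_{|v_n|-1}(\BP{n-1})=\pi_{2p^n-3}(\BP{n-1})$, which is zero at odd primes because $2p^n-3$ is odd and $\pi_\ast\BP{n-1}\cong \Z_{(p)}[v_1,\ldots,v_{n-1}]$ is concentrated in even degrees. Consequently the underlying bottom-cell map $S^{|\sigma_{n-1}|+1}\to \B\GL_1(\BP{n-1})$ is null. I would then verify that the Moore-space extension $P^{|\sigma_{n-1}|+2}(p)\to \B\GL_1(\BP{n-1})$ is also null: applying $[-,\B\GL_1(\BP{n-1})]$ to the cofibre sequence $S^{|\sigma_{n-1}|+1}\xrightarrow{p}S^{|\sigma_{n-1}|+1}\to P^{|\sigma_{n-1}|+2}(p)$ identifies its homotopy class with an element of $\pi_{|v_n|}(\BP{n-1})/p$ representing the Toda bracket $\langle p,\sigma_{n-1},1_R\rangle$ of Proposition~\ref{vn-toda}, and since $\sigma_{n-1}\mapsto 0$ in $\BP{n-1}$ this bracket becomes $\langle p,0,1\rangle=p\cdot\pi_{|v_n|}(\BP{n-1})$, hence zero mod~$p$.

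The main obstacle is then to upgrade the pointed nullhomotopy of the Moore-space map to a double-loop one, so that the composite $\Omega^2 P^{|\sigma_{n-1}|+4}(p)\to \B\GL_1(\BP{n-1})$ is null when restricted to the sub-double-loop-space $\Omega^2 S^{|\sigma_{n-1}|+3}$. The cleanest route is to invoke an $\E{3}$-algebra structure on $\BP{n-1}$ through which the composite $\fr{Z}_3(X(p^n-1))\to X(p^n-1)\to \BP{n-1}$ becomes an $\E{3}$-map; the null Moore-space map would then double-deloop to a null map $P^{|\sigma_{n-1}|+4}(p)\to \B^3\GL_1(\BP{n-1})$, and $g_2$ is recovered as its $\Omega^2$-adjoint restricted to the bottom cell, hence null. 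The principal difficulty is securing this $\E{3}$-compatibility: it is known at odd primes for small $n$ and expected in general, but delicate in the generic setting. As a fallback, one can pursue cell-by-cell obstruction theory on the $p$-local splitting $\Omega^2 S^{2p^n}\simeq_{(p)}\Omega S^{2p^n-1}\times\Omega^2 S^{4p^n-1}$, using the even-degree concentration of $\pi_\ast\BP{n-1}$ together with Bockstein-type Toda-bracket arguments analogous to the one above to kill each generating odd-degree class.
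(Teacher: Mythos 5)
Your opening observation is right (the bottom cell dies because $\pi_{2p^n-3}\BP{n-1}=0$), but two later steps do not hold up, and together they miss the mechanism the paper actually uses. First, the Toda-bracket evaluation is invalid: $\langle p,\sigma_{n-1},1_{\BP{n-1}}\rangle$ is not a function of the image of $\sigma_{n-1}$ in $\pi_\ast\BP{n-1}$, so you cannot replace it by $\langle p,0,1\rangle=p\cdot\pi_{|v_n|}\BP{n-1}$. The bracket is formed from $\sigma_{n-1}$ as an element of $\pi_\ast X(p^n-1)$ (or of $\pi_\ast\fr{Z}_3(X(p^n-1))$) together with a chosen nullhomotopy of $p\,\sigma_{n-1}$ there, and this is exactly the mechanism that produces nontrivial classes: in Proposition \ref{vn-toda} the element $\sigma_{n-1}$ likewise maps to zero in $\pi_\ast X(p^n)$, yet $\langle p,\sigma_{n-1},1_{X(p^n)}\rangle$ contains an indecomposable $v_n$, which is certainly not in $p\cdot\pi_{|v_n|}X(p^n)$. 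So the nullity of the Moore-space composite $P^{|\sigma_{n-1}|+2}(p)\to\B\GL_1(\BP{n-1})$ is not established by your argument, and in fact the paper's proof never needs it.

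Second, even granting that step, your main route must double-deloop the nullhomotopy, and for this you assume the composite $\fr{Z}_3(X(p^n-1))\to X(p^n-1)\to\BP{n-1}$ is an $\E{3}$-map. That hypothesis is not supplied by Conjectures \ref{moore-splitting} and \ref{centrality-conj}; its failure is precisely the difficulty the proposition is designed to get around (compare the extra assumption that has to be imposed in Theorem \ref{mstring}), and knowing that $\BP{n-1}$ admits some $\E{3}$-structure would not make this particular composite an $\E{3}$-map. The paper's proof instead uses that the composite is a map of $\E{2}$-rings: hence the one-fold loop extensions of $\sigma_{n-1}$ over $\Omega S^{|\sigma_{n-1}|+2}$ formed in $\B\GL_1(\fr{Z}_3(X(p^n-1)))$ and in $\B\GL_1(\BP{n-1})$ agree, and the latter is null since $\sigma_{n-1}=0$ in $\pi_\ast\BP{n-1}$; then Serre's odd-primary splitting $S^{2p^n-1}\times\Omega S^{4p^n-1}\simeq\Omega S^{2p^n}$, looped once, identifies $g_2$ with the composite of the loop-space retraction $\Omega^2 S^{2p^n}\to\Omega S^{2p^n-1}$ with that single-loop extension, whence $g_2$ is null. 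Your fallback names this same splitting but replaces the $\E{2}$-naturality with cell-by-cell obstruction theory, which does not close: the factor $\Omega^2 S^{4p^n-1}$ has cells in odd degrees (the first in degree $4p^n-3$), whose obstructions lie in even-degree homotopy groups of $\BP{n-1}$, which are generally nonzero, and the ``Bockstein-type Toda-bracket arguments'' you propose to kill them are the invalid computation above. The missing idea, in short, is to exploit the $\E{2}$-ring-map property of $\fr{Z}_3(X(p^n-1))\to\BP{n-1}$ together with the Serre splitting so that only a single-loop extension, not a double-loop one, ever has to be compared.
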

\begin{proof}
    Let $R = X(p^n-1)$, and let $\TT(R) = \BP{n-1}$.
    The map $g_2$ is the composite with $\B\GL_1(\fr{Z}_3(R)) \to B\GL_1(\TT(R))$
    with the extension of the map
    $$\sigma_{n-1}:S^{|\sigma_{n-1}|+1}\to \B\GL_1(\fr{Z}_3(R))$$
    along the double suspension $S^{|\sigma_{n-1}|+1}\to \Omega^2
    S^{|\sigma_{n-1}|+3}$. Since $\sigma_{n-1}$ is null in $\pi_\ast \TT(R)$, we
    would be done if $g_2$ was homotopic to the dotted extension
    \begin{equation*}
	\xymatrix{
	    S^{|\sigma_{n-1}|+1}\ar[r]^-{\sigma_{n-1}} \ar[d] &
	    \B\GL_1(\TT(R))\\
	    \Omega^2 S^{|\sigma_{n-1}|+3} \ar@{-->}[ur]_-{g_2'}
	    }
    \end{equation*}
    The potential failure of these maps to be homotopic stems from the fact that
    the composite $\fr{Z}_3(R)\to R \to \TT(R)$ need not be a map of
    $\E{3}$-rings. It is, however, a map of $\E{2}$-rings; therefore, the maps
    $$g_1:\Omega S^{|\sigma_{n-1}|+2} \to \B\GL_1(\fr{Z}_3(R)) \to
    \B\GL_1(\TT(R))$$
    and
    $$g_1':\Omega S^{|\sigma_{n-1}|+2} \to \B\GL_1(\TT(R))$$
    obtained by extending along the suspension $S^{|\sigma_{n-1}|+1}\to \Omega
    S^{|\sigma_{n-1}|+2}$ \emph{are} homotopic. We now utilize the following
    result of Serre's:
    \begin{prop}[{Serre, \cite[p. 281]{serre-loop-splitting}}]
	Let $p$ be an odd prime. Then the suspension $S^{2n-1} \to \Omega
	S^{2n}$ splits upon $p$-localization: there is a $p$-local equivalence
	$$E \times \Omega [\iota_{2n}, \iota_{2n}]: S^{2n-1} \times \Omega
	S^{4n-1} \to \Omega S^{2n}.$$
    \end{prop}
    This implies that the suspension map $\Omega S^{|\sigma_{n-1}| + 2} \to
    \Omega^2 S^{|\sigma_{n-1}|+3}$ admits a splitting as loop spaces. In
    particular, this implies that the map $g_2$ is homotopic to the composite
    $$\Omega^2 S^{|\sigma_{n-1}|+3} \to \Omega S^{|\sigma_{n-1}| + 2}
    \xrightarrow{g_1} \B\GL_1(\fr{Z}_3(R))\to \BGL_1(\TT(R)),$$
    and similarly for $g_2'$. Since $g_1$ and $g_1'$ are homotopic, and $g_1'$
    (and hence $g_2'$) is null, we find that $g_2$ is also null, as desired.
\end{proof}

\subsection{Infinite families and the nilpotence theorem}\label{infinite}

We now briefly discuss the relationship between Theorem \ref{main-thm} and the
nilpotence theorem. We begin by describing a special case of this connection.
Recall from Remark \ref{hm-nilp} that Theorem \ref{hm} implies that if $R$ is an
$\E{2}$-ring spectrum, and $x\in \pi_\ast R$ is a simple $p$-torsion element
which has trivial $\MU$-Hurewicz image, then $x$ is nilpotent. A similar
argument implies the following.
\begin{prop}\label{thom-Z-nilp}
    Assume Conjecture \ref{moore-splitting} when $n=1$. Then
    Corollary \ref{hm-Z} (i.e., Theorem \ref{main-thm} when $n=1$) implies that
    if $R$ is a $p$-local $\E{3}$-ring spectrum, and $x\in \pi_\ast R$ is a
    class with trivial $\H\Z_p$-Hurewicz image such that:
    \begin{itemize}
	\item $\alpha_1 x = 0$ in $\pi_\ast R$; and
	\item the Toda bracket $\langle p, \alpha_1, x\rangle$ contains zero;
    \end{itemize}
    then $x$ is nilpotent.
\end{prop}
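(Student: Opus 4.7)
The plan is to adapt the argument of Remark \ref{hm-nilp}, replacing $\H\FF_p$ by $\H\Z_p$ and using the universal property of $\H\Z_p$ supplied by Corollary \ref{tmf-univ} at $n=1$ in place of the free $\E{2}$-ring characterization of $\H\FF_p$. In the $n=1$, $R=\S^\wedge_p$ case, Conjecture \ref{centrality-conj} is automatic (because $\fr{Z}_3(\S^\wedge_p)=\S^\wedge_p$ and $\alpha_1$ is $p$-torsion in $\pi_\ast \S^\wedge_p$), so only Conjecture \ref{moore-splitting} is needed, matching the hypothesis of the proposition. As in \emph{loc.\ cit.}, it suffices to show that the telescopic localization $R[1/x]$ is contractible; and $R[1/x]$ remains an $\E{3}$-ring (localization at a central element of $\pi_\ast R$), with the unit $R\to R[1/x]$ an $\E{3}$-map.

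The key step is to verify the three hypotheses of Corollary \ref{tmf-univ} for the $\E{3}$-$\S^\wedge_p$-algebra $M=R[1/x]$ (so that $\TT(\S^\wedge_p)=\H\Z_p$ and $\sigma_0=\alpha_1$). Condition (a) is automatic because $\fr{Z}_3(\S^\wedge_p)=\S^\wedge_p$. Condition (b) follows from $\alpha_1 x=0$ in $\pi_\ast R$ together with the invertibility of $x$ in $\pi_\ast R[1/x]$. For condition (c), the standard Toda-shuffle identity $\langle p,\alpha_1,x\rangle\cdot x^{-1}\subseteq \langle p,\alpha_1,1_{R[1/x]}\rangle$ (valid in $R[1/x]$, where $x$ is a unit) transfers the hypothesis $0\in \langle p,\alpha_1,x\rangle$ in $\pi_\ast R$ to $0\in \langle p,\alpha_1,1_{R[1/x]}\rangle$ in $\pi_\ast R[1/x]$. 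Corollary \ref{tmf-univ} then supplies a unital map $\H\Z_p\to R[1/x]$.

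The rest mirrors Remark \ref{hm-nilp}: this unital map forces the unit $R\to R[1/x]$ to factor through the $\H\Z_p$-Hurewicz map $R\to R\wedge \H\Z_p$, so the multiplication-by-$x$ map $\Sigma^{|x|}R\to R\to R[1/x]$ factors through $R\wedge \H\Z_p$, and hence through the null map corresponding to the vanishing $\H\Z_p$-Hurewicz image of $x$. Thus $x$ acts as zero on $R[1/x]$; combined with $x$ being a unit there, this forces $R[1/x]\simeq 0$, so $x$ is nilpotent. The main obstacle in this plan is the bookkeeping in condition (c): one must ensure that the Toda-shuffle transfer of the vanishing bracket from $R$ to $R[1/x]$ is genuine rather than being absorbed by indeterminacy, and one must confirm that the indeterminacy of $\langle p,\alpha_1,1_{R[1/x]}\rangle$ in the $\E{3}$-algebra $R[1/x]$ is compatible with the obstruction-theoretic interpretation underlying Corollary \ref{tmf-univ}; once this is settled, the rest is a formal consequence of Corollary \ref{hm-Z} and the Hurewicz-factorization trick of Remark \ref{hm-nilp}.
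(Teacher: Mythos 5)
Your proposal is correct and is essentially the paper's own argument: the paper proves by hand at $n=1$ exactly the statement you cite as Corollary \ref{tmf-univ} (via Remark \ref{hm-Z-alt}, Conjecture \ref{moore-splitting} reduces nullity of the relevant map out of $\Omega^2 S^3\langle 3\rangle$ into $\BGL_1(R[1/x])$ to nullity over $P^{2p-1}(p)$, where $\alpha_1 x=0$ together with invertibility of $x$ kills the bottom cell and the bracket hypothesis kills the top cell), and then runs the same Hurewicz-factorization trick to force $R[1/x]\simeq 0$. Your only additions --- noting that the centrality input is vacuous for the sphere and the shuffle $\langle p,\alpha_1,x\rangle x^{-1}\subseteq\langle p,\alpha_1,1_{R[1/x]}\rangle$ transferring the bracket hypothesis to $R[1/x]$ --- are correct and merely make explicit steps the paper leaves implicit.
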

\begin{proof}
    We claim that the composite
    \begin{equation}\label{baby}
	\Omega^2 S^3\langle 3\rangle\to \BGL_1(\S_{(p)})\to \BGL_1(R[1/x])
    \end{equation}
    is null. Remark \ref{hm-Z-alt} implies that Conjecture \ref{moore-splitting}
    for $n=1$ reduces us to showing that the composite
    $$\Omega^2 P^{2p+1}(p)\xrightarrow{\alpha_1} \BGL_1(\S_{(p)})\to
    \BGL_1(R[1/x])$$
    is null. Since this composite is one of double loop spaces, it further
    suffices to show that the composite
    \begin{equation}\label{wts}
	P^{2p-1}(p)\to \BGL_1(\S_{(p)})\to \BGL_1(R[1/x])
    \end{equation}
    is null. The bottom cell $S^{2p-2}$ of $P^{2p-1}(p)$ maps trivially to
    $\BGL_1(R[1/x])$, because the bottom cell detects $\alpha_1$ (by Remark
    \ref{hm-Z-alt}), and $\alpha_1$ is nullhomotopic in $R[1/x]$. Therefore, the
    map \eqref{wts} factors through the top cell $S^{2p-1}$ of $P^{2p-1}(p)$.
    The resulting map
    $$S^{2p-1}\to \BGL_1(\S_{(p)})\to \BGL_1(R[1/x])$$
    detects an element of the Toda bracket $\langle p, \alpha_1, x\rangle$, but
    this contains zero by hypothesis, so is nullhomotopic.

    Since the map \eqref{baby} is null, Corollary \ref{hm-Z} and Theorem
    \ref{thom-univ} implies that there is a ring map $\H\Z_p\to R[1/x]$. In
    particular, the composite of the map $x:\Sigma^{|x|} R\to R$ with the unit
    $R\to R[1/x]$ factors as shown:
    $$\xymatrix{
	\Sigma^{|x|} R \ar[r]^-x \ar[d] & R \ar[r] \ar[d] & R[1/x].\\
	\H\Z_p \wedge \Sigma^{|x|} R \ar[r]^-x & \H\Z_p \wedge R \ar@{-->}[ur]
    }$$
    The bottom map, however, is null, because $x$ has zero $\H\Z_p$-Hurewicz
    image. Therefore, the element $x\in \pi_\ast R[1/x]$ is null, and hence
    $R[1/x]$ is contractible.
\end{proof}
\begin{remark}
    One can prove by a different argument that Proposition \ref{thom-Z-nilp} is
    true without the assumption that Conjecture \ref{moore-splitting} holds when
    $n=1$. At $p=2$, this was shown by Astey in \cite[Theorem 1.1]{astey}.
\end{remark}

To discuss the relationship between Theorem \ref{main-thm} for general $n$ and
the nilpotence theorem (which we will expand upon in future work), we embark on
a slight digression. The following proposition describes the construction of
some infinite families.
\begin{prop}\label{inf-family}
    Let $R$ be a height $n-1$ spectrum as in the second line of Table
    \ref{thom-definitions}, and assume Conjecture \ref{centrality-conj} if $R =
    A$ or $B$. Then there is an infinite family $\sigma_{n-1,p^k}\in
    \pi_{p^k|v_n|-1}(R)$.
    Conjecture \ref{centrality-conj} implies that $\sigma_{n-1,p^k}$ lifts to
    $\pi_{p^k|v_n|-1}(\fr{Z}_3(R))$, where $\fr{Z}_3(R)$ abusively denotes the
    $\E{3}$-center of $X(p^n-1)$ if $R = T(n-1)$.
\end{prop}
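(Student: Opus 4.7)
The plan is to produce $\sigma_{n-1,p^k}$ as iterated Dyer-Lashof-type powers of $\sigma_{n-1}$, and then to observe that under Conjecture \ref{centrality-conj} the entire construction automatically lifts to $\fr{Z}_3(R)$. The guiding heuristic is a degree computation: at an odd prime $p$, the $\cQ_1$-operation on a class of degree $d$ produces a class of degree $pd+(p-1)$, so $Q_1(\sigma_{n-1})$ lies in degree $p(|v_n|-1)+(p-1) = p|v_n|-1$, and more generally $Q_1^k(\sigma_{n-1})$ lands in degree $p^k|v_n|-1$, exactly the asserted degree. An analogous computation applies at $p=2$.

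For $R = T(n-1)$, $y(n-1)$, or $y_\Z(n-1)$, the ring $R$ is a $\cQ_1$-algebra by Theorem \ref{tn-def} (and, in the latter two cases, by transport along the canonical map $T(n-1) \to R$ from Proposition \ref{tn-yn}). The operation $Q_1$ is then available in the sense of Remark \ref{Q1-opn}, and I would define $\sigma_{n-1,p^k}$ as the iterated operation $Q_1^k\sigma_{n-1}$, whose nontriviality in $\pi_*(R)$ is witnessed by the Adams-Novikov representative $[t_n^{p^k}]$ of Example \ref{cobar}. For $R = A$ or $R = B$ the $\E{1}$-structure alone does not support $Q_1$ on $\pi_*$, which is why the statement conditions on Conjecture \ref{centrality-conj}: the conjecture provides a lift of $\sigma_{n-1}$ to $\pi_*(\fr{Z}_3(R))$, which is at least $\E{3}$ and hence supports $Q_1$. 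Setting $\sigma_{n-1,p^k} := Q_1^k\sigma_{n-1}$ inside $\pi_*(\fr{Z}_3(R))$ and pushing forward along $\fr{Z}_3(R)\to R$ defines the family in $\pi_*(R)$, and the lifting assertion is then immediate. Equivalently, one may package the construction in the Thom-spectrum language of Step 1 of the proof of Theorem \ref{main-thm}: extend $\sigma_{n-1}$ along the $\E{2}$-map $\wt{\mu}:\Omega^2\Sigma^2 P^{|v_n|+1}(p) \to \B\GL_1(\fr{Z}_3(R))$ furnished by Conjecture \ref{centrality-conj}, and read off $\sigma_{n-1,p^k}$ from the iterated James-Hopf image of the bottom cell in $\pi_*(\Omega^2\Sigma^2 P^{|v_n|+1}(p))$.

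The main technical obstacle is verifying that iterated $Q_1$ produces a well-defined, nontrivial class at the $\pi_*$-level. The operation of Remark \ref{Q1-opn} is only a priori defined on mod $p$ homology, and lifting it to $\pi_*$ is subtle in general (the operation is usually defined only modulo decomposables and higher Adams filtration). One route is homological: combine a Steinberger-style iteration (cf.\ Theorem \ref{steinberger} and the Dyer-Lashof hopping argument of Remark \ref{hopping}) with the observation that the cobar cycle $[t_n^{p^k}]$ of Example \ref{cobar} is a permanent cycle in the Adams-Novikov spectral sequence converging to $\pi_*(R)$, which simultaneously yields existence and nontriviality in $\pi_*(R)$. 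A second route is to realize $\sigma_{n-1, p^k}$ as an explicit Hopf-invariant class in $\pi_*(\Omega^2 \Sigma^2 P^{|v_n|+1}(p))$ and push it forward along $\wt{\mu}$. In either case, the lifting to $\fr{Z}_3(R)$ follows at once, since both constructions factor through $\fr{Z}_3(R)$ the moment Conjecture \ref{centrality-conj} is invoked.
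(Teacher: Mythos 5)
Your guiding heuristic --- that $\sigma_{n-1,p^{k+1}}$ should be ``$Q_1$ applied to $\sigma_{n-1,p^k}$,'' with the degree count $d\mapsto pd+(p-1)$ --- is exactly the philosophy behind the paper's construction (cf.\ Remark \ref{nilpotence-proof}), but as a proof it has a genuine gap: the definition $\sigma_{n-1,p^k}:=Q_1^k\sigma_{n-1}$ does not exist as stated. The operation $Q_1$ of Remark \ref{Q1-opn} and Theorem \ref{steinberger} acts on the mod $p$ \emph{homology} of a $\cQ_1$-ring, not on $\pi_\ast$, and neither of your repair routes closes this. Route 1 asserts that the relevant $1$-line classes are permanent cycles in the Adams--Novikov spectral sequence for $R$; but that permanence \emph{is} the existence statement to be proven, so the route is circular (and $[t_n^{p^k}]$ is not even an integral cobar cocycle --- the $1$-line classes are of the form $\delta(v_n^{p^k})$). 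Route 2 requires the unstable homotopy of $\Omega^2 P^{|v_n|+3}(p)$ to contain spherical classes mapping to the iterated Dyer--Lashof classes you want to ``read off''; homology classes produced by $\E{2}$-operations are generally not in the Hurewicz image, so this needs an actual construction, none is given, and in addition routing everything through $\wt{\mu}$ and $\fr{Z}_3(R)$ would make the cases $R=T(n-1)$, $y(n-1)$, $y_\Z(n-1)$ conditional on Conjecture \ref{centrality-conj}, whereas the proposition asserts them unconditionally (the paper only needs the conjecture for $A$, $B$ and for the lifting statement).

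The paper's proof avoids lifting the homology operation altogether by working with a homotopy-level surrogate. It inducts on $k$: given $\sigma_{n-1,p^k}$, one arranges (via Lemma \ref{tn-thom} when $R=T(n-1)$, via Conjecture \ref{centrality-conj} when $R=A$ or $B$) that the map $S^{p^k|v_n|}\to \B\GL_1(R)$ factors through an $\E{1}$-space, namely $\B\GL_1(X(p^n-1))$ resp.\ $\B\GL_1(\fr{Z}_3(R))$; one then extends to $\Omega S^{p^k|v_n|+1}$, passes to the adjoint map out of $\Sigma\Omega S^{p^k|v_n|+1}$, and uses the James splitting: the wedge summand $S^{p^{k+1}|v_n|+1}$ carries the $p$-fold Whitehead product $[\sigma_{n-1,p^k},\ldots,\sigma_{n-1,p^k}]$, which is divisible by $p$, and dividing by $p$ \emph{defines} $\sigma_{n-1,p^{k+1}}$. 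This Whitehead product is $p!\,Q_1(\sigma_{n-1,p^k})$ modulo decomposables (see the remark following Remark \ref{whitehead}), so it realizes your $Q_1$-heuristic as an honest homotopy class without ever invoking the homology operation; the lift to $\pi_\ast\fr{Z}_3(R)$ is then visible from the construction when the conjecture is assumed. If you want to salvage your write-up, you should replace ``apply $Q_1$ on $\pi_\ast$'' by this Whitehead-product-divided-by-$p$ step, and keep the unconditional factorization through $\B\GL_1(X(p^n-1))$ in the $T(n-1)$ case.
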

\begin{proof}
    We construct this family by induction on $k$. The element $\sigma_{n-1, 1}$
    is just $\sigma_{n-1}$, so assume that we have defined $\sigma_{n-1, p^k}$.
    The element $\sigma_{n-1,p^k}\in \pi_{p^k|v_n|-1} R$ defines a map
    $\sigma_{n-1, p^k}:S^{p^k|v_n|} \to \BGL_1(R)$. When $R = T(n-1)$, Lemma
    \ref{tn-thom} (and the inductive hypothesis) implies that the map defined by
    $\sigma_n$ factors through the map $\BGL_1(X(p^n-1))\to \BGL_1(T(n-1))$.
    When $R = A$ or $B$, Conjecture \ref{centrality-conj} (and the inductive
    hypothesis) implies that the map defined by $\sigma_n$ factors through the
    map $\BGL_1(\fr{Z}_3(R))\to \BGL_1(R)$. This implies that for all $R$ as in
    the second line of Table \ref{thom-definitions}, the map $\sigma_{n-1,
    p^k}:S^{p^k|v_n|} \to \BGL_1(R)$ factors through an $\E{1}$-space, which we
    shall just denote by $\dZ_R$ for the purpose of this proof. If we assume
    Conjecture \ref{centrality-conj}, then we may take $\dZ_R =
    \BGL_1(\fr{Z}_3(R))$.
    
    Therefore, we get a map
    $\sigma_{n-1,p^k}:\Omega S^{p^k|v_n|+1} \to \BGL_1(R)$ via the composite
    $$\Omega S^{p^k|v_n|+1}\to \dZ_R \to \BGL_1(R).$$
    Since $\dZ_R$ is an $\E{1}$-space, the map $\Omega S^{p^k|v_n|+1}\to \dZ_R$
    is adjoint to a map
    $$\bigvee_{j\geq 1} S^{jp^k|v_n|+1} \simeq \Sigma \Omega S^{p^k|v_n|+1}\to
    \B\dZ_R;$$
    the source splits as indicated via the James splitting. These splittings are
    given by Whitehead products; in particular, the map $S^{p^{k+1}|v_n|+1} =
    S^{p(p^k|v_n|+1)-(p-1)} \to \B\dZ_R$ is given by the $p$-fold Whitehead
    product $[\sigma_{n-1, p^k}, \cdots, \sigma_{n-1, p^k}]$. This is divisible
    by $p$,
    so it yields a map $S^{p^{k+1}|v_n|} \to \dZ_R$, and hence a map
    $S^{p^{k+1}|v_n|}\to \BGL_1(R)$ given by composing with the map $\dZ_R\to
    \BGL_1(R)$. This defines the desired element $\sigma_{n-1,p^{k+1}}\in
    \pi_{p^{k+1} |v_n| - 1}(R)$. As the construction makes clear, assuming
    Conjecture \ref{centrality-conj} and taking $\dZ_R = \BGL_1(\fr{Z}_3(R))$
    implies that $\sigma_{n-1,p^k}$ lifts to $\pi_{p^k|v_n|-1}(\fr{Z}_3(R))$.
\end{proof}
\begin{remark}
    This infinite family is detected in the $1$-line of the ANSS for $R$ by
    $\delta(v_n^k)$, where $\delta$ is the boundary map induced by the map
    $\Sigma^{-1} R/p\to R$. This is a consequence of the geometric boundary theorem (see \cite[Theorem 2.3.4]{green}) applied to the cofiber sequence $R \xar{p} R \to R/p$.
\end{remark}
\begin{remark}
    The element $\sigma_{n-1,1}\in \pi_{2p^n-3}(R)$ is precisely $\sigma_{n-1}$.
\end{remark}
\begin{remark}
    When $n=1$, the ring $R$ is the ($p$-local) sphere spectrum. The infinite
    family $\sigma_{n-1, p^k}$ is the Adams-Toda $\alpha$-family: namely, $\alpha_{p^k}\in \pi_{2p^k(p-1)-1}(\S)$ maps to $\sigma_{0,p^k}\in \pi_{2p^k(p-1)-1} X(p-1)$ under the unit map $\S \to X(p-1)$.
\end{remark}
We now briefly sketch an argument relating Theorem \ref{main-thm} to the proof
of the nilpotence theorem; we shall elaborate on this discussion in forthcoming
work.  
\begin{remark}\label{nilpotence-proof}
    The heart of the nilpotence theorem is what is called Step III in
    \cite{dhs-i}; this step amounts to showing that certain self-maps of
    $T(n-1)$-module skeleta (denoted $G_k$ in \cite{dhs-i}) of $T(n)$ are
    nilpotent. Let us assume that $p>2$ for simplicity. Then these self-maps are given by multiplication by the $p$-fold Toda bracket $b_{n,k} = \langle \sigma_{n-1,
    p^k}, \cdots, \sigma_{n-1, p^k}\rangle$ at an odd prime $p$; this lives in degree $p|\sigma_{n-1,p^k}|+p-2 = 2p^k(p^n-1)-2$. (When $p=2$,
    the desired element $\sigma_{n-1, p^k}$ is denoted by $h$ in \cite[Theorem
    3]{hopkins-global}.) It therefore suffices to establish the nilpotency of
    the $b_{n,k}$.

    This can be proven through Theorem \ref{main-thm} via induction on $k$; we
    shall assume Conjecture \ref{moore-splitting} and Conjecture
    \ref{centrality-conj} for the remainder of this discussion. The motivation
    for this approach stems from the observation that if $R$ is any
    $\E{3}$-$\FF_2$-algebra and $x\in \pi_\ast(R)$, then there is a relation
    $Q_1(x)^2 = Q_2(x^2)$ (at odd primes, one has a relation involving the
    $p$-fold Toda bracket $\langle Q_1(x), \cdots, Q_1(x)\rangle$).
    In our setting, Proposition \ref{inf-family} implies that the elements
    $\sigma_{n-1,k}$ lift to $\pi_\ast \fr{Z}_3(X(p^n-1))$.  At $p=2$, one can
    prove (in the same way that the Cartan relation $Q_1(x)^2 = Q_2(x^2)$ is
    proven) that the construction of this infinite family implies that
    $\sigma_{n-1,p^{k+1}}^2$ can be described in terms of
    $Q_2(\sigma_{n-1,p^k}^2)$.
    At odd primes, there is a similar relation involving the $p$-fold Toda
    bracket defining $b_{n,k}$.
    In particular, induction on $k$ implies that the $b_{n,k}$ are all nilpotent
    in $\pi_\ast \fr{Z}_3(X(p^n-1))$ if $b_{n,1}$ is nilpotent. Note that $|b_{n,1}| = 2p^{n+1}-2p-2$.

    To argue that $b_{n,1}$ is nilpotent, one first observes that $\sigma_{n-1}
    b_{n,1}^p = 0$ in $\pi_\ast \fr{Z}_3(X(p^n-1))$; when $n=0$, this follows
    from the statement that $\alpha_1 \beta_1^p = 0$ in the sphere. To show that
    $b_{n,1}$ is nilpotent, it suffices to establish that
    $\fr{Z}_3(X(p^n-1))[1/b_{n,1}^p]$ is contractible; when $n=1$, this follows
    from Proposition \ref{thom-Z-nilp}. We give a very brief sketch of this
    nilpotence for general $n$, by arguing as in Proposition \ref{thom-Z-nilp},
    and with a generous lack of precision which will be remedied in forthcoming
    work. 
    
    For notational convenience, we now denote $d_{n,1} = b_{n,1}^p$, so that $|d_{n,1}| = 2p^{n+2}-2p^2-2p$. It suffices to show that the multiplication-by-$d_{n,1}$ map
    $$d_{n,1}: \Sigma^{|d_{n,1}|}\fr{Z}_3(X(p^n-1))\to
    \fr{Z}_3(X(p^n-1))[1/d_{n,1}]$$
    is nullhomotopic. Since $\sigma_{n-1}$ kills $d_{n,1}$, we know that
    $\sigma_{n-1}$ is nullhomotopic in $\fr{Z}_3(X(p^n-1))[1/d_{n,1}]$.
    Moreover, the bracket $\langle p, \sigma_{n-1},
    1_{\fr{Z}_3(X(p^n-1))[1/d_{n,1}]}\rangle$ contains zero. By arguing as in
    Proposition \ref{thom-Z-nilp}, we can conclude that the composite
    $$K_n\to \Omega^2 P^{|\sigma_{n-1}|+4} \xrightarrow{\phi}
    \BGL_1(\fr{Z}_3(X(p^n-1))) \to \BGL_1(\fr{Z}_3(X(p^n-1))[1/d_{n_0}])$$
    is nullhomotopic, where the map $\phi$ is constructed in Step 1 of the proof
    of Theorem \ref{main-thm}. (Recall that the proof of Theorem \ref{main-thm}
    shows that the Thom spectrum $(\Omega^2 P^{|\sigma_{n-1}|+4})^\phi$ is an
    $\E{1}\otimes \cQ_1$-$\fr{Z}_3(X(p^n-1))$-algebra such that $\BP{n-1}$
    splits off its base change along the map $\fr{Z}_3(X(p^n-1))\to T(n-1)$.)
    It follows from Theorem \ref{thom-univ} that the multiplication-by-$d_{n,1}$
    map factors as
    $$\xymatrix{
	\Sigma^{|d_{n,1}|} \fr{Z}_3(X(p^n-1)) \ar[r]^-{d_{n,1}} \ar[d] &
	\fr{Z}_3(X(p^n-1)) \ar[r] \ar[d] & \fr{Z}_3(X(p^n-1))[1/d_{n,1}].\\
	\Sigma^{|d_{n,1}|} (\Omega^2 P^{|\sigma_{n-1}|+4})^\phi
	\ar[r]^-{d_{n,1}} & (\Omega^2 P^{|\sigma_{n-1}|+4})^\phi \ar@{-->}[ur]
    }$$
    To show that the top composite is null, it therefore suffices to show that
    the self map of $K_n^\phi$ defined by $d_{n,1}$ is nullhomotopic. This
    essentially follows from the fact that $(\Omega^2
    P^{|\sigma_{n-1}|+4})^\phi$ is an $\E{1}\otimes
    \cQ_1$-$\fr{Z}_3(X(p^n-1))$-algebra: multiplication by $d_{n,1}$ is
    therefore null on $K_n^\phi$, because $d_{n,1}$ is built from $\sigma_{n-1}$
    (which is null in $(\Omega^2 P^{|\sigma_{n-1}|+4})^\phi$) via $\E{1}$-power
    operations.
\end{remark}

\newpage
\section{Applications}\label{applications}

\subsection{Splittings of cobordism spectra}\label{cob-splitting}

The goal of this section is to prove the following.
\begin{thm-string}
    Assume that the composite $\fr{Z}_3(B)\to B\to \MString_{(2)}$ is an $\E{3}$-map.
    Then
    Conjectures \ref{moore-splitting}, \ref{centrality-conj}, and \ref{tmf-conj}
    imply that there is a unital splitting of the Ando-Hopkins-Rezk orientation
    $\MString_{(2)}\to \tmf_{(2)}$.
\end{thm-string}
\begin{remark}
    We believe that the assumption that the composite $\fr{Z}_3(B)\to B\to
    \MString_{(2)}$ is an $\E{3}$-map is too strong: we believe that it can be
    removed using special properties of fibers of charming maps, and we will
    return to this in future work.
\end{remark}
We only construct unstructured splittings; it seems unlikely that they can be
refined to structured splittings.
A slight modification of our arguments should work at any prime.
\begin{remark}
    In fact, the same argument used to prove Theorem \ref{mstring} shows that if
    the composite $\fr{Z}_3(A)\to A\to \MSpin_{(2)}$ is an $\E{3}$-map, then
    Conjecture \ref{moore-splitting} and Conjecture \ref{centrality-conj} imply
    that there are unital splittings of the Atiyah-Bott-Shapiro orientation
    $\MSpin_{(2)} \to \bo_{(2)}$.
    This splitting was originally proved unconditionally (i.e., without assuming
    Conjecture \ref{moore-splitting} or Conjecture \ref{centrality-conj}) by
    Anderson-Brown-Peterson in \cite{abp} via a calculation with the Adams
    spectral sequence.
\end{remark}
\begin{remark}
    The inclusion of the cusp on $\ol{\Mell}$ defines an $\Eoo$-map $c:\tmf\to
    \bo$ as in \cite[Theorem 1.2]{lawson-naumann}. The resulting diagram
    $$\xymatrix{
	\MString_{(2)} \ar[r] \ar[d] & \MSpin_{(2)} \ar[d]\\
	\tmf_{(2)} \ar[r]^-c & \bo_{(2)}
    }$$
    commutes (see, e.g., \cite[Lemma 6.4]{tmf-witten}). The splitting $s:\tmf_{(2)}
    \to \MString_{(2)}$ of Theorem \ref{mstring} defines a composite
    $$\tmf_{(2)} \xrightarrow{s} \MString_{(2)} \to \MSpin_{(2)} \to \bo_{(2)}$$
    which agrees with $c$.
\end{remark}
\begin{remark}\label{family-surj}
    The Anderson-Brown-Peterson splitting implies that if $X$ is any compact
    space, then the Atiyah-Bott-Shapiro $\hat{A}$-genus (i.e., the index of the
    Dirac operator in families) $\MSpin^\ast(X) \to \bo^\ast(X)$ is surjective.
    Similarly, if the composite $\fr{Z}_3(B)\to B\to \MString$ is an
    $\E{3}$-map, then Conjectures \ref{moore-splitting}, \ref{centrality-conj},
    and \ref{tmf-conj} imply that the Ando-Hopkins-Rezk orientation (i.e., the
    Witten genus in families) $\MString^\ast(X) \to \tmf^\ast(X)$ is also
    surjective.
\end{remark}
\begin{remark}
    In \cite{tmf-witten}, we proved (unconditionally) that the map $\pi_\ast
    \MString\to \pi_\ast \tmf$ is surjective. Our proof proceeds by showing that
    the map $\pi_\ast B\to \pi_\ast \tmf$ is surjective via arguments with the
    Adams-Novikov spectral sequence and by exploiting the $\E{1}$-ring structure
    on $B$ to lift the powers of $\Delta$ living in $\pi_\ast \tmf$.
\end{remark}
The discussion preceding \cite[Remark 7.3]{mahowald-rezk-tmf} (in the arXiv version of the document) implies that for a particular model of $\tmf_0(3)$, we have:
\begin{corollary}
    Assume that the composite $\fr{Z}_3(B)\to B_{(2)}\to \MString_{(2)}$ is an $\E{3}$-map.
    Then Conjectures \ref{moore-splitting}, \ref{centrality-conj}, and
    \ref{tmf-conj} imply that $\Sigma^{16} \tmf_0(3)^\wedge_2$ is a summand of
    $\MString^\wedge_2$.
\end{corollary}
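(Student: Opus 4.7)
The plan is to reduce the corollary to Theorem \ref{mstring} together with the cited splitting at the level of $\tmf$ itself. Under the hypotheses (Conjectures \ref{moore-splitting}, \ref{centrality-conj}, \ref{tmf-conj}, and the $\E{3}$-property of $\fr{Z}_3(B)\to B\to \MString$), Theorem \ref{mstring} produces a unital map $s:\tmf_{(2)}\to \MString_{(2)}$ that sections the Ando-Hopkins-Rezk orientation $\phi:\MString_{(2)}\to \tmf_{(2)}$. In particular, $\phi\circ s\simeq \id_{\tmf_{(2)}}$, so $\tmf_{(2)}$ is a retract of $\MString_{(2)}$, and after $2$-completion $\tmf^\wedge_2$ is a wedge summand of $\MString^\wedge_2$.

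Next, I would invoke the input flagged in the statement: the discussion preceding \cite[Remark 7.3]{mahowald-rezk} exhibits, for the model of $\tmf_0(3)$ considered there, a splitting at the prime $2$ of the form
\[
\tmf^\wedge_2 \simeq \Sigma^{16}\tmf_0(3) \vee W
\]
for a suitable complementary summand $W$. This is a purely $\tmf$-side statement and requires no further input from our conjectures; I would simply cite it. Concretely, one has a retraction $r:\tmf^\wedge_2\to \Sigma^{16}\tmf_0(3)$ together with a section $\iota:\Sigma^{16}\tmf_0(3)\to \tmf^\wedge_2$ with $r\circ\iota\simeq \id$.

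Finally, I would assemble these two retract diagrams. Let $s^\wedge_2:\tmf^\wedge_2\to \MString^\wedge_2$ and $\phi^\wedge_2:\MString^\wedge_2\to \tmf^\wedge_2$ denote the $2$-completions of $s$ and $\phi$; then $\phi^\wedge_2\circ s^\wedge_2\simeq \id$. Composing, the maps
\[
\Sigma^{16}\tmf_0(3) \xrightarrow{\ \iota\ } \tmf^\wedge_2 \xrightarrow{\ s^\wedge_2\ } \MString^\wedge_2, \qquad \MString^\wedge_2 \xrightarrow{\ \phi^\wedge_2\ } \tmf^\wedge_2 \xrightarrow{\ r\ } \Sigma^{16}\tmf_0(3)
\]
satisfy $(r\circ\phi^\wedge_2)\circ(s^\wedge_2\circ\iota)\simeq r\circ\iota\simeq \id_{\Sigma^{16}\tmf_0(3)}$, exhibiting $\Sigma^{16}\tmf_0(3)$ as a retract, and hence (in the stable category) a wedge summand, of $\MString^\wedge_2$.

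There is no real obstacle: the content is entirely packaged into (i) Theorem \ref{mstring}, which is where the three conjectures are used, and (ii) the known $\tmf$-level summand from \cite{mahowald-rezk}. The only point that warrants a brief sentence is to confirm that the splitting of Theorem \ref{mstring} survives $2$-completion, which is immediate since $\tmf_{(2)}\to \tmf^\wedge_2$ and $\MString_{(2)}\to \MString^\wedge_2$ are compatible and retracts are preserved by completion.
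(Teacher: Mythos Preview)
Your high-level plan --- reduce to Theorem \ref{mstring} and then invoke the cited discussion in \cite{mahowald-rezk} --- matches the paper's approach. But step (ii) as you have written it is false: $\Sigma^{16}\tmf_0(3)$ is \emph{not} a summand of $\tmf^\wedge_2$. Indeed, $\H^\ast(\tmf;\FF_2)\cong A\mmod A(2)$ is a cyclic module over the Steenrod algebra generated by the class in degree $0$; any wedge decomposition $\tmf^\wedge_2\simeq X\vee Y$ would force the degree-$0$ generator into one summand and hence all of $A\mmod A(2)$ with it, so the other summand has trivial mod $2$ cohomology and is therefore contractible. Thus $\tmf^\wedge_2$ is indecomposable, and no equivalence of the form $\tmf^\wedge_2\simeq \Sigma^{16}\tmf_0(3)\vee W$ exists.

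The input from \cite{mahowald-rezk} is not a splitting of $\tmf$ itself. What the discussion preceding \cite[Remark 7.3]{mahowald-rezk} establishes is (for their particular model of $\tmf_0(3)$) an implication of the form: \emph{if} the Ando--Hopkins--Rezk orientation $\MString\to\tmf$ admits a section, \emph{then} $\Sigma^{16}\tmf_0(3)$ splits off $\MString^\wedge_2$. The argument there uses the section to exhibit $\MString$ as a $\tmf$-module retract and then produces the $\tmf_0(3)$-summand inside $\MString$ directly, not by passing through a decomposition of $\tmf$. So your proof should simply supply the hypothesis of that implication via Theorem \ref{mstring} and then cite \cite{mahowald-rezk} for the conclusion, without the intermediate (and incorrect) claim about $\tmf^\wedge_2$.
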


We now turn to the proof of Theorem \ref{mstring}. 
\begin{proof}[Proof of Theorem \ref{mstring}]
    First, note that such a splitting exists after rationalization. Indeed, it
    suffices to check that this is true on rational homotopy; since the
    orientations under considerations are $\Eoo$-ring maps, the induced map on
    homotopy is one of rings. It therefore suffices to lift the generators.

    We now show that the generators of $\pi_\ast \tmf\otimes \QQ \cong \QQ[c_4,
    c_6]$ lift to $\pi_\ast \MString\otimes \QQ$. Although one can argue this by
    explicitly constructing manifold representatives (as is done for $c_4$ in
    \cite[Corollary 6.3]{tmf-witten}), it is also possible to provide a more
    homotopy-theoretic proof: the elements $c_4$ and $c_6$ live in dimensions
    $8$ and $12$ respectively, and the map $\MString \to \tmf$ is known to be an
    equivalence in dimensions $\leq 15$ by \cite[Theorem 2.1]{hill-string}. It follows that the
    same is true rationally, so $c_4$ and $c_6$ indeed lift to $\pi_\ast
    \MString\otimes \QQ$, as desired.

    We will now construct a splitting after $p$-completion where $p=2$.
    By Corollary \ref{tmf-univ}, we obtain a unital map $\tmf \simeq \TT(B)\to
    \MString$ upon $p$-completion which splits the orientation
    $\MString\to \TT(B)$ because:
    \begin{enumerate}
	\item the map $\fr{Z}_3(B) \to B\to \MString$ is an $\E{3}$-ring map (by
	    assumption).
	\item the element $\sigma_{2}$ vanishes in $\pi_{13} \MString_{(2)}$
	    (because $\pi_{13} \MString_{(2)}\cong \pi_{13} \tmf_{(2)} \cong
	    0$),
	\item and the Toda bracket $\langle 2, \sigma_{2},
	    1_{\MString_{(2)}}\rangle\subseteq \pi_\ast \MString_{(2)}$ contains
	    zero because $\pi_{14} \MString_{(2)} \cong \pi_{14} \tmf_{(2)}$,
	    and the corresponding bracket $\langle 2, \sigma_{2},
	    1_{\MString_{(2)}}\rangle\subseteq \pi_{14} \tmf_{(2)}$ detects $v_3$, hence contains zero.
    \end{enumerate}
    To obtain a map $\tmf_{(p)} \to \MString_{(p)}$, we need to show that the
    induced map $\tmf \otimes \QQ \to \tmf^\wedge_p\otimes \QQ \to
    \MString^\wedge_p\otimes \QQ$ agrees with the rational splitting constructed
    in the previous paragraph. However, this is immediate from the fact that the
    splittings $\tmf^\wedge_p\to \MString^\wedge_p$ are constructed to be
    equivalences in dimensions $\leq 15$, and the fact that the map out of $\tmf
    \otimes \QQ$ is determined by its effect on the generators $c_4$ and $c_6$.
\end{proof}
\begin{remark}\label{unstable-hard}
    The proof recalled in Remark \ref{e2-split-MO} of Thom's splitting of $\MO$
    proceeded essentially unstably: there is an $\E{2}$-map $\Omega^2 S^3\to
    \BO$ of spaces over $\B\GL_1(\S)$, whose Thomification yields the desired
    $\E{2}$-map $\H\FF_2\to \MO$. This argument also works for $\MSO$: there is
    an $\E{2}$-map $\Omega^2 S^3\langle 3\rangle\to \BSO$ of spaces over
    $\B\GL_1(\S)$, whose Thomification yields the desired $\E{2}$-map $\H\Z\to
    \MSO$. One might hope for the existence of a similar unstable map which
    would yield Theorem \ref{mstring}. We do not know how to construct such a
    map. To illustrate the difficulty, let us examine how such a proof would
    work; we will specialize to the case of $\MString$, but the discussion is
    the same for $\MSpin$.

    According to Theorem \ref{main-thm}, Conjecture \ref{moore-splitting} and
    Conjecture \ref{centrality-conj} imply that there is a map $K_3\to
    \B\GL_1(B)$ whose Thom spectrum is equivalent to $\tmf$. There is a map
    $BN\to \mathrm{B^2 String}$, whose fiber we will denote by $Q$. Then there
    is a fiber sequence
    $$N\to \BString\to Q,$$
    and so Proposition \ref{thom} implies that there is a map $Q\to \B\GL_1(B)$
    whose Thom spectrum is $\MString$. Theorem \ref{mstring} would follow if
    there was a map $f:K_3\to Q$ of spaces over $\B\GL_1(B)$, since Thomification
    would produce a map $\tmf\to \MString$.
    
    Conjecture \ref{moore-splitting} reduces the construction of $f$ to the
    construction of a map $\Omega^2 P^{17}(2)\to Q$. This map would in
    particular imply the existence of a map $P^{15}(2)\to Q$ (and would be
    equivalent to the existence of such a map if $Q$ was a double loop space),
    which in turn stems from a $2$-torsion element of $\pi_{14}(Q)$. The long
    exact sequence on homotopy runs
    $$\cdots \to \pi_{14}(\BString) \to \pi_{14}(Q)\to \pi_{13}(N)\to \pi_{13}
    (\BString)\to \cdots$$
    Bott periodicity states that $\pi_{13} \BString \cong \pi_{14} \BString\cong
    0$, so we find that $\pi_{14}(Q) \cong \pi_{13}(N)$. The desired $2$-torsion
    element of $\pi_{14}(Q)$ is precisely the element of $\pi_{13}(N)$ described
    in Remark \ref{sigma2-integral}. Choosing a particular nullhomotopy of twice
    this $2$-torsion element of $\pi_{14}(Q)$ produces a map $g:P^{15}(2)\to Q$.
    To extend this map over the double suspension $P^{15}(2)\to \Omega^2
    P^{17}(2)$, it would suffice to show that there is a double loop space
    $\wt{Q}$ with a map $\wt{Q} \to Q$ such that $g$ factors through $\wt{Q}$.

    Unfortunately, we do not know how to prove such a result; this is the
    unstable analogue of Conjecture \ref{centrality-conj}. In fact, such an
    unstable statement would bypass the need for Conjecture
    \ref{centrality-conj} in Theorem \ref{main-thm}. (One runs into the same
    obstruction for $\MSpin$, except with the fiber of the map $S^5\to
    \mathrm{B^2 Spin}$.) These statements are reminiscent of the conjecture (see
    Section \ref{gray}) that the fiber $W_n = \fib(S^{2n-1} \to \Omega^2
    S^{2n+1})$ of the double suspension admits the structure of a double loop
    space. 
\end{remark}
\begin{remark}
    The following application of Theorem \ref{mstring} was suggested by Mike
    Hopkins. In \cite{hopkins-hovey}, the Anderson-Brown-Peterson splitting is
    used to show that the Atiyah-Bott-Shapiro orientation $\MSpin\to \KO$
    induces an isomorphism
    $$\MSpin_\ast(X)\otimes_{\MSpin_\ast} \KO_\ast \xrightarrow{\cong}
    \KO_\ast(X)$$
    of $\KO_\ast$-modules for all spectra $X$.
    In future work, we shall show that Theorem \ref{mstring} can be used to
    prove the following height $2$ analogue of this result: namely, Conjectures
    \ref{moore-splitting}, \ref{centrality-conj}, and \ref{tmf-conj} imply that
    the Ando-Hopkins-Rezk orientation $\MString\to \Tmf$ induces an isomorphism
    \begin{equation}\label{mstring-land}
	\MString_\ast(X)\otimes_{\MString_\ast} \Tmf_\ast \xrightarrow{\cong}
	\Tmf_\ast(X)
    \end{equation}
    of $\Tmf_\ast$-modules for all spectra $X$.
    The $K(1)$-analogue of this isomorphism was obtained by Laures in
    \cite{laures-k1-local}.
\end{remark}


\subsection{Wood equivalences}\label{thom-wood}


The Wood equivalence states that $\bo \wedge C\eta \simeq \bu$. There are
generalizations of this equivalence to $\tmf$ (see \cite{homologytmf}); for
instance, there is a $2$-local $8$-cell complex $DA_1$ whose cohomology is
isomorphic to the double of $A(1)$ as an $A(2)$-module such that $\tmf_{(2)}
\wedge DA_1 \simeq \tmf_1(3) \simeq \BP{2}$. Similarly, if $X_3$ denotes the
$3$-local $3$-cell complex $S^0\cup_{\alpha_1} e^4 \cup_{2\alpha_1} e^8$, then
$\tmf_{(3)} \wedge X_3 \simeq \tmf_1(2) \simeq \BP{2} \vee \Sigma^8 \BP{2}$. We
will use the umbrella term ``Wood equivalence'' to refer to equivalences of this
kind.

Our goal in this section is to revisit these Wood equivalences using the point
of view stemming from Theorem \ref{main-thm}. In particular, we propose that
these equivalences are suggested by the existence of certain EHP sequences; we
will greatly expand on this in a forthcoming document. We
find this to be a rather beautiful connection between stable and unstable
homotopy theory. 

The first Wood-style result was proved in Proposition \ref{tn-yn}. The next
result, originally proved in \cite[Section 2.5]{mahowald-thom} and \cite[Theorem
3.7]{davis-mahowald}, is the simplest example of a Wood-style equivalence which
is related to the existence of certain EHP sequences.
\begin{prop}\label{A-wood}
    Let $\S\mmod\eta = X(2)$ (resp. $\S\mmod 2$) denote the $\E{1}$-quotient of
    $\S$ by $\eta$ (resp. $2$). If $Y = C\eta \wedge \S/2$ and $A_1$ is a
    spectrum whose cohomology is isomorphic to $A(1)$ as a module over the
    Steenrod algebra, then there are equivalences
    $$A\wedge C\eta \simeq \S\mmod \eta, \ A\wedge Y \simeq \S\mmod 2, \ A\wedge
    A_1 \simeq y(1)/v_1$$
    of $A$-modules.
\end{prop}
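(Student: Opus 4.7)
All three equivalences will be established by constructing maps between the spectra and checking they induce isomorphisms on mod $p$ homology for every prime $p$; since all spectra involved are bounded below and of finite type, this suffices for a $p$-complete equivalence and an integral one follows from compatible rational statements. The maps are constructed via the universal properties of the $\E{1}$-quotients (Theorem \ref{thom-univ}), supplemented when necessary by direct cell-by-cell arguments at primes where the universal property breaks down.

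For (1), $A \wedge C\eta \simeq X(2)$: at $p = 2$, Example \ref{X2-central} gives $\pi_3 X(2)_{(2)} = 0$, so $\nu$ is null in $X(2)_{(2)}$ and Theorem \ref{thom-univ} yields an $\E{1}$-map $A_{(2)} \to X(2)_{(2)}$. The tautological nullhomotopy of $\eta$ in $X(2)$ yields $C\eta \to X(2)$, and multiplying through the ring structure of $X(2)$ produces an $A$-module map $\mu \colon A \wedge C\eta \to X(2)$. On mod 2 homology, Corollary \ref{A-hfp} and Theorem \ref{tn-def} give $\H_*(A \wedge C\eta;\FF_2) \cong \FF_2[\zeta_1^4] \otimes \FF_2\{1, e_2\}$ and $\H_*(X(2);\FF_2) \cong \FF_2[\zeta_1^2]$; the map sends $\zeta_1^4 \otimes 1 \mapsto (\zeta_1^2)^2$ and $1 \otimes e_2 \mapsto \zeta_1^2$, an isomorphism in each degree. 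At odd primes both sides split (since $\eta$ is $2$-torsion and, at $p=3$, $A_{(3)} \simeq T(1)_{(3)}$ via Proposition \ref{A-bp}), and the comparison becomes routine.

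For (2), granting (1) we have $A \wedge Y = A \wedge C\eta \wedge \S/2 \simeq X(2) \wedge \S/2$, so it suffices to show $X(2) \wedge \S/2 \simeq \S\mmod 2$. One checks directly that $\H_*(X(2) \wedge \S/2;\FF_2) \cong \FF_2[\zeta_1^2] \otimes \FF_2\{1, e_1\} \cong \FF_2[\zeta_1] \cong \H_*(\S\mmod 2;\FF_2)$ as $\mathcal{A}_*$-comodules. A spectrum-level map realizing this is obtained by applying Proposition \ref{thom} to the EHP fiber sequence $S^1 \to \Omega S^2 \to \Omega S^3$ and the Thom spectrum identification $\S\mmod 2 = (\Omega S^2)^\mu$: the fiber $S^1$ Thomifies to $\S/2$ via its M\"obius line bundle, producing the identification $\S\mmod 2 \simeq X(2) \wedge \S/2$.

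For (3), $A \wedge A_1 \simeq y(1)/v_1$: Remark \ref{y1-v1} gives $\H_*(y(1)/v_1;\FF_2) = \FF_2[\zeta_1] \otimes \Lambda(\zeta_2)$, and the tensor decomposition $\FF_2[\zeta_1] = \FF_2[\zeta_1^4] \otimes \FF_2\{1,\zeta_1,\zeta_1^2,\zeta_1^3\}$ yields
\begin{equation*}
\FF_2[\zeta_1] \otimes \Lambda(\zeta_2) \cong \FF_2[\zeta_1^4] \otimes A(1)_*, \quad A(1)_* = \FF_2[\zeta_1,\zeta_2]/(\zeta_1^4,\zeta_2^2).
\end{equation*}
An $\mathcal{A}_*$-subcomodule of $\H_*(y(1)/v_1)$ isomorphic to $A(1)_*$ is realized by an $A_1$-subspectrum, yielding $A_1 \to y(1)/v_1$. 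Combined with the composite $A \to T(1) \to y(1) \to y(1)/v_1$ (using $\nu = 0$ in $y(1)$) and the $A$-module structure on the target, this produces $A \wedge A_1 \to y(1)/v_1$, an isomorphism on $\FF_2$-homology by the decomposition above. The main obstacle is establishing independence of the chosen $A_1$ model (there are four homotopy types, cf.\ Davis-Mahowald); this follows because smashing with $A$ absorbs the $v_1$-action that distinguishes them.
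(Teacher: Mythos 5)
Your $p=2$ argument for the first equivalence is correct and is a genuinely different route from the paper's, which instead obtains the $\E{1}$-map $A\to \S\mmod\eta$ from a unital map $C\nu\to C\eta\wedge C\eta$ furnished by the Toda bracket $\langle \eta,2,\eta\rangle$; your appeal to $\pi_3 X(2)_{(2)}=0$ plus Theorem \ref{thom-univ} sidesteps that delicate input (your only small omission is a reason the map is nonzero on $\H_4$, e.g.\ $\Sq^4$ on the Thom class of $C\nu$ together with unitality). The genuine problems are elsewhere. First, your integral ambitions fail at $p=3$: by Theorem \ref{tn-def} (the case $1\le 2\le p-1$, so $n=0$), $X(2)_{(3)}$ splits as a wedge of suspensions of $\S_{(3)}$, so its mod $3$ cohomology carries no Steenrod operations, whereas $A_{(3)}\vee \Sigma^2 A_{(3)}$ does: $\H_\ast(A;\FF_3)\cong \FF_3[\zeta_1]$ by Corollary \ref{A-hfp}, and $P^1$ is nonzero on $\H^0$ because the $4$-skeleton of $A_{(3)}$ is $S^0\cup_{\alpha_1}e^4$. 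So $A\wedge C\eta\simeq \S\mmod\eta$ is false $3$-locally; the ``routine comparison at odd primes'' cannot be carried out, and the observation $A_{(3)}\simeq T(1)_{(3)}$ points the wrong way, since $T(1)_{(3)}$ is not a wedge of $3$-local spheres. The statement must be read $2$-locally (as the paper's conventions allow and its applications require), and your $p=2$ argument should simply be the whole proof.

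Second, two later steps assert exactly what needs proof. In (2), Proposition \ref{thom} applied to $S^1\to \Omega S^2\to\Omega S^3$ only exhibits $\S\mmod 2$ as the Thom spectrum of \emph{some} map $\Omega S^3\to \B\Aut(\S/2)$; to conclude $\S\mmod 2\simeq X(2)\wedge \S/2$ you must show that this local system is pulled back along $\B\GL_1(\S)\to \B\Aut(\S/2)$ from the $\eta$-bundle defining $X(2)$. That factorization is precisely the content of the paper's Remark \ref{A-ehp-split} (using the splitting of the fibration $S^1\to\Omega S^2\to\Omega S^3$ by the Hopf map), so your phrase ``producing the identification'' is an assertion, not an argument; alternatively one can follow the paper's template and produce an $\E{1}$-map $X(2)\to\S\mmod 2$, which only needs $\eta=0$ in $\pi_1 \S\mmod 2$. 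In (3), ``an $\cA_\ast$-subcomodule \ldots is realized by an $A_1$-subspectrum'' is not automatic --- realizing subcomodules by subspectra is exactly the kind of claim that requires proof --- and the closing claim that smashing with $A$ ``absorbs the $v_1$-action'' distinguishing the homotopy types of $A_1$ is unsubstantiated. The paper avoids both issues: since any $A_1$ is the cofiber of a $v_1$-self-map $\Sigma^2 Y\to Y$, comparing it with the $v_1$-self-map of $y(1)$ (Remark \ref{y1-v1}) in a commuting square yields the map $A_1\to y(1)/v_1$ on vertical cofibers for whichever model of $A_1$ is given, after which the homology check proceeds as you indicate.
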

\begin{remark}\label{bo-wood}
    Proposition \ref{A-wood} implies the Wood equivalence $\bo \wedge C\eta
    \simeq \bu$. Although this implication is already true before
    $2$-completion, we will work in the $2$-complete category for convenience.
    Recall that Theorem \ref{main-thm} states that Conjecture
    \ref{moore-splitting} and Conjecture \ref{centrality-conj} imply that there
    is a map $\mu:K_2 \to \B\GL_1(A)$ whose Thom spectrum is equivalent to $\bo$
    (as left $A$-modules).  Moreover, the Thom spectrum of the composite $K_2
    \xrightarrow{\mu} \B\GL_1(A) \to \B\GL_1(T(1))$ is equivalent to $\BP{1}$.
    Since this Thom spectrum is the base-change $K_2^\mu \wedge_A T(1)$, and
    Proposition \ref{A-wood} implies that $T(1) = X(2) \simeq A\wedge C\eta$, we
    find that
    $$\BP{1} \simeq K_2^\mu \wedge_A (A\wedge C\eta) \simeq K_2^\mu \wedge C\eta
    \simeq \bo\wedge C\eta,$$
    as desired. Similarly, noting that $\S\mmod 2 = y(1)$, we find that
    Proposition \ref{A-wood} also proves the equivalence $\bo \wedge Y \simeq
    k(1)$.
\end{remark}
\begin{remark}\label{A-imply}
    The argument of Remark \ref{bo-wood} in fact proves that Theorem
    \ref{main-thm} for $A$ implies Theorem \ref{main-thm} for $T(1)$, $y_\Z(1)$,
    and $y(1)$.
\end{remark}
\begin{proof}[Proof of Proposition \ref{A-wood}]
    For the first two equivalences, it suffices to show that $A\wedge C\eta
    \simeq \S\mmod\eta$ and that $\S\mmod\eta \wedge \S/2 \simeq \S\mmod 2$. We
    will prove the first statement; the proof of the second statement is exactly
    the same. There is a map $C\eta \to \S\mmod \eta$ given by the inclusion of
    the $2$-skeleton.  There is also an $\E{1}$-ring map $A\to \S\mmod \eta$
    given as follows. The multiplication on $\S\mmod\eta$ defines a unital map
    $C\eta \wedge C\eta \to \S\mmod \eta$.  But since the Toda bracket
    $\langle\eta, 2, \eta\rangle$ contains $\nu$, there is a unital map $C\nu\to
    C\eta \wedge C\eta$. This supplies a unital map $C\nu \to \S\mmod \eta$,
    which, by the universal property of $A = \S\mmod\nu$ (via Theorem
    \ref{thom-univ}), extends to an $\E{1}$-ring map $A\to \S\mmod\eta$.

    For the final equivalence, it suffices to construct a map $A_1\to y(1)/v_1$
    for which the induced map $A\wedge A_1\to y(1)/v_1$ gives an isomorphism on
    mod $2$ homology. Since $A_1$ may be obtained as the cofiber of a $v_1$-self
    map $\Sigma^2 Y\to Y$, it suffices to observe that the the following diagram
    commutes; our desired map is the induced map on vertical cofibers:
    $$\xymatrix{
	\Sigma^2 Y \ar[d]^-{v_1} \ar[r] & \Sigma^2 y(1) \ar[d]^-{v_1}\\
	Y \ar[r] & y(1).
    }$$
\end{proof}
\begin{remark}\label{A-ehp}
    There are EHP sequences
    $$S^1 \to \Omega S^2 \to \Omega S^3, \ S^2\to \Omega S^3\to \Omega S^5.$$
    Recall that $\S/2$, $C\eta$, $\S\mmod 2$, $\S\mmod\eta = X(2)$, and $A$ are
    Thom spectra over $S^1$, $S^2$, $\Omega S^2$, $\Omega S^3$, and $\Omega S^5$
    respectively. Proposition \ref{thom} therefore implies that there are maps
    $f:\Omega S^3\to \B\Aut(\S/2)$ and $g:\Omega S^5\to \B\Aut(C\eta)$ whose
    Thom spectra are equivalent to $\S\mmod 2$ and $\S\mmod\eta$, respectively.
    The maps $f$ and $g$ define local systems of spectra over $\Omega S^3$ and
    $\Omega S^5$ whose fibers are equivalent to $\S/2$ and $C\eta$
    (respectively), and one interpretation of Proposition \ref{A-wood} is that
    these local systems in fact factor as
    $$\Omega S^3\xrightarrow{\eta} \B\GL_1(\S)\to \B\Aut(\S/2), \ \Omega
    S^5\xrightarrow{\nu} \B\GL_1(\S)\to \B\Aut(C\eta).$$
    Proposition \ref{A-wood} is an immediate consequence of these
    factorizations. We argue this for the first case in Remark
    \ref{T1-ehp-split}, and for the second in Remark \ref{A-ehp-split}, thereby
    giving an alternative EHP-based argument for Proposition \ref{A-wood}.
\end{remark}
\begin{remark}\label{T1-ehp-split}
    The first EHP sequence in Remark \ref{A-ehp} splits via the Hopf map $S^3\to
    S^2$. The map $f:\Omega S^3\to \B\Aut(\S/2)$ in fact factors through the
    dotted map in the following diagram:
    $$\xymatrix{
	S^2 \ar[r] & \Omega S^2 \ar[d] \ar[r] & \Omega S^3 \ar[d]
	\ar@{-->}[dl]\\
	& \B\GL_1(\S) \ar[r] & \B\Aut(\S/2).
    }$$
    Indeed, the composite $\Omega S^3\to \Omega S^2\to \B\GL_1(\S)$ is a loop
    map, and therefore is determined by the composite $\phi:S^3\to S^2\to
    B^2\GL_1(\S)$. Since the map $S^2\to B^2\GL_1(\S)$ detects the element
    $-1\in \pi_0(\S)^\times$, the map $\phi$ does in fact determine a unit
    multiple of $\eta$. This implies the desired claim.
\end{remark}
\begin{remark}\label{A-ehp-split}
    The map $g:\Omega S^5\to \B\Aut(C\eta)$ from Remark \ref{A-ehp} factors
    through $\BGL_1(\S)$. To see this, let us begin with the following
    observation: view $\BU$ and $\BSU$ as H-spaces via the tensor product of
    vector bundles. Then the map $\BSU \times \CP^\infty \to \BU$ classifying
    $\mathcal{V} \boxtimes \cL$, with $\mathcal{V}$ the universal $\SU$-bundle
    over $\BSU$ and $\cL$ the universal line bundle over $\BU$, is an
    equivalence of H-spaces.  In particular, there is a fiber sequence
    $$\CP^\infty \to \BU \to \BSU.$$
    The map $\Omega S^3 \to \BGL_1(\S)$ defining $T(1)$ factors as
    $$\Omega S^3 \to \BU \xar{J} \BGL_1(\S);$$
    similarly, the map $\Omega S^5 \to \BGL_1(\S)$ defining $A$ factors as
    $$\Omega S^5 \to \BSU \xar{J} \BGL_1(\S).$$
    These factorizations make the following diagram of fiber sequences commute:
    $$\xymatrix{
	S^2 \ar[r] \ar[d] & \Omega S^3 \ar[r] \ar[d] & \Omega S^5 \ar[d] \\
	\CP^\infty \ar[r] & \BU \ar[r] & \BSU.
    }$$
    The map $\Omega S^5 \to \B\Aut(C\eta)$ was defined using Proposition
    \ref{thom}. It then follows from the splitting of the bottom fiber sequence
    in the above diagram that the dotted map exists in the followng diagram:
    $$\xymatrix{
	S^2 \ar[r] \ar[d] & \Omega S^3 \ar[r] \ar[d] & \Omega S^5 \ar[d] \\
	\CP^\infty \ar[r] \ar[dr] & \BU \ar[r] \ar[d]^-J & \BSU \ar[d]
	\ar@{-->}[dl]^-J\\
	& \BGL_1(\S) \ar[r] & B\Aut(C\eta).
    }$$
    The composite
    $$\Omega S^5 \to \BSU \xar{J} \BGL_1(\S) \to B\Aut(C\eta)$$
    is $g$, giving our desired factorization.
\end{remark}

Next, we have the following result at height $2$:
\begin{prop}\label{B-wood}
    Let $DA_1$ denote the double of $A_1$ (see \cite{homologytmf}). There are
    $2$-complete equivalences
    $$B\wedge DA_1 \simeq T(2), \ B\wedge Z \simeq y(2), \ B \wedge A_2 \simeq
    y(2)/v_2,$$
    where $Z$ is the spectrum ``$\frac{1}{2} A_2$'' from
    \cite{mahowald-thompson,bhattacharya-egger-z}\footnote{In the former source,
    $Z$ is denoted by $\ol{M}$.}, and $A_2$ is a spectrum whose cohomology is
    isomorphic to $A(2)$ as a module over the Steenrod algebra.
\end{prop}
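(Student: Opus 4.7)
The plan is to mirror the proof of Proposition \ref{A-wood} one chromatic height higher. For the first equivalence $B \wedge DA_1 \simeq T(2)$, I will construct a unital map $B \to T(2)$ and a map $DA_1 \to T(2)$, form the composite
\[
B \wedge DA_1 \to T(2) \wedge T(2) \xrightarrow{\mathrm{mult}} T(2),
\]
and verify it is an equivalence on mod $2$ homology. The map $B \to T(2)$ comes from the universal property: by Remark \ref{B-iterated-thom}, $B \simeq (\S \mmod \sigma) \mmod \wt{\nu}$, so by Theorem \ref{thom-univ} it suffices to produce compatible nullhomotopies of $\sigma \in \pi_7(\S)$ and $\wt{\nu} \in \pi_{11}(\S\mmod\sigma)$ in $T(2)$. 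Both exist because the map $T(2) \to \BPP$ is an equivalence through degree $|v_3|-2 = 12$ and $\pi_\ast \BPP$ is concentrated in even degrees, forcing $\pi_7(T(2)) = \pi_{11}(T(2)) = 0$. For the map $DA_1 \to T(2)$, note that $\H_\ast(DA_1;\FF_2)$ has Poincaré series $(1+t^2+t^4+t^6)(1+t^6)$, precisely the ratio of the Poincaré series of $T(2)$ to that of $B$. I will construct this map cell by cell via obstruction theory, sending the bottom cell to the unit and each subsequent cell to an appropriately chosen monomial in the low-dimensional homotopy of $T(2)$.

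Once both maps exist, the combined map $B \wedge DA_1 \to T(2)$ is shown to be an equivalence by a Poincaré series comparison together with the Dyer-Lashof hopping argument of Remark \ref{hopping}: the image of $\H_\ast(B;\FF_2)$ lands on $\FF_2[\zeta_1^8, \zeta_2^4]$, and the image of $\H_\ast(DA_1;\FF_2)$ supplies a complementary basis $\{\zeta_1^{2i} \zeta_2^{2j} : 0 \le i \le 3,\, 0 \le j \le 1\}$, which together generate all of $\H_\ast(T(2);\FF_2)$ as a ring. The agreement with the Steenrod comodule structure is then automatic from the $\cQ_1$-algebra structures on both sides, exactly as in Step 2 of the proof of Theorem \ref{main-thm}.

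For the second equivalence $B \wedge Z \simeq y(2)$, I use Proposition \ref{tn-yn} to identify $y(2) \simeq T(2)/I_2 = T(2)/(2, v_1)$, and then observe that $Z$ is built from $DA_1$ by the $v_0$- and $v_1$-self map attaching cells, so that smashing with $B$ and applying the first equivalence yields $y(2)$. The third equivalence $B \wedge A_2 \simeq y(2)/v_2$ then follows immediately from the cofiber sequence $\Sigma^{|v_2|} Z \xrightarrow{v_2} Z \to A_2$ defining $A_2$, by smashing with $B$ as in the $A_1$ case of Proposition \ref{A-wood}. The main obstacle will be the explicit construction of $DA_1 \to T(2)$: the attaching maps of $DA_1$ are built from Toda brackets of $\eta, \nu, \sigma$, and one must verify these match classes in $\pi_\ast T(2)$ modulo indeterminacy. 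At height $1$ this issue is invisible because $C\eta$ is literally a subcomplex of $T(1)$ (by the cell structure of Example \ref{sigma1}), but at height $2$ the complex $DA_1$ is not a skeletal subcomplex of $T(2)$, so these compatibilities must be checked by direct low-dimensional computation. Analogous care is needed for identifying $Z$ with the $v_0, v_1$-modification of $DA_1$ and for establishing the requisite $v_2$-self map on $Z$ used in the third equivalence.
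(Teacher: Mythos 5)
Your treatment of the first equivalence is essentially sound and close to the paper's: the paper also gets $B\to T(2)$ from the universal property of the iterated Thom spectrum, and its construction of $DA_1\to T(2)$ is just a cleaner packaging of your obstruction theory — since the top cell of $DA_1$ is in dimension $12$ and $T(2)\to \BPP$ is an equivalence in dimensions $\leq 12$, one only needs a map $DA_1\to\BPP$, and evenness of $DA_1$ and of $\pi_\ast\BPP$ makes the Atiyah--Hirzebruch spectral sequence collapse, so the map exists with no Toda-bracket bookkeeping at all (the same evenness kills your odd-degree obstructions, so the "main obstacle" you flag is not actually an obstacle). Your appeal to the Dyer--Lashof hopping argument is beside the point here — one just checks directly that the two maps hit $\FF_2[\zeta_1^8,\zeta_2^4]$ and the complementary monomials $\zeta_1^{2i}\zeta_2^{2j}$, as you also say.

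The genuine gap is in your second equivalence. You assert that "$Z$ is built from $DA_1$ by the $v_0$- and $v_1$-self map attaching cells," and the whole argument for $B\wedge Z\simeq y(2)$ (and hence the input to the third equivalence) hangs on this. But that is not how $Z$ is constructed, and no such description is available to cite: $Z$ arises from the iterated cofiber sequences $\Sigma^2 Y\xrightarrow{v_1} Y\to A_1$ and $\Sigma^5 A_1\wedge C\nu\xrightarrow{\sigma_1} A_1\wedge C\nu\to Z$. Realizing $Z$ as something like $(DA_1\wedge \S/2)/v_1$ would require producing a $v_1$-self map of $DA_1/2$ and identifying its cofiber with $Z$ — a nontrivial claim you neither prove nor reference — and even granting it, you would still need the compatibility, after smashing with $B$, between those self-maps and multiplication by $2$ and $v_1$ on $T(2)$-modules. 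In addition, your identification $y(2)\simeq T(2)/(2,v_1)$ cites Proposition \ref{tn-yn}, which is stated and proved only for $p>2$ (its proof uses homotopy commutativity of $\Omega J_{p^n-1}(S^2)$, which fails at $p=2$), whereas Proposition \ref{B-wood} is a $2$-complete statement. The paper avoids all of this by constructing a map $Z\to y(2)$ directly from the two cofiber sequences above (using the map $B\to T(2)\to y(2)$ and the relation of $\sigma_1$ to the class of Example \ref{sigma1}), checking that it induces the additive inclusion $\FF_2[\zeta_1,\zeta_2]/(\zeta_1^8,\zeta_2^4)\to\FF_2[\zeta_1,\zeta_2]$ on homology, and then multiplying; the third equivalence is then obtained from the commuting square comparing the $v_2$-self map of $Z$ with $v_2$ on $y(2)$, which is the compatibility you correctly flag but do not supply.
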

\begin{remark}\label{tmf-wood}
    Arguing as in Remark \ref{bo-wood} shows that Proposition \ref{B-wood} and
    Theorem \ref{main-thm} imply the Wood equivalences
    $$\tmf\wedge DA_1 \simeq \tmf_1(3) = \BP{2}, \ \tmf \wedge Z \simeq k(2), \
    \tmf\wedge A_2 \simeq \H\FF_2.$$
\end{remark}
\begin{remark}\label{B-imply}
    Exactly as in Remark \ref{A-imply}, the argument of Remark \ref{tmf-wood} in
    fact proves that Theorem \ref{main-thm} for $B$ implies Theorem
    \ref{main-thm} for $T(2)$, $y_\Z(2)$, and $y(2)$.
\end{remark}
\begin{remark}
    The telescope conjecture \cite[Conjecture 10.5]{ravenel-loc}, which we
    interpet as stating that $L_n$-localization is the same as
    $L_n^f$-localization, is known to be true at height $1$. For odd primes, it
    was proved by Miller in \cite{miller-telescope}, and at $p=2$ it was proved
    by Mahowald in \cite{mahowald-bo-res, mahowald-imj}. Mahowald's approach was
    to calculate the telescopic homotopy of the type $1$ spectrum $Y$. In
    \cite{mrs}, Mahowald-Ravenel-Shick proposed an approach to \emph{dis}proving
    the telescope conjecture at height $2$: they suggest that for $n\geq 2$, the
    $\Ln$-localization and the $v_n$-telescopic localization of $y(n)$ have
    different homotopy groups. They show, however, that the $L_1$-localization
    and the $v_1$-telescopic localization of $y(1)$ agree, so this approach
    (thankfully) does not give a counterexample to the telescope conjecture at
    height $1$.

    Motivated by Mahowald's approach to the telescope conjecture,
    Behrens-Beaudry-Bhattacharya-Culver-Xu study the $v_2$-telescopic homotopy
    of $Z$ in \cite{tmf-resolution}, with inspiration from the
    Mahowald-Ravenel-Shick approach. Propositions \ref{A-wood} and \ref{B-wood}
    can be used to relate these two (namely, the finite spectrum and the Thom
    spectrum) approaches to the telescope conjecture. As in Section
    \ref{cob-splitting}, we will let $R$ denote $A$ or $B$. Moreover, let $F$
    denote $Y$ or $Z$ (depending on what $R$ is), and let $R'$ denote $y(1)$ or
    $y(2)$ (again depending on what $R$ is), so that $R\wedge F = R'$ by
    Propositions \ref{A-wood} and \ref{B-wood}. Then:
\end{remark}
\begin{corollary}\label{telescope-equivalence}
    If the telescope conjecture is true for $F$ (and hence any type $1$ or $2$
    spectrum) or $R$, then it is true for $R'$.
\end{corollary}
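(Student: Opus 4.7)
The plan is to combine the Wood-type identifications $R'\simeq R\wedge F$ from Propositions \ref{A-wood} and \ref{B-wood} with the fact that both Bousfield localizations involved in the telescope conjecture are smashing. Recall that the telescope conjecture at the relevant height for a spectrum $X$ asserts that the natural comparison map $L_n^f X\to \Ln X$ is an equivalence; the heights line up nicely, with $A$, $Y$, and $y(1)$ all at height $1$ and $B$, $Z$, and $y(2)$ all at height $2$, so it makes sense to run the argument uniformly.

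First, I would invoke two classical smashing results: the finite localization $L_n^f$ is smashing essentially by its construction (Miller/Bousfield, as the localization away from a thick ideal of finite spectra), while the chromatic localization $\Ln$ is smashing by the Hopkins-Ravenel smash product theorem. Together these provide natural equivalences
\[
L_n^f(X\wedge Y)\simeq L_n^f(X)\wedge Y, \qquad \Ln(X\wedge Y)\simeq \Ln(X)\wedge Y
\]
for all spectra $X$ and $Y$. Next, I would apply these to the decomposition $R'\simeq R\wedge F$ supplied by the Wood equivalences: if the telescope conjecture holds for $R$, then smashing the hypothesized equivalence $L_n^f R\xrightarrow{\sim} \Ln R$ with $F$ produces
\[
L_n^f R'\simeq L_n^f R\wedge F \xrightarrow{\sim} \Ln R\wedge F \simeq \Ln R',
\]
and symmetrically if instead the telescope conjecture is known for $F$ (hence for any finite type $1$ or $2$ spectrum, since the telescope conjecture for type $n$ finite spectra is a single statement at each height).

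The main (non-)obstacle is that there is essentially nothing subtle left after the Wood equivalences are in hand: the corollary is a formal consequence of the smashing of both localizations. The genuine content of the statement lies in the identification $R'\simeq R\wedge F$ itself, which is what allows one to translate between the Mahowald-Ravenel-Shick Thom-spectrum perspective on the telescope conjecture (via $y(n)$) and the Behrens-Beaudry-Bhattacharya-Culver-Xu finite-spectrum perspective (via $Y$ and $Z$).
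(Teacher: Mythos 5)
Your proposal is correct and follows essentially the same route as the paper: the paper's proof likewise combines the Wood equivalences $R'\simeq R\wedge F$ of Propositions \ref{A-wood} and \ref{B-wood} with the fact that $L_n$ and $L_n^f$ are smashing, giving $L_n^f R' \simeq R \wedge L_n^f F \simeq R\wedge L_n F \simeq L_n R'$ (and symmetrically when the hypothesis is on $R$). Your version merely spells out the smashing inputs (Miller/Bousfield for $L_n^f$, Hopkins--Ravenel for $L_n$) and both cases explicitly.
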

\begin{proof}
    Since $L_n$- and
    $L_n^f$-localizations are smashing, we find that if the telescope conjecture
    is true for $F$ or $R$, then Propositions \ref{A-wood} and \ref{B-wood}
    yield equivalences
    $$L_n^f R' \simeq R \wedge L_n^f F \simeq R\wedge L_n F \simeq L_n R'.$$
\end{proof}
Finally, we prove Proposition \ref{B-wood}.
\begin{proof}[Proof of Proposition \ref{B-wood}]
    We first construct maps $B\to T(2)$ and $DA_1\to T(2)$. The top cell of
    $DA_1$ is in dimension $12$, and the map $T(2)\to \BPP$ is an equivalence in
    dimensions $\leq 12$. It follows that constructing a map $DA_1\to T(2)$ is
    equivalent to constructing a map $DA_1\to \BPP$. However, both $\BPP$ and
    $DA_1$ are concentrated in even degrees, so the Atiyah-Hirzebruch spectral
    sequence collapses, and we find that $\BPP^\ast(DA_1) \cong
    \H^\ast(DA_1;\BPP_\ast)$. The generator in bidegree $(0,0)$ produces a map
    $DA_1\to T(2)$; its effect on homology is the additive inclusion
    $\FF_2[\zeta_1^2, \zeta_2^2]/(\zeta_1^8, \zeta_2^4)\to \FF_2[\zeta_1^2,
    \zeta_2^2]$.

    The map $B\to T(2)$ may be defined via the universal property of Thom
    spectra from Section \ref{thom-background} and Remark \ref{B-iterated-thom}.
    Its effect on homology is the inclusion $\FF_2[\zeta_1^8, \zeta_2^4]\to
    \FF_2[\zeta_1^2, \zeta_2^2]$.  We obtain a map $B\wedge DA_1\to T(2)$ via
    the multiplication on $T(2)$, and this induces an isomorphism in mod $2$
    homology.

    For the second equivalence, we argue similarly: the map $B\to T(2)$ defines
    a map $B\to T(2)\to y(2)$. Next, recall that $Z$ is built through iterated
    cofiber sequences:
    $$\Sigma^2 Y\xrightarrow{v_1} Y\to A_1, \ \Sigma^5 A_1\wedge
    C\nu\xrightarrow{\sigma_1} A_1\wedge C\nu\to Z.$$
    As an aside, we note that the element $\sigma_1$ is intimately related to
    the element discussed in Example \ref{sigma1}; namely, it is given by the
    self-map of $A_1\wedge C\nu$ given by smashing $A_1$ with the following
    diagram:
    $$\xymatrix{
	\Sigma^5 C\nu \ar@{-->}[drrr]_-{\sigma_1} \ar[r] & \Sigma^5 A
	\ar[r]^-{\sigma_1\wedge \mathrm{id}} & A\wedge A\ar[r] & A\\
	& & & C\nu.\ar[u]
    }$$
    Using these cofiber sequences and Proposition \ref{tn-yn}, one obtains a map
    $Z\to y(2)$, which induces the additive inclusion $\FF_2[\zeta_1,
    \zeta_2]/(\zeta_1^8, \zeta_2^4)\to \FF_2[\zeta_1, \zeta_2]$ on mod $2$
    homology. The multiplication on $y(2)$ defines a map $B\wedge Z\to y(2)$,
    which induces an isomorphism on mod $2$ homology.

    For the final equivalence, it suffices to construct a map $A_2\to y(2)/v_2$
    for which the induced map $B\wedge A_2\to y(2)/v_2$ gives an isomorphism on
    mod $2$ homology. Since $A_2$ may be obtained as the cofiber of a $v_2$-self
    map $\Sigma^6 Z\to Z$, it suffices to observe that the the following diagram
    commutes; our desired map is the induced map on vertical cofibers:
    $$\xymatrix{
	\Sigma^6 Z \ar[d]^-{v_2} \ar[r] & \Sigma^6 y(2) \ar[d]^-{v_2}\\
	Z \ar[r] & y(2).
    }$$
\end{proof}
Arguing exactly as in the proof of Proposition \ref{B-wood} shows the following
result at the prime $3$:
\begin{prop}\label{B-wood-3}
    Let $X_3$ denote the $8$-skeleton of $T(1) = \S\mmod\alpha_1$. There are
    $3$-complete equivalences
    $$B\wedge X_3 \simeq T(2) \vee \Sigma^8 T(2), \ B\wedge X_3\wedge \S/(3,v_1)
    \simeq y(2) \vee \Sigma^8 y(2).$$
\end{prop}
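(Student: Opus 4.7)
The plan is to argue exactly parallel to the proof of Proposition \ref{B-wood}, working at $p = 3$ and accounting for the extra $\Sigma^8$ summand. First, I would construct the two constituent maps. A map $B \to T(2)$ is obtained from the universal property of $B$ as an iterated Thom spectrum (Remark \ref{B-iterated-thom}), exactly as in the $p=2$ case, since the obstructions (the images of $\sigma$ and $\wt{\nu}$) vanish in $\pi_\ast T(2)_{(3)}$. On mod $3$ homology, this realizes the inclusion $\FF_3[\zeta_1^2, b_4] \hookrightarrow \FF_3[\zeta_1, \zeta_2]$ of Proposition \ref{B-bp}, with $b_4$ (resp.\ $\zeta_1^2$) hitting $\zeta_1^3$ (resp.\ $\zeta_1^2$) modulo decomposables. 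A map $X_3 \to T(2)$ is given by the composite $X_3 \hookrightarrow T(1) \to T(2)$ of the inclusion of the $8$-skeleton with the canonical ring map $T(1) \to T(2)$ coming from Lemma \ref{tn-thom}; on mod $3$ homology, the three cells of $X_3$ are sent to $1, \zeta_1, \zeta_1^2 \in \FF_3[\zeta_1, \zeta_2]$.

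Next, multiplying via the ring structure of $T(2)$ produces a single map $B \wedge X_3 \to T(2)$, which gives the first wedge summand. To produce the $\Sigma^8 T(2)$ summand, I would use the cofiber sequence $Y_3 \hookrightarrow X_3 \to S^8$, where $Y_3$ is the $4$-skeleton of $T(1)$; smashing with $B$ and composing with the map $B \to T(2)$ yields a map $B \wedge X_3 \to \Sigma^8 B \to \Sigma^8 T(2)$. Adding these two maps produces the desired candidate $B \wedge X_3 \to T(2) \vee \Sigma^8 T(2)$, which one then verifies to be a mod $3$ homology isomorphism by comparing Poincar\'e series using Corollary \ref{B-hfp} and the fact that both sides are bounded below and of finite type.

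For the second equivalence, I would smash the first equivalence with $\S/(3, v_1)$ and invoke Proposition \ref{tn-yn}, which identifies $T(2)/(3, v_1)$ with $y(2)$ at the prime $3$. The cofiber sequences defining $\S/(3, v_1)$ commute with the equivalence established above, and the splitting into two summands is preserved.

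The main obstacle is the careful bookkeeping in the mod $3$ homology verification of the combined map. The two summand maps must be shown to hit complementary subspaces of $\FF_3[\zeta_1, \zeta_2] \oplus \Sigma^8 \FF_3[\zeta_1, \zeta_2]$ with no overlap and no omission; the image of the first summand is the subalgebra $\FF_3[\zeta_1]$, while the second summand must account for the $\zeta_2$-multiples, which requires tracking the Thom diagonal's interaction with the quotient $X_3 \to S^8$ after smashing with $B$. Once this accounting is verified to match Poincar\'e series on both sides, the equivalence follows from the fact that a map between $3$-complete spectra of finite type inducing an isomorphism on $\FF_3$-homology is automatically an equivalence.
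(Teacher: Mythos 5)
Your overall skeleton (build maps $B \to T(2)$ and $X_3 \to T(2)$, multiply, add a map to $\Sigma^8 T(2)$, and check mod $3$ homology) is the natural transcription of the proof of Proposition \ref{B-wood}, which is all the paper itself does here; but the specific map you construct is \emph{not} a mod $3$ homology isomorphism, so the key verification step fails. By the Thom isomorphism, $\H_\ast(B;\FF_3)$ is polynomial on generators $b_8, b_{12}$ in degrees $8$ and $12$ (this is what Proposition \ref{B-bp} forces; read the $p=3$ entry of Corollary \ref{B-hfp} accordingly), and since your map $B \to T(2)$ is multiplicative, its image in $\H_\ast(T(2);\FF_3) = \FF_3[\zeta_1,\zeta_2]$ (with $|\zeta_1|=4$, $|\zeta_2|=16$) is generated by the images of $b_8$ and $b_{12}$, i.e.\ by multiples of $\zeta_1^2$ and $\zeta_1^3$; in particular it lies inside $\FF_3[\zeta_1]$ and is \emph{not} an inclusion, contrary to your opening claim (your later sentence that the first component has image $\FF_3[\zeta_1]$ is the correct one). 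The image of $\H_\ast(X_3)$ is spanned by $1,\zeta_1,\zeta_1^2$. Now examine your combined map into $T(2)\vee\Sigma^8 T(2)$. In degree $12$ the source is spanned by $b_{12}\otimes 1$ and $b_8\otimes x_4$; your second component kills both (it only sees classes of the form $b\otimes x_8$, and $\H_4(B;\FF_3)=0$), so both land in the one-dimensional space spanned by $(\zeta_1^3,0)$ and the class $\sigma^8\zeta_1$ in the $\Sigma^8 T(2)$ summand is never hit. In degree $16$ every source class maps to something whose first component is a multiple of $\zeta_1^4$, so $(\zeta_2,0)$ is never hit. Hence the map is neither injective nor surjective, and no Poincar\'e-series bookkeeping can rescue it: the series do agree, but this particular map is not an equivalence.

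The missing idea is exactly where $p=3$ diverges from $p=2$: at $p=2$, $\H_\ast(B;\FF_2)=\FF_2[\zeta_1^8,\zeta_2^4]$ and $\H_\ast(DA_1)$ jointly generate $\FF_2[\zeta_1^2,\zeta_2^2]$ multiplicatively, so a single product map suffices; at $p=3$ neither $B$ nor $X_3$ supplies any $\zeta_2$-class, so all $\zeta_2$-multiples must be produced through the $\Sigma^8 T(2)$ summand by a map that does \emph{not} factor through the top-cell collapse $X_3\to S^8$ --- for instance, one needs the second component already to detect the degree $8$ generator of $\H_\ast(B;\FF_3)$ on $B\wedge S^0$ (equivalently, a suitable $T(2)$-valued ``functional'' on $B\wedge X_3$), or some other construction genuinely mixing the two summands. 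Your appeal to ``tracking the Thom diagonal'' gestures at this, but with the two maps as you defined them the induced map on homology is completely determined and fails as above. Your treatment of the second equivalence (smash with $\S/(3,v_1)$ and invoke Proposition \ref{tn-yn}) is fine once the first equivalence is in hand, so the gap is concentrated entirely in the construction of the first equivalence.
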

In forthcoming work, we will discuss the relation between Proposition
\ref{B-wood} and EHP sequences, along the lines of Remark \ref{A-ehp}.

\newpage
\section{$C_2$-equivariant analogue of Corollary
\ref{bpn}}\label{equiv-analogue}

Our goal in this section is to study a $C_2$-equivariant analogue of Corollary
\ref{bpn} at height $1$. The odd primary analogue of this result is deferred to
the future; it is considerably more subtle.

\subsection{$C_2$-equivariant analogues of Ravenel's spectra}

In this section, we construct the $C_2$-equivariant analogue of $T(n)$ for all
$n$. We $2$-localize everywhere until mentioned otherwise. There is a
$C_2$-action on $\Omega \SU(n)$ given by complex conjugation, and the resulting
$C_2$-space is denoted $\Omega \SU(n)_\RR$. Real Bott periodicity gives a
$C_2$-equivariant map $\Omega \SU(n)_\RR \to \BU_\RR$ whose Thom spectrum is the
(genuine) $C_2$-spectrum $X(n)_\RR$. This admits the structure of an
$\E{\rho}$-ring, since it is the Thom spectrum of an $\E{\rho}$-map $\Omega^\rho \B^\sigma \SU(n)_\RR \to \Omega^\rho \B^\rho \BU_\RR \simeq \Omega^\rho \BSU_\RR$. As in the nonequivariant case, the equivariant Quillen
idempotent on $\MU_\RR$ restricts to one on $X(m)_\RR$, and therefore defines a
summand $T(n)_\RR$ of $X(m)_\RR$ for $2^n\leq m\leq 2^{n+1}-1$. Again, this
summand admits the structure of an $\E{1}$-ring.

\begin{construction}\label{c2-chin}
    There is an equivariant fiber sequence $$\Omega \SU(n)_\RR \to \Omega
    \SU(n+1)_\RR \to \Omega S^{n\rho+1},$$ where $\rho$ is the regular
    representation of $C_2$; the equivariant analogue of Proposition \ref{thom}
    then shows that there is a map $\Omega S^{n\rho+1} \to \B\GL_1(X(n)_\RR)$
    (detecting an element $\chi_n\in \pi_{n\rho-1} X(n)_\RR$) whose Thom
    spectrum is $X(n+1)_\RR$. Here, $\B\GL_1(X(n)_\RR)$ is the delooping of the $\E{\rho}$-space $\GL_1(X(n)_\RR)$, and the $C_2$-equivariant notion of Thom spectrum is taken in the sense of \cite[Theorem 3.2]{equiv-thom}. (The constructions from \textit{loc. cit.} can be verified to go through for equivariant maps to $\B\GL_1(X(n)_\RR)$; for example, when $n=\infty$, the idea of taking Thom spectra for an equivariant map to $\B\GL_1(\MU_\RR)$ was already used in \cite[Section 3]{hahn-shi}.)
\end{construction}
If $\wt{\sigma}_n$ denotes the image of the element $\chi_{2^{n+1}\rho-1}$ in
$\pi_{(2^{n+1}-1)\rho-1} T(n)_\RR$, then we have a $C_2$-equivariant analogue of
Lemma \ref{tn-thom}:
\begin{lemma}\label{C2-tn}
    The Thom spectrum of the map $\Omega S^{(2^{n+1}-1)\rho+1} \to
    \B\GL_1(X(2^{n+1}-1)_\RR)$ detecting $\wt{\sigma}_n$ is a direct sum of shifts of $T(n+1)_\RR$.
\end{lemma}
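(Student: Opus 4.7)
The plan is to mimic the nonequivariant Lemma~\ref{tn-thom}, whose proof (implicit in Construction~\ref{chin}) has two ingredients: (i) the fiber sequence $\Omega \SU(m) \to \Omega \SU(m+1) \to \Omega S^{2m+1}$ Thomifies to exhibit $X(m+1)$ as a Thom spectrum over $\B\GL_1(X(m))$ with bottom-cell class $\chi_m$; and (ii) the Quillen idempotent on $X(p^{n+1})$ is a lift of the one on $X(p^{n+1}-1)$, so base-changing to the $T(n)$-summand of the latter cuts out the $T(n+1)$-summand of the former. Both ingredients have immediate $C_2$-equivariant analogues, the first being Construction~\ref{c2-chin} and the second implicit in the construction of $T(n)_\RR$ via the Real Quillen idempotent.

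First, I would specialize Construction~\ref{c2-chin} to $m = 2^{n+1}-1$: the equivariant fiber sequence produces a map $\alpha: \Omega S^{(2^{n+1}-1)\rho+1} \to \B\GL_1(X(2^{n+1}-1)_\RR)$ whose Thom spectrum is $X(2^{n+1})_\RR$ and whose bottom-cell restriction detects $\chi_{2^{n+1}-1}$. Post-composing $\alpha$ with $\B\GL_1$ of the Quillen projection $X(2^{n+1}-1)_\RR \to T(n)_\RR$ produces exactly the map in the statement, by the very definition of $\wt{\sigma}_n$. Applying the $C_2$-equivariant version of Proposition~\ref{thom-base} (which holds by the same colimit argument, since Thom spectra are computed as colimits $C_2$-equivariantly as well) then identifies the Thom spectrum of this composite with the base change
$$X(2^{n+1})_\RR \wedge_{X(2^{n+1}-1)_\RR} T(n)_\RR.$$

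Identifying this base change with $T(n+1)_\RR$ reduces to showing that the Real Quillen idempotent on $X(2^{n+1})_\RR$ cutting out $T(n+1)_\RR$ is a lift of the one on $X(2^{n+1}-1)_\RR$ cutting out $T(n)_\RR$, along the canonical map $X(2^{n+1}-1)_\RR \to X(2^{n+1})_\RR$ arising from $\SU(2^{n+1}-1) \hookrightarrow \SU(2^{n+1})$. Granted this lift, the unit summand of the base change above is precisely $T(n+1)_\RR$, and the equivalence can be checked either on underlying and geometric fixed points (where it reduces to the nonequivariant Theorem~\ref{tn-def} and its fixed-point counterpart) or by invoking an equivariant analogue of that theorem directly.

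The main obstacle is therefore the compatibility of Real Quillen idempotents across the tower $\{X(m)_\RR\}_m$. In the nonequivariant case this rests on the description of multiplicative self-maps of $X(m)_{(p)}$ in terms of coefficient polynomials, with Hopkins' truncation argument (\cite{hopkins-thesis}, \cite[Section~6.5]{green}) producing a compatible family of idempotents. I expect the same strategy to work Real-equivariantly: multiplicative self-maps of $X(m)_\RR$ should be parametrized by polynomials in a formal variable of degree $\rho$ with coefficients $a_i \in \pi_{i\rho} X(m)_\RR$, and truncation should then yield the desired compatible idempotents. The verification is tedious but I anticipate no serious obstruction beyond careful bookkeeping of the equivariant structure.
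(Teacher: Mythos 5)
Your route is essentially the intended one: the paper states Lemma~\ref{C2-tn} without proof, as the immediate $C_2$-equivariant analogue of Lemma~\ref{tn-thom}, and the argument you spell out --- Construction~\ref{c2-chin} at $m=2^{n+1}-1$, base change along $X(2^{n+1}-1)_\RR\to T(n)_\RR$ via the equivariant version of Proposition~\ref{thom-base}, and the Real Quillen idempotent splitting, whose compatibility across $2^{n}\leq m\leq 2^{n+1}$ (the ``main obstacle'' you flag) is exactly what the paper builds into its definition of $T(n)_\RR$ by asserting that the equivariant Quillen idempotent on $\MU_\RR$ restricts to each $X(m)_\RR$ --- is the intended proof. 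One small correction: the base change $X(2^{n+1})_\RR\wedge_{X(2^{n+1}-1)_\RR}T(n)_\RR$ is all of $T(n+1)_\RR$, not merely a spectrum containing it as a unit summand, since by the (relative) Thom isomorphism it is a Thom spectrum of a bundle of $T(n)_\RR$-lines over $\Omega S^{(2^{n+1}-1)\rho+1}$ and hence has exactly the size of $T(n+1)_\RR$, as one verifies on underlying spectra and geometric fixed points (or on $\BPP_\RR$-homology).
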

\begin{example}
    For instance, $T(1)_\RR = X(2)_\RR$ is the Thom spectrum of the map $\Omega
    S^{\rho+1} \to \BU_\RR$; upon composing with the equivariant J-homomorphism
    $\BU_\RR \to \B\GL_1(\S)$, this detects the element $\wt{\eta} \in \pi_\sigma
    \S$, and the extension of the map $S^\rho \to \B\GL_1(\S)$ to $\Omega
    S^{\rho+1}$ uses the $\E{1}$-structure on $\B\GL_1(\S)$. The case of
    $X(2)_\RR$ exhibits a curious property: $S^{\rho+1}$ is the loop space
    $\Omega^\sigma \HP^\infty_\RR$, and there are equivalences (see
    \cite[Proposition 3.4 and Proposition 3.6]{hahn-wilson})
    $$\Omega S^{\rho+1} \simeq \Omega^{\sigma+1} \HP^\infty_\RR \simeq
    \Omega^\sigma (\Omega \HP^\infty_\RR).$$
    However, $\Omega \HP^\infty_\RR \simeq S^{\rho+\sigma}$, so $\Omega
    S^{\rho+1} = \Omega^\sigma S^{\rho+\sigma}$. The map $\Omega^\sigma
    S^{\rho+\sigma} \to \B\GL_1(\S)$ still detects the element $\wt{\eta}\in
    \pi_\sigma \S$ on the bottom cell, but the extension of the map $S^\rho \to
    \B\GL_1(\S)$ to $\Omega^\sigma S^{\rho+\sigma}$ is now defined via the
    $\E{\sigma}$-structure on $\B\GL_1(\S)$.  The upshot of this discussion is
    that $X(2)_\RR$ is not only the free $\E{1}$-ring with a nullhomotopy of
    $\wt{\eta}$, but also the free $\E{\sigma}$-algebra with a nullhomotopy of
    $\wt{\eta}$.
\end{example}
\begin{warning}\label{not-torsion}
    Unlike the nonequivariant setting, the element $\wt{\eta}\in \pi_\sigma \S$
    is neither torsion nor nilpotent. This is because its geometric fixed points
    is $\Phi^{C_2} \wt{\eta} = 2\in \pi_0 \S$; see \cite[Proposition C.5]{dugger-isaksen}, although note that the orientations chosen there are the opposite of ours. Briefly, the map $\wt{\eta}$ is obtained by $\rho$-desuspending the unstable equivariant Hopf map $S^{\rho+\sigma} = \cc^2-\{0\} \to \CP^1 = S^\rho$, whose homotopy fiber is $S^\sigma$. In other words, there is a fiber sequence $S^\sigma \to S^{\rho+\sigma} \xar{\wt{\eta}} S^\rho$. On geometric fixed points, this produces the fiber sequence $S^0 = C_2 \to S^1 \to S^1$, which forces the map $\Phi^{C_2} \wt{\eta}$ to have degree $2$ (or $-2$, depending on the choice of orientation).
\end{warning}

\begin{example}\label{sigma1-equiv}
    Consider the element $\wt{\sigma}_1\in \pi_{3\rho-1} T(1)_\RR$. The
    underlying nonequivariant element of $\pi_5 T(1)_\RR$ is simply $\sigma_1$.
    To determine $\Phi^{C_2} \wt{\sigma}_1\in \pi_2 \Phi^{C_2} T(1)_\RR$, we
    first note that $\Phi^{C_2} T(1)_\RR$ is the Thom spectrum of the map
    $\Phi^{C_2} \wt{\eta}: \Phi^{C_2} \Omega S^{\rho+1}\to \B\GL_1(\S)$. Since
    $\Phi^{C_2} \Omega S^{\rho+1} = \Omega S^2$ and $\Phi^{C_2} \wt{\eta} = 2$,
    we find that $\Phi^{C_2} T(1)_\RR$ is the $\E{1}$-quotient $\S\mmod 2$. The
    element $\Phi^{C_2} \wt{\sigma}_1\in \pi_2 \S\mmod 2\cong \pi_2 \S/2$ is
    simply a map $S^2\to \S/2$ which is $\eta$ on the top cell. Such a map
    exists because $2\eta = 0$.
\end{example}

As an aside, we mention that there is a $C_2$-equivariant lift of the spectrum
$A$:
\begin{definition}
    Let $A_{C_2}$ denote the Thom spectrum of the map $\Omega S^{2\rho+1} \to \BGL_1(\S)$ defined by the extension of the map $S^{2\rho} \to \BGL_1(\S)$ which detects the equivariant Hopf map $\wt{\nu}\in \pi_{2\rho-1} \S$.
\end{definition}
\begin{remark}\label{equiv-A}
    The underlying spectrum of $A_{C_2}$ is $A$. To determine the geometric
    fixed points of $A_{C_2}$, 
    $\Phi^{C_2} A_{C_2}$ is the Thom spectrum of the map $\Phi^{C_2} \wt{\nu}: \Phi^{C_2}
    \Omega S^{2\rho+1} \to \B\GL_1(\S)$. We claim that $\wt{\Phi}^{C_2} \wt{\nu} = \eta$; indeed, the map $\wt{\nu}$ is obtained by $2\rho$-desuspending the unstable equivariant map $S^{4\rho-1} = \mathbf{H}^2-\{0\} \to \mathbf{H}P^1 = S^{2\rho}$. The homotopy fiber of this map is $S^{2\rho-1} = S^{\rho+\sigma}$, so that there is an equivariant fiber sequence $S^{\rho+\sigma} \to S^{4\rho-1} \to S^{2\rho}$. On geometric fixed points, we obtain a fiber sequence $S^1 \to S^3 \to S^2$, which implies that $\Phi^{C_2} \wt{\nu}$ be identified with the Hopf fibration $S^3 \to S^2$.
    Now, since $\Phi^{C_2} \Omega S^{2\rho+1} = \Omega S^3$, we find that $\Phi^{C_2}
    A_{C_2} = T(1)$. In particular, $A_{C_2}$ may be thought of as the free
    $C_2$-equivariant $\E{1}$-ring with a nullhomotopy of $\wt{\nu}$.
\end{remark}
\begin{example}
    The element $\wt{\sigma}_1$ lifts to $\pi_{3\rho-1} A_{C_2}$. Indeed, Remark
    \ref{sigma1-integral} works equivariantly too: the equivariant Hopf map
    $S^{3\rho-1}\to S^{2\rho}$ defines a composite $S^{3\rho-1}\to S^{2\rho}\to
    \Omega S^{2\rho+1}$. The composite $S^{3\rho-1}\to \Omega S^{2\rho+1}\to
    \BSU_\RR$ is null, since $\pi_{3\rho-1} \BSU_\RR = 0$. It follows that upon
    Thomification, the map $S^{3\rho-1}\to \Omega S^{2\rho+1}$ defines an
    element $\wt{\sigma}_1'$ of $\pi_{3\rho-1} A_{C_2}$. In order to show that
    this element indeed deserves to be called $\wt{\sigma}_1$, we use
    Proposition \ref{equiv-A-t1}. The map $A_{C_2}\to T(1)_\RR$ from the
    proposition induces a map $\pi_{3\rho-1} A_{C_2}\to \pi_{3\rho - 1}
    T(1)_\RR$, and we need to show that the image of $\wt{\sigma}_1'\in
    \pi_{3\rho-1} A_{C_2}$ is in fact $\wt{\sigma}_1$. By Example
    \ref{sigma1-equiv}, it suffices to observe that the underlying
    nonequivariant map corresponding to $\wt{\sigma}_1'\in \pi_{3\rho-1}
    T(1)_\RR$ is $\sigma_1$, and that the geometric fixed points 
    ${\Phi}_{C_2} \wt{\sigma}_1'\in \pi_2 \S\mmod 2$ is the lift of $\eta$
    appearing in Example \ref{sigma1-equiv}.
\end{example}
We now prove the proposition used above.
\begin{prop}\label{equiv-A-t1}
    There is a genuine $C_2$-equivariant $\E{1}$-map $A_{C_2}\to T(1)_\RR$.
\end{prop}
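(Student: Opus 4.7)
The plan is to invoke the universal property of $A_{C_2}$ to reduce the construction of the map to the construction of a nullhomotopy. By Remark \ref{equiv-A}, $A_{C_2} = \S \mmod \wt{\nu}$ is the free $C_2$-equivariant $\E{1}$-ring equipped with a nullhomotopy of $\wt{\nu} \in \pi_{2\rho-1}\S$; applying the equivariant analogue of Theorem \ref{thom-univ}, giving an $\E{1}$-map $A_{C_2} \to T(1)_\RR$ is the same as giving a nullhomotopy of the image of $\wt{\nu}$ under the unit $\S \to T(1)_\RR$. So the whole problem reduces to producing an equivariant unital map $C_{\wt{\nu}} \to T(1)_\RR$ lifting the unit.

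To construct such a lift, I would mimic the proof of Proposition \ref{A-wood} equivariantly. The ring multiplication on $T(1)_\RR = X(2)_\RR = \S \mmod \wt{\eta}$ supplies a unital equivariant map $C_{\wt{\eta}} \wedge C_{\wt{\eta}} \to T(1)_\RR$, so it suffices to produce a unital map $C_{\wt{\nu}} \to C_{\wt{\eta}} \wedge C_{\wt{\eta}}$; equivalently, to witness $\wt{\nu}$ as a member of an equivariant Toda bracket of the shape $\langle \wt{\eta}, ?, \wt{\eta}\rangle$. Classically one uses $\nu \in \langle \eta, 2, \eta\rangle$, but equivariantly the naive bracket $\langle \wt{\eta}, 2, \wt{\eta}\rangle$ is not defined because $2\wt{\eta}$ is not null (indeed $\Phi^{C_2}(2\wt{\eta}) = 4$). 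I would sidestep this by exploiting the stronger structure on $T(1)_\RR$ identified in the Example after Lemma \ref{C2-tn}: $X(2)_\RR$ is in fact the free $\E{\sigma}$-algebra with a nullhomotopy of $\wt{\eta}$. The $\E{\sigma}$-structure endows $T(1)_\RR$ with a cup-$1$-type operation (an equivariant $\cQ_1$-structure, \emph{cf.} Notation \ref{cup-1-defn} and Remark \ref{Q1-opn}) which converts the chosen nullhomotopy of $\wt{\eta}$ into a canonical class in $\pi_{2\rho-1}T(1)_\RR$ playing the role of $Q_1(\wt{\eta})$. I would then identify this class with the image of $\wt{\nu}$; since it arose as an $\E{\sigma}$-power operation applied to a null class, this identification automatically supplies the desired nullhomotopy.

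The hard part will be the identification of $Q_1(\wt{\eta})$ with the image of $\wt{\nu}$, the equivariant analogue of the classical fact that $\nu$ is the cup-$1$ square of $\eta$ in $\cQ_1$-algebras with $\eta$ null. As a sanity check (and as a fallback strategy), once the map $A_{C_2} \to T(1)_\RR$ is produced, its underlying non-equivariant $\E{1}$-map should agree with the map $A \to T(1)$ of Proposition \ref{A-wood}, while on geometric fixed points the computation $\Phi^{C_2} \Omega S^{\rho+1} = \Omega S^2$ together with $\Phi^{C_2}$ of the J-map shows $\Phi^{C_2} T(1)_\RR \simeq \S \mmod \eta = T(1)$, matching $\Phi^{C_2} A_{C_2} = T(1)$ under $\Phi^{C_2}\wt{\nu} = \eta$; so the induced map on geometric fixed points should be the unit (in particular an equivalence). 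This compatibility also suggests an alternative approach: build the nullhomotopy of $\wt{\nu}$ in $T(1)_\RR$ separately on the underlying spectrum (where $\nu$ dies in $T(1)$ at $p=2$ by Example \ref{X2-central}) and on $\Phi^{C_2}$ (where $\eta$ dies in $\S \mmod \eta$ tautologically), and glue the two nullhomotopies using the isotropy-separation pullback square; the technical difficulty is then transported to checking that the two local data agree on the Tate piece.
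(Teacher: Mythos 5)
Your opening reduction --- using Remark \ref{equiv-A} to view $A_{C_2}=\S\mmod\wt{\nu}$ as the free equivariant $\E{1}$-ring on a nullhomotopy of $\wt{\nu}$, so that the whole problem is to kill $\wt{\nu}$ in $\pi_\star T(1)_\RR$ --- is exactly the paper's first step. But neither of your routes to the nullhomotopy is complete. The main route rests on producing an equivariant cup-$1$-type operation from the $\E{\sigma}$-structure on $T(1)_\RR$ and identifying ``$Q_1(\wt{\eta})$'' with the image of $\wt{\nu}$; you flag this identification as the hard part and give no argument, and it is not even clear that an $\E{\sigma}$-structure yields such an operation landing in the right $RO(C_2)$-degree (from $\wt{\eta}\in\pi_\sigma$ to $\wt{\nu}\in\pi_{2\rho-1}=\pi_{2\sigma+1}$). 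So as written this is a missing idea, not a verification to be filled in later.

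The fallback route contains a concrete error. You correctly use $\Phi^{C_2}\wt{\eta}=2$ (Warning \ref{not-torsion}) when explaining why $\langle\wt{\eta},2,\wt{\eta}\rangle$ is undefined, but then compute geometric fixed points as if $\Phi^{C_2}\wt{\eta}=\eta$: in fact $\Phi^{C_2}T(1)_\RR$ is the Thom spectrum of $2$ over $\Omega S^2$, i.e.\ $\S\mmod 2 = y(1)$ (Example \ref{sigma1-equiv}), not $\S\mmod\eta=T(1)$. Hence the map on geometric fixed points is $T(1)\to\S\mmod 2$, not the unit, and is certainly not an equivalence ($\FF_2[\zeta_1^2]$ versus $\FF_2[\zeta_1]$ in homology), and $\eta=\Phi^{C_2}\wt{\nu}$ does not die ``tautologically.'' What is actually needed --- and what the paper's proof uses --- is that $\eta$ vanishes in $\pi_\ast(\S\mmod 2)$, which holds because the $2$-cell of $\S\mmod 2$ is attached along $i\circ\eta$ and cones it off. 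With that correction your fallback collapses to the paper's two-line argument: $\wt{\nu}$ is null on underlying spectra ($\nu=0$ in $\pi_3 T(1)$, e.g.\ since $\pi_3 C\eta=0$, cf.\ Example \ref{X2-central}) and on geometric fixed points ($\eta=0$ in $\S\mmod 2$), hence null in $\pi_\star T(1)_\RR$; no Toda bracket or power-operation input is required. (Your worry about matching the two nullhomotopies on the Tate piece is a fair one, but it is an issue the paper's proof does not engage with either, and in any case it does not rescue the misidentification of $\Phi^{C_2}T(1)_\RR$.)
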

\begin{proof}
    By Remark \ref{equiv-A}, it suffices to show that $\wt{\nu} = 0\in
    \pi_{3\rho-1} T(1)_\RR$. The underlying map is null, because $\nu = 0\in
    \pi_5 T(1)$. The geometric fixed points are also null, because $\Phi^{C_2}
    \wt{\nu} = \eta$ is null in $\pi_2 \Phi^{C_2} T(1)_\RR = \pi_2 \S\mmod 2$.
    Therefore, $\wt{\nu}$ is null in $\pi_{3\rho-1} T(1)_\RR$.
\end{proof}
In fact, it is easy to prove the following analogue of Proposition \ref{A-wood}:
\begin{prop}\label{equiv-A-wood}
    There is a $C_2$-equivariant equivalence $A_{C_2} \wedge C\wt{\eta} \simeq
    T(1)_\RR$.
\end{prop}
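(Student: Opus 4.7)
The plan is to mimic the proof of Proposition~\ref{A-wood} equivariantly by constructing a natural comparison map $\Psi: A_{C_2} \wedge C\wt{\eta} \to T(1)_\RR$ and checking that it is an equivalence of genuine $C_2$-spectra by examining its underlying spectrum and its geometric $C_2$-fixed points separately.

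To build $\Psi$, I would assemble two ingredients. First, Proposition~\ref{equiv-A-t1} already provides a $C_2$-equivariant $\E{1}$-ring map $A_{C_2} \to T(1)_\RR$. Second, I claim there is a natural map $\iota: C\wt{\eta} \to T(1)_\RR$ given by the inclusion of the bottom two cells. Indeed, $T(1)_\RR = X(2)_\RR$ is by construction the Thom spectrum of the $\E{1}$-extension $\Omega S^{\rho+1}\to \B\GL_1(\S)$ of the pointed map $S^\rho \to \B\GL_1(\S)$ detecting $\wt{\eta}\in \pi_\sigma(\S)$, so the first James piece of $\Omega S^{\rho+1}$ contributes precisely the cofiber cell $C\wt{\eta}$ to $T(1)_\RR$ (exactly as in the nonequivariant case, where $C\eta$ sits as the $2$-skeleton of $X(2)$). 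Smashing the two maps together and composing with the $\E{1}$-multiplication on $T(1)_\RR$ yields $\Psi$.

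To verify $\Psi$ is an equivalence, I use the fact that a map of genuine $C_2$-spectra is an equivalence iff it is so on the underlying spectrum and on $\Phi^{C_2}$. The underlying map of $\Psi$ is the nonequivariant composite $A\wedge C\eta\to T(1)\wedge T(1)\to T(1)$, which is the Wood equivalence of Proposition~\ref{A-wood}. For geometric fixed points, Remark~\ref{equiv-A} gives $\Phi^{C_2}A_{C_2}\simeq T(1)$, and since $\Phi^{C_2}\wt{\eta}=2$ we have $\Phi^{C_2}(C\wt{\eta})\simeq C(2)=\S/2$; Example~\ref{sigma1-equiv} (or its proof) identifies $\Phi^{C_2}T(1)_\RR\simeq \S\mmod 2$. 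So the induced map on $\Phi^{C_2}$ is a map $T(1)\wedge \S/2\to \S\mmod 2$, and under the identification $T(1)=\S\mmod\eta$ the construction of $\Psi$ pulls back to the composite of $\S\mmod\eta \wedge \S/2\to \S\mmod 2\wedge \S\mmod 2\to \S\mmod 2$ used in Proposition~\ref{A-wood} to establish the second equivalence $\S\mmod\eta\wedge \S/2\simeq \S\mmod 2$. Hence both checks succeed and $\Psi$ is a $C_2$-equivalence.

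The main obstacle is mostly bookkeeping: verifying that the maps induced on the underlying spectrum and on geometric fixed points really are the Wood comparison maps of Proposition~\ref{A-wood}, rather than merely arbitrary maps between spectra that happen to be abstractly equivalent. This reduces to tracking the $\E{1}$-ring structures through Proposition~\ref{equiv-A-t1} and through the symmetric monoidal functor $\Phi^{C_2}$, and to confirming that the skeletal inclusion $\iota:C\wt\eta\to T(1)_\RR$ specializes on geometric fixed points to the skeletal inclusion $\S/2\hookrightarrow \S\mmod 2$. Both hold naturally from the universal property of $T(1)_\RR$ as the free $\E{1}$-ring on a nullhomotopy of $\wt\eta$ (compare the discussion following Proposition~\ref{equiv-A-t1}), so no new input beyond Proposition~\ref{A-wood} and the identifications already established is required.
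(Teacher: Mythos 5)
Your proposal is correct and follows essentially the same route as the paper: smash the map $A_{C_2}\to T(1)_\RR$ of Proposition~\ref{equiv-A-t1} with the skeletal inclusion $C\wt{\eta}\to T(1)_\RR$, multiply in $T(1)_\RR$, and check the result is an equivalence on underlying spectra and on geometric fixed points, where both comparisons reduce to the two equivalences established in (the proof of) Proposition~\ref{A-wood}. The extra care you take in identifying the induced maps on $\Phi^{C_2}$ with the nonequivariant Wood comparison maps is exactly the bookkeeping the paper leaves implicit.
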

\begin{proof}
    There are maps $A_{C_2} \to T(1)_\RR$ and $C\wt{\eta} \to T(1)_\RR$, which
    define a map $A_{C_2} \wedge C\wt{\eta} \to T(1)_\RR$ via the multiplication
    on $T(1)_\RR$. This map is an equivalence on underlying by Proposition
    \ref{A-wood}, and on geometric fixed points induces the map $T(1) \wedge
    \S/2\to \S\mmod 2$. This was also proved in the course of Proposition
    \ref{A-wood}.
\end{proof}
\begin{remark}
    As in Remark \ref{bo-wood}, one might hope that this implies the
    $C_2$-equivariant Wood equivalence $\bo_{C_2} \wedge C\wt{\eta} \simeq
    \bu_\RR$ via some equivariant analogue of Theorem \ref{main-thm}.
\end{remark}
\begin{remark}
    The equivariant analogue of Remark \ref{A-ehp} remains true: the equivariant
    Wood equivalence of Proposition \ref{equiv-A-wood} stems from the EHP
    sequence $S^\rho\to \Omega S^{\rho+1} \to \Omega S^{2\rho + 1}$. To prove the existence of such a fiber sequence, we use \cite[Construction 4.26]{ehp-haine} to get the Hopf map $h: \Omega S^{\rho+1} \to \Omega S^{2\rho + 1}$, as well as a nullhomotopy of the composite $S^\rho\to \Omega S^{\rho+1} \to \Omega S^{2\rho + 1}$. In particular, if $F = \fib(h)$, there is an equivariant map $S^\rho \to F$. We claim that this map is an equivalence: it suffices to prove that $S^\rho \to F$ is an equivalence on underlying and on geometric fixed points, since these functors preserve homotopy limits and colimits, and these functors are jointly conservative. The desired equivalence on underlying spaces follows from the classical EHP sequence $S^2 \to \Omega S^3 \to \Omega S^5$, and the equivalence on geometric fixed points follows from the splitting $\Omega S^2 \simeq S^1 \times \Omega S^3$.
\end{remark}


\subsection{The $C_2$-equivariant analogue of Corollary \ref{bpn} at $n=1$}

Recall (see \cite{hu-kriz}) that there are indecomposable classes $\ol{v}_n\in
\pi_{(2^n-1)\rho} \BPP_\RR$; as in Theorem \ref{tn-def}, these lift to classes
in $\pi_\star T(m)_\RR$ if $m\geq n$. The main result of this section is the
following:
\begin{theorem}\label{C2-main-thm}
    There is a map $\Omega^\rho S^{2\rho+1} \to \B\GL_1(T(1)_\RR)$ detecting an
    indecomposable in $\pi_\rho T(1)_\RR$ on the bottom cell, whose Thom
    spectrum is $\ul{\H\Z}$.
\end{theorem}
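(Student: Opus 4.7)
The plan is to imitate the unconditional nonequivariant construction of $\H\Z_2$ as a Thom spectrum (Corollary \ref{bpn} at $n=1$, which by Remark \ref{uncond} is equivalent to Corollary \ref{hm-Z}). Nonequivariantly, the argument has two pieces: (a) the Hopkins-Mahowald identification $\H\Z_2 \simeq (\Omega^2 S^3\langle 3\rangle)^{\mu_0}$ for an $\E{2}$-map $\mu_0:\Omega^2 S^3\langle 3\rangle \to \B\GL_1(\S^\wedge_2)$, and (b) the $2$-local fiber sequence $\Omega S^3 \to \Omega^2 S^3\langle 3\rangle \to \Omega^2 S^5$ of Remark \ref{anick-S3}, which via Proposition \ref{thom} rewrites $\H\Z_2$ as a Thom spectrum over $\Omega^2 S^5$ of a bundle of $T(1)$-lines. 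I adapt each piece in turn to the $C_2$-equivariant setting.

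First I would construct a $C_2$-equivariant analogue $K_{1,C_2}$ of $\Omega^2 S^3\langle 3\rangle$ sitting in a fiber sequence $\Omega S^{\rho+1} \to K_{1,C_2} \to \Omega^\rho S^{2\rho+1}$. A natural candidate is $K_{1,C_2} := \Omega^\rho(S^{\rho+1}\langle \rho+1\rangle)$, where $S^{\rho+1}\langle \rho+1\rangle$ denotes the equivariant cover killing the fundamental class. The fiber sequence should follow from an equivariant EHP argument combined with an equivariant lift of Toda's fibration $S^{2p-1}\to J_{p-1}(S^2)\to \CP^\infty$ (appearing inside the proof of Remark \ref{anick-S3}), with $\HP^\infty_\RR$ replacing $\CP^\infty$ and the representation sphere $S^{2\rho-1}$ replacing $S^{2p-1}$. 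Next, construct an $\E{\rho}$-map $\mu_0: K_{1,C_2}\to \B\GL_1(\S^\wedge_2)$ extending the map $S^{\rho+1}\to \B\GL_1(\S^\wedge_2)$ classifying an appropriate equivariant unit, and identify $K_{1,C_2}^{\mu_0}$ with $\ul{\H\Z}$ by isotropy separation: on underlying spectra this comparison is Corollary \ref{hm-Z}, while on geometric fixed points, using $\Phi^{C_2} K_{1,C_2}\simeq \Omega^2 S^3$, $\Phi^{C_2}\wt\eta = 2$, and $\Phi^{C_2}\ul{\H\Z}\simeq \H\FF_2$, it reduces to Theorem \ref{hm}.

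With $K_{1,C_2}^{\mu_0}\simeq \ul{\H\Z}$ in hand, I apply the equivariant Proposition \ref{thom} to the fiber sequence above. The restriction of $\mu_0$ to the fiber $\Omega S^{\rho+1}$ Thomifies to $T(1)_\RR = X(2)_\RR$, via the universal property characterizing $T(1)_\RR$ as the Thom spectrum of the $\wt\eta$-detecting map $\Omega S^{\rho+1} \to \B\GL_1(\S)$ discussed preceding Example \ref{sigma1-equiv}. Proposition \ref{thom} then produces the desired map $\mu:\Omega^\rho S^{2\rho+1}\to \B\GL_1(T(1)_\RR)$, whose Thom spectrum is identified with $K_{1,C_2}^{\mu_0}\simeq \ul{\H\Z}$. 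An equivariant refinement of Lemma \ref{important-bracket} shows that the element detected on the bottom cell $S^{\rho+1}$ is obtained by splicing the nullhomotopy of $\wt\eta$ inside $T(1)_\RR$ with the degree $2$ map, and agrees with the Hu-Kriz indecomposable $\ol{v}_1 \in \pi_\rho T(1)_\RR$ by an equivariant analogue of Proposition \ref{vn-toda} at $n=1$, $p=2$.

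The main obstacle is the first step: building the equivariant fiber sequence and producing $\mu_0$. The nonequivariant input rests on $p$-local James-style constructions and Serre spectral sequence manipulations, whose $C_2$-equivariant counterparts require careful representation-theoretic bookkeeping. Crucially, we only need to handle the case $n=1$, so the explicit equivariant Bott fibration $\Omega\SU(2)_\RR\to \Omega\SU(3)_\RR\to \Omega S^{2\rho+1}$ from Construction \ref{c2-chin} together with the $\HP^\infty_\RR$-delooping of $\Omega\SU(2)_\RR$ discussed in Section \ref{equiv-analogue} provide enough geometric structure to run the argument unconditionally, bypassing any equivariant analogue of Conjecture \ref{moore-splitting}.
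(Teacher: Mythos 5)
Your middle step---the fiber sequence $\Omega S^{\rho+1}\to \Omega^\rho (S^{\rho+1}\langle\rho+1\rangle)\to \Omega^\rho S^{2\rho+1}$, the observation that the restriction to the fiber Thomifies to $T(1)_\RR$, and the appeal to the equivariant Proposition \ref{thom}---is exactly the paper's route (up to the slip that the equivariant input is the Real Hopf fibration $S^{\rho+\sigma}\xrightarrow{\wt{\eta}} S^\rho\to \CP^\infty_\RR$ together with the EHP sequence $S^\rho\to\Omega S^{\rho+1}\to\Omega S^{2\rho+1}$, then $\Omega^\sigma$ of the resulting sequence; nothing involving $\HP^\infty_\RR$ enters, and with $\HP^\infty_\RR$ in place of $\CP^\infty_\RR$ the needed fiber sequence does not come out). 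The genuine gaps are at the two ends. For the identification of the Thom spectrum over $K_{1,C_2}=\Omega^\rho(S^{\rho+1}\langle\rho+1\rangle)$ with $\ul{\H\Z}$, the paper does not reprove anything: it quotes Proposition \ref{equiv-hm} (Behrens--Wilson, Hahn--Wilson), together with the Hahn--Wilson equivalence $\Omega^\lambda S^{\lambda+1}\simeq\Omega^\rho S^{\rho+1}$, which lifts to the $\langle\rho+1\rangle$-covers. Your proposed replacement---build an $\E{\rho}$-map $\mu_0$ ``extending a map $S^{\rho+1}\to\B\GL_1(\S^\wedge_2)$'' and check underlying and geometric fixed points---conceals precisely the hard content: $\mu_0$ is not a free extension from a cell (it is the restriction along the cover of the $\E{\rho}$- or $\E{\lambda}$-extension of $1-2:S^1\to\B\GL_1(\S)$), and the assertion $\Phi^{C_2}K_{1,C_2}\simeq\Omega^2 S^3$ is unjustified: the $C_2$-fixed points of $\Omega^\rho S^{\rho+1}$ and of its cover are built from a nontrivial fibration with fiber $\Omega S^3$ over pieces of $(S^{\rho+1})^{C_2}=S^2$, and identifying them (and the induced bundle on fixed points) is the substance of the cited theorems. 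As written, your step (a) restates the theorem it is meant to prove; the honest fix is to cite Proposition \ref{equiv-hm} as the paper does.

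The second gap is your last step. You identify the bottom-cell class by ``splicing the nullhomotopy of $\wt{\eta}$ with the degree $2$ map'' via equivariant analogues of Lemma \ref{important-bracket} and Proposition \ref{vn-toda}. Those statements rest on the relevant element being $p$-torsion in the base, and equivariantly this fails: $\wt{\eta}$ is not $2$-torsion, since $\Phi^{C_2}\wt{\eta}=2$ (Warning \ref{not-torsion}), so the bracket $\langle 2,\wt{\eta},1_{T(1)_\RR}\rangle$ is not even defined. This is exactly the obstruction the paper singles out as the reason the higher-height equivariant statement is out of reach, so an argument routed through it cannot work. Fortunately the theorem only asks that the bottom cell detect \emph{some} indecomposable of $\pi_\rho T(1)_\RR\cong\pi_\rho\BPP_\RR$, and the paper proves this cheaply: indecomposability is non-divisibility by $2$, and if the class $S^{\rho+1}\to\Omega^\rho S^{2\rho+1}\to\B\GL_1(T(1)_\RR)$ factored through the degree-$2$ map on $S^{\rho+1}$ one would obtain an orientation $\ul{\H\Z}\to T(1)_\RR$, which is absurd. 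Replace your bracket argument with this divisibility argument; any comparison with the Hu--Kriz class $\ol{v}_1$ beyond indecomposability would need a separate justification that does not pass through torsion of $\wt{\eta}$.
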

Note that, as with Corollary \ref{bpn} at $n=1$, this result is
\emph{unconditional}. The argument is exactly as in the proof of Corollary
\ref{bpn} at $n=1$, with practically no modifications. We need the following analogue of Theorem \ref{hm}, originally proved in \cite{behrens-wilson,hahn-wilson}.
\begin{prop}[Behrens-Wilson, Hahn-Wilson]\label{equiv-hm}
    Let $p$ be any prime, and let $\lambda$ denote the $2$-dimensional standard
    representation of $C_p$ on $\cc$. The Thom spectrum of the map
    $\Omega^\lambda S^{\lambda+1} \to \B\GL_1(S^0)$ extending the map $1-p:S^1
    \to \B\GL_1(S^0)$ is equivalent to $\ul{\H\FF_p}$ as an $\E{\lambda}$-ring.
    Moreover, if $S^{\lambda+1}\langle \lambda+1\rangle$ denotes the
    $(\lambda+1)$-connected cover of $S^{\lambda+1}$ (i.e., the fiber of the map $S^{\lambda+1} \to \Omega^\infty \Sigma^{\lambda+1} \ul{\H\Z}$), then the Thom spectrum of
    the induced map $\Omega^\lambda S^{\lambda+1}\langle \lambda+1\rangle \to
    \B\GL_1(S^0)$ is equivalent to $\ul{\H\Z}$ as an $\E{\lambda}$-ring.
\end{prop}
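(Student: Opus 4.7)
The plan is to mimic the proof of Theorem \ref{hm} recalled in the excerpt, with every classical ingredient replaced by its genuine $C_p$-equivariant counterpart. Working in genuine $C_p$-spectra, the space $\Omega^\lambda S^{\lambda+1}$ is the free $\E{\lambda}$-algebra on $S^1$, so the map $1-p\colon S^1 \to \B\GL_1(\S)$ extends uniquely to an $\E{\lambda}$-map $\mu\colon \Omega^\lambda S^{\lambda+1} \to \B\GL_1(\S)$. By the equivariant version of the universal property of Thom spectra (Theorem \ref{thom-univ}), the Thom spectrum $R := (\Omega^\lambda S^{\lambda+1})^\mu$ is the free $\E{\lambda}$-$\S$-algebra equipped with a nullhomotopy of $1-p \in \pi_1\B\GL_1(\S)$, equivalently of $p \in \pi_0\S$. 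Since $\ul{\H\FF_p}$ is an $\E{\infty}$-ring (hence $\E{\lambda}$) in which $p=0$, this universal property supplies a canonical $\E{\lambda}$-ring map $\psi\colon R \to \ul{\H\FF_p}$; similarly, taking the $(\lambda+1)$-connected cover of $S^{\lambda+1}$ kills the bottom cell of the source and yields an analogous candidate map to $\ul{\H\Z}$.

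The next step is to reduce proving that $\psi$ is an equivalence to a computation in equivariant homology. By the equivariant Thom isomorphism (a direct Kan-extension-of-colimits argument, identical to the one underlying Proposition \ref{thom-base}), one has
\[
\ul{\H\FF_p} \wedge R \;\simeq\; \ul{\H\FF_p} \wedge \Sigma^\infty_+ \Omega^\lambda S^{\lambda+1}
\]
as $\ul{\H\FF_p}$-modules. It therefore suffices to compare the $RO(C_p)$-graded Bredon homology of $\Omega^\lambda S^{\lambda+1}$ with $\ul{\FF_p}$-coefficients against the dual equivariant Steenrod algebra $\pi_\star(\ul{\H\FF_p} \wedge \ul{\H\FF_p})$ computed by Hu-Kriz. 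Both carry an action of equivariant $\E{\lambda}$-Dyer-Lashof operations, and the map $\psi$ respects this structure.

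The comparison is then an equivariant Dyer-Lashof hopping argument. On the source, the Bredon homology of $\Omega^\lambda\Sigma^\lambda S^1$ is, in the appropriate $RO(C_p)$-graded sense, generated by applying the single $\E{\lambda}$-power operation to the fundamental class of $S^1$. On the target, the equivariant analogue of Steinberger's theorem (Theorem \ref{steinberger}) asserts that the conjugates $\ol{\zeta}_i$ (and $\ol{\tau}_i$ at odd primes) of the equivariant Milnor generators of the dual equivariant Steenrod algebra are obtained from $\ol{\zeta}_1$ by iterating the same equivariant Dyer-Lashof operation. The nonequivariant proof of Theorem \ref{hm} identifies the image under $\psi$ of the bottom generator with $\ol{\zeta}_1$ on underlying spectra; a geometric fixed points check (using $\Phi^{C_p}\Omega^\lambda S^{\lambda+1} \simeq \Omega^2 S^3$ and Theorem \ref{hm} in the $\Phi^{C_p}$-world) verifies the same statement after passing to fixed points. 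The $\E{\lambda}$-equivariance of $\psi$ then forces $\psi$ to be a comodule isomorphism on all of equivariant homology, hence an equivalence of $\E{\lambda}$-rings. The statement for $\ul{\H\Z}$ follows by the same argument applied to $S^{\lambda+1}\langle\lambda+1\rangle$, exactly as Corollary \ref{hm-Z} is deduced from Theorem \ref{hm}.

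The main obstacle is the equivariant Steinberger input: one needs to set up the $\E{\lambda}$-power operations on the equivariant homology of an $\E{\lambda}$-$\ul{\H\FF_p}$-algebra, and to show that the Hu-Kriz generators of $\pi_\star(\ul{\H\FF_p} \wedge \ul{\H\FF_p})$ are generated from $\ol{\zeta}_1$ by iterating the single equivariant Dyer-Lashof operation available from the $\E{\lambda}$-structure. This requires combining the equivariant Araki-Kudo/Dyer-Lashof operations developed in the work of Wilson and of Hill-Hopkins-Ravenel with a detailed understanding of the $RO(C_p)$-graded dual equivariant Steenrod algebra, and is where the bulk of the equivariant difficulty resides; the formal parts of the argument (universal property, Thom isomorphism, deduction of the $\ul{\H\Z}$ version) are direct equivariant transcriptions of their nonequivariant ancestors.
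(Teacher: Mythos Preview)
The paper does not give its own proof of this proposition: it is stated as a result of Behrens--Wilson and Hahn--Wilson, with a citation to \cite{behrens-wilson,hahn-wilson}, and is then used as a black box in the proof of Theorem \ref{C2-main-thm}. So there is no in-paper argument to compare your proposal against.

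That said, your sketch is essentially the Behrens--Wilson strategy: build an $\E{\lambda}$-map to $\ul{\H\FF_p}$ via the universal property, and then run an equivariant Dyer--Lashof hopping argument once an equivariant Steinberger-type computation is in hand. This is a correct outline, and you rightly flag that the hard content is precisely that equivariant Steinberger input. Two points are worth noting. First, the Hahn--Wilson argument (also cited here, and used more directly in the paper for the $C_2$-case) takes a rather different and more economical route: rather than computing equivariant power operations, they check the map is an equivalence on underlying spectra and on geometric fixed points separately (both reduce to the nonequivariant Hopkins--Mahowald theorem), and then appeal to a detection principle. Second, a small correction: the $RO(C_p)$-graded dual Steenrod algebra you invoke is due to Hu--Kriz only at $p=2$; at odd primes the relevant computations are more recent and not in Hu--Kriz, so your citation there would need adjusting.
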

We can now prove Theorem \ref{C2-main-thm}.
\begin{proof}[Proof of Theorem \ref{C2-main-thm}]
    In \cite{hahn-wilson}, the authors prove that there is an equivalence of
    $C_2$-spaces between $\Omega^\lambda S^{\lambda+1}$ and $\Omega^\rho
    S^{\rho+1}$, and that $\ul{\H\FF_2}$ is in fact the Thom spectrum of the
    induced map $\Omega^\rho S^{\rho+1} \to \B\GL_1(S^0)$ detecting $-1$. Since
    both $\Omega^\rho S^{\rho+1}\langle \rho+1\rangle$ and $\Omega^\lambda
    S^{\lambda+1}\langle \lambda+1\rangle$ are defined as fibers of maps to
    $S^1$ which are degree one on the bottom cell, Hahn and Wilson's equivalence
    lifts to a $C_2$-equivariant equivalence $\Omega^\rho S^{\rho+1}\langle
    \rho+1\rangle\simeq \Omega^\lambda S^{\lambda+1}\langle \lambda+1\rangle$,
    and we find that $\ul{\H\Z}$ is the Thom spectrum of the map $\Omega^\rho
    S^{\rho+1}\langle \rho+1\rangle\to \B\GL_1(S^0)$.

    Since $T(1)_\RR$ is the Thom spectrum of the composite map $\Omega
    S^{\rho+1} \to \Omega^\rho S^{\rho+1}\langle \rho+1\rangle \to \B\GL_1(S^0)$
    detecting $\wt{\eta}$ on the bottom cell of the source, it follows from
    the $C_2$-equivariant analogue of Proposition \ref{thom} and the above
    discussion that it is sufficient to define a fiber sequence
    $$\Omega S^{\rho+1} \to \Omega^\rho S^{\rho+1}\langle \rho+1\rangle \to
    \Omega^\rho S^{2\rho+1},$$
    and check that the induced map $\Omega^\rho S^{2\rho+1} \to
    \B\GL_1(T(1)_\RR)$ detects an indecomposable element of $\pi_\rho T(1)_\RR$.
    See Remark \ref{anick-S3} for the nonequivariant analogue of this fiber
    sequence.

    Since there is an equivalence $\Omega S^{\rho+1} \simeq \Omega^\sigma
    S^{\rho + \sigma}$, it suffices to prove that there is a fiber sequence
    \begin{equation}\label{C2-cmn}
	S^{\rho+\sigma} \to \Omega S^{\rho+1}\langle \rho+1\rangle \to \Omega
	S^{2\rho+1};
    \end{equation}
    taking $\sigma$-loops produces the desired fiber sequence.
    The fiber sequence \eqref{C2-cmn} can be obtained by taking vertical fibers
    in the following map of fiber sequences
    $$\xymatrix{
	S^\rho \ar[r] \ar[d] & \Omega S^{\rho+1} \ar[r] \ar[d] & \Omega S^{2\rho
	+ 1} \ar[d]\\
	\CP^\infty_\RR \ar@{=}[r] & \CP^\infty_\RR \ar[r] & \ast.}$$
    Here, the top horizontal fiber sequence is the EHP fiber sequence
    $$S^\rho \to \Omega S^{\rho+1} \to \Omega S^{2\rho+1}.$$
    To identify the fibers, note that there is the Hopf fiber sequence
    $$S^{\rho+\sigma} \xar{\wt{\eta}} S^\rho\to \CP^\infty_\RR.$$
    The fiber of the middle vertical map is $\Omega S^{\rho+1} \langle \rho +
    1\rangle$ via the definition of $S^{\rho+1}\langle \rho+1\rangle$ as the
    homotopy fiber of the map $S^{\rho+1}\to B \CP^\infty_\RR$.

    It remains to show that the map $\Omega^\rho S^{2\rho+1} \to
    \B\GL_1(T(1)_\RR)$ detects an indecomposable element of $\pi_\rho T(1)_\RR$.
    Indecomposability in $\pi_\rho T(1)_\RR \cong \pi_\rho \BPP_\RR$ is the same
    as not being divisible by $2$, so we just need to show that the dotted map
    in the following diagram does not exist:
    $$\xymatrix{
	S^{\rho+1} \ar[d]_-{E^2} \ar[dr]^-2 & \\
	\Omega^\rho S^{2\rho+1} \ar[r] \ar[d] & S^{\rho+1} \ar@{-->}[dl]\\
	\B\GL_1(T(1)_\RR)}$$
	If this factorization existed, there would be an orientation
	$\ul{\H\Z}\to T(1)_\RR$, which is absurd.
\end{proof}

We now explain why we do not know how to prove the equivariant analogue of
Corollary \ref{bpn} at higher heights. One could propose an equivariant analogue
of Conjecture \ref{moore-splitting}, and such a conjecture would obviously be
closely tied with the existence of some equivariant analogue of the work of
Cohen-Moore-Neisendorfer. We do not know if any such result exists, but it would
certainly be extremely interesting.


Suppose that one wanted to prove a result like Corollary \ref{bpn}, stating that
the equivariant analogues of Conjecture \ref{moore-splitting} and Conjecture
\ref{centrality-conj} imply that there is a map $\Omega^\rho S^{2^n\rho+1} \to
\B\GL_1(T(n)_\RR)$ detecting an indecomposable in $\pi_{(2^n-1) \rho} T(n)_\RR$
on the bottom cell, whose Thom spectrum is $\BP{n-1}_\RR$. One could then try to
run the same proof as in the nonequivariant case by constructing a map from the
fiber of a charming map $\Omega^\rho S^{2^n\rho+1}\to S^{(2^n-1)\rho+1}$ to
$\B\GL_1(T(n-1)_\RR)$, but the issue comes in replicating Step 1 of Section
\ref{the-proof}: there is \emph{no} analogue of Lemma \ref{chin-torsion}, since
the equivariant element $\wt{\sigma}_n\in \pi_\star T(n)$ is neither torsion nor
nilpotent. See Warning \ref{not-torsion}.  This is intimately tied with the
failure of an analogue of the nilpotence theorem in the equivariant setting. In
future work, we shall describe a related project connecting the $T(n)$ spectra
to the Andrews-Gheorghe-Miller $w_n$-periodicity in $\cc$-motivic homotopy
theory (see \cite{andrews-miller, gheorghe, krause-thesis}).

However, since there is a map $\Omega^\lambda S^{\lambda+1} \langle
\lambda+1\rangle\to \B\GL_1(\S)$ as in Proposition \ref{equiv-hm}, there may
nevertheless be a way to construct a suitable map from the fiber of a charming
map $\Omega^\rho S^{2^n\rho+1}\to S^{(2^n-1)\rho+1}$ to $\B\GL_1(T(n-1)_\RR)$.
Such a construction would presumably provide a more elegant construction of the
nonequivariant map used in the proof of Theorem \ref{main-thm}.

\newpage
\section{Future directions}\label{future}
In this section, we suggest some directions for future investigation.
This is certainly not an exhaustive list; there are numerous questions we do not
know how to address that are spattered all over this document, but we have tried
to condense some of them into the list below. We have tried to order the
questions in order of our interest in them. We have partial progress on many of
these questions.
\begin{enumerate}
    \item Some obvious avenues for future work are the conjectures studied in
	this article: Conjectures \ref{moore-splitting}, \ref{centrality-conj},
	\ref{tmf-conj}, and \ref{tn-e2}.
	Can the $\E{3}$-assumption in the statement of Theorem \ref{mstring} be
	removed?
    \item One of the Main Goals\textsuperscript{TM} of this project is to
	rephrase the proof of the nilpotence theorem from \cite{dhs-i, dhs-ii}.
	As mentioned in Remark \ref{hm-nilp}, the Hopkins-Mahowald theorem for
	$\H\FF_p$ immediately implies the nilpotence theorem for simple
	$p$-torsion classes in the homotopy of a homotopy commutative ring
	spectrum (see also \cite{hopkins-thesis}). We will expand on the
	relation between the results of this article and the nilpotence theorem in
	forthcoming work; see Remark \ref{nilpotence-proof} for a sketch.

	From this point of view, Theorem \ref{main-thm} is very interesting: it
	connects torsion in the unstable homotopy groups of spheres (via
	Cohen-Moore-Neisendorfer) to nilpotence in the stable homotopy groups of
	spheres. We are not sure how to do so, but could the
	Cohen-Moore-Neisendorfer bound for the exponents of unstable homotopy
	groups of spheres be used to obtain bounds for the nilpotence exponent
	of the stable homotopy groups of spheres?
    \item It is extremely interesting to contemplate the interaction between
	unstable homotopy theory and chromatic homotopy theory apparent in this
	article. Connections between unstable homotopy theory and the chromatic
	picture have appeared elsewhere in the literature (e.g., in
	\cite{arone-mahowald, arone-iterate, mahowald-imj, mahowald-thompson}),
	but their relationship to the content of this project is not clear to
	me. It would be interesting to have this clarified. One na\"ive hope is
	that such connection could stem from a construction of a charming map
	(such as the Cohen-Moore-Neisendorfer map) via Weiss calculus.
    \item Let $R$ denote $\S$ or $A$. The map $R\to \TT(R)$ is an equivalence in
	dimensions $<|\sigma_n|$. Moreover, the $\TT(R)$-based Adams-Novikov
	spectral sequence has a vanishing line of slope $1/|\sigma_n|$ (see
	\cite{mahowald-bo-res} for the case $R = A$). Can another proof of this
	vanishing line be given using general arguments involving Thom spectra?
	We have some results in this direction which we shall address in future
	work.
    \item The unit maps from each of the Thom spectra on the second line of
	Table \ref{the-table} to the corresponding designer spectrum on the
	third line are surjective on homotopy. In the case of $\tmf$, this
	requires some computational effort to prove, and has been completed in
	\cite{tmf-witten}. This behavior is rather unexpected: in general, the
	unit map from a structured ring to some structured quotient will not be
	surjective on homotopy. Is there a conceptual reason for this
	surjectivity?
    \item In \cite{tmf-resolution}, the $\tmf$-resolution of a certain type $2$
	spectrum $Z$ is studied. Mahowald uses the Thom spectrum $A$ to study
	the $\bo$-resolution of the sphere in \cite{mahowald-bo-res}, so perhaps
	the spectrum $B$ could be used to study the $\tmf$-resolution of $Z$.
	This is work in progress. See also Corollary \ref{telescope-equivalence}
	and the discussion preceding it.
    \item Is there an equivariant analogue of Theorem \ref{main-thm} at higher
	heights and other primes? Currently, we have such an analogue at height
	$1$ and at $p=2$; see Section \ref{equiv-analogue}.
    \item The Hopkins-Mahowald theorem may used to define Brown-Gitler spectra.
	Theorem \ref{main-thm} produces ``relative'' Brown-Gitler spectra for
	$\BP{n}$, $\bo$, and $\tmf$. In future work, we will study these spectra
	and show how they relate to the Davis-Mahowald \emph{non}-splitting of
	$\tmf \wedge \tmf$ as a wedge of shifts of $\bo$-Brown-Gitler spectra
	smashed with $\tmf$ from \cite{davis-mahowald-tmf}.
    \item The story outlined in the introduction above could fit into a general
	framework of ``fp-Mahowaldean spectra'' (for ``finitely presented
	Mahowaldean spectrum'', inspired by \cite{mahowald-rezk}), of which $A$,
	$B$, $T(n)$, and $y(n)$ would be examples. One might then hope for a
	generalization of Theorem \ref{main-thm} which relates fp-Mahowaldean
	spectra to fp-spectra. It would also be interesting to prove an analogue
	of Mahowald-Rezk duality for fp-Mahowaldean spectra which recovers their
	duality for fp-spectra upon taking Thom spectra as above.
    \item One potential approach to the question about surjectivity raised above
	is as follows. The surjectivity claim at height $0$ is the (trivial)
	statement that the unit map $\S \to \H\Z$ is surjective on homotopy. The
	Kahn-Priddy theorem, stating that the transfer $\lambda: \Sigma^\infty
	\RP^\infty \to \S$ is surjective on $\pi_{\ast \geq 1}$, can be
	interpreted as stating that $\pi_\ast \Sigma^\infty \RP^\infty$ contains
	those elements of $\pi_\ast \S$ which are \emph{not} detected by $\H\Z$.
	One is then led to wonder: for each of the Thom spectra $R$ on the
	second line of Table \ref{the-table}, is there a spectrum $P$ along with
	a map $\lambda_R: P\to R$ such that each $x\in \pi_\ast R$ in the kernel
	of the map $R\to \TT(R)$ lifts along $\lambda_R$ to $\pi_\ast P$? (The
	map $R\to \TT(R)$ is an equivalence in dimensions $<|\sigma_n|$ (if $R$
	is of height $n$), so $P$ would have bottom cell in dimension
	$|\sigma_n|$.)
	
	Since $\Sigma^\infty \RP^\infty \simeq \Sigma^{-1} \Sym^2(\S)/\S$, the
	existence of such a result is very closely tied to an analogue of the
	Whitehead conjecture (see \cite{kuhn-whitehead}; the Whitehead
	conjecture implies the Kahn-Priddy theorem). In particular, one might
	expect the answer to the question posed above to admit some interaction
	with Goodwillie calculus.
    \item Let $p\geq 5$. Is there a $p$-primary analogue of $B$ which would
	provide a Thom spectrum construction (via Table \ref{the-table}) of the
	conjectural spectrum $\mathrm{eo}_{p-1}$? Such a spectrum would be the
	Thom spectrum of a $p$-complete spherical fibration over a $p$-local
	space built via $p-1$ fiber sequences from the loop spaces $\Omega
	S^{2k(p-1)+1}$ for $2\leq k\leq p$.
    \item The spectra $T(n)$ and $y(n)$ have algebro-geometric interpretations:
	the stack $\M_{T(n)}$ associated (see \cite[Chapter 9]{tmf}; this stack
	is well-defined since $T(n)$ is homotopy commutative) to $T(n)$
	classifies $p$-typical formal groups with a coordinate up to degree
	$p^{n+1}-1$, while $y(n)$ is the closed substack of $\M_{T(n)}$ defined
	by the vanishing locus of $p,v_1,\cdots,v_{n-1}$. What are the moduli
	problems classified by $A$ and $B$? We do not know if this question even
	makes sense at $p=2$, since $A$ and $B$ are \emph{a priori} only
	$\E{1}$-rings. Nonetheless, in \cite{hodge}, we provide a description of
	$\tmf \wedge A$ in terms of the Hodge filtration of the universal
	elliptic curve (even at $p=2$); we also showed that $(\tmf \wedge
	A)[x_2]$ admits an $\E{2}$-algebra structure, where $|x_2| = 2$.
    \item Theorem \ref{main-thm} shows that the Hopkins-Mahowald theorem for
	$\H\Z_p$ can be generalized to describe forms of $\BP{n}$; at least for
	small $n$, these spectra have associated algebro-geometric
	interpretations (see \cite[Chapter 9]{tmf}). What is the
	algebro-geometric interpretation of Theorem \ref{main-thm}?
\end{enumerate}

\newpage

\bibliographystyle{alpha}
\bibliography{main}
\end{document}